\documentclass{amsart}
\usepackage{amsmath,amssymb}
\title[Nakayama functor for coalgebras and the integral theory]{Nakayama functor for coalgebras and a categorical perspective of the integral theory for Hopf algebras}
\author[K. Shimizu]{Kenichi Shimizu}
\address[K. Shimizu]{Department of Mathematical Sciences,
  Shibaura Institute of Technology \\
  307 Fukasaku, Minuma-ku, Saitama-shi, Saitama 337-8570, Japan.}
\email{kshimizu@shibaura-it.ac.jp}
\keywords{Nakayama functor, locally finite abelian category, tensor category, coquasi-bialgebra}
\subjclass[2020]{18M05, 16T05}
% 16T05 Hopf algebras and their applications
% 18M05 Monoidal categories, symmetric monoidal categories

\date{}

%%%%%%%%%%%%%%%%%%%%%%%%%%%%%%%%%%%%%%%%%%%%%%%%%%%%%%%%%%%%%%%%%%%%%%
% packages
%%%%%%%%%%%%%%%%%%%%%%%%%%%%%%%%%%%%%%%%%%%%%%%%%%%%%%%%%%%%%%%%%%%%%%
\usepackage{mathtools}
\usepackage{bbm}
\usepackage[setpagesize=false]{hyperref}
\usepackage{tikz}
\usepackage{tikz-cd}
\usetikzlibrary{calc}
\usepackage{accents}
%%%%%%%%%%%%%%%%%%%%%%%%%%%%%%%%%%%%%%%%%%%%%%%%%%%%%%%%%%%%%%%%%%%%%%
% amsthm
%%%%%%%%%%%%%%%%%%%%%%%%%%%%%%%%%%%%%%%%%%%%%%%%%%%%%%%%%%%%%%%%%%%%%%
\usepackage{amsthm}
\numberwithin{equation}{section}
\theoremstyle{plain}
\newtheorem{C}{}[section] % the common counter
\newtheorem{lemma}[C]{Lemma}

\newtheorem{theorem}[C]{Theorem}

\newtheorem{corollary}[C]{Corollary}
\newtheorem{problem}[C]{Problem}
\newtheorem{question}[C]{Question}
\newtheorem{observation}[C]{Observation}
\newtheorem{assumption}[C]{Assumption}
\theoremstyle{definition}
\newtheorem{definition}[C]{Definition}
\theoremstyle{remark}

%%%%%%%%%%%%%%%%%%%%%%%%%%%%%%%%%%%%%%%%%%%%%%%%%%%%%%%%%%%%%%%%%%%%%%
% macros
%%%%%%%%%%%%%%%%%%%%%%%%%%%%%%%%%%%%%%%%%%%%%%%%%%%%%%%%%%%%%%%%%%%%%%
\newcommand{\id}{\mathrm{id}}
\newcommand{\op}{\mathrm{op}}
\newcommand{\cop}{\mathrm{cop}}
\newcommand{\radj}{\mathrm{ra}}

\newcommand{\ladj}{\mathrm{la}}
\newcommand{\lladj}{\mathrm{lla}}

\newcommand{\bfk}{\Bbbk}
\newcommand{\Obj}{\mathrm{Obj}}
\newcommand{\Hom}{\mathrm{Hom}}

\newcommand{\Ker}{\mathrm{Ker}}
\newcommand{\Img}{\mathrm{Im}}

\newcommand{\coHom}{\mathrm{coHom}}

\newcommand{\rat}{\mathrm{rat}}

\newcommand{\Inj}{\mathrm{Inj}}
\newcommand{\Proj}{\mathrm{Proj}}

\newcommand{\unitobj}{\mathbf{1}}
\newcommand{\rev}{\mathrm{rev}}
\newcommand{\eval}{\mathrm{ev}}
\newcommand{\coev}{\mathrm{coev}}

% copower

% left and right
\renewcommand{\L}{\mathrm{l}}
\newcommand{\R}{\mathrm{r}}

% NAKAYAMA FUNCTORS
\newcommand{\Nak}{\mathbb{N}}
\newcommand{\Nakl}{\Nak^{\L}}
\newcommand{\Nakr}{\Nak^{\R}}

% co/module categories
\newcommand{\Mod}{\mathfrak{M}}
\newcommand{\fdmod}{\Mod_{\fd}}
\newcommand{\Vect}{\mathbf{Vec}}
\newcommand{\Sets}{\mathbf{Set}}
\newcommand{\Rep}{\mathrm{Rep}}

\newcommand{\sfInj}{\mathfrak{I}_{\mathtt{sf}}}
\newcommand{\fdInj}{\mathfrak{I}_{\mathtt{fd}}}
\newcommand{\fdPro}{\mathfrak{P}_{\mathtt{fd}}}
\newcommand{\Ind}{\mathrm{Ind}}
\newcommand{\fd}{\mathtt{fd}} % finite-dimensional
 % finitely generated
 % finitely presented
 % finite length
\newcommand{\qf}{\mathtt{qf}} % quasi-finite

\newcommand{\modobj}{\mathsf{g}}
\newcommand{\Radford}{\mathfrak{r}}

\begin{document}

\maketitle

\begin{abstract}
  We review basic properties of the Nakayama functor for coalgebras and introduce a number of applications to tensor categories.
  We also give equivalent conditions for a coquasi-bialgebra with preantipode to admit a non-zero cointegral.
\end{abstract}

\setcounter{tocdepth}{2}
\tableofcontents

\section{Introduction}

Given a finite-dimensional algebra $A$ a field $\bfk$, we denote by ${}_A \Mod_{\fd}$ the category of finite-dimensional left $A$-modules.
The endofunctor on ${}_A \Mod_{\fd}$ defined by $M \mapsto A^* \otimes_{A} M$ is called the {\em Nakayama functor} and has a prominent role in the representation theory of finite-dimensional algebras. Recently, this functor also attracts attention from a different perspective of the study of Hopf algebras, tensor categories and their applications.
This rather expository paper aims to introduce numerous applications of the Nakayama functor to tensor categories mainly based on \cite{2023arXiv230314687S,MR4560996}.

The powerfulness of the use of the Nakayama functor for the study of tensor categories was demonstrated by Fuchs, Schaumann and Schweigert in \cite{MR4042867}.
We recall that a finite abelian category is a linear category that is equivalent to ${}_A \Mod_{\fd}$ for some finite-dimensional algebra $A$, and a finite tensor category is, roughly speaking, a finite abelian category equipped with a structure of a rigid monoidal category \cite{MR3242743}.
An important fact pointed out in \cite{MR4042867} is the formula
\begin{equation}
  \label{eq:intro-coend-formula}
  A^* \otimes_{A} M = \int^{X \in {}_A \Mod_{\fd}} \Hom_A(M, X)^* \otimes_{\bfk} X
\end{equation}
of the Nakayama functor, where the integral means a coend \cite{MR1712872}.
By this formula, one can define the {\em right exact Nakayama functor} $\Nakr_{\mathcal{M}}: \mathcal{M} \to \mathcal{M}$ for a finite abelian category $\mathcal{M}$ as the same form of coend as \eqref{eq:intro-coend-formula}.
Applications of this functor given in \cite{MR4042867} is based on the following two observations:

\begin{observation}
  \label{obs:intro-1}
  By the representation theory of finite-dimensional algebras, we know that $\Nakr_{\mathcal{M}}$ is an equivalence (respectively, the identity) if and only if $\mathcal{M} \approx {}_A \Mod_{\fd}$ for some Frobenius (respectively, symmetric Frobenius) algebra $A$.
\end{observation}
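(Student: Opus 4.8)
The plan is to fix a finite-dimensional algebra $A$ with $\mathcal{M} \approx {}_A \Mod_{\fd}$ and transport the whole question to ${}_A \Mod_{\fd}$; since $\Nakr_{\mathcal{M}}$ is defined by the coend \eqref{eq:intro-coend-formula}, it is preserved by equivalences and is carried to the classical Nakayama functor $\nu = A^* \otimes_A (-)$, that is, tensoring by the $A$-bimodule $A^* = \Hom_{\bfk}(A,\bfk)$. The statement then splits into two purely module-theoretic assertions: (i) $\nu$ is an autoequivalence of ${}_A \Mod_{\fd}$ if and only if $A$ is self-injective; and (ii) $\nu \cong \id$ if and only if $A \cong A^*$ as $A$-bimodules, i.e.\ $A$ is symmetric.

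For (i), I would use that a right-exact endofunctor of ${}_A \Mod_{\fd}$ given by tensoring with a bimodule is an equivalence exactly when that bimodule is invertible. Now $A^*$, as the $\bfk$-dual of the free right module $A$, is always injective as a left module; hence if $\nu$ is an equivalence then $A^*$ is an invertible bimodule, so a projective generator on the left, forcing every indecomposable projective to be injective and $A$ to be self-injective. Conversely, when $A$ is self-injective the left module $A^*$ is a progenerator, and a computation of its endomorphism ring, $\End_A({}_A A^*)^{\op} \cong A$, shows $A^*$ is invertible, so $\nu$ is an equivalence. For (ii), evaluating a natural isomorphism $\nu \cong \id$ at the regular bimodule $A$ and using naturality with respect to right multiplications produces an isomorphism $A^* \cong A$ of $A$-bimodules; conversely such an isomorphism visibly trivialises $\nu$, and this is precisely the defining property of a symmetric algebra.

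The remaining and most delicate step is to replace ``self-injective'' by ``Frobenius,'' since being Frobenius is \emph{not} Morita invariant whereas the condition on $\Nakr_{\mathcal{M}}$ manifestly is. The plan is to pass to the basic algebra $B$ Morita equivalent to $A$ and invoke that a basic self-injective algebra is automatically Frobenius: writing ${}_B B$ as a sum of pairwise non-isomorphic indecomposable projectives, self-injectivity realises $B^*$ as a permutation of the same summands, whence ${}_B B \cong {}_B B^*$ and $B$ is Frobenius. As self-injectivity is Morita invariant, this yields that $\mathcal{M} \approx {}_B \Mod_{\fd}$ for a Frobenius $B$ exactly when $\Nakr_{\mathcal{M}}$ is an equivalence. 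For the identity case no such passage is needed, since symmetry is already Morita invariant, so (ii) gives directly that $\Nakr_{\mathcal{M}} \cong \id$ if and only if some (equivalently every) representing algebra is symmetric Frobenius. I expect the bookkeeping around the non-invariance of the Frobenius property, and stating it cleanly against the Morita-invariant categorical condition, to be the main obstacle.
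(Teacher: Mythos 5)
Your proposal is correct, and there is nothing in the paper to measure it against line by line: Observation~\ref{obs:intro-1} is stated without proof, deferred wholesale to ``the representation theory of finite-dimensional algebras'' (following \cite{MR4042867}). Your sketch reconstructs exactly the standard facts being invoked: transport along $\mathcal{M}\approx{}_A\Mod_{\fd}$ is legitimate because the coend \eqref{eq:intro-coend-formula} commutes with equivalences; then (i) $A^*\otimes_A(-)$ is an autoequivalence iff ${}_AA^*$ is an invertible bimodule, which by your progenerator argument in both directions is equivalent to self-injectivity of $A$; and (ii) the naturality-against-right-multiplications trick correctly upgrades $\eta_A\colon A^*\otimes_AA\to A$ to a bimodule isomorphism, which is the definition of symmetric. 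Crucially, you identified the one genuinely delicate point --- the categorical condition is Morita invariant while ``Frobenius'' is not --- and resolved it the standard way, via Nakayama's theorem that a basic self-injective algebra is Frobenius (your multiplicity-one argument for ${}_BB\cong{}_BB^*$ is sound over an arbitrary field, since basicness forces each indecomposable projective, respectively injective, to occur exactly once in ${}_BB$, respectively $B^*$, and self-injectivity matches the two lists up to the Nakayama permutation). Two minor points of hygiene: read ``the identity'' in the statement as ``isomorphic to the identity functor,'' consistent with how the paper phrases the coalgebra analogue (Theorem~\ref{thm:Nakayama-symm-coalg}); and in the Morita step for self-injective $A$ you should verify that the \emph{canonical} map $A\to\End_A({}_AA^*)^{\op}$ induced by the right action is an isomorphism, not merely that the two rings are abstractly isomorphic --- this does hold, via the duality identification $\End_A({}_AA^*)\cong\End(A_A)^{\op}$, under which the right action corresponds to left multiplications. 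It is worth noting that the paper's actual proofs of the coalgebra-level generalizations (Theorems~\ref{thm:Nakayama-QcF}, \ref{thm:Nakayama-cF}, \ref{thm:Nakayama-symm-coalg}) proceed by quite different, coalgebra-specific tools (Iovanov's characterization of QcF coalgebras, Frobenius pairings and Nakayama automorphisms), since the finite-dimensional Morita-theoretic shortcuts you use are unavailable there; your argument buys the finite statement cheaply but does not scale to that setting.
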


\begin{observation}
  \label{obs:intro-2}
  Given a functor $F$, we denote by $F^{\ladj}$ and $F^{\radj}$ a left and a right adjoint of $F$, respectively, if it exists. By the universal property of $\Nakr_{\mathcal{M}}$ as a coend, we have an isomorphism $F \circ \Nakr_{\mathcal{A}} \cong \Nakr_{\mathcal{B}} \circ F^{\lladj}$ for a linear functor $F: \mathcal{A} \to \mathcal{B}$ between finite abelian categories such that $F^{\radj}$, $F^{\ladj}$ and $F^{\lladj} := (F^{\ladj})^{\ladj}$ exist.
\end{observation}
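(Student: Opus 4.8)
The plan is to establish the isomorphism objectwise, naturally in $M$, by exhibiting $F(\Nakr_{\mathcal{A}}(M))$ and $\Nakr_{\mathcal{B}}(F^{\lladj}(M))$ as corepresenting the same functor $\mathcal{B} \to \Vect$. First I would pull $F$ through the coend defining $\Nakr_{\mathcal{A}}$: since $F^{\radj}$ exists, $F$ preserves all colimits, and being linear it preserves copowers, so $F(\Nakr_{\mathcal{A}}(M)) \cong \int^{X \in \mathcal{A}} \Hom_{\mathcal{A}}(M, X)^* \otimes_{\bfk} F(X)$. On the other side, the adjunction $F^{\lladj} \dashv F^{\ladj}$ supplies a natural isomorphism $\Hom_{\mathcal{B}}(F^{\lladj}(M), Y) \cong \Hom_{\mathcal{A}}(M, F^{\ladj}(Y))$, and feeding it into the coend defining $\Nakr_{\mathcal{B}}$ gives $\Nakr_{\mathcal{B}}(F^{\lladj}(M)) \cong \int^{Y \in \mathcal{B}} \Hom_{\mathcal{A}}(M, F^{\ladj}(Y))^* \otimes_{\bfk} Y$.

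Next I would use the universal property of these coends. Mapping out of a coend is computed by an end, so for $W \in \mathcal{B}$ one finds $\Hom_{\mathcal{B}}(F(\Nakr_{\mathcal{A}}(M)), W) \cong \int_{X} \Hom_{\bfk}(\Hom_{\mathcal{A}}(M, X)^*, \Hom_{\mathcal{B}}(F(X), W))$; as both integrand factors are contravariant in $X$, this end is the space $\Nat(\Hom_{\mathcal{A}}(M, -)^*, \Hom_{\mathcal{B}}(F(-), W))$ of natural transformations of presheaves on $\mathcal{A}$. Likewise $\Hom_{\mathcal{B}}(\Nakr_{\mathcal{B}}(F^{\lladj}(M)), W) \cong \Nat(\Hom_{\mathcal{A}}(M, F^{\ladj}(-))^*, \Hom_{\mathcal{B}}(-, W))$, over presheaves on $\mathcal{B}$. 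To identify these two spaces I would transpose natural families along the unit $\eta \colon \id_{\mathcal{B}} \to F \circ F^{\ladj}$ and counit $\varepsilon \colon F^{\ladj} \circ F \to \id_{\mathcal{A}}$ of $F^{\ladj} \dashv F$: to $\phi \colon \Hom_{\mathcal{A}}(M, -)^* \Rightarrow \Hom_{\mathcal{B}}(F(-), W)$ I assign $\psi_Y := \Hom_{\mathcal{B}}(\eta_Y, W) \circ \phi_{F^{\ladj}(Y)}$, and to $\psi$ I assign $\phi_X := \psi_{F(X)} \circ \Hom_{\mathcal{A}}(M, \varepsilon_X)^*$.

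The main obstacle will be the verification that these two assignments are well defined and mutually inverse. Well-definedness is the assertion that each constructed family is again natural in its parameter, and the inverse relations reduce, after using naturality of $\phi$ and $\psi$, to the triangle identities $F(\varepsilon_X) \circ \eta_{F(X)} = \id_{F(X)}$ and $\varepsilon_{F^{\ladj}(Y)} \circ F^{\ladj}(\eta_Y) = \id_{F^{\ladj}(Y)}$. No single step is deep---this is a parametrized Yoneda argument along the adjunction $F^{\ladj} \dashv F$---but keeping the variances and the direction of each transpose straight is where the care is needed; naturality of the resulting isomorphism in $M$ then follows from the same transposition. It is worth recording that the three hypotheses enter at distinct points: existence of $F^{\radj}$ makes $F$ cocontinuous and licenses the first reduction, whereas the segment $F^{\lladj} \dashv F^{\ladj} \dashv F$ furnishes both the rewriting of the target coend and the unit and counit used to build the comparison.
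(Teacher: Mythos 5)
Your proposal is correct and follows essentially the same route as the paper's proof of the general statement (Theorem \ref{thm:Nakayama-double-adj}): pull $F$ through the coend using cocontinuity and copower preservation, rewrite the target coend along the adjunction $F^{\lladj} \dashv F^{\ladj}$ as in \eqref{eq:cohom-and-adjoints}, and then identify the two remaining coends along $F^{\ladj} \dashv F$. The only divergence is that where the paper cites the coend--adjunction lemma \eqref{eq:coend-and-adjoint} of Brugui\`eres--Virelizier for that last identification, you reprove it from scratch via the mapping-out-of-coends computation of $\Nat$-spaces and the unit/counit transposes checked against the triangle identities --- which is precisely the standard proof of that lemma, so the mathematical content coincides.
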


Some non-trivial results on finite tensor categories now easily follow.
To demonstrate, let $\mathcal{C}$ be a finite tensor category.
By applying Observation~\ref{obs:intro-2} to the functor $F = (-) \otimes X$ or $F = X \otimes (-)$, we obtain natural isomorphisms
\begin{equation}
  \label{eq:intro-ftc-Nakayama}
  {}^{\vee\vee}\!X \otimes \Nakr_{\mathcal{C}}(\unitobj)
  \cong \Nakr_{\mathcal{C}}(X) \cong \Nakr_{\mathcal{C}}(\unitobj) \otimes X^{\vee\vee}
  \quad (X \in \mathcal{C}),
\end{equation}
where $\unitobj$, $(-)^{\vee}$ and ${}^{\vee}(-)$ are the unit object, the left duality and the right duality, respectively.
Since $\mathcal{C} \approx {}_A \Mod_{\fd}$ for some Frobenius algebra $A$ \cite{MR3242743}, Observation \ref{obs:intro-1} implies that $\Nakr_{\mathcal{C}}$ is an equivalence.
Hence $\alpha := \Nakr_{\mathcal{C}}(\unitobj)$ is invertible with respect to the tensor product. In particular, the Radford $S^4$-formula
\begin{equation}
  \label{eq:Radford-S4}
  X^{\vee\vee\vee\vee} \cong \alpha^{\vee} \otimes X \otimes \alpha
  \quad (X \in \mathcal{C})
\end{equation}
follows from \eqref{eq:intro-ftc-Nakayama}.
Observation~\ref{obs:intro-1} also implies that $\mathcal{C} \approx {}_A \Mod_{\fd}$ for some symmetric Frobenius algebra $A$ if and only if there are an isomorphism $\alpha \cong \unitobj$ and a natural isomorphism $X^{\vee\vee} \cong X$ ($X \in \mathcal{C}$).

After the work of Fuchs, Schaumann and Schweigert \cite{MR4042867}, properties of the Nakayama functor are investigated and a lot of applications to finite tensor categories and their modules are found; see \cite{MR4403279,MR4586249,2021arXiv210315772S,2021arXiv210313702S,2019arXiv190400376S,2022arXiv220808203S,MR4560990,2023arXiv230206192Y}.
Now, in view of these successful, it is natural to ask:

\begin{problem}
  Define the Nakayama functor by a coend of the form like \eqref{eq:intro-coend-formula} for a class of abelian categories and establish results describing homological properties of the category like Observation~\ref{obs:intro-2}.
\end{problem}

Given a coalgebra $C$, we denote by $\Mod^C$ and $\Mod^C_{\fd}$ the category of right $C$-comodules and its full subcategory consisting of finite-dimensional objects.
A {\em locally finite abelian category} \cite{MR3242743} is a linear category that is equivalent to $\Mod^C_{\fd}$ for some coalgebra $C$.
Our work \cite{MR4560996} gives, in summary, a solution to the above problem for locally finite abelian categories and their ind-completions.
More precisely, for a coalgebra $C$, we define the right exact Nakayama functor $\Nakr_C$ by
\begin{equation*}
  \Nakr_C : \Mod^C \to \Mod^C, \quad
  \Nakr_C(M) = C \otimes_{C^*} M
  \quad (M \in \Mod^C),
\end{equation*}
where $C^*$ is the dual algebra of $C$, which naturally acts on a left (right) $C$-comodule from the right (left).
We have the coend formula
\begin{equation}
  \label{eq:intro-coend-formula-2}
  \Nakr_C(M) = \int^{X \in \Mod^C_{\fd}} \coHom^C(X, M) \otimes_{\bfk} X
  \quad (M \in \Mod^C),
\end{equation}
where $\coHom^C$ is the coHom functor (see Subsection~\ref{subsec:qf-comodules}).
Some properties of $C$ are characterized through $\Nakr_C$ as follows:

\begin{theorem}[{\it cf}. \cite{2023arXiv230314687S,MR4560996}]
  \label{thm:intro}
  For a coalgebra $C$, the following hold.
  \begin{enumerate}
  \item $C$ is right semiperfect if and only if $\Nakr_C$ induces an equivalence from the category of finite-dimensional projective right $C$-comodules to the category of injective right $C$-comodules whose socle is finite-dimensional.
    If these equivalent conditions are satisfied, then there is an isomorphism
    \begin{equation*}
      \Nakr_C(P(S)) \cong E(S)
    \end{equation*}
    for each simple right $C$-comodule $S$, where $E(S)$ and $P(S)$ are an injective hull and a projective cover of $S$, respectively.
  \item $C$ is quasi-co-Frobenius (QcF) if and only if $\Nakr_C$ is an equivalence.
  \item $C$ is co-Frobenius if and only if $C$ is QcF and $\Nakr_C$ preserves dimensions of simple right $C$-comodules.
  \item $C$ is symmetric co-Frobenius if and only if $\Nakr_C$ is the identity functor.
  \end{enumerate}
\end{theorem}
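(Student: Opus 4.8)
The plan is to derive all four statements from a single cornerstone, the isomorphism $\Nakr_C(P(S)) \cong E(S)$ for every simple right $C$-comodule $S$, and then to feed it into the classical module-theoretic characterizations of the four properties. The organizing heuristic is the finite-dimensional case: were $C$ finite-dimensional, $\Mod^C$ would be $\simeq {}_{C^*}\Mod_{\fd}$ and $\Nakr_C = C \otimes_{C^*}(-)$ would be the ordinary Nakayama functor $\nu_{C^*}$ of the algebra $C^*$, for which $\nu_{C^*}(P(S)) \cong E(S)$, $\nu_{C^*}$ is an equivalence precisely when $C^*$ is quasi-Frobenius, and $\nu_{C^*} = \id$ precisely when $C^*$ is symmetric. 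The real content is to lift these facts to an arbitrary, possibly infinite-dimensional, coalgebra through the coend presentation \eqref{eq:intro-coend-formula-2}.

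I would establish the cornerstone in two steps. Working in the locally finite category $\Mod^C$, an object $I$ is injective if and only if $\mathrm{Ext}^1(S', I) = 0$ for every simple $S'$, so it suffices to show that $\Nakr_C(P(S))$ has socle $S$ and admits no first extensions from simples. The socle computation I would read off from \eqref{eq:intro-coend-formula-2}: since $\coHom^C(X, P(S))$ is governed by the top of the projective $P(S)$, which is $S$, a co-Yoneda/density argument over $\Mod^C_{\fd}$ identifies $\socle \Nakr_C(P(S))$ with $S$. The vanishing of $\mathrm{Ext}^1(S', \Nakr_C(P(S)))$ encodes the principle that the right exact Nakayama functor carries projectives to injectives; I would expect it to follow from right exactness of $\Nakr_C$ together with projectivity of $P(S)$, via a comparison with the right adjoint, the left exact Nakayama functor $\Nakl_C$. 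Uniqueness of the injective hull then forces $\Nakr_C(P(S)) \cong E(S)$.

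With the cornerstone in place, part (1) is bookkeeping: by the theory of semiperfect coalgebras, $C$ is right semiperfect exactly when every simple right comodule admits a finite-dimensional projective cover $P(S)$, these being the indecomposable finite-dimensional projectives, while the indecomposable injectives with finite-dimensional socle are the $E(S)$; the assignment $P(S) \mapsto E(S)$ extends additively to the claimed equivalence, and conversely such an equivalence supplies the covers and hence right semiperfectness. For part (2) I would use that $C$ is QcF if and only if the classes of projective and injective right comodules coincide. In the QcF case $C$ is semiperfect on both sides, the part-(1) correspondence becomes a self-equivalence of the subcategory of finite-dimensional projective-injective comodules, and cocontinuity of $\Nakr_C = C \otimes_{C^*}(-)$ promotes it to a self-equivalence of all of $\Mod^C$. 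Conversely, if $\Nakr_C$ is a self-equivalence then it is exact and its quasi-inverse coincides with its right adjoint $\Nakl_C$; tracking injective copresentations through this adjoint equivalence identifies the injective with the projective comodules, which is QcF.

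Parts (3) and (4) refine this. For (3) I would invoke the characterization of co-Frobenius coalgebras by the existence of a nonzero integral, equivalently a nonzero embedding $C \hookrightarrow C^*$ of $C^*$-modules; in the QcF case $\Nakr_C$ sends simples to simples, and the presence of a genuine integral is exactly what makes the resulting permutation $S \mapsto \Nakr_C(S)$ dimension-preserving, which is the asserted condition. For (4), a symmetric structure means the embedding $C \hookrightarrow C^*$ can be taken so that the induced identification is symmetric as a bimodule datum, which translates precisely into a natural isomorphism $C \otimes_{C^*} M \cong M$, i.e.\ $\Nakr_C \cong \id$; conversely $\Nakr_C = \id$ is a self-equivalence (so $C$ is QcF by (2)), is trivially dimension-preserving (so $C$ is co-Frobenius by (3)), and its triviality upgrades the co-Frobenius datum to a symmetric one. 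The main obstacle is the infinite-dimensional core of the cornerstone: justifying the interchange of the coend with $\Hom^C$, controlling the possibly infinite coHom spaces, and proving the $\mathrm{Ext}^1$-vanishing that makes $\Nakr_C(P(S))$ genuinely injective in $\Mod^C$ rather than merely injective in the finite-dimensional subcategory. The finite-dimensional picture is transparent, but every passage to general $C$ requires the quasi-finiteness and density arguments to be carried out with care.
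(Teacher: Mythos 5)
Your plan inverts the logical structure of the paper's argument, and the two places where you wave your hands are exactly where the hard content lies. First, the cornerstone itself: your proposed mechanism for the injectivity of $\Nakr_C(P(S))$ --- vanishing of $\mathrm{Ext}^1(S', \Nakr_C(P(S)))$ ``from right exactness of $\Nakr_C$ together with projectivity of $P(S)$, via comparison with the right adjoint'' --- cannot work as stated, because the adjunction $\Nakr_C \dashv \Nakl_C$ only computes $\Hom^C(\Nakr_C(M), N)$, i.e.\ morphisms \emph{out of} $\Nakr_C(P(S))$, whereas injectivity requires control of morphisms and extensions \emph{into} it. The paper's actual mechanism is structural: under right semiperfectness it classifies the finite-dimensional projectives as duals of the comodules $e C$ ($e$ a primitive idempotent of $C^*$), via the anti-equivalence $E \mapsto \coHom^C(E, C)$ of Lemma~\ref{lem:sfinj-equivalence} and linear duality, and then computes directly $\Nakr_C((C e)^*) = C \otimes_{C^*} C^* e \cong C e$, which is injective by construction. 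Note also that your cornerstone presupposes that $P(S)$ exists, i.e.\ right semiperfectness; for a general coalgebra $\Nakr_C$ can even be the zero functor (Lemma~\ref{lem:Nakayama-C-star-rat-zero}, when $C^{*\rat}_{\ell} = 0$), so the cornerstone cannot serve as the engine for any of the converse implications.

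Those converses are the genuine gaps. For (1), your sketch produces, for each simple $S$, an indecomposable $P_S \in \fdPro^C$ with $\Nakr_C(P_S) \cong E(S)$, hence an \emph{injective} self-map of the (possibly infinite) set of isomorphism classes of simples via $S \mapsto \mathrm{top}(P_S)$; but an injective self-map of an infinite set need not be surjective, so you cannot conclude that every simple admits a projective cover. The paper instead shows the equivalence forces every injective hull of a simple \emph{left} $C$-comodule to be finite-dimensional and invokes the standard characterization of right semiperfectness. For the converse of (2), ``tracking injective copresentations through this adjoint equivalence'' is not an argument: the paper's proof hinges on Iovanov's nontrivial theorem that $C$ is QcF iff $C^{\oplus\kappa} \cong (C^{*\rat}_{\ell})^{\oplus\kappa}$ for some cardinal $\kappa$, applied after using exactness of $\Nakr_C$ to show $C^{*\rat}_{\ell}$ is injective and faithfulness (via the isomorphism $\Nakr_C(M)^* \cong \Hom^C(M, C^{*\rat}_{\ell})$) to show every simple occurs in its socle; your criterion ``QcF iff projectives and injectives coincide'' is itself something requiring proof and, in the paper, comes packaged with semiperfectness (Lemma~\ref{lem:loc-fin-ab-QcF}). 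Similarly, (3) rests on Iovanov's theorem that a QcF coalgebra is co-Frobenius iff its Nakayama permutation preserves dimensions of simples --- you assert this rather than prove it, and your appeal to ``nonzero integrals'' is a Hopf-algebra characterization unavailable for bare coalgebras. One redeeming feature: your forward direction of (2), extending the self-equivalence on $\fdPro^C$ cocontinuously to $\Mod^C$ (legitimate since QcF implies semiperfect, hence enough projectives), genuinely differs from the paper's route, which reduces to co-Frobenius coalgebras by Morita--Takeuchi invariance and derives $\Nakr_C \cong (-)^{(\nu)}$ from the Frobenius pairing and local units (Theorem~\ref{thm:Nakayama-formula-co-Frobenius}); but this does not repair the converse directions, which carry the theorem's real weight.
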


Given a category $\mathcal{A}$, we denote its ind-completion \cite{MR2182076} by $\Ind(\mathcal{A})$ (we may identify it with $\Mod^C$ when $\mathcal{A} = \Mod^C_{\fd}$ for some coalgebra $C$).
One can define the right exact Nakayama functor $\Nakr_{\Ind(\mathcal{A})} : \Ind(\mathcal{A}) \to \Ind(\mathcal{A})$ for a locally finite abelian category $\mathcal{A}$ as the same form of coend as \eqref{eq:intro-coend-formula-2}.
Unlike the finite case, there is a technical difficulty that the functor $\Nakr_C$ does not preserve $\Mod^C_{\fd}$ in general.
The full subcategory $\Mod^C_{\fd}$ is closed under $\Nakr_C$ if $C$ is semiperfect.
Thus, if $\mathcal{A} \approx \Mod^C_{\fd}$ for some semiperfect $C$, then $\Nakr_{\Ind(\mathcal{A})}$ defines an endofunctor on $\mathcal{A}$.

The above results are applied to tensor categories as follows:
In the same way as the finite case, we see that the Nakayama functor $\Nakr_{\Ind(\mathcal{C})}$ for a tensor category $\mathcal{C}$ has a similar expression as \eqref{eq:intro-ftc-Nakayama}.
An interesting result generalizing an existence criteria for non-zero integrals on Hopf algebras is that the functor $\Nakr_{\Ind(\mathcal{C})}$ is non-zero if and only if $\mathcal{C}$ is Frobenius.
For a Frobenius tensor category $\mathcal{C}$, the functor $\Nakr_{\Ind(\mathcal{C})}$ is shown to be an equivalence.
Radford's $S^4$-formula \eqref{eq:Radford-S4} for $\mathcal{C}$ is now easily established in the same way as the finite case.
Like this, the Nakayama functor is useful for generalizing results on Hopf algebras that had been proved by using integrals.
The Nakayama functor could be a categorical alternative for integrals on Hopf algebras.

\subsection{Organization of this paper}
We introduce further applications of the Nakayama functor as well as explaining the organization of this paper.
In Section \ref{sec:preliminaries}, we fix notations used throughout this paper and recall basic results on coalgebras and comodules from \cite{MR1904645,MR1786197}.

In Section~\ref{sec:naka-coalg}, for a coalgebra $C$, we introduce two endofunctors $\Nakl_C$ and $\Nakr_C$ on $\Mod^C$, which we call the left exact and the right exact Nakayama functor, respectively (Definition \ref{def:Nakayama}).
Apart from the results introduced in the above as Theorem \ref{thm:intro}, we show that $\Nakr_C$ is the zero functor if and only if the rational part of $C^*$ is zero (Lemma \ref{lem:Nakayama-C-star-rat-zero}).
We also prove the following Calabi-Yau type property:
For a semiperfect coalgebra $C$, there is a natural isomorphism
\begin{equation*}
  % \label{eq:intro-Calabi-Yau}
  \Hom^C(P, M)^* \cong \Hom^C(M, \Nakr_C(P))
\end{equation*}
for $M, P \in \Mod^C_{\fd}$ with $P$ projective (Theorem~\ref{thm:Calabi-Yau-1}).

In Section~\ref{sec:cat-interpr}, we first give the coend formula of $\Nakr_C$ as in \eqref{eq:intro-coend-formula-2}, which allows us to define the Nakayama functor $\Nakr_{\Ind(\mathcal{A})}: \Ind(\mathcal{A}) \to \Ind(\mathcal{A})$ for a locally finite abelian category $\mathcal{A}$ without referencing a coalgebra $C$ such that $\mathcal{A} \approx \Mod^C_{\fd}$ (Definition \ref{def:Nakayama-locally-finite}).
We also observe that the left semiperfectness, the right semiperfectness, the semiperfectness and the QcF property of a coalgebra $C$ are characterized in terms of the category $\Mod^C_{\fd}$ (Lemmas~\ref{lem:loc-fin-ab-semiperfect}--\ref{lem:loc-fin-ab-QcF}).
With the above preparation, we interpret results in Section \ref{sec:naka-coalg} in the language of category theory.
We also give a generalization of Observation \ref{obs:intro-2} by the same way as in the finite case (Theorem \ref{thm:Nakayama-double-adj}).

In Section~\ref{sec:tensor-cat}, we first give a characterization of Frobenius tensor categories (Theorem \ref{thm:one-sided-rigidity}).
As in the finite case, we obtain the Radford $S^4$-formula for Frobenius tensor categories (Theorem \ref{thm:Radford-S4-Frob-ten-cat}).
We announce a generalization of the relationship between the Nakayama functor and modified traces pointed out in \cite{2021arXiv210315772S,2023arXiv230314687S} in the finite setting (Theorem \ref{thm:modified-trace}).
Finally, we explain applications of the Nakayama functor for exact sequences of tensor categories. An important result is that the class of Frobenius tensor categories is closed under exact sequences (Theorem \ref{thm:intro-co-Fb-2}), as asked by Natale \cite{MR4281372}.

In Section \ref{sec:coquasibialg}, we give applications of our results to coquasi-bialgebras with preantipode.
Bulacu and Caenepeel \cite{MR1995128} gave several equivalent conditions for a coquasi-Hopf algebra to be co-Frobenius.
We generalize this result to coquasi-bialgebras with preantipode (Theorem \ref{thm:coquasi-bialg-QcF-1}).
Unlike the case of coquasi-Hopf algebras, there might be a QcF coquasi-bialgebra with preantipode that is not co-Frobenius, despite that we have no such examples ({\it cf}. Theorem \ref{thm:coquasi-bialg-QcF-2}). At the end of this paper, we discuss coquasi-bialgebras arising from the de-equivariantization process \cite{MR3160718}.

\subsection{Acknowledgement}

The author thanks Taiki Shibata for discussions.
The author is supported by JSPS KAKENHI Grant Number JP20K03520.

\section{Preliminaries on coalgebras}
\label{sec:preliminaries}

\subsection{Notation}

Throughout this paper, we work over a field $\bfk$.
The symbol $\Vect$ means the category of all vector spaces (over $\bfk$).
Given a vector space $X$, we denote by $X^* := \Hom_{\bfk}(X, \bfk)$ the dual space.
The symbol like $x^*$ is sometimes used to express an element of $X^*$.
For $x^* \in X^*$ and $x \in X$, we often write $\langle x^*, x \rangle$ to mean $x^*(x) \in \bfk$.

All (co)algebras are assumed to be (co)associative and (co)unital over $\bfk$.
Given an algebra $A$, we denote by ${}_A \Mod$ and $\Mod_A$ the category of left and right $A$-modules, respectively.
Similarly, given a coalgebra $C$, we denote by ${}^C\Mod$ and $\Mod^C$ the category of left and right $C$-comodules, respectively.
The Hom functor for these four categories are written as ${}_A \Hom$, $\Hom_A$, ${}^C\Hom$ and $\Hom^C$, respectively.
We use the subscript $\fd$ to mean the full subcategory of finite-dimensional objects.
For example, $\Mod^C_{\fd}$ is the category of finite-dimensional right $C$-comodules.

The comultiplication and the counit of a coalgebra $C$ are denoted by $\Delta$ and $\varepsilon$, respectively. To express the comultiplication of $c \in C$, we adopt the Sweedler notation like $\Delta(c) = c_{(1)} \otimes c_{(2)}$.
For a left $C$-comodule $L$ and a right $C$-comodule $R$, the coactions of $C$ are expressed by Sweedler-like notation as $\ell \mapsto \ell_{(-1)} \otimes \ell_{(0)}$ ($\ell \in L$) and $r \mapsto r_{(0)} \otimes r_{(1)}$ ($r \in R$), respectively.

If $X$ is a finite-dimensional right $C$-comodule, then $X^*$ is a left $C$-comodule by the coaction determined by $x^*_{(-1)} \langle x^*_{(0)}, x \rangle = \langle x^*, x_{(0)} \rangle x_{(1)}$ for all elements $x^* \in X^*$ and $x \in X$. Similarly, the dual space of a finite-dimensional left $C$-comodule has a natural structure of a right $C$-comodule. We note that these constructions give an anti-equivalence between $\Mod^C_{\fd}$ and ${}^C\Mod_{\fd}$.

\subsection{Rational modules}
\label{subsec:rational-modules}

If $C$ is a coalgebra, then $C^*$ is an algebra with respect to the convolution product defined by $\langle f g, c \rangle = \langle f, c_{(1)} \rangle \langle g, c_{(2)} \rangle$ for $f, g \in C^*$ and $c \in C$. The algebra $C^*$ acts on a right $C$-comodule $M$ from the left by
\begin{equation*}
  c^* \rightharpoonup m = m_{(0)} \langle f, m_{(1)} \rangle
  \quad (c^* \in C^*, m \in M)
\end{equation*}
and this construction gives rise to a fully faithful functor
\begin{equation}
  \label{eq:C-comod-inclusion}
  \Mod^C \to {}_{C^*}\Mod,
  \quad M \mapsto (M, \rightharpoonup).
\end{equation}
The image of this functor is the category of {\em rational} modules.
Every left $C^*$-module $M$ has the largest rational submodule, which is called {\em the rational part} of $M$ and denoted by $M^{\rat}$. We may, and do, regard the assignment $M \mapsto M^{\rat}$ as a functor $(-)^{\rat} : {}_{C^*}\Mod \to \Mod^C$, which is right adjoint to \eqref{eq:C-comod-inclusion}.
The rational part functor for left $C^*$-modules is defined in a similar way.

In general, computing the rational part is a non-trivial problem.
Here we note the following formula:
If $M$ is a left $C$-comodule, then $M$ has a natural structure of a right $C^*$-module and hence its dual $M^*$ is a left $C^*$-module.
By \cite[Corollary 2.2.16]{MR1786197}, the rational part of $M^*$ is given by
\begin{equation}
  \label{eq:star-rat}
  M^{*\rat} = \{ m^* \in M^* \mid \dim_{\bfk}(C^* m^*) < \infty \}.
\end{equation}

\subsection{Quasi-finite comodules and the coHom functor}
\label{subsec:qf-comodules}

Let $C$ be a coalgebra.
For each right $C$-comodule $M$, there is a linear functor
\begin{equation}
  \label{eq:comodule-copower}
  \Vect \to \Mod^C,
  \quad W \mapsto W \otimes_{\bfk} M.
\end{equation}
If the functor \eqref{eq:comodule-copower} has a left adjoint, then we denote it by $\coHom^C(M, -)$. Thus, by definition, we have a natural isomorphism
\begin{equation*}
  \Hom_{\bfk}(\coHom^C(M, N), W) \cong \Hom^C(N, W \otimes_{\bfk} M)
\end{equation*}
for $N \in \Mod^C$ and $W \in \Vect$. In particular, by letting $W = \bfk$, we have
\begin{equation}
  \label{eq:coHom-formula-1}
  \coHom^C(M, N)^* \cong \Hom^C(N, M).
\end{equation}

The functor $\coHom^C(M, -)$ is introduced by Takeuchi \cite{MR472967} in his study of Morita theory for coalgebras.
According to Takeuchi \cite{MR472967}, the functor \eqref{eq:comodule-copower} has a left adjoint if and only if $M$ is {\em quasi-finite} in the sense that the vector space $\Hom(X, M)$ is finite-dimensional for all objects $X \in \Mod^C_{\fd}$. Let $\Mod^C_{\qf}$ denote the full subcategory of $\Mod^C$ consisting of quasi-finite comodules. Then we have a functor
\begin{equation*}
  \coHom^C : (\Mod^C_{\qf})^{\op} \times \Mod^C \to \Vect,
  \quad (M, N) \mapsto \coHom^C(M, N),
\end{equation*}
which we call the {\em coHom functor}.

In \cite{MR472967}, the coHom functor is given as a certain filtered colimit.
For practical use, the following formula is useful:
We have a natural isomorphism
\begin{equation}
  \label{eq:coHom-for-fd}
  \coHom^C(X, Y) \cong X^* \otimes_{C^*} Y
  \quad (X \in \Mod^C_{\fd}, Y \in \Mod^C)
\end{equation}
by the tensor-Hom adjunction.

\subsection{Injective and projective comodules}

We recall that an object $P$ of a category $\mathcal{A}$ is said to be {\em projective} if the functor $\Hom_{\mathcal{A}}(P, -) : \mathcal{A} \to \Sets$ preserves epimorphisms, where $\Sets$ is the category of all sets. An object of $\mathcal{A}$ is said to be {\em injective} if it is projective in $\mathcal{A}^{\op}$. To clarify, we introduce the following terminology:

\begin{definition}
  A right $C$-comodule $X$ is said to be {\em projective} ({\em injective}) if it is a projective (injective) object of the category $\Mod^C$.
\end{definition}

Given a category $\mathcal{A}$, we denote by $\Proj(\mathcal{A})$ and $\Inj(\mathcal{A})$ the full subcategory of $\mathcal{A}$ consisting of projective and injective objects of $\mathcal{A}$, respectively.
In this paper, we are interested in $\Proj$ or $\Inj$ of $\Mod^C_{\fd}$ rather than $\Mod^C$.
For an algebra $A$, an object of $\Proj({}_A \Mod_{\fd})$ may not be projective in ${}_A \Mod$.
However, the situation is easy for coalgebras:

\begin{lemma}
  \label{lem:fd-proj-comodules}
  A finite-dimensional right $C$-comodule is projective (injective) in $\Mod^C$ if and only if it is projective (injective) in $\Mod^C_{\fd}$.
\end{lemma}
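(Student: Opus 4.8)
The plan is to dispatch the two ``only if'' directions as formal consequences of $\Mod^C_{\fd}$ being a full subcategory of $\Mod^C$, and to concentrate the actual work on the two ``if'' directions, both of which I would handle with the fundamental theorem of comodules: every element of a right $C$-comodule lies in a finite-dimensional subcomodule, so every comodule is the filtered union of its finite-dimensional subcomodules.

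First, the easy directions. Since $\Mod^C_{\fd}$ is closed under subobjects and quotients, it is a full abelian subcategory of $\Mod^C$ in which a morphism is epi (resp. mono) precisely when it is surjective (resp. injective), exactly as in $\Mod^C$. Hence if a finite-dimensional $X$ is projective (resp. injective) in $\Mod^C$, then the defining lifting (resp. extension) property against the epimorphisms (resp. monomorphisms) of $\Mod^C$ restricts verbatim to those of $\Mod^C_{\fd}$, yielding projectivity (resp. injectivity) in $\Mod^C_{\fd}$.

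For the projective ``if'' direction, suppose $X$ is finite-dimensional and projective in $\Mod^C_{\fd}$, and let $p \colon M \to N$ be an epimorphism in $\Mod^C$ together with a morphism $f \colon X \to N$. I would first cut everything down to finite dimensions: the image $N_0 := f(X)$ is a finite-dimensional subcomodule of $N$, and lifting a spanning set of $N_0$ along the surjection $p$ and passing to the subcomodule they generate produces, by the finiteness theorem, a finite-dimensional subcomodule $M_0 \subseteq M$ with $p(M_0) \supseteq N_0$. Then $M_1 := M_0 \cap p^{-1}(N_0)$ is a finite-dimensional subcomodule with $p(M_1) = N_0$, so $p$ restricts to an epimorphism $M_1 \to N_0$ in $\Mod^C_{\fd}$. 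Projectivity of $X$ in $\Mod^C_{\fd}$ now yields a lift $X \to M_1$ of the corestriction $X \to N_0$; composing with the inclusion $M_1 \hookrightarrow M$ gives the desired lift of $f$, so $X$ is projective in $\Mod^C$.

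The injective ``if'' direction is the main obstacle, because the analogous reduction is unavailable: an extension must be defined on all of a possibly infinite-dimensional comodule at once, so I cannot simply restrict to a finite-dimensional piece and glue back a single map. Instead I would run a Baer-type argument. Given a monomorphism $L \hookrightarrow M$ in $\Mod^C$ and a morphism $g \colon L \to X$, consider the poset of partial extensions $(M', g')$ with $L \subseteq M' \subseteq M$ and $g'|_L = g$, ordered by extension, and take a maximal element $(M_{\max}, g_{\max})$ by Zorn's lemma. If $M_{\max} \neq M$, choose $m \in M \setminus M_{\max}$ and let $\langle m \rangle$ be the finite-dimensional subcomodule it generates. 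The intersection $K := \langle m \rangle \cap M_{\max}$ is finite-dimensional, and injectivity of $X$ in $\Mod^C_{\fd}$ extends $g_{\max}|_K$ along $K \hookrightarrow \langle m \rangle$ to some $h \colon \langle m \rangle \to X$. As $g_{\max}$ and $h$ agree on $K$, they glue to a comodule morphism on $M_{\max} + \langle m \rangle$ strictly extending $g_{\max}$, contradicting maximality; hence $M_{\max} = M$ and $g$ extends, proving $X$ injective in $\Mod^C$. The only points needing care are the finite-dimensionality of $\langle m \rangle$ (the finiteness theorem) and the well-definedness of the glued map, which is forced by the agreement on $K$.
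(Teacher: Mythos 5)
Your proof is correct, but it takes a genuinely different route from the paper, which gives no direct argument at all: there the lemma is obtained by ``assembling fundamental results'' on projective and injective comodules from \cite{MR1786197}, with the assembly carried out in \cite[Lemma 2.3]{MR4560996}. That standard assembly goes through duality and local testing of injectivity: the anti-equivalence $(-)^* \colon \Mod^C_{\fd} \to {}^C\Mod_{\fd}$ exchanges projectives and injectives, a finite-dimensional $P \in \Mod^C$ is projective precisely when $P^*$ is injective in ${}^C\Mod$ ({\it cf}.\ \cite[Corollary 2.4.19]{MR1786197}, which the paper also uses later, e.g.\ in the proof of Theorem~\ref{thm:Calabi-Yau-1}), and injectivity in $\Mod^C$ is detected on finite-dimensional comodules because every comodule is the directed union of its finite-dimensional subcomodules. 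You instead prove everything from scratch with the fundamental theorem of comodules as the only input: the two restriction directions are correctly dispatched as formal (epimorphisms and monomorphisms in both categories are exactly the surjections and injections, since the forgetful functors to $\Vect$ are exact and faithful and $\Mod^C_{\fd}$ is closed under subobjects and quotients); the projective direction by cutting an epimorphism $p \colon M \to N$ down to an epimorphism $M_1 \to N_0$ in $\Mod^C_{\fd}$, where your $M_1 = M_0 \cap p^{-1}(N_0)$ indeed satisfies $p(M_1) = N_0$; and the injective direction by a Zorn/Baer maximal-extension argument whose inductive step extends along $K = \langle m \rangle \cap M_{\max} \hookrightarrow \langle m \rangle$ in $\Mod^C_{\fd}$, the glued map being a comodule morphism because it is induced by $(g_{\max}, h)$ on $M_{\max} \oplus \langle m \rangle$ modulo the antidiagonal copy of $K$. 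You correctly identify the injective half as the genuinely non-formal one (the reduction trick that works for projectivity fails there), and your Zorn argument in effect reproves the ``injectivity is tested on finite-dimensional monomorphisms'' fact that the cited sources invoke. What your approach buys is a self-contained, citation-free and duality-free proof; what the paper's buys is brevity and direct reuse of the toolkit from \cite{MR1786197} that the rest of the paper draws on anyway.
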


This lemma can be obtained by assembling some fundamental results on projective and injective comodules found in \cite{MR1786197} (see \cite[Lemma 2.3]{MR4560996} for details).

\subsection{Injective comodules and idempotents}
\label{sec:inj-comod-idempo}

Let $C$ be a coalgebra. An {\em idempotent} for a coalgebra $C$ is an idempotent of the algebra $C^*$.
Given an idempotent $e$ for $C$ and a right $C$-comodule $M$, we write $e M := \{ e \rightharpoonup m \mid m \in M \}$. A similar notation will be used for left $C$-comodules.
We note that $e C$ and $C e$ are a left and a right $C$-comodule, respectively.
It is easy to verify that the map
\begin{equation}
  \label{eq:idempotent-iso}
  \Hom^C(M, W \otimes_{\bfk} C e)
  \to \Hom_{\bfk}(e M, W),
  \quad f \mapsto (\id_W \otimes \varepsilon|_{C e}) \circ (f|_{e M})
\end{equation}
is a natural isomorphism for $M \in \Mod^C$ and $W \in \Vect$. Hence we have
\begin{equation}
  \label{eq:idempotent-iso-2}
  \coHom^C(C e, M) \cong e M
  \quad (M \in \Mod^C)
\end{equation}
as vector spaces. When $M$ is moreover a $B$-$C$-bicomodule for some coalgebra $B$, \eqref{eq:idempotent-iso-2} is in fact an isomorphism of left $B$-comodules.

According to \cite[Proposition 1.13]{MR1904645}, an indecomposable injective left (right) $C$-comodule is isomorphic to $e C$ ($C e$) for some primitive idempotent $e$ for $C$.
Let $\sfInj^C$ be the full subcategory of $\Mod^C$ consisting of injective objects whose socle is of finite length.
Since an indecomposable injective comodule is the same thing as an injective hull of a simple comodule, $\sfInj^C$ coincides with the full subcategory of $\Mod^C$ consisting of finite direct sums of injective hulls of simple comodules.
We define the full subcategory ${}^C\sfInj \subset {}^C\Mod$ in a similar way.
Then we have:

\begin{lemma}
  \label{lem:sfinj-equivalence}
  Let $C$ be a coalgebra. The contravariant functor
  \begin{equation}
    \label{eq:sfinj-equivalence}
    D: \sfInj^C \to {}^C\sfInj,
    \quad E \mapsto \coHom^C(E, C)
  \end{equation}
  is a well-defined anti-equivalence sending $C e$ to $e C$, where $e$ is a primitive idempotent.
  Moreover, the anti-equivalence \eqref{eq:sfinj-equivalence} sends an injective hull of a simple right $C$-comodule $S$ to an injective hull of $S^* \in {}^C\Mod$.
\end{lemma}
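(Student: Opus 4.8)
The plan is to reduce every assertion to the indecomposable injectives by additivity and then to compute with the idempotent formula \eqref{eq:idempotent-iso-2}. Recall that every object of $\sfInj^C$ is a finite direct sum of comodules of the form $C e$ with $e$ a primitive idempotent. The defining adjunction $\Hom_{\bfk}(\coHom^C(M,N),W)\cong\Hom^C(N,W\otimes_{\bfk}M)$ turns a finite direct sum in the variable $M$ into a direct sum, so $\coHom^C(-,C)$ is additive and it suffices to treat a single summand $Ce$. The isomorphism \eqref{eq:idempotent-iso} already exhibits $M\mapsto eM$ as the left adjoint of $W\mapsto W\otimes_{\bfk}Ce$, so $Ce$ is quasi-finite and $\coHom^C(Ce,-)\cong e(-)$; taking the regular $C$-$C$-bicomodule $M=C$ in \eqref{eq:idempotent-iso-2} gives $\coHom^C(Ce,C)\cong eC$ as left $C$-comodules. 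By \cite[Proposition 1.13]{MR1904645} each $eC$ is an indecomposable injective left $C$-comodule, hence lies in ${}^C\sfInj$, and additivity shows $\coHom^C(E,C)\in{}^C\sfInj$ for all $E\in\sfInj^C$. This proves that $D$ is well defined and that $D(Ce)\cong eC$.

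For the anti-equivalence I would introduce the left-handed companion $D'\colon{}^C\sfInj\to\sfInj^C$, $F\mapsto{}^C\coHom(F,C)$, which by the same argument is well defined and satisfies $D'(eC)\cong Ce$. Essential surjectivity of $D$ is then immediate, since every object of ${}^C\sfInj$ is a finite direct sum of indecomposable injectives $eC=D(Ce)$. For fullness and faithfulness I would again reduce to indecomposables and compute both Hom-spaces. Using \eqref{eq:coHom-formula-1} with $\coHom^C(Ce,-)\cong e(-)$ gives $\Hom^C(Cf,Ce)\cong\coHom^C(Ce,Cf)^*\cong(eCf)^*$, and the left-handed versions give ${}^C\Hom(eC,fC)\cong{}^C\coHom(fC,eC)^*\cong(eCf)^*$ as well. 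Thus the source and the target of the map induced by $D$ are both naturally identified with $(eCf)^*$. The main obstacle is to verify that $D$ realizes exactly this identification, equivalently that the canonical double-dual comparison $E\to D'D(E)$ is an isomorphism on each $Ce$; this requires unwinding the action of $\coHom^C(-,C)$ on morphisms and the naturality of \eqref{eq:coHom-formula-1} and \eqref{eq:idempotent-iso-2}. Granting this, $D$ is fully faithful, and together with essential surjectivity it is an anti-equivalence with quasi-inverse $D'$.

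Finally, to identify the image of an injective hull I would write $E(S)\cong Ce$ with $e$ primitive, so that $D(E(S))\cong eC=E(T)$ for the unique simple left comodule $T\cong\socle(eC)$, and it remains to show $T\cong S^*$. Here the formula $\Hom^C(S,Ce)\cong\coHom^C(Ce,S)^*\cong(eS)^*$ shows that $S\cong\socle(Ce)$ is characterized by $eS\neq 0$, while dually ${}^C\Hom(T,eC)\cong(Te)^*$ shows that $\socle(eC)$ is the simple $T$ with $Te\neq 0$. Since dualization interchanges the left and right $C^*$-actions through $\langle s^*\leftharpoonup e,s\rangle=\langle s^*,e\rightharpoonup s\rangle$, the idempotent operators $e\rightharpoonup(-)$ on $S$ and $(-)\leftharpoonup e$ on $S^*$ are transpose to one another, giving $S^*e\cong(eS)^*\neq 0$. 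Hence $S^*$ embeds into the simple socle of the indecomposable injective $eC$, so $\socle(eC)\cong S^*$ and $D(E(S))\cong E(S^*)$, completing the proof.
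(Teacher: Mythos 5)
Your proposal is correct and takes essentially the same route as the paper: both reduce to indecomposable injectives $Ce$, use \eqref{eq:idempotent-iso} and \eqref{eq:idempotent-iso-2} to get $D(Ce)\cong eC$ and to identify the relevant Hom-spaces with $(eCf)^*$, and prove the injective-hull statement by the same duality computation $(S^*e)^*\cong(eS)^{**}\cong\Hom^C(S,Ce)^*\ne 0$. The compatibility check you defer (``granting this'') is precisely the step the paper also leaves as ``one can check that \eqref{eq:idempotent-iso-3} coincides with the linear map induced by $D$,'' so your level of detail matches the paper's, and your explicit quasi-inverse $D'$ is only a harmless presentational variant of the paper's bijection-on-isomorphism-classes argument.
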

\begin{proof}
  By \eqref{eq:idempotent-iso-2}, we have an isomorphism $D(C e) \cong e C$ for each idempotent $e$ for $C$ (as noted in \cite[Corollary 1.6]{MR1904645}). This shows that the functor $D$ induces a bijective correspondence between the isomorphism classes of objects of $\sfInj^C$ and those of ${}^C\sfInj$. For idempotents $e$ and $f$ for $C$, we have isomorphisms
  \begin{equation}
    \label{eq:idempotent-iso-3}
    \Hom^C(C e, C f) \cong (e C f)^* \cong {}^C\Hom(f C, e C)
    \cong \Hom^C(D(C f), D(C e))
  \end{equation}
  by \eqref{eq:idempotent-iso} and its variant for left comodules.
  One can check that \eqref{eq:idempotent-iso-3} coincides with the linear map induced by the functor $D$.
  Thus $D$ is fully faithful.
  We have proved that $D$ is an anti-equivalence.

  Now let $S$ be a simple right $C$-comodule, and let $e$ be a primitive idempotent for $C$ such that $C e$ is an injective hull of $S$.
  Since $e C$ ($\cong D(C e)$) is an indecomposable injective left $C$-comodule, it is an injective hull of a simple left $C$-comodule.
  By the isomorphism \eqref{eq:idempotent-iso}, we have
  \begin{equation*}
    {}^C\Hom(S^*, D(C e))
    \cong {}^C\Hom(S^*, e C)
    \cong (S^* e)^* \cong (e S)^{**} \cong \Hom^C(S, C e)^* \ne 0,
  \end{equation*}
  and therefore $D(C e)$ is an injective hull of $S^*$.
\end{proof}

\subsection{Special classes of coalgebras}
\label{subsec:special-classes-of-coalgebras}

Let $C$ be a coalgebra.
We recall from \cite{MR1786197} the following classes of coalgebras:
\begin{enumerate}
\item $C$ is said to be {\em left semiperfect} if every finite-dimensional left $C$-comodule has a projective cover in ${}^C\Mod$.
\item $C$ is said to be {\em left quasi-co-Frobenius} (or {\em left QcF} for short) if there are a cardinal $\kappa$ and an injective homomorphism $C \to (C^{*})^{\oplus \kappa}$ of left $C^*$-modules.
  If, moreover, the cardinal $\kappa$ can be taken to be 1, then $C$ is said to be {\em co-Frobenius}.
\item For an adjective $\mathrm{X} \in \{ \text{semiperfect, quasi-co-Frobenius, co-Frobenius} \}$, we say that $C$ is right X if $C^{\cop}$ is left X.
  An X coalgebra means a coalgebra that is both left X and right X.
\end{enumerate}
The book \cite{MR1786197} is a comprehensive reference for results on these classes of coalgebras.
One of important results is that a left (right) QcF coalgebra is left (right) semiperfect \cite[Corollary 3.3.6]{MR1786197}.

\subsection{Local units for $C^{*\rat}$}
\label{subsec:local-units}

Given a coalgebra $C$, we denote by $C^{*\rat}_{\ell}$ and $C^{*\rat}_r$ the rational part of $C^*$ as a left and a right $C^*$-module, respectively.
By \eqref{eq:star-rat}, both $C^{*\rat}_{\ell}$ and $C^{*\rat}_r$ are ideals of the algebra $C^*$.

We recall a technique of {\em local units} for semiperfect coalgebras.
From now on, we assume that $C$ is semiperfect.
Since $C^{*\rat}_{\ell} = C^{*\rat}_r$ by the semiperfectness \cite[Corollary 3.2.16]{MR1786197}, we write $C^{*\rat}_{\ell}$ and $C^{*\rat}_r$ simply as $C^{*\rat}$.
By \eqref{eq:star-rat}, the counit of $C$ belongs to $C^{*\rat}$ only if $C$ is finite-dimensional.
Thus, in general, $C^{*\rat}$ is a `non-unital' ring.
However, it is known that $C^{*\rat}$ has local units in the following sense:
For any finite subset $X \subset C^{*\rat}$, there is an idempotent $e \in C^{*\rat}$ such that $e x = x = x e$ for all $x \in X$ \cite[Corollary 3.2.17]{MR1786197}.

We note that local units can be extended for comodules as follows:

\begin{lemma}
  \label{lem:local-unit-1}
  Let $C$ be a semiperfect coalgebra, and let $M$ be a right $C$-comodule.
  For any finite subsets $X \subset C^{*\rat}$ and $Y \subset M$, there is an idempotent $e \in C^{*\rat}$ such that $e x = x = x e$ and $e y = y$ for all $x \in X$ and $y \in Y$.
\end{lemma}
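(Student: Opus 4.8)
The plan is to bootstrap from the known local units for the ring $C^{*\rat}$ (the case $Y = \emptyset$, which is \cite[Corollary 3.2.17]{MR1786197}) to the statement involving an arbitrary finite subset $Y \subset M$. The first step is to reduce the action of $C^{*\rat}$ on the finitely many elements $y \in Y$ to a finite computation inside $C^{*\rat}$ itself. To do this, I would use the coaction on each $y \in Y$: writing the coaction as $y \mapsto y_{(0)} \otimes y_{(1)}$, there is a finite-dimensional subcomodule containing $y$, so the coefficients $y_{(1)}$ span a finite-dimensional subspace of $C$. The key point is that the action $c^* \rightharpoonup y = y_{(0)} \langle c^*, y_{(1)} \rangle$ depends only on the restriction of $c^*$ to this finite-dimensional subspace, and thus factors through finitely many linear functionals.

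The main idea is then to produce, for each $y \in Y$, a finite subset of $C^{*\rat}$ whose action recovers $y$, and to fold these into the set $X$. Concretely, I expect to argue that since $M$ is rational, each $y \in Y$ lies in a finite-dimensional subcomodule $M_y \subset M$, and the counit acts as the identity on $M$, i.e. $\varepsilon \rightharpoonup y = y$. The difficulty is that $\varepsilon$ itself is generally \emph{not} in $C^{*\rat}$ (as noted in the excerpt, $\varepsilon \in C^{*\rat}$ forces $C$ finite-dimensional). So I cannot simply take $\varepsilon$; instead I would choose, for each $y$, an element of $C^{*\rat}$ acting as the identity on the finite-dimensional subcomodule $M_y$. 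This is where semiperfectness enters more deeply: I would find a finite-dimensional injective (equivalently projective, by Lemma~\ref{lem:fd-proj-comodules}) direct summand of $M$ containing all of $Y$, and on such a summand the relevant local unit of $C^{*\rat}$ already acts as the counit does, because finite-dimensional rational modules are annihilated by a cofinite ideal and the action of $C^{*\rat}$ saturates.

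After securing, for each $y \in Y$, an element $f_y \in C^{*\rat}$ with $f_y \rightharpoonup y = y$, I would collect the finite set $X' = X \cup \{ f_y : y \in Y \}$ and apply the ring-level local units \cite[Corollary 3.2.17]{MR1786197} to $X'$: this yields an idempotent $e \in C^{*\rat}$ with $e x = x = x e$ for all $x \in X'$, in particular $e f_y = f_y$ for each $y$. The final step is the computation $e \rightharpoonup y = e \rightharpoonup (f_y \rightharpoonup y) = (e f_y) \rightharpoonup y = f_y \rightharpoonup y = y$, using that the $C^*$-action on $M$ is a module action so that $(e f_y) \rightharpoonup y = e \rightharpoonup (f_y \rightharpoonup y)$. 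This same $e$ satisfies $e x = x = x e$ for the original $x \in X$, completing the proof.

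I expect the main obstacle to be the construction of the elements $f_y$, that is, finding a genuinely rational functional in $C^{*\rat}$ that acts as the identity on a prescribed finite-dimensional subcomodule. The naive choice $\varepsilon$ fails rationality, so the argument must exploit semiperfectness to replace $\varepsilon$ by a local unit of $C^{*\rat}$ that agrees with it on the relevant finite piece; verifying that such a replacement acts as the identity on $M_y$ is the crux and will rely on the equality $C^{*\rat}_\ell = C^{*\rat}_r$ and the behaviour of local units on finite-dimensional rational modules.
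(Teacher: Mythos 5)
Your overall skeleton is the same as the paper's: reduce $Y$ to a finite-dimensional subcomodule $N \subset M$, manufacture an element (or finitely many elements) of $C^{*\rat}$ acting as the identity on $N$, adjoin it to $X$, take a ring-level local unit $e$ for the enlarged set via \cite[Corollary 3.2.17]{MR1786197}, and conclude by the associativity computation $e \rightharpoonup y = (e f_y) \rightharpoonup y = f_y \rightharpoonup y = y$, which is verbatim the paper's final step (with your $f_y$ playing the role of the paper's single element $\varepsilon'$). The gap lies entirely in the construction of the $f_y$, which is the actual content of the lemma, and your proposed mechanism does not work. First, there is in general no finite-dimensional injective direct summand of $M$ containing $Y$: take $M$ finite-dimensional, indecomposable and non-injective; then the only direct summand of $M$ containing a non-zero $y$ is $M$ itself, which is not injective. (Moreover, Lemma~\ref{lem:fd-proj-comodules} does not assert that finite-dimensional injectives are projective or vice versa; it only compares injectivity, respectively projectivity, in $\Mod^C$ with that in $\Mod^C_{\fd}$.) Second, the phrase ``the action of $C^{*\rat}$ saturates,'' i.e.\ $C^{*\rat} \rightharpoonup M = M$, is precisely the nontrivial assertion at stake: granted it, your bootstrap does succeed (write $y = \sum_i g_i \rightharpoonup m_i$ with $g_i \in C^{*\rat}$ and take a local unit for $X \cup \{g_i\}$), but it is not a formal property of rational modules --- for a non-semiperfect coalgebra $C^{*\rat}$ can even be zero (cf.\ Lemma~\ref{lem:Nakayama-C-star-rat-zero}) --- so invoking it here is circular.

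The missing ingredient, and the way the paper actually exploits semiperfectness, is the density of $C^{*\rat}$ in $C^*$ with respect to the finite topology \cite[Proposition 3.2.1]{MR1786197}. Your own first paragraph contains the correct reduction: the action of $c^* \in C^*$ on the finite-dimensional subcomodule $N$ depends only on the restriction of $c^*$ to the finite-dimensional coefficient subcoalgebra $D \subset C$ of $N$ (equivalently, $N$ is annihilated by the cofinite ideal $D^{\perp}$). Hence producing $f \in C^{*\rat}$ with $f \rightharpoonup n = n$ for all $n \in N$ amounts exactly to showing $\varepsilon \in C^{*\rat} + D^{\perp}$, which is what density supplies: there is $\varepsilon' \in C^{*\rat}$ with $\varepsilon'|_D = \varepsilon|_D$, whence $\varepsilon' \rightharpoonup n = n$ for all $n \in N$. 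Substituting this single $\varepsilon'$ for your family $\{f_y\}$ and running your final local-unit computation reproduces the paper's proof; as written, however, the crux step is unsupported.
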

\begin{proof}
  Let $N$ be a finite-dimensional subcomodule of $M$ such that $Y \subset N$, and let $D$ be the subspace spanned by $\langle n^*, n_{(0)} \rangle n_{(1)}$ ($n^* \in N^*$, $n \in N$). It is easy to see that $D$ is a finite-dimensional subcoalgebra of $C$.
  The semiperfectness of $C$ implies that $C^{*\rat}$ is dense in $C^*$ with respect to the finite topology \cite[Proposition 3.2.1]{MR1786197}.
  Hence there is an element $\varepsilon' \in C^{*\rat}$ such that $\varepsilon'(d) = \varepsilon(d)$ for all $d \in D$. By the construction, we have
  \begin{equation*}
    \langle n^*, \varepsilon' n \rangle
    = \varepsilon'(\langle n^*, n_{(0)} \rangle n_{(1)})
    = \varepsilon(\langle n^*, n_{(0)} \rangle n_{(1)})
    = \langle n^*, n \rangle
  \end{equation*}
  for all $n^* \in M^*$ and $n \in M$. Hence we have
  \begin{equation}
    \label{eq:local-unit-1-eq-1}
    \varepsilon' n = n \quad (n \in N).
  \end{equation}
  Now let $e \in C^{*\rat}$ be a local unit for the finite set $X \cup \{ \varepsilon' \}$. The element $e$ fulfills the requirements:
  Indeed, it is trivial that the equations $e x = x = x e$ hold for all $x \in X$.
  By \eqref{eq:local-unit-1-eq-1}, we have $e n = e (\varepsilon' n) = (e \varepsilon') n = \varepsilon' n = n$ for all $n \in N$. Hence, in particular, we have $e y = y$ for all $y \in Y$. The proof is done.
\end{proof}

By using this lemma, we prove the following technical result:

\begin{lemma}
  \label{lem:local-unit-2}
  Suppose that $C$ is semiperfect. Then the map
  \begin{equation*}
    \kappa_M: C^{*\rat} \otimes_{C^*} M \to M,
    \quad \kappa_M(c^* \otimes_{C^*} m) = c^* m
  \end{equation*}
  is a natural isomorphism for $M \in \Mod^C$.
\end{lemma}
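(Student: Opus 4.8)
The plan is to treat well-definedness and naturality of $\kappa_M$ as routine formal points and to put the real work into showing that $\kappa_M$ is bijective for each fixed $M \in \Mod^C$. The map $\kappa_M$ is induced by the $C^*$-balanced map $C^{*\rat} \times M \to M$, $(c^*, m) \mapsto c^* \rightharpoonup m$, so it is well-defined, and it is a map of left $C^*$-modules since the left $C^*$-action on $C^{*\rat}$ is by multiplication. For a morphism $f : M \to N$ in $\Mod^C$, which is in particular left $C^*$-linear, the identity $f(\kappa_M(c^* \otimes_{C^*} m)) = f(c^* \rightharpoonup m) = c^* \rightharpoonup f(m) = \kappa_N((\id \otimes f)(c^* \otimes_{C^*} m))$ gives naturality. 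The two genuinely nontrivial points are surjectivity and injectivity, both of which I would deduce from the local unit property of Lemma~\ref{lem:local-unit-1}. Here the semiperfectness of $C$ enters in the form $C^{*\rat}_{\ell} = C^{*\rat}_r =: C^{*\rat}$, so that $C^{*\rat}$ is a two-sided ideal of $C^*$ and in particular a right $C^*$-module for which $C^{*\rat} \otimes_{C^*} M$ is defined.

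For surjectivity, given $m \in M$ I would apply Lemma~\ref{lem:local-unit-1} with $Y = \{ m \}$ to obtain an idempotent $e \in C^{*\rat}$ with $e \rightharpoonup m = m$; then $\kappa_M(e \otimes_{C^*} m) = e \rightharpoonup m = m$, so $\kappa_M$ is onto.

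The main step is injectivity. I would start from $\xi = \sum_{i=1}^n c_i^* \otimes_{C^*} m_i$ with $\kappa_M(\xi) = \sum_{i} c_i^* \rightharpoonup m_i = 0$, and apply Lemma~\ref{lem:local-unit-1} to the finite set $X = \{ c_1^*, \dots, c_n^* \} \subset C^{*\rat}$ to get an idempotent $e \in C^{*\rat}$ with $e c_i^* = c_i^*$ for all $i$. The crux is then the tensor manipulation
\begin{equation*}
  c_i^* \otimes_{C^*} m_i
  = (e c_i^*) \otimes_{C^*} m_i
  = e \otimes_{C^*} (c_i^* \rightharpoonup m_i),
\end{equation*}
where the first equality is $e c_i^* = c_i^*$ and the second is the $C^*$-balancing relation moving $c_i^*$ across the tensor, viewing $e c_i^*$ as $e$ with $c_i^*$ acting on the right. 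Summing over $i$ and invoking the hypothesis gives
\begin{equation*}
  \xi = \sum_{i} e \otimes_{C^*} (c_i^* \rightharpoonup m_i)
      = e \otimes_{C^*} \Big( \sum_i c_i^* \rightharpoonup m_i \Big)
      = e \otimes_{C^*} 0 = 0 ,
\end{equation*}
so $\kappa_M$ is injective.

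The delicate point, and the only place where semiperfectness is essentially used, is the existence of a single idempotent $e \in C^{*\rat}$ serving as a common left unit for all of $c_1^*, \dots, c_n^*$ at once: this is exactly what lets me absorb every tensorand into one factor $e$ and recognize the surviving factor as $\kappa_M(\xi)$. Once Lemma~\ref{lem:local-unit-1} is available this causes no difficulty, so I do not expect any obstacle beyond correctly bookkeeping the left- versus right-module structures on the ideal $C^{*\rat}$ when applying the balancing relation.
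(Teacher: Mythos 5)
Your proof is correct, but it takes a genuinely different route from the paper's. You establish bijectivity directly for arbitrary $M$: surjectivity by hitting $m$ with a local unit, and injectivity by the standard absorption trick for rings with local units --- choosing a single idempotent $e \in C^{*\rat}$ with $e c_i^* = c_i^*$ for all $i$ and using the balancing relation $(e c_i^*) \otimes_{C^*} m_i = e \otimes_{C^*} (c_i^* \rightharpoonup m_i)$ to rewrite $\xi = e \otimes_{C^*} \kappa_M(\xi) = 0$; this step is valid exactly as you say, since $c_i^* \in C^{*\rat} \subset C^*$ acts on $e$ through the right $C^*$-module structure of the ideal. The paper instead first treats finite-dimensional $M$ and builds an explicit two-sided inverse $\overline{\kappa}_M(m) = \tilde{e} \otimes_{C^*} m$: it applies Lemma~\ref{lem:local-unit-1} twice, once to a basis of $M$ to get $e$ with $e m = m$ for all $m \in M$, and again --- using that $X := C^{*\rat} e$ is finite-dimensional by \eqref{eq:star-rat} --- to get a second idempotent $\tilde{e}$ absorbing $X$, $e$ and $M$; the general case is then deduced from the fundamental theorem for comodules, i.e.\ by writing $M$ as a filtered union of finite-dimensional subcomodules and using that $C^{*\rat} \otimes_{C^*} (-)$ commutes with these colimits. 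Your argument is shorter and more uniform: it avoids both the reduction to finite-dimensional $M$ and the second idempotent, and your injectivity step needs only plain local units in the ideal $C^{*\rat}$ (the comodule-extended Lemma~\ref{lem:local-unit-1} enters only in surjectivity), whereas the paper's approach has the merit of exhibiting an explicit inverse map. One small correction to your framing: by \eqref{eq:star-rat}, each of $C^{*\rat}_{\ell}$ and $C^{*\rat}_r$ is already a two-sided ideal of $C^*$; semiperfectness is used not to make $C^{*\rat}$ an ideal but to identify the two rational parts and, crucially, to guarantee the local units.
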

\begin{proof}
  We first consider the case where $M$ is finite-dimensional.
  By applying Lemma~\ref{lem:local-unit-1} to a basis of $M$, we obtain an idempotent $e \in C^{*\rat}$ such that $e m = m$ for all $m \in M$.
  The space $X := C^{*\rat} e$ is finite-dimensional by \eqref{eq:star-rat}.
  Again by Lemma~\ref{lem:local-unit-1}, we get an idempotent $\tilde{e} \in C^{*\rat}$ such that $\tilde{e} x = x = x \tilde{e}$, $\tilde{e} e = e = e \tilde{e}$ and $\tilde{e} m = m$ for all $x \in X$ and $m \in M$.
  Now we define the linear map
  \begin{equation*}
    \overline{\kappa}_M: M \to C^{*\rat} \otimes_{C^*} M,
    \quad \overline{\kappa}_M(m) = \tilde{e} \otimes_{C^*} m.
  \end{equation*}
  It is obvious that $\kappa_M \circ \overline{\kappa}_M$ is the identity.
  We also have
  \begin{gather*}
    \overline{\kappa}_M \kappa_M(c^* \otimes_{C^*} m)
    = \tilde{e} \otimes_{C^*} c^* m
    = \tilde{e} \otimes_{C^*} c^* (e m)
    = \tilde{e} \otimes_{C^*} (c^* e) m \\
    = \tilde{e} (c^* e) \otimes_{C^*} m
    = (c^* e) \otimes_{C^*} m
    = c^* \otimes_{C^*} e m = c^* \otimes_{C^*} m
  \end{gather*}
  for $c^* \in C^{*\rat}$ and $m \in M$, where the fifth equality follows from $c^* e \in X$. Hence we have proved that $\kappa_M$ is an isomorphism if $M$ is finite-dimensional.
  The general case follows from the fundamental theorem for comodules.
\end{proof}

\section{Nakayama functor for coalgebras}
\label{sec:naka-coalg}

\subsection{Nakayama functor for coalgebras}

Let $C$ be a coalgebra.

\begin{definition}
  \label{def:Nakayama}
  We define the left exact Nakayama functor $\Nakl_C$ and the right exact Nakayama functor $\Nakr_C$ to be the endofunctors on $\Mod^C$ given by
  \begin{equation*}
    \Nakl_C(M) = \Hom^C(C, M)^{\rat}
    \quad \text{and} \quad
    \Nakr_C(M) = C \otimes_{C^*} M,
  \end{equation*}
  respectively, for $M \in \Mod^C$. Here, the right $C$-comodule structure of $\Nakl_C(M)$ and $\Nakr_C(M)$ are given as follows:
  \begin{enumerate}
  \item The vector space $\Hom^C(C, M)$ becomes a left $C^*$-module by the action given by $(c^* \cdot f)(c) = f(c \leftharpoonup c^*)$ for $c^* \in C^*$, $f \in \Hom^C(C, M)$ and $c \in C$. We take its rational part to obtain the right $C$-comodule $\Nakl_C(M)$.
  \item The right $C$-coaction on $\Nakr_C(M)$ is given by
    \begin{equation*}
      c \otimes_{C^*} m \mapsto (c_{(1)} \otimes_{C^*} m) \otimes c_{(2)}
      \quad (c \in C, m \in M).
    \end{equation*}
  \end{enumerate}
\end{definition}

As their name suggest, $\Nakl_C$ and $\Nakr_C$ are left and right exact, respectively. This follows from Lemma~\ref{lem:Nakayama-adjoint} below and the fact that a functor is left (right) exact if it has a left (right) adjoint.

\begin{lemma}
  \label{lem:Nakayama-adjoint}
  $\Nakl_C$ is right adjoint to $\Nakr_C$.
\end{lemma}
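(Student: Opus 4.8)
The plan is to realize both $\Nakr_C$ and $\Nakl_C$ through constructions on $C^*$-modules and to obtain the adjunction by concatenating three standard natural isomorphisms. Throughout, I regard $C$ as a $(C^*, C^*)$-bimodule: the left action $c^* \rightharpoonup c = c_{(1)} \langle c^*, c_{(2)} \rangle$ comes from viewing $C$ as a right $C$-comodule via $\Delta$, while the right action $c \leftharpoonup c^* = \langle c^*, c_{(1)} \rangle c_{(2)}$ comes from viewing $C$ as a left $C$-comodule via $\Delta$; coassociativity of $\Delta$ guarantees that the two actions commute, so $C$ is indeed a bimodule. With these conventions, $\Nakr_C(M) = C \otimes_{C^*} M$ is the tensor product of the right $C^*$-module $C$ with the left $C^*$-module $M$ (the latter obtained from the comodule $M$ via \eqref{eq:C-comod-inclusion}), and the left $C^*$-action that the left action on $C$ induces on $C \otimes_{C^*} M$ agrees with the one coming from the right $C$-comodule structure specified in Definition~\ref{def:Nakayama}.

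First I would use the full faithfulness of the functor \eqref{eq:C-comod-inclusion}: for right $C$-comodules $X$ and $Y$, comodule maps coincide with $C^*$-linear maps between the underlying left $C^*$-modules, so $\Hom^C(X, Y) = \Hom_{C^*}(X, Y)$. Applying this with $X = \Nakr_C(M)$ and $Y = N$ gives
\[
  \Hom^C(\Nakr_C(M), N) \cong \Hom_{C^*}(C \otimes_{C^*} M, N).
\]
The same identity, applied with $C$ in place of $X$, identifies $\Hom^C(C, N)$ with $\Hom_{C^*}(C, N)$, and one checks that the left $C^*$-module structure on $\Hom^C(C, N)$ stipulated in Definition~\ref{def:Nakayama} is exactly the one that the right action $\leftharpoonup$ on $C$ induces on $\Hom_{C^*}(C, N)$.

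Next I would invoke the tensor-hom adjunction for the bimodule $C$, namely the natural isomorphism
\[
  \Hom_{C^*}(C \otimes_{C^*} M, N) \cong \Hom_{C^*}(M, \Hom_{C^*}(C, N)),
\]
where $\Hom_{C^*}(C, N)$ carries the left $C^*$-module structure coming from the right action of $C^*$ on $C$. By the previous paragraph the target is $\Hom_{C^*}(M, \Hom^C(C, N))$. Finally, since $(-)^{\rat}$ is right adjoint to \eqref{eq:C-comod-inclusion}, for the comodule $M$ and the left $C^*$-module $\Hom^C(C, N)$ we have
\[
  \Hom_{C^*}(M, \Hom^C(C, N)) \cong \Hom^C(M, \Hom^C(C, N)^{\rat}) = \Hom^C(M, \Nakl_C(N)).
\]
Concatenating these isomorphisms yields $\Hom^C(\Nakr_C(M), N) \cong \Hom^C(M, \Nakl_C(N))$, naturally in $M$ and $N$, which is the claimed adjunction.

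The routine parts are the verification of the bimodule axioms and of the tensor-hom adjunction; the point demanding genuine care, and the main obstacle, is to check that the various left $C^*$-module structures match up on the nose. Concretely, I must confirm that the coaction on $C \otimes_{C^*} M$ from Definition~\ref{def:Nakayama} corresponds under \eqref{eq:C-comod-inclusion} to the left action inherited from $C$, and that the action $(c^* \cdot f)(c) = f(c \leftharpoonup c^*)$ on $\Hom^C(C, N)$ is precisely the module structure produced by the tensor-hom adjunction. Once these identifications are in place, naturality in both variables is inherited from the naturality of the three constituent isomorphisms, and no further computation is required.
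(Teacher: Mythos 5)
Your proposal is correct and follows essentially the same route as the paper: the chain $\Hom^C(\Nakr_C(M), N) \cong {}_{C^*}\Hom(C \otimes_{C^*} M, N) \cong {}_{C^*}\Hom(M, {}_{C^*}\Hom(C, N)) \cong \Hom^C(M, \Nakl_C(N))$ via full faithfulness of \eqref{eq:C-comod-inclusion}, the tensor-Hom adjunction for the $C^*$-bimodule $C$, and the adjunction between \eqref{eq:C-comod-inclusion} and $(-)^{\rat}$ is exactly the paper's argument. Your additional bookkeeping verifying that the $C^*$-module structures match those stipulated in Definition~\ref{def:Nakayama} is precisely what the paper leaves implicit.
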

\begin{proof}
  For $M, M' \in \Mod^C$, we have natural isomorphisms
  \begin{align*}
    \Hom^C(\Nakr_C(M), M')
    & \cong {}_{C^*}\Hom(C \otimes_{C^*} M, M') \\
    & \cong {}_{C^*}\Hom(M, {}_{C^*}\Hom(C, M')) \\
    & \cong \Hom^C(M, \Hom(C, M')^{\rat})
      = \Hom^C(M, \Nakl_C(M')),
  \end{align*}
  where the second isomorphism is the tensor-Hom adjunction.
\end{proof}

The restriction of $\Nakl_C$ to $\Mod^C_{\qf}$ has the following expression:

\begin{lemma}
  \label{lem:Nakayama-l-coHom}
  There is a natural isomorphism
  \begin{equation*}
    \Nakl_C(M) \cong \coHom^C(M, C)^{*\rat}
    \quad (M \in \Mod^C_{\qf}).
  \end{equation*}
\end{lemma}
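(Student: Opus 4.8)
The plan is to deduce the isomorphism from the defining property \eqref{eq:coHom-formula-1} of the coHom functor, by upgrading a bare vector-space isomorphism to one of left $C^*$-modules and then passing to rational parts. Concretely, taking $N = C$ in \eqref{eq:coHom-formula-1} gives a natural isomorphism $\coHom^C(M,C)^* \cong \Hom^C(C,M)$ of vector spaces for every $M \in \Mod^C_{\qf}$. Since $\Nakl_C(M) = \Hom^C(C,M)^{\rat}$ by Definition~\ref{def:Nakayama}, it suffices to identify $\coHom^C(M,C)^{*}$ with $\Hom^C(C,M)$ as left $C^*$-modules; the asserted isomorphism then follows by applying the rational-part functor of Subsection~\ref{subsec:rational-modules} to both sides.

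First I would pin down the two left $C^*$-module structures. The object $C$ is a $C$-$C$-bicomodule via $\Delta$, so, as in the discussion around \eqref{eq:idempotent-iso-2}, the functoriality of $\coHom^C(M,-)$ in the second variable (it is a left adjoint, hence preserves the copower $C \otimes (-)$) makes $\coHom^C(M,C)$ a left $C$-comodule; equivalently it is a right $C^*$-module, and hence its dual $\coHom^C(M,C)^*$ is a left $C^*$-module whose rational part is a right $C$-comodule. On the other side, $\Hom^C(C,M)$ carries the left $C^*$-action $(c^*\cdot f)(c) = f(c \leftharpoonup c^*)$ of Definition~\ref{def:Nakayama}(1), where $c \leftharpoonup c^* = \langle c^*, c_{(1)}\rangle\, c_{(2)}$ is the right $C^*$-action on $C$ arising from the very same left $C$-comodule structure $\Delta$.

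The heart of the proof is to check that the isomorphism \eqref{eq:coHom-formula-1}, for $N=C$, is $C^*$-linear for these two structures. Both actions are induced from the single left coaction $\Delta\colon C \to C \otimes C$ on the regular comodule: on the left-hand side through the functor $\coHom^C(M,-)$ followed by dualization, and on the right-hand side through precomposition with $(-)\leftharpoonup c^*$. I would therefore trace the coHom adjunction together with its naturality in the second variable, applied to the comodule map $\Delta$, and compare the resulting $C^*$-actions term by term using the explicit formula for $\leftharpoonup$. This is essentially a bookkeeping computation whose only genuine subtlety is keeping straight the left/right handedness introduced by the two dualizations.

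Once $C^*$-linearity is established, the rational-part functor $(-)^{\rat}\colon {}_{C^*}\Mod \to \Mod^C$, being right adjoint to the inclusion \eqref{eq:C-comod-inclusion}, sends the isomorphism to an isomorphism $\coHom^C(M,C)^{*\rat} \cong \Hom^C(C,M)^{\rat} = \Nakl_C(M)$ of right $C$-comodules, and naturality in $M$ is inherited from the naturality of \eqref{eq:coHom-formula-1} and of the rational-part functor. I expect the main obstacle to be precisely the middle step: matching the abstractly defined $C^*$-action coming from the bicomodule structure on $\coHom^C(M,C)$ with the hands-on action $(c^*\cdot f)(c)=f(c\leftharpoonup c^*)$, since the former is specified by a universal property while the latter is an explicit formula, and the comparison must respect the flip of sides caused by passing to duals.
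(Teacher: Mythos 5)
Your proposal is correct and takes essentially the same route as the paper: the paper's proof likewise specializes \eqref{eq:coHom-formula-1} to $N=C$, combines it with the defining formula $\Nakl_C(M)=\Hom^C(C,M)^{\rat}$, and leaves the compatibility of structures as a check (``one can check that it is indeed an isomorphism of right $C$-comodules''). Your plan for that check --- identifying the left $C^*$-action on $\coHom^C(M,C)^*$ induced by the bicomodule structure of $C$ via the copower-preserving functor $\coHom^C(M,-)$, verifying $C^*$-linearity of the adjunction isomorphism against the action $(c^*\cdot f)(c)=f(c\leftharpoonup c^*)$, and then applying the rational-part functor $(-)^{\rat}$ --- fills in precisely the step the paper elides, and it is sound.
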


Chin and Simson \cite[Equation 1.12]{MR2684139} have introduced a functor $\nu_C$ for a coalgebra $C$ and called it the Nakayama functor.
The functor $\nu_C$ is defined on a particular class of quasi-finite comodules.
By Lemma~\ref{lem:Nakayama-l-coHom} and \cite[Theorem 1.8 (a)]{MR2684139}, we see that $\nu_C$ is isomorphic to $\Nakl_C$ on the domain of $\nu_C$.

\begin{proof}
  By \eqref{eq:coHom-formula-1} and the defining formula of $\Nakl_C(M)$, we establish an isomorphism of vector spaces as stated.
  One can check that it is indeed an isomorphism of right $C$-comodules.
\end{proof}

The restriction of $\Nakr_C$ to $\Mod^C_{\fd}$ has the following expression:

\begin{lemma}
  \label{lem:Nakayama-r-coHom}
  For $M \in \Mod_{\fd}^C$, there are natural isomorphism
  \begin{equation}
    \Nakr_C(M) \cong \coHom^{C^{\cop}}(M^*, C^{\cop}), \quad
    \Nakl_{C^{\cop}}(M^*) \cong \Nakr_C(M)^{*\rat}.
  \end{equation}
\end{lemma}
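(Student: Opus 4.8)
The plan is to deduce both isomorphisms from the explicit formula \eqref{eq:coHom-for-fd} for the coHom functor on finite-dimensional comodules, applied to the coalgebra $C^{\cop}$ in place of $C$, and then to obtain the second isomorphism formally from the first together with Lemma~\ref{lem:Nakayama-l-coHom}.

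First I would establish the left-hand isomorphism. Since $M$ is finite-dimensional, its dual $M^*$ is a finite-dimensional left $C$-comodule, which is the same datum as a finite-dimensional right $C^{\cop}$-comodule; in particular $M^* \in \Mod^{C^{\cop}}_{\fd}$, so \eqref{eq:coHom-for-fd} applies. Taking $X = M^*$ and $Y = C^{\cop}$ there yields a natural isomorphism
\begin{equation*}
  \coHom^{C^{\cop}}(M^*, C^{\cop}) \cong (M^*)^* \otimes_{(C^{\cop})^*} C^{\cop}.
\end{equation*}
Using the canonical isomorphism $(M^*)^* \cong M$ for the finite-dimensional comodule $M$, together with the identifications $(C^{\cop})^* = C^{*\op}$ and $C^{\cop} = C$ as vector spaces, the right-hand side becomes $M \otimes_{C^{*\op}} C$. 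The standard flip $m \otimes_{C^{*\op}} c \mapsto c \otimes_{C^*} m$ then identifies this with $C \otimes_{C^*} M = \Nakr_C(M)$, giving the asserted isomorphism of vector spaces.

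The substance of the argument lies in checking that this chain of identifications respects the right $C$-comodule structures. Here I would use the bicomodule refinement of \eqref{eq:coHom-for-fd}: since $C^{\cop}$ is a $C^{\cop}$-$C^{\cop}$-bicomodule, the object $\coHom^{C^{\cop}}(M^*, C^{\cop})$ inherits a left $C^{\cop}$-comodule structure---equivalently a right $C$-comodule structure---from the left regular coaction of $C^{\cop}$, and \eqref{eq:coHom-for-fd} is an isomorphism of such comodules. Tracing this structure through the flip, the left $C^{\cop}$-coaction $c \mapsto c_{(2)} \otimes c_{(1)}$ of $C^{\cop}$ corresponds to the right $C$-coaction $c \mapsto c_{(1)} \otimes c_{(2)}$ on the left tensorand of $C \otimes_{C^*} M$, which is precisely the coaction defining $\Nakr_C(M)$ in Definition~\ref{def:Nakayama}. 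The main obstacle is exactly this bookkeeping: one must match the right $(C^{\cop})^*$-module, left $(C^{\cop})^*$-module, and left $C^{\cop}$-comodule structures appearing on the coHom side against the left $C^*$-module, right $C^*$-module, and right $C$-comodule structures on $\Nakr_C(M)$, keeping careful track of the order reversal introduced by passing to $C^{\cop}$.

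Finally, the right-hand isomorphism follows formally. Applying Lemma~\ref{lem:Nakayama-l-coHom} with the coalgebra $C^{\cop}$ and the quasi-finite (indeed finite-dimensional) comodule $M^*$ gives $\Nakl_{C^{\cop}}(M^*) \cong \coHom^{C^{\cop}}(M^*, C^{\cop})^{*\rat}$, and substituting the left-hand isomorphism just proved yields $\Nakl_{C^{\cop}}(M^*) \cong \Nakr_C(M)^{*\rat}$, as required.
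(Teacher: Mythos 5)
Your proposal is correct and takes essentially the same route as the paper: the first isomorphism is the paper's appeal to \eqref{eq:coHom-for-fd} applied to $C^{\cop}$ with $X = M^*$ and $Y = C^{\cop}$, and the second is obtained, just as in the paper, by applying $(-)^{*\rat}$ and invoking Lemma~\ref{lem:Nakayama-l-coHom} for $C^{\cop}$. You merely make explicit the flip $M \otimes_{C^{*\op}} C \cong C \otimes_{C^*} M$ and the comodule-structure bookkeeping that the paper's one-line proof leaves implicit.
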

\begin{proof}
  The first isomorphism follows from \eqref{eq:coHom-for-fd}.
  The second one is obtained from the first one by applying the functor $(-)^{*\rat}$ to the both sides.
\end{proof}

Two coalgebras $C$ and $D$ are said to be {\em Morita-Takeuchi equivalent} if $\Mod^C$ and $\Mod^D$ are equivalent as linear categories.
The Nakayama functor is Morita-Takeuchi invariant in the following sense:

\begin{lemma}
  \label{lem:Nakayama-MT-invariance}
  Let $C$ and $D$ be coalgebras, and let $\Phi : \Mod^C \to \Mod^D$ be an equivalence of linear categories. Then there are isomorphisms
  \begin{equation*}
    \Phi \circ \Nakl_C \cong \Nakl_D \circ \Phi
    \quad \text{and} \quad
    \Phi \circ \Nakr_C \cong \Nakr_D \circ \Phi.
  \end{equation*}
\end{lemma}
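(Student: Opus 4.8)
The plan is to derive both isomorphisms from the single statement for $\Nakr$, using the coend description of the right exact Nakayama functor as the essential input. First I would observe that it suffices to establish
\begin{equation*}
  \Phi \circ \Nakr_C \cong \Nakr_D \circ \Phi.
\end{equation*}
Indeed, fix a quasi-inverse $\Psi$ of $\Phi$, so that $\Phi \dashv \Psi$ and $\Psi \dashv \Phi$. By Lemma~\ref{lem:Nakayama-adjoint} we have $\Nakr_C \dashv \Nakl_C$ and $\Nakr_D \dashv \Nakl_D$, whence $\Phi \circ \Nakr_C \dashv \Nakl_C \circ \Psi$ and $\Nakr_D \circ \Phi \dashv \Psi \circ \Nakl_D$. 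Since right adjoints are unique up to natural isomorphism, the displayed isomorphism yields $\Nakl_C \circ \Psi \cong \Psi \circ \Nakl_D$; composing with $\Phi$ suitably and using $\Psi \circ \Phi \cong \id$ and $\Phi \circ \Psi \cong \id$ then gives $\Phi \circ \Nakl_C \cong \Nakl_D \circ \Phi$.

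To prove the isomorphism for $\Nakr$, I would use the coend formula \eqref{eq:intro-coend-formula-2}, which expresses $\Nakr_C$ purely in terms of the linear-categorical data of $\Mod^C$. The key preliminary step is to check that $\Phi$ preserves all ingredients of that formula. A copower $W \otimes_{\bfk} M$ is a coproduct of $\dim_{\bfk} W$ copies of $M$, so the equivalence $\Phi$ satisfies $\Phi(W \otimes_{\bfk} M) \cong W \otimes_{\bfk} \Phi(M)$ naturally. Because $\coHom^C(M, -)$ is by definition the left adjoint of the copower functor $W \mapsto W \otimes_{\bfk} M$, a short Yoneda argument in $\Vect$ using
\begin{equation*}
  \Hom_{\bfk}(\coHom^D(\Phi M, \Phi N), W) \cong \Hom^D(\Phi N, W \otimes_{\bfk} \Phi M) \cong \Hom^C(N, W \otimes_{\bfk} M) \cong \Hom_{\bfk}(\coHom^C(M, N), W)
\end{equation*}
shows $\coHom^D(\Phi M, \Phi N) \cong \coHom^C(M, N)$ naturally, and in particular that $\Phi$ carries quasi-finite comodules to quasi-finite comodules. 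Since an equivalence preserves finite length, hence finite-dimensionality of comodules, $\Phi$ restricts to an equivalence $\Mod^C_{\fd} \xrightarrow{\sim} \Mod^D_{\fd}$.

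With these facts in hand I would conclude as follows. An equivalence is cocontinuous, so it commutes with the coend in \eqref{eq:intro-coend-formula-2}. Applying $\Phi$ and rewriting copowers and coHom-spaces by the isomorphisms above gives
\begin{equation*}
  \Phi(\Nakr_C(M)) \cong \int^{X \in \Mod^C_{\fd}} \coHom^C(X, M) \otimes_{\bfk} \Phi(X) \cong \int^{X \in \Mod^C_{\fd}} \coHom^D(\Phi X, \Phi M) \otimes_{\bfk} \Phi X.
\end{equation*}
Reindexing the coend along the equivalence $\Phi \colon \Mod^C_{\fd} \xrightarrow{\sim} \Mod^D_{\fd}$ then identifies the right-hand side with $\int^{Y \in \Mod^D_{\fd}} \coHom^D(Y, \Phi M) \otimes_{\bfk} Y = \Nakr_D(\Phi M)$, naturally in $M$.

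I expect the main obstacle to be bookkeeping rather than conceptual: one must verify that all the isomorphisms above are natural, so that they assemble into an isomorphism of functors compatible with the dinatural structure of the coends, and one must justify reindexing a coend along an equivalence of the indexing category. Preservation of copowers and of coHom is routine once copowers are recognized as coproducts, and cocontinuity of $\Phi$ is automatic for an equivalence; the only genuinely delicate point is tracking naturality through the coend computation.
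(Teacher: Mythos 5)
Your proposal is correct and takes essentially the same route as the paper: the paper also restricts $\Phi$ to an equivalence $\Mod^C_{\fd} \to \Mod^D_{\fd}$ via preservation of length, invokes the universal property of the coend formula (Lemma~\ref{lem:Nakayama-r-coend}) to get $\Phi \circ \Nakr_C \cong \Nakr_D \circ \Phi$, and then obtains the $\Nakl$ statement by passing to right adjoints, exactly as you do. Your write-up merely makes explicit the bookkeeping the paper compresses into ``by the universal property'' (preservation of copowers, invariance of $\coHom$, cocontinuity of an equivalence, and reindexing the coend, the last of which is justified by the paper's own coend-adjunction lemma \eqref{eq:coend-and-adjoint} applied to $F = \Phi$).
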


We will give a proof of this lemma in Subsection \ref{subsec:Nakayama-uni-pro} after recalling a categorical notion of coends. In fact, this lemma is trivial once the universal property of the Nakayama functor is formulated and proved in Subsection \ref{subsec:Nakayama-uni-pro}.

\subsection{The Nakayama functor and the rational part of $C^*$}

Let $C$ be a coalgebra.
As in Subsection~\ref{subsec:local-units}, we denote by $C^{*\rat}_{\ell}$ the rational part of $C^*$ as a left $C^*$-module.
One of main purposes of this section is to characterize some homological properties of $C$ in terms of the Nakayama functor.
In preparation for this, we first observe:

\begin{lemma}
  \label{lem:Nakayama-C-star-rat-zero}
  For a coalgebra $C$, the following are equivalent:
  \begin{enumerate}
  \item $C^{*\rat}_{\ell} = 0$.
  \item $\Nakl_C$ is the zero functor.
  \item $\Nakr_C$ is the zero functor.
  \end{enumerate}
\end{lemma}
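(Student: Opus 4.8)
The plan is to prove the chain of equivalences $(1) \Leftrightarrow (2)$ and $(1) \Leftrightarrow (3)$ by unwinding the definitions of the two Nakayama functors and relating them both to the rational part $C^{*\rat}_{\ell}$. The starting observation is that both functors, when fed the regular comodule $C$ (or its dual), produce exactly $C^{*\rat}_{\ell}$ up to a natural identification, so the whole statement should reduce to the behaviour of the functors on a single distinguished object.

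First I would handle $(1) \Leftrightarrow (2)$. By Definition~\ref{def:Nakayama}, $\Nakl_C(M) = \Hom^C(C, M)^{\rat}$. The key is the standard natural isomorphism $\Hom^C(C, M) \cong M$ of vector spaces (the regular comodule $C$ corepresents the forgetful-type functor), under which the left $C^*$-action $(c^* \cdot f)(c) = f(c \leftharpoonup c^*)$ transports to the natural left $C^*$-action on $M$. Hence $\Nakl_C(M) \cong M^{\rat}$ is just the rational part of $M$ regarded as a $C^*$-module. Taking $M = C^*$ viewed appropriately, or more directly noting that $\Nakl_C$ is the zero functor if and only if $M^{\rat} = 0$ for every $M$, I would argue that this forces $C^{*\rat}_{\ell} = 0$ (apply it to $M = C^*$ as a left module over itself), and conversely that $C^{*\rat}_{\ell} = 0$ makes every rational part vanish. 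The cleanest route is probably: $\Nakl_C = 0$ $\Leftrightarrow$ the rational part functor $(-)^{\rat}$ is zero on all rational-relevant modules $\Leftrightarrow$ $C^{*\rat}_{\ell}=0$, since the rational part of any module is built from rational submodules and $C^{*\rat}_{\ell}$ is the universal obstruction.

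For $(1) \Leftrightarrow (3)$, I would use that $\Nakr_C$ is left adjoint to $\Nakl_C$ (Lemma~\ref{lem:Nakayama-adjoint}). A right-exact functor with a right adjoint is zero if and only if its right adjoint is zero, because $\Nakr_C \cong 0$ forces $\Hom^C(\Nakr_C(M), M') = 0$ for all $M, M'$, which by adjunction equals $\Hom^C(M, \Nakl_C(M'))$; running this with suitable cogenerators shows $\Nakl_C(M') = 0$ for all $M'$, and symmetrically. This gives $(2) \Leftrightarrow (3)$ directly from the adjunction, avoiding a separate computation with $C \otimes_{C^*} M$. Alternatively one can argue concretely that $\Nakr_C(M) = C \otimes_{C^*} M$ vanishes for all $M$ iff $C^{*\rat}_{\ell} = 0$, using that $C$ as a right $C^*$-module has rational part $C^{*\rat}_{\ell}$ and that the tensor product only sees this rational part.

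The main obstacle I anticipate is the careful bookkeeping in the $(1) \Leftrightarrow (2)$ step: identifying the transported $C^*$-action under $\Hom^C(C, M) \cong M$ and then correctly locating $C^{*\rat}_{\ell}$ (the rational part of $C^*$ as a \emph{left} module) as the precise universal obstruction to all rational parts vanishing. One must be attentive to the left/right conventions, since the action on $\Hom^C(C,M)$ is written via the right $C^*$-action $\leftharpoonup$ on $C$, and verify that the zero of $\Nakl_C$ is detected already on $C^*$ itself rather than needing all $M$. The adjunction argument for $(2)\Leftrightarrow(3)$ should then be essentially formal.
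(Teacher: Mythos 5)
Your step $(1) \Leftrightarrow (2)$ rests on a false isomorphism, and the error is fatal to that half of the argument. The regular comodule $C$ does \emph{not} corepresent the identity: what $C$ represents is the dual, contravariantly, via $\Hom^C(M, C) \cong M^*$ (a special case of \eqref{eq:idempotent-iso} with $e = \varepsilon$). Covariantly, $\Hom^C(C, M) = {}_{C^*}\Hom(C, M)$ where $C$ carries the \emph{coregular} left $C^*$-module structure, which is not the free module $C^*$; the isomorphism you are invoking, ${}_{C^*}\Hom(C^*, M) \cong M$, applies to $C^*$, and identifying $C$ with (the rational part of) $C^*$ as a $C^*$-module is essentially the co-Frobenius condition, not a generality. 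The damage is total: under your identification one would get $\Nakl_C(M) \cong M^{\rat} = M$ (every $M \in \Mod^C$ is already rational), so $\Nakl_C$ would be the identity functor for \emph{every} coalgebra, contradicting Theorem~\ref{thm:Nakayama-symm-coalg} and falsifying the very lemma at hand: for the divided power coalgebra, $C^* \cong \bfk[[t]]$ is a domain, so $C^{*\rat}_{\ell} = 0$ by \eqref{eq:star-rat}, yet the identity functor on $\Mod^C$ is not zero. (Already for finite-dimensional $C$ with $A = C^*$ one has $\Hom^C(C, M) = \Hom_A(A^*, M)$, the classical left exact Nakayama functor, not $M$.) A second defect: $M = C^*$ is not a legitimate input for $\Nakl_C$, since $C^*$ is in general not rational, hence not an object of $\Mod^C$. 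The correct test object is $M = C$ itself, which gives $\Nakl_C(C) \cong C^{*\rat}_{\ell}$ (as used in the proof of Lemma~\ref{lem:Nakayama-C-star-rat-qf}); this yields $(2) \Rightarrow (1)$, but your route to the converse collapses.

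What survives is your adjunction argument: $(2) \Leftrightarrow (3)$ via Lemma~\ref{lem:Nakayama-adjoint} is exactly the paper's proof, and no cogenerators are needed --- from $\Hom^C(\Nakr_C(M), M') \cong \Hom^C(M, \Nakl_C(M'))$ simply take $M = \Nakl_C(M')$ to see that $\Nakr_C = 0$ forces $\id_{\Nakl_C(M')} = 0$, and symmetrically. The ingredient you are missing for $(1) \Leftrightarrow (3)$ is the tensor--Hom duality \eqref{eq:Nakayama-dual}: $\Nakr_C(M)^* \cong {}_{C^*}\Hom(M, C^*) \cong \Hom^C(M, C^{*\rat}_{\ell})$, where the image lands in the rational part because $M$ is rational. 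With this, $(1) \Rightarrow (3)$ is immediate, and for $(3) \Rightarrow (1)$ one tests with $M = C^{*\rat}_{\ell}$. Your ``concrete alternative'' misstates the relevant fact in the same way: $C$ as a $C^*$-module is entirely rational (it is a comodule), so it makes no sense to say its rational part is $C^{*\rat}_{\ell}$; the object whose rational part is $C^{*\rat}_{\ell}$ is $C^*$, and the mechanism relating $C \otimes_{C^*} M$ to it is the duality \eqref{eq:Nakayama-dual}, not a rational-part computation inside $C$.
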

\begin{proof}
  The tensor-Hom adjunction gives natural isomorphisms
  \begin{equation}
    \label{eq:Nakayama-dual}
    \Nakr_C(M)^*
    \cong {}_{C^*}\Hom(M, \Hom_{\bfk}(C, \bfk))
    \cong \Hom^C(M, C^{*\rat}_{\ell})
  \end{equation}
  for $M \in \Mod^C$.
  Now (1) $\Leftrightarrow$ (3) is obvious.
  The equivalence (2) $\Leftrightarrow$ (3) follows from Lemma \ref{lem:Nakayama-adjoint}, which states that $\Nakr_C$ and $\Nakl_C$ are an adjoint pair.
\end{proof}

The natural isomorphism \eqref{eq:Nakayama-dual} is also used to prove:

\begin{lemma}
  \label{lem:Nakayama-C-star-rat-qf}
  For a coalgebra $C$, the following are equivalent:
  \begin{enumerate}
  \item The right $C$-comodule $C^{*\rat}_{\ell}$ is quasi-finite.
  \item $\Nakl_C$ preserves the class of quasi-finite comodules.
  \item $\Nakr_C$ preserves the class of finite-dimensional comodules.
  \end{enumerate}
\end{lemma}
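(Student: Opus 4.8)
The plan is to use the natural isomorphism \eqref{eq:Nakayama-dual}, namely $\Nakr_C(M)^* \cong \Hom^C(M, C^{*\rat}_{\ell})$, as the bridge between the three conditions. The logical skeleton I would establish is (1) $\Leftrightarrow$ (3), then (3) $\Rightarrow$ (2), and finally (2) $\Rightarrow$ (1); closing this loop yields all the equivalences.

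For (1) $\Leftrightarrow$ (3), fix $M \in \Mod^C_{\fd}$. A vector space is finite-dimensional if and only if its dual is (the nontrivial direction uses the injection $V \hookrightarrow V^{**}$), so \eqref{eq:Nakayama-dual} shows that $\Nakr_C(M)$ is finite-dimensional exactly when $\Hom^C(M, C^{*\rat}_{\ell})$ is. Quantifying over all finite-dimensional $M$, the former condition is (3) and the latter is precisely the quasi-finiteness of $C^{*\rat}_{\ell}$, that is, (1).

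For (3) $\Rightarrow$ (2), I would invoke the adjunction of Lemma \ref{lem:Nakayama-adjoint} to obtain $\Hom^C(X, \Nakl_C(M)) \cong \Hom^C(\Nakr_C(X), M)$ for $X \in \Mod^C_{\fd}$ and $M \in \Mod^C$. Assuming (3), the object $\Nakr_C(X)$ is finite-dimensional, so if $M$ is quasi-finite then the right-hand side is finite-dimensional for every finite-dimensional $X$; hence $\Nakl_C(M)$ is quasi-finite, which is (2). For (2) $\Rightarrow$ (1), the key identification I would prove is $\Nakl_C(C) \cong C^{*\rat}_{\ell}$ as right $C$-comodules: indeed $\Hom^C(C, C) \cong C^*$ as left $C^*$-modules (the action of Definition \ref{def:Nakayama}(1) unwinds to the convolution product), so passing to rational parts gives $\Nakl_C(C) = \Hom^C(C, C)^{\rat} \cong C^{*\rat}_{\ell}$; alternatively one may combine Lemma \ref{lem:Nakayama-l-coHom} with the case $e = \varepsilon$ of \eqref{eq:idempotent-iso-2}, which yields $\coHom^C(C, C) \cong C$. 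Since $C$ is quasi-finite -- because $\Hom^C(X, C) \cong X^*$ for finite-dimensional $X$ by the cofree property -- condition (2) applied to $M = C$ forces $\Nakl_C(C) \cong C^{*\rat}_{\ell}$ to be quasi-finite, which is (1).

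The main obstacle is the bookkeeping in the implication (2) $\Rightarrow$ (1): one must verify that the left $C^*$-module structure on $\Hom^C(C, C)$ coming from Definition \ref{def:Nakayama}(1) really coincides with convolution on $C^*$, and keep the left/right comodule conventions straight so that the rational part lands in $C^{*\rat}_{\ell}$ rather than $C^{*\rat}_{r}$. Everything else is a formal consequence of \eqref{eq:Nakayama-dual}, the adjunction of Lemma \ref{lem:Nakayama-adjoint}, and the elementary fact that a space and its dual are simultaneously finite-dimensional.
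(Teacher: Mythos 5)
Your proof is correct and takes essentially the same route as the paper: the cycle (1) $\Rightarrow$ (3) via the isomorphism \eqref{eq:Nakayama-dual}, (3) $\Rightarrow$ (2) via the adjunction of Lemma~\ref{lem:Nakayama-adjoint}, and (2) $\Rightarrow$ (1) by applying the hypothesis to the quasi-finite comodule $C$ together with the identification $\Nakl_C(C) \cong C^{*\rat}_{\ell}$. Your direct proof of (3) $\Rightarrow$ (1) is logically redundant given the closed cycle, and your verification that the $C^*$-action of Definition~\ref{def:Nakayama}(1) on $\Hom^C(C,C) \cong C^*$ is convolution (so that the rational part is indeed $C^{*\rat}_{\ell}$) supplies a detail the paper's proof leaves unstated.
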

\begin{proof}
  The implication (1) $\Rightarrow$ (3) follows immediately from \eqref{eq:Nakayama-dual}.
  We suppose that (3) holds and let $M$ be a quasi-finite right $C$-comodule.
  Then we have
  \begin{equation*}
    \dim_{\bfk} \Hom^C(X, \Nakl_C(M))
    = \dim_{\bfk} \Hom^C(\Nakr_C(X), M) < \infty
  \end{equation*}
  for all $X \in \Mod^C_{\fd}$.
  This means that $\Nakl_C(M)$ is quasi-finite.
  Thus (2) holds.
  Finally, we assume that (2) holds.
  Then $C^{*\rat}_{\ell}$, which is isomorphic to $\Nakl_C(C)$, is quasi-finite since $C$ is. Hence (1) holds. The proof is done.
\end{proof}

\begin{lemma}
  \label{lem:Nakayama-C-star-rat-inj}
  For a coalgebra $C$, the following are equivalent:
  \begin{enumerate}
  \item The right $C$-comodule $C^{*\rat}_{\ell}$ is injective.
  \item $\Nakl_C$ preserves the class of injective comodules.
  \item $\Nakr_C$ is exact.
  \end{enumerate}
\end{lemma}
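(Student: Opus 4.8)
The plan is to prove the cyclic chain of implications $(1) \Rightarrow (3) \Rightarrow (2) \Rightarrow (1)$, as in the two preceding lemmas, using throughout that $\Nakr_C$ is left adjoint to $\Nakl_C$ (Lemma~\ref{lem:Nakayama-adjoint}); consequently $\Nakr_C$ is right exact and $\Nakl_C$ is left exact. The natural isomorphism \eqref{eq:Nakayama-dual}, which expresses $\Nakr_C(M)^*$ in terms of $\Hom^C(M, C^{*\rat}_{\ell})$, will be the main bridge between the homological behaviour of $\Nakr_C$ and the injectivity of $C^{*\rat}_{\ell}$.

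For $(1) \Rightarrow (3)$, since $\Nakr_C$ is already right exact, I would only need to check that it preserves monomorphisms. Given a monomorphism $f : M' \to M$ in $\Mod^C$, the assumed injectivity of $C^{*\rat}_{\ell}$ makes $\Hom^C(f, C^{*\rat}_{\ell})$ an epimorphism, and by the naturality of \eqref{eq:Nakayama-dual} this map is identified with the dual $\Nakr_C(f)^*$. The crucial elementary input is that, for vector spaces over $\bfk$, a linear map whose dual is surjective is itself injective; applying this to $\Nakr_C(f)$ shows that $\Nakr_C$ preserves monomorphisms and is therefore exact. The delicate point here is to avoid dualizing a second time---doing so would entangle the argument with double duals of possibly infinite-dimensional spaces---and instead to extract injectivity of $\Nakr_C(f)$ directly from surjectivity of its single dual, using the faithfulness of $(-)^*$ on $\Vect$.

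For $(3) \Rightarrow (2)$, I would invoke the standard fact that, for an adjoint pair, exactness of the left adjoint forces the right adjoint to preserve injectives: if $I$ is injective, the adjunction isomorphism $\Hom^C(-, \Nakl_C(I)) \cong \Hom^C(\Nakr_C(-), I)$ exhibits $\Hom^C(-, \Nakl_C(I))$ as a composite of the exact functors $\Nakr_C$ and $\Hom^C(-, I)$, so $\Nakl_C(I)$ is injective. Finally, for $(2) \Rightarrow (1)$, I would use that $C$ is injective as a right $C$-comodule (being cofree) together with the identification $\Nakl_C(C) \cong C^{*\rat}_{\ell}$ already observed in the proof of Lemma~\ref{lem:Nakayama-C-star-rat-qf}; applying the hypothesis to $C$ then yields the injectivity of $C^{*\rat}_{\ell}$. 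The only genuine obstacle is $(1) \Rightarrow (3)$, precisely because \eqref{eq:Nakayama-dual} controls the dual of $\Nakr_C$ rather than $\Nakr_C$ itself; the map-level duality observation above is what lets the exactness of $\Hom^C(-, C^{*\rat}_{\ell})$ be transferred back to $\Nakr_C$.
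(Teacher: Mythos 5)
Your proof is correct and takes essentially the same route as the paper: the cyclic chain $(1)\Rightarrow(3)\Rightarrow(2)\Rightarrow(1)$, with $(1)\Rightarrow(3)$ via the natural isomorphism \eqref{eq:Nakayama-dual}, $(3)\Rightarrow(2)$ via the adjunction of Lemma~\ref{lem:Nakayama-adjoint}, and $(2)\Rightarrow(1)$ via applying $\Nakl_C$ to the injective comodule $C$ and the identification $\Nakl_C(C)\cong C^{*\rat}_{\ell}$. Your only deviation, in $(1)\Rightarrow(3)$ --- deducing mono-preservation from surjectivity of the single dual $\Nakr_C(f)^*$ and combining with the automatic right exactness, rather than invoking wholesale that $(-)^*$ preserves and reflects exact sequences --- is a harmless (and correct) rephrasing of the paper's argument.
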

\begin{proof}
  The implication (1) $\Rightarrow$ (3) follows from \eqref{eq:Nakayama-dual} and the fact that the duality functor preserves and reflects exact sequences.
  The implication (3) $\Rightarrow$ (2) follows from Lemma \ref{lem:Nakayama-adjoint}.
  If (2) holds, then $\Nakl_C(C) \cong C^{*\rat}_{\ell}$ is injective since $C$ is, and thus (1) holds.
  The proof is done.
\end{proof}

\subsection{Semiperfect coalgebras}

The Nakayama functor $A^* \otimes_A (-)$ for a finite-dimensional algebra $A$ induces an equivalence from $\Proj({}_A\Mod_{\fd})$ to $\Inj({}_A \Mod_{\fd})$.
We show that an analogous result holds for a coalgebra $C$ if and only if $C$ is right semiperfect.
To be precise, we define the full subcategory $\sfInj^C \subset \Mod^C$ as in Subsection~\ref{sec:inj-comod-idempo} and write $\fdPro^C = \Proj(\Mod^C_{\fd})$ for brevity. Then we have:

\begin{theorem}
  \label{thm:Nakayama-right-semiperfect}
  For a coalgebra $C$, the following are equivalent:
  \begin{enumerate}
  \item $C$ is right semiperfect.
  \item The functor $\Nakl_C$ restricts to an equivalence from $\sfInj^C$ to $\fdPro^C$.
  \item The functor $\Nakr_C$ restricts to an equivalence from $\fdPro^C$ to $\sfInj^C$.
  \end{enumerate}
  Suppose that these equivalent conditions are satisfied.
  Then the equivalences between $\sfInj^C$ and $\fdPro^C$ induced by $\Nakl_C$ and $\Nakr_C$ are mutually quasi-inverse to each other.
  Moreover, for a simple right $C$-comodule $S$, there are isomorphisms
  \begin{equation}
    \label{eq:Nakayama-inj-hull}
    \Nakl_C(E(S)) \cong P(S)
    \quad \text{and} \quad
    \Nakr_C(P(S)) \cong E(S),
  \end{equation}
  where $E(S)$ and $P(S)$ are an injective hull and a projective cover of $S$, respectively.
\end{theorem}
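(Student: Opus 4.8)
The plan is to reduce both Nakayama functors, restricted to finite-dimensional comodules, to the anti-equivalence $D$ of Lemma~\ref{lem:sfinj-equivalence} composed with the duality $(-)^*$, and then to read off the three conditions from the finite-dimensionality of injective hulls, which is the classical characterisation of semiperfectness. I would establish the cycle (1) $\Rightarrow$ (2) $\Rightarrow$ (3) $\Rightarrow$ (1) and then deduce the quasi-inverse and injective-hull assertions. Write $D_C : \sfInj^C \to {}^C\sfInj$ and $D_{C^{\cop}} : \sfInj^{C^{\cop}} \to \sfInj^C$ for the anti-equivalences of Lemma~\ref{lem:sfinj-equivalence} applied to $C$ and to $C^{\cop}$, noting $\sfInj^{C^{\cop}} = {}^C\sfInj$ and ${}^{C^{\cop}}\sfInj = \sfInj^C$. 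The duality $(-)^*$ is an anti-equivalence $\Mod^C_{\fd} \to {}^C\Mod_{\fd}$ carrying $\fdPro^C = \Proj(\Mod^C_{\fd})$ onto $\Inj({}^C\Mod_{\fd})$ by Lemma~\ref{lem:fd-proj-comodules}, and sending the projective cover of a simple $S$ to the injective hull of $S^*$. By Lemma~\ref{lem:Nakayama-r-coHom} there is a natural isomorphism $\Nakr_C(M) \cong \coHom^{C^{\cop}}(M^*, C^{\cop}) = D_{C^{\cop}}(M^*)$ for $M \in \Mod^C_{\fd}$, so $\Nakr_C|_{\fdPro^C}$ is naturally isomorphic to the composite $D_{C^{\cop}} \circ (-)^*$. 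Being a composite of two fully faithful anti-functors, this is fully faithful, lands in $\sfInj^C$, and has essential image $D_{C^{\cop}}(\Inj({}^C\Mod_{\fd}))$.

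For (3) $\Rightarrow$ (1): since $\Nakr_C|_{\fdPro^C}$ is always fully faithful, condition (3) amounts to essential surjectivity, i.e. to $D_{C^{\cop}}(\Inj({}^C\Mod_{\fd})) = \sfInj^C$. Applying the anti-equivalence $D_{C^{\cop}}$, this is equivalent to $\Inj({}^C\Mod_{\fd}) = {}^C\sfInj$, i.e. to every injective hull of a simple left $C$-comodule being finite-dimensional. By the standard characterisation of semiperfect coalgebras \cite{MR1786197} applied to $C^{\cop}$, this holds if and only if $C^{\cop}$ is left semiperfect, that is, if and only if $C$ is right semiperfect. (In particular this already yields the equivalence (1) $\Leftrightarrow$ (3).)

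For (1) $\Rightarrow$ (2): under (1) the preceding paragraph shows that ${}^C\sfInj = \Inj({}^C\Mod_{\fd})$ consists of finite-dimensional comodules. Each $E \in \sfInj^C$ is quasi-finite, because $\Hom^C(-,E)$ is exact and $\Hom^C(S',E)$ is finite-dimensional for every simple $S'$ by essentiality and Schur's lemma; hence Lemma~\ref{lem:Nakayama-l-coHom} gives $\Nakl_C(E) \cong D_C(E)^{*\rat}$. Since $D_C(E) \in {}^C\sfInj$ is finite-dimensional, its rational part is the whole dual, so $\Nakl_C(E) \cong D_C(E)^*$ naturally, and thus $\Nakl_C|_{\sfInj^C} \cong (-)^* \circ D_C$ is an equivalence onto $\fdPro^C$, proving (2).

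For (2) $\Rightarrow$ (3) I would argue by adjunction. We always have $\Nakr_C(\fdPro^C) \subseteq \sfInj^C$, and under (2) also $\Nakl_C(\sfInj^C) \subseteq \fdPro^C$, so the adjunction $\Nakr_C \dashv \Nakl_C$ of Lemma~\ref{lem:Nakayama-adjoint} restricts to an adjunction between $\fdPro^C$ and $\sfInj^C$. As $\Nakl_C$ is an equivalence there, its left adjoint $\Nakr_C$ is automatically a quasi-inverse; this gives (3) and simultaneously proves that the two restricted functors are mutually quasi-inverse. The formulas \eqref{eq:Nakayama-inj-hull} then follow by tracking simples: $\Nakr_C(P(S)) \cong D_{C^{\cop}}(P(S)^*) \cong D_{C^{\cop}}(E(S^*))$, which by the last assertion of Lemma~\ref{lem:sfinj-equivalence} is the injective hull of $S^{**} \cong S$, namely $E(S)$; dually $\Nakl_C(E(S)) \cong D_C(E(S))^* \cong E(S^*)^* \cong P(S)$. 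I expect the \emph{main obstacle} to be the rational-part functor $(-)^{*\rat}$ built into $\Nakl_C$: on infinite-dimensional comodules it cannot be inverted directly, which is exactly why (2) $\Rightarrow$ (3) is routed through the adjunction rather than a naive duality argument, and why (1) is invoked in (1) $\Rightarrow$ (2) precisely to force $D_C(E)$ to be finite-dimensional so that the rational part collapses. The secondary hazard is keeping the $C$-versus-$C^{\cop}$ and left-versus-right bookkeeping consistent throughout.
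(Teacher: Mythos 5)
Your proposal is correct and takes essentially the same route as the paper: both factor $\Nakl_C|_{\sfInj^C}$ as $(-)^* \circ \coHom^C(-,C)$ via Lemmas \ref{lem:Nakayama-l-coHom} and \ref{lem:sfinj-equivalence}, identify $\Nakr_C|_{\fdPro^C}$ with $\coHom^{C^{\cop}}((-)^*, C^{\cop})$ via Lemma \ref{lem:Nakayama-r-coHom}, reduce condition (1) to finite-dimensionality of injective hulls of simple left $C$-comodules, and obtain the quasi-inverse statement from the adjunction of Lemma \ref{lem:Nakayama-adjoint}. The differences are purely organizational --- you prove $(1) \Leftrightarrow (3)$ directly by noting that $\Nakr_C|_{\fdPro^C}$ is unconditionally fully faithful with image in $\sfInj^C$ and route $(2) \Rightarrow (3)$ through the adjunction, where the paper proves $(1) \Rightarrow (3)$ by the adjunction and $(2) \Rightarrow (1)$, $(3) \Rightarrow (1)$ separately --- and your explicit verification that objects of $\sfInj^C$ are quasi-finite (needed to invoke Lemma \ref{lem:Nakayama-l-coHom}) is a point the paper leaves implicit.
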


We note that $\Nakl_C$ does not even define a functor from $\sfInj^C$ to $\fdPro^C$ in general (see \cite[Example 3.29]{MR4560996}). The condition (2) of this theorem means that $\Nakl_C(E)$ belongs to $\fdPro^C$ for all objects $E \in \sfInj^C$ and, moreover, the induced functor $\Nakl_C: \sfInj^C \to \fdPro^C$ is an equivalence. The same remark applies to the condition (3).

\begin{proof}
  We first show that (1) implies both (2) and (3).
  For brevity, we write ${}^C\fdInj = \Inj({}^C\Mod_{\fd})$.
  Suppose that (1) holds.
  Then, since every simple left $C$-comodule has a finite-dimensional injective hull \cite[Theorem 3.2.3]{MR1786197}, we have ${}^C\sfInj = {}^C\fdInj$.
  The assignment $X \mapsto X^*$ induces an anti-equivalence between ${}^C\fdInj$ and $\fdPro^C$. By Lemmas~\ref{lem:sfinj-equivalence} and \ref{lem:Nakayama-l-coHom}, we see that the restriction of $\Nakl_C$ to $\sfInj^C$ factors into the composition
  \begin{equation*}
    \Nakl_C : \sfInj^C
    \xrightarrow{\quad \coHom(-,C) \quad} {}^C\sfInj = {}^C\fdInj
    \xrightarrow{\quad (-)^* \quad} \fdPro^C
  \end{equation*}
  of anti-equivalences. Hence (2) holds.
  To show (3), we note that the above argument shows that every object of $\fdPro^C$ is a finite direct sum of objects of the form $(C e)^*$ for some primitive idempotent $e$ for $C$.
  Since we have
  \begin{equation}
    \label{eq:Nakayama-r-of-Ce-dual}
    \Nakr_C((C e)^*)
    = C \otimes_{C^*} (C e)^*
    = C \otimes_{C^*} C^* e
    \cong C e
  \end{equation}
  for a primitive idempotent $e$, the functor $\Nakr_C$ induces a functor from $\fdPro^C$ to $\sfInj^C$.
  It follows from Lemma \ref{lem:Nakayama-adjoint} that the induced functor $\Nakr_C : \fdPro^C \to \sfInj^C$ is left adjoint to $\Nakl_C : \sfInj^C \to \fdPro^C$. As the latter one is an equivalence as we have proved, so is the former one. Namely, (3) holds.

  Next, we prove that (2) implies (1). Suppose that (2) holds.
  Then there is an anti-equivalence $D: {}^C\sfInj \to \fdPro^C$ obtained by composing the quasi-inverse of \eqref{eq:sfinj-equivalence} and the equivalence $\Nakl_C : \sfInj^C \to \fdPro^C$.
  By Lemma \ref{lem:Nakayama-l-coHom}, we have $D(E) = E^{*\rat}$ for $E \in {}^C\sfInj$.
  We now consider the contravariant functor $\overline{D} : \fdPro^C \to {}^C\sfInj$ defined by $\overline{D}(P) = P^*$.
  It is easy to see that $D \circ \overline{D}$ is isomorphic to the identity functor on $\fdPro^C$.
  Since $D$ is an anti-equivalence, so is $\overline{D}$.
  This implies that every object of ${}^C\sfInj$ is finite-dimensional.
  By a characterization of right semiperfect coalgebras \cite[Theorem 3.2.3]{MR1786197}, we conclude that $C$ is semiperfect.

  One can prove that (3) implies (1) in a similar way:
  Suppose that (3) holds.
  Then, by applying Lemma \ref{lem:sfinj-equivalence} to $C^{\cop}$, we obtain an anti-equivalence
  \begin{equation}
    \label{eq:sfinj-equivalence-cop}
    \coHom^{C^{\cop}}(-, C^{\cop}) : {}^C\sfInj \to \sfInj^C
  \end{equation}
  of categories. Let $D: \fdPro^C \to {}^C\sfInj$ be the anti-equivalence obtained by composing the equivalence $\Nakr_C : \fdPro^C \to \sfInj^C$ and the quasi-inverse of \eqref{eq:sfinj-equivalence-cop}. By Lemma \ref{lem:Nakayama-r-coHom}, we have $D(E) \cong E^*$ for $E \in \fdPro^C$.
  This implies that every object of ${}^C\sfInj$ is finite-dimensional.
  Hence $C$ is semiperfect.

  We have proved that (1), (2) and (3) are equivalent.
  To complete the proof, we now assume that these equivalent conditions are satisfied.
  Then $\Nakl_C : \sfInj^C \to \fdPro^C$ and $\Nakr_C : \fdPro \to \sfInj^C$ are mutually quasi-inverse to each other as we have seen in the proof of (1) $\Rightarrow$ (3).
  By Lemmas~\ref{lem:sfinj-equivalence} and \ref{lem:Nakayama-l-coHom}, we have $\Nakl_C(E(S)) \cong E(S^*)^* \cong P(S)$ for simple $S \in \Mod^C$. The latter isomorphism in \eqref{eq:Nakayama-inj-hull} is obtained by applying the equivalence $\Nakr_C : \fdPro \to \sfInj^C$ to the former one. The proof is done.
\end{proof}

For our applications to locally finite abelian categories, it is important to know when the full subcategory $\Mod^C_{\fd}$ is closed under $\Nakl_C$ and $\Nakr_C$.
With the help of the above theorem, we now prove:

\begin{theorem}[{\cite[Theorem 3.15]{MR4560996}}]
  \label{thm:Nakayama-semiperfect}
  Let $C$ be a semiperfect coalgebra. Then the adjunction $\Nakr_C \dashv \Nakl_C$ of Lemma \ref{lem:Nakayama-adjoint} restricts to an adjunction on $\Mod^C_{\fd}$. Moreover, the functors $\Nakl_C$ and $\Nakr_C$ induce equivalences
  \begin{equation}
    \label{eq:Nakayama-semiperfect-equiv}
    \Nakl_C : \fdInj^C \to \fdPro^C
    \text{\quad and \quad}
    \Nakr_C : \fdPro^C \to \fdInj^C,
  \end{equation}
  which are mutually quasi-inverse to each other.
\end{theorem}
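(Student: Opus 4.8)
The plan is to derive both assertions from Theorem~\ref{thm:Nakayama-right-semiperfect}, supplemented by two facts that are special to semiperfect (as opposed to merely right semiperfect) coalgebras: the identification $\sfInj^C = \fdInj^C$, and the closure of $\Mod^C_{\fd}$ under both $\Nakl_C$ and $\Nakr_C$. Granting these, the equivalences \eqref{eq:Nakayama-semiperfect-equiv} become a mere rewriting of the equivalences produced by Theorem~\ref{thm:Nakayama-right-semiperfect}, and the restriction of the adjunction is then purely formal.

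First I would record that $\sfInj^C = \fdInj^C$. By Lemma~\ref{lem:fd-proj-comodules}, a finite-dimensional right $C$-comodule is injective in $\Mod^C_{\fd}$ if and only if it is injective in $\Mod^C$, so $\fdInj^C$ is exactly the class of finite-dimensional objects of $\sfInj^C$; the reverse inclusion amounts to the finite-dimensionality of the injective hull of every simple right $C$-comodule. This holds because $C$, being semiperfect, is in particular left semiperfect, and left semiperfectness is equivalent to that finite-dimensionality by \cite[Theorem 3.2.3]{MR1786197} (the left--right mirror of the fact ${}^C\sfInj = {}^C\fdInj$ used in the proof of Theorem~\ref{thm:Nakayama-right-semiperfect}). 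Since $C$ is also right semiperfect, Theorem~\ref{thm:Nakayama-right-semiperfect} applies and furnishes mutually quasi-inverse equivalences $\Nakl_C : \sfInj^C \to \fdPro^C$ and $\Nakr_C : \fdPro^C \to \sfInj^C$; substituting $\sfInj^C = \fdInj^C$ gives precisely \eqref{eq:Nakayama-semiperfect-equiv}.

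The substantive step is the closure of $\Mod^C_{\fd}$. For $\Nakr_C$ I would use that it is right exact (it has the right adjoint $\Nakl_C$ by Lemma~\ref{lem:Nakayama-adjoint}) together with the fact that right semiperfectness gives enough finite-dimensional projectives, so each $M \in \Mod^C_{\fd}$ admits an epimorphism $P_0 \twoheadrightarrow M$ with $P_0 \in \fdPro^C$. Right exactness yields an epimorphism $\Nakr_C(P_0) \twoheadrightarrow \Nakr_C(M)$, and $\Nakr_C(P_0) \in \sfInj^C = \fdInj^C$ is finite-dimensional by the previous step, hence so is its quotient $\Nakr_C(M)$. Dually, for $\Nakl_C$ I would use left exactness and the existence of an embedding $M \hookrightarrow E^0$ with $E^0 \in \fdInj^C = \sfInj^C$, available since left semiperfectness makes injective hulls of finite-dimensional comodules finite-dimensional; then $\Nakl_C(M)$ embeds into $\Nakl_C(E^0) \in \fdPro^C$, which is finite-dimensional, so $\Nakl_C(M)$ is too.

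Finally, with $\Mod^C_{\fd}$ closed under both functors, the adjunction $\Nakr_C \dashv \Nakl_C$ restricts to $\Mod^C_{\fd}$: for finite-dimensional $M, M'$ the adjunction isomorphism of Lemma~\ref{lem:Nakayama-adjoint} takes values between morphism spaces inside the full subcategory $\Mod^C_{\fd}$, so the unit and counit restrict and the triangle identities persist. This is the general principle that an adjunction restricts to any full subcategories of its source and target preserved by the respective functors. The equivalences \eqref{eq:Nakayama-semiperfect-equiv} are then the restrictions of these functors to projective and injective objects, and their being mutually quasi-inverse is inherited from Theorem~\ref{thm:Nakayama-right-semiperfect}. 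I expect the only genuine friction to lie in the third step, namely verifying the two finiteness inputs (enough finite-dimensional projectives and injectives), which is exactly where semiperfectness on \emph{both} sides is used; the identification of subcategories and the restriction of the adjunction are formal.
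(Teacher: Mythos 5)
Your proposal is correct and follows essentially the same route as the paper's proof: identify $\sfInj^C = \fdInj^C$ from semiperfectness, invoke Theorem~\ref{thm:Nakayama-right-semiperfect} to obtain the mutually quasi-inverse equivalences \eqref{eq:Nakayama-semiperfect-equiv}, and establish closure of $\Mod^C_{\fd}$ under both functors by exactness (the paper carries out the $\Nakl_C$ case via an embedding into a finite-dimensional injective and dismisses the $\Nakr_C$ case as ``a dual argument'', which is precisely your epimorphism from a finite-dimensional projective). The only difference is that you make explicit the formal restriction of the adjunction and the finiteness inputs, which the paper leaves implicit.
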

\begin{proof}
  The semiperfect of $C$ implies $\sfInj^C = \fdInj^C$. Thus, by Theorem \ref{thm:Nakayama-right-semiperfect}, we have well-defined equivalences \eqref{eq:Nakayama-semiperfect-equiv}.
  It remains to show that $\Mod^C_{\fd}$ is closed under $\Nakl_C$ and $\Nakr_C$.
  Let $M$ be a finite-dimensional right $C$-comodule.
  By the semiperfectness of $C$, there are objects $E \in \fdInj^C$ and a monomorphism $M \hookrightarrow E$ of right $C$-comodules. Since $\Nakl_C$ is left exact, $\Nakl_C(M)$ is finite-dimensional as a subcomodule of $\Nakl_C(M) \in \fdPro^C$.
  A dual argument shows that $\Nakr_C$ preserves $\Mod^C_{\fd}$. The proof is done.
\end{proof}

\subsection{Quasi-co-Frobenius coalgebras}

We investigate when $\Nakl_C$ and $\Nakr_C$ are equivalences.
To begin with, we consider the case where $C$ is co-Frobenius.
The definition of a co-Frobenius coalgebra was recalled in Subsection~\ref{subsec:special-classes-of-coalgebras}.
However, we use the following equivalent condition:
According to Iovanov \cite[Theorem 2.8]{MR2253657}, a coalgebra $C$ is co-Frobenius if and only if it has a {\em Frobenius pairing}, that is, a non-degenerate $C^*$-balanced bilinear map $C \times C \to \bfk$. The following definition is due to \cite[Section 6]{MR2078404}.

\begin{definition}
  Let $C$ be a co-Frobenius coalgebra with Frobenius pairing $\beta$. Then there is a unique linear map $\nu_C : C \to C$ determined by
  \begin{equation*}
    \beta(y, x) = \beta(\nu_C(x), y)
    \quad (x, y \in C),
  \end{equation*}
  which we call the {\em Nakayama automorphism} of $C$ with respect to $\beta$.
\end{definition}

It is known that $\nu_C$ is a coalgebra automorphism \cite[Lemma 3.18]{MR4560996}.
Given a coalgebra automorphism $f$ of $C$ and a right $C$-comodule $M$, we denote by $M^{(f)}$ the right $C$-comodule obtained by twisting the coaction by $f$.
The Nakayama functor for a co-Frobenius coalgebra is given as follows:

\begin{theorem}[{\cite[Lemma 3.20]{MR4560996}}]
  \label{thm:Nakayama-formula-co-Frobenius}
  For a co-Frobenius coalgebra $C$ with Nakayama automorphism $\nu$, there are natural isomorphisms
  \begin{equation*}
    \Nakl_C(M) \cong M^{(\nu^{-1})}
    \quad \text{and} \quad
    \Nakr_C(M) \cong M^{(\nu)}
    \quad (M \in \Mod^C).
  \end{equation*}
\end{theorem}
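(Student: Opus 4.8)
The plan is to prove the statement for $\Nakr_C$ and then deduce the one for $\Nakl_C$ formally. Twisting by a coalgebra automorphism is an autoequivalence of $\Mod^C$ whose quasi-inverse is twisting by the inverse automorphism, and a quasi-inverse is in particular a right adjoint; hence $(-)^{(\nu^{-1})}$ is right adjoint to $(-)^{(\nu)}$. Since $\Nakl_C$ is right adjoint to $\Nakr_C$ by Lemma~\ref{lem:Nakayama-adjoint}, once $\Nakr_C \cong (-)^{(\nu)}$ is established the isomorphism $\Nakl_C \cong (-)^{(\nu^{-1})}$ follows from the uniqueness of adjoints up to natural isomorphism.

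To handle $\Nakr_C$, I would first recall from the theory of the Frobenius pairing (Iovanov \cite{MR2253657}) that $\theta(c) = \beta(c, -)$ defines an isomorphism $\theta : C \to C^{*\rat}$ of right $C^*$-modules: injectivity is the non-degeneracy of $\beta$, the image is rational since $C$ is rational as a right $C^*$-module, and the module identity $\theta(c \leftharpoonup c^*) = \theta(c)\, c^*$ is exactly the $C^*$-balancedness $\beta(c \leftharpoonup c^*, d) = \beta(c, c^* \rightharpoonup d)$. Because a co-Frobenius coalgebra is QcF and hence semiperfect, Lemma~\ref{lem:local-unit-2} supplies the natural isomorphism $\kappa_M : C^{*\rat} \otimes_{C^*} M \to M$. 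Composing these yields a natural linear isomorphism
\[
  \Psi_M := \kappa_M \circ (\theta \otimes_{C^*} \id_M) : \Nakr_C(M) = C \otimes_{C^*} M \longrightarrow M,
  \qquad \Psi_M(c \otimes_{C^*} m) = m_{(0)}\, \beta(c, m_{(1)}),
\]
whose bijectivity and naturality are immediate from those of $\theta$ and $\kappa_M$.

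The heart of the proof is to verify that $\Psi_M$ is an isomorphism of comodules onto $M^{(\nu)}$, i.e. that it intertwines the coaction $c \otimes_{C^*} m \mapsto (c_{(1)} \otimes_{C^*} m)\otimes c_{(2)}$ of $\Nakr_C(M)$ with the $\nu$-twisted coaction $m \mapsto m_{(0)} \otimes \nu(m_{(1)})$ of $M^{(\nu)}$. Unwinding both sides, this amounts to the identity
\[
  m_{(0)}\, \beta(c_{(1)}, m_{(1)}) \otimes c_{(2)}
  = m_{(0)} \otimes \nu(m_{(1)})\, \beta(c, m_{(2)})
  \quad \text{in } M \otimes C.
\]
I would prove it by pairing the second tensor leg with an arbitrary $c^* \in C^*$ (which separates the points of $C$): the left side becomes $m_{(0)}\,\beta(c^* \rightharpoonup c, m_{(1)})$, while the right side, after rewriting $\langle c^*, \nu(m_{(1)})\rangle\, m_{(2)} = m_{(1)} \leftharpoonup (c^* \circ \nu)$, becomes $m_{(0)}\,\beta(c, m_{(1)} \leftharpoonup (c^* \circ \nu))$. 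The defining relation $\beta(y, x) = \beta(\nu(x), y)$ of the Nakayama automorphism, together with the balancedness of $\beta$ and the fact that $\nu$ is a coalgebra map, yields the auxiliary identity $\beta(x, y \leftharpoonup c^*) = \beta((c^* \circ \nu^{-1}) \rightharpoonup x, y)$; applying it with $c^*$ replaced by $c^* \circ \nu$ turns the right side into the left side.

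This last identity is the main obstacle: all the bookkeeping among the two legs of $\Delta$, the two one-sided $C^*$-actions on $C$, and the automorphism $\nu$ must be aligned correctly, and it is precisely here that the twist by $\nu$ (rather than by $\nu^{-1}$ or the identity) is forced. Once it is in hand, $\Psi_M$ is the desired natural isomorphism $\Nakr_C \cong (-)^{(\nu)}$, and the proof is complete.
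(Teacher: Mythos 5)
Your proposal is correct and follows essentially the same route as the paper's proof: both compose the bijection $c \mapsto \beta(c,-)$ from $C$ onto $C^{*\rat}$ with the local-units isomorphism of Lemma~\ref{lem:local-unit-2} to identify $\Nakr_C(M)$ with $M$ as a vector space, transport the coaction to obtain the $\nu$-twist, and deduce the $\Nakl_C$ formula from the adjunction of Lemma~\ref{lem:Nakayama-adjoint}. Your explicit verification of the intertwining identity (via pairing the $C$-leg with $c^* \in C^*$ and the auxiliary identity $\beta(x, y \leftharpoonup c^*) = \beta((c^* \circ \nu^{-1}) \rightharpoonup x, y)$, which I checked is a correct consequence of $\beta(y,x)=\beta(\nu(x),y)$, balancedness, and $\nu$ being a coalgebra map) simply fills in the step the paper summarizes as ``transporting the coaction.''
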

\begin{proof}
  Let $\beta : C \times C \to \bfk$ be the Frobenius pairing of $C$. Then the map
  \begin{equation*}
    \beta_{\ell} : C \to C^{*\rat},
    \quad c \mapsto \beta(c, -)
  \end{equation*}
  is bijective \cite[Section 1]{MR2078404}. Thus we obtain an isomorphism
  \begin{equation*}
    \Nakr_C(M)
    = C \otimes_{C^*} M
    \xrightarrow[\cong]{\quad \beta_{\ell} \otimes_{C^*} \id_M \quad}
    C^{*\rat} \otimes_{C^*} M
    \xrightarrow[\cong]{\quad \text{Lemma~\ref{lem:local-unit-2}} \quad} M
  \end{equation*}
  of vector spaces. The formula for $\Nakr_C(M)$ is obtained by transporting the coaction of $C$ through this isomorphism.
  The formula for $\Nakl_C(M)$ follows from that $\Nakl_C$ is right adjoint to $\Nakr_C$.
\end{proof}

In particular, $\Nakl_C$ and $\Nakr_C$ are equivalences if $C$ is co-Frobenius.
The converse does not holds: Let $C$ be a QcF coalgebra that is not co-Frobenius (see \cite{MR3125851} for a concrete example).
Then there is a co-Frobenius coalgebra $D$ that is Morita-Takeuchi equivalent to $C$ \cite{MR1904645}. By Lemma \ref{lem:Nakayama-MT-invariance} and Theorem \ref{thm:Nakayama-formula-co-Frobenius}, we see that $\Nakl_C$ and $\Nakr_C$ are equivalences, despite that $C$ is not co-Frobenius.

The above argument shows that $\Nakl_C$ and $\Nakr_C$ are equivalences if $C$ is QcF. With the help of Iovanov's result \cite[Lemma 1.7]{MR3150709} characterizing QcF coalgebras, one can prove that the converse holds. More specifically,

\begin{theorem}[{\it cf}. {\cite[Theorems 3.22 and 3.23]{MR4560996}}]
  \label{thm:Nakayama-QcF}
  For a coalgebra $C$, the following are equivalent:
  \begin{enumerate}
  \item $C$ is QcF.
  \item $\Nakl_C$ is an equivalence.
  \item $\Nakr_C$ is an equivalence.
  \item $\Nakr_C$ is exact and faithful.
  \end{enumerate}
  Suppose that these equivalent conditions are satisfied.
  Then we have
  \begin{equation}
    \label{eq:Nakayama-inj-hull-2}
    P(S) \cong E(\Nakl_C(S))
    \quad \text{and} \quad
    E(S) \cong P(\Nakr_C(S))
  \end{equation}
  for a simple right $C$-comodule $S$, where $E(S)$ and $P(S)$ are an injective hull and a projective cover of $S$, respectively.
\end{theorem}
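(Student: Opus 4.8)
The plan is to verify the cycle of implications $(1) \Rightarrow (3) \Rightarrow (4) \Rightarrow (1)$, to obtain $(2) \Leftrightarrow (3)$ for free from the adjunction, and finally to read off the isomorphisms \eqref{eq:Nakayama-inj-hull-2} from the fact that equivalences preserve all categorical structure.

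The implication $(1) \Rightarrow (3)$ (and hence $(1) \Rightarrow (2)$) is already supplied by the discussion preceding the theorem: a co-Frobenius coalgebra has invertible $\Nakr_C$ by Theorem~\ref{thm:Nakayama-formula-co-Frobenius}, and a general QcF coalgebra is Morita--Takeuchi equivalent to a co-Frobenius one, so that Lemma~\ref{lem:Nakayama-MT-invariance} transports the conclusion. For $(2) \Leftrightarrow (3)$ I would simply invoke Lemma~\ref{lem:Nakayama-adjoint}: in an adjunction $\Nakr_C \dashv \Nakl_C$, one functor is an equivalence exactly when the other is, in which case they are mutually quasi-inverse. Finally $(3) \Rightarrow (4)$ is immediate, since any equivalence is exact and faithful.

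The substantive step is $(4) \Rightarrow (1)$, and here the key is to dualize the natural isomorphism \eqref{eq:Nakayama-dual}, which reads $\Nakr_C(M)^* \cong \Hom^C(M, C^{*\rat}_{\ell})$. Since $(-)^*$ is exact and faithful, the functor $\Nakr_C$ is exact and faithful if and only if $\Hom^C(-, C^{*\rat}_{\ell})$ is, and I claim this happens precisely when $C^{*\rat}_{\ell}$ is an injective cogenerator of $\Mod^C$. Exactness of $\Hom^C(-, C^{*\rat}_{\ell})$ is equivalent to injectivity of $C^{*\rat}_{\ell}$ by Lemma~\ref{lem:Nakayama-C-star-rat-inj}. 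For the cogenerator condition I would argue through simple comodules: faithfulness of $\Nakr_C$ forces $\Nakr_C(S) \neq 0$, hence $\Hom^C(S, C^{*\rat}_{\ell}) \neq 0$ and $S \hookrightarrow C^{*\rat}_{\ell}$, for every simple $S$; conversely, for an injective object the contravariant Hom into it is faithful as soon as that object cogenerates. It then remains to cite Iovanov's characterization \cite[Lemma~1.7]{MR3150709}, which identifies the property that $C^{*\rat}_{\ell}$ is an injective cogenerator with $C$ being QcF. I expect this translation, together with the exact form of the cited characterization, to be the main obstacle: one must be careful that faithfulness is used precisely to produce the cogenerator property while injectivity of $C^{*\rat}_{\ell}$ is precisely what Lemma~\ref{lem:Nakayama-C-star-rat-inj} provides, and that the sidedness conventions match those of \cite{MR3150709} and \cite{MR1786197}.

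For the final isomorphisms, note that a QcF coalgebra is semiperfect, so Theorem~\ref{thm:Nakayama-semiperfect} applies and, via Theorem~\ref{thm:Nakayama-right-semiperfect}, yields \eqref{eq:Nakayama-inj-hull}, namely $\Nakr_C(P(S)) \cong E(S)$ and $\Nakl_C(E(S)) \cong P(S)$ for simple $S$. On the other hand, $\Nakr_C$ and $\Nakl_C$ are now equivalences of $\Mod^C$, hence preserve simple objects, projective covers and injective hulls; in particular $\Nakr_C(P(S)) \cong P(\Nakr_C(S))$ and $\Nakl_C(E(S)) \cong E(\Nakl_C(S))$. Comparing these with the two isomorphisms of \eqref{eq:Nakayama-inj-hull} gives $E(S) \cong P(\Nakr_C(S))$ and $P(S) \cong E(\Nakl_C(S))$, which are exactly \eqref{eq:Nakayama-inj-hull-2}.
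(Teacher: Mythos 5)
Your proposal is correct and follows the paper's proof essentially step for step: $(1)\Rightarrow(2),(3)$ via Theorem~\ref{thm:Nakayama-formula-co-Frobenius} and Morita--Takeuchi invariance, $(2)\Leftrightarrow(3)$ from the adjunction, $(3)\Rightarrow(4)$ trivially, $(4)\Rightarrow(1)$ by combining Lemma~\ref{lem:Nakayama-C-star-rat-inj} (exactness gives injectivity of $C^{*\rat}_{\ell}$) with faithfulness (giving $\Hom^C(S, C^{*\rat}_{\ell}) \cong \Nakr_C(S)^* \ne 0$ for every simple $S$) and Iovanov's result, and the final isomorphisms exactly as intended, from \eqref{eq:Nakayama-inj-hull} together with the fact that the equivalences $\Nakr_C$ and $\Nakl_C$ preserve simples, projective covers and injective hulls. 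The only discrepancy, which you yourself flagged, is the form in which \cite[Lemma~1.7]{MR3150709} is invoked: the paper does not use an ``injective cogenerator'' criterion but instead decomposes $C \cong \bigoplus_{i} E(S_i)^{\oplus a_i}$ and $C^{*\rat}_{\ell} \cong \bigoplus_{i} E(S_i)^{\oplus b_i}$ with all $a_i, b_i > 0$ and equalizes multiplicities with an infinite cardinal $\kappa$ to obtain $C^{\oplus \kappa} \cong (C^{*\rat}_{\ell})^{\oplus \kappa}$, the weak-isomorphism form in which Iovanov's lemma is actually applied---a routine extra step your sketch would need to make explicit.
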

\begin{proof}
  We have already proved (1) $\Rightarrow$ (2) and (1) $\Rightarrow$ (3).
  Since $\Nakr_C$ and $\Nakl_C$ are an adjoint pair, (2) and (3) are equivalent.
  It is trivial that (3) implies (4).
  Now we suppose that (4) holds and aim to show (1).
  Let $\{ S_i \}_{i \in I}$ be a complete set of representatives of isomorphism classes of simple right $C$-comodules.
  The right $C$-comodule $C$ is injective.
  By Lemma \ref{lem:Nakayama-C-star-rat-inj} and the assumption that $\Nakr_C$ is exact, $C^{*\rat}_{\ell}$ is also injective.
  Thus $C$ and $C^{*\rat}_{\ell}$ are decomposed as
  \begin{equation*}
    C \cong \bigoplus_{i \in I} E(S_i)^{\oplus a_i}
    \quad \text{and} \quad
    C^{*\rat}_{\ell} \cong \bigoplus_{i \in I} E(S_i)^{\oplus b_i}
  \end{equation*}
  for some cardinals $a_i$ and $b_i$.
  Since $\Hom^C(S_i, C) \cong S_i^* \ne 0$, we have $a_i > 0$ for all $i$.
  By \eqref{eq:Nakayama-dual} and the assumption that $\Nakr_C$ is faithful, we have
  \begin{equation*}
    \Hom^C(S_i, C^{*\rat}_{\ell})
    \cong \Nakr_C(S_i)^* \ne 0
  \end{equation*}
  for all $i$. This implies that $b_i > 0$ for all $i$.
  Now let $\kappa$ be an infinite cardinal bigger than all $a_i$'s and all $b_i$'s.
  Then we have $C^{\oplus \kappa} \cong (C^{*\rat}_{\ell})^{\oplus \kappa}$ by the above discussion.
  Thus, by \cite[Lemma 1.7]{MR3150709}, we conclude that $C$ is QcF, that is, (1) holds.
  We have proved that (1), (2), (3) and (4) are equivalent.
  The rest of the proof easily follows from Theorem \ref{thm:Nakayama-semiperfect}.
\end{proof}

Let $C$ be a QcF coalgebra, and let $\{ S_i \}_{i \in I}$ be a complete set of representatives of isomorphism classes of simple right $C$-comodules.
Then there is a permutation $\pi$ of the index set $I$ defined by $P(S_i) \cong E(S_{\pi(i)})$.
Following the terminology used in the representation theory of finite-dimensional algebras, we may call $\pi$ the {\em Nakayama permutation} of $C$.
According to \cite[Corollary 2.3]{MR3150709}, a QcF coalgebra is co-Frobenius if and only if its Nakayama permutation preserves the dimension of simple comodules.
Theorem \ref{thm:Nakayama-QcF} says that the Nakayama permutation is induced by the Nakayama functor $\Nakl_C$. Thus we have:

\begin{theorem}[{\cite[Theorem 3.24]{MR4560996}}]
  \label{thm:Nakayama-cF}
  For a QcF coalgebra $C$, the following are equivalent:
  \begin{enumerate}
  \item $C$ is co-Frobenius.
  \item $\dim_{\bfk} \Nakl_C(S) = \dim_{\bfk} S$ for all simple right $C$-comodule $S$.
  \item $\dim_{\bfk} \Nakr_C(S) = \dim_{\bfk} S$ for all simple right $C$-comodule $S$.
  \end{enumerate}
\end{theorem}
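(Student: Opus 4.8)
The plan is to reduce all three conditions to a single combinatorial statement about the Nakayama permutation $\pi$ and then invoke the cited characterization of co-Frobenius coalgebras. Throughout I fix a complete set $\{ S_i \}_{i \in I}$ of representatives of the isomorphism classes of simple right $C$-comodules, and I use the Nakayama permutation $\pi$ of $I$, defined by $P(S_i) \cong E(S_{\pi(i)})$, together with the external fact \cite[Corollary 2.3]{MR3150709} that a QcF coalgebra is co-Frobenius if and only if $\pi$ preserves the dimensions of the simple comodules.

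First I would establish $(1) \Leftrightarrow (2)$. Since $C$ is QcF, Theorem \ref{thm:Nakayama-QcF} tells us that $\Nakl_C$ is an equivalence of the abelian category $\Mod^C$; as an equivalence preserves simple objects, $\Nakl_C$ permutes the classes $\{ S_i \}_{i \in I}$, so let $\sigma$ be the permutation of $I$ determined by $\Nakl_C(S_i) \cong S_{\sigma(i)}$. Combining the isomorphism $P(S_i) \cong E(\Nakl_C(S_i))$ from \eqref{eq:Nakayama-inj-hull-2} with the definition of $\pi$ yields $E(S_{\pi(i)}) \cong P(S_i) \cong E(S_{\sigma(i)})$, and since injective hulls of pairwise non-isomorphic simples are non-isomorphic, I conclude $\sigma = \pi$. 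Consequently $\dim_{\bfk} \Nakl_C(S_i) = \dim_{\bfk} S_{\pi(i)}$, so condition (2) is precisely the assertion that $\pi$ preserves the dimensions of simple comodules; by \cite[Corollary 2.3]{MR3150709} this is equivalent to $C$ being co-Frobenius.

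For $(2) \Leftrightarrow (3)$ I would treat $\Nakr_C$ identically: it is an equivalence by Theorem \ref{thm:Nakayama-QcF}, hence induces a permutation $\tau$ with $\Nakr_C(S_i) \cong S_{\tau(i)}$, and the second isomorphism $E(S_i) \cong P(\Nakr_C(S_i))$ of \eqref{eq:Nakayama-inj-hull-2} gives $E(S_i) \cong P(S_{\tau(i)}) \cong E(S_{\pi(\tau(i))})$, so that $\pi \circ \tau = \id$ and $\tau = \pi^{-1}$ (equivalently, $\Nakr_C$ is quasi-inverse to $\Nakl_C$). Thus condition (3) asserts that $\pi^{-1}$ preserves simple dimensions, which holds if and only if $\pi$ does, giving $(2) \Leftrightarrow (3)$. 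Since essentially all of the substance is already packaged into Theorem \ref{thm:Nakayama-QcF} and \cite[Corollary 2.3]{MR3150709}, I do not expect a genuine obstacle; the one point demanding care is the clean identification $\sigma = \pi$ (and $\tau = \pi^{-1}$), namely verifying that the permutation of simples induced by the Nakayama functor really coincides with the Nakayama permutation defined through projective covers and injective hulls.
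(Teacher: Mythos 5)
Your proposal is correct and takes essentially the same route as the paper: both reduce all three conditions to Iovanov's criterion \cite[Corollary 2.3]{MR3150709} via the Nakayama permutation $\pi$, identifying $\pi$ with the permutation induced by $\Nakl_C$ on simples through the isomorphisms \eqref{eq:Nakayama-inj-hull-2} of Theorem \ref{thm:Nakayama-QcF}. Your explicit verifications that $\sigma = \pi$ and $\tau = \pi^{-1}$ (using that injective hulls of non-isomorphic simples are non-isomorphic, and that $\pi$ preserves simple dimensions if and only if $\pi^{-1}$ does) simply make rigorous the details the paper leaves implicit in the sentence preceding the theorem.
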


\subsection{Symmetric coalgebras}

A coalgebra $C$ is said to be {\em symmetric} \cite{MR2078404} if there is an injective homomorphism $C \to C^*$ of $C^*$-bimodules.
This class of coalgebras is characterized by the Nakayama functor as follows:

\begin{theorem}[{\cite[Theorem 3.27]{MR4560996}}]
  \label{thm:Nakayama-symm-coalg}
  For a coalgebra $C$, the following are equivalent:
  \begin{enumerate}
  \item $C$ is a symmetric coalgebra.
  \item $\Nakl_C$ is isomorphic to the identity functor.
  \item $\Nakr_C$ is isomorphic to the identity functor.
  \end{enumerate}
\end{theorem}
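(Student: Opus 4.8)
The plan is to prove the implications $(2)\Leftrightarrow(3)$, $(1)\Rightarrow(3)$ and $(3)\Rightarrow(1)$, relying on the structure theory for co-Frobenius and QcF coalgebras already assembled in this section.

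The equivalence $(2)\Leftrightarrow(3)$ I would dispatch first, as it is purely formal. By Lemma~\ref{lem:Nakayama-adjoint}, $\Nakl_C$ is right adjoint to $\Nakr_C$. The identity functor is its own adjoint on both sides, and adjoints are unique up to natural isomorphism; hence $\Nakr_C\cong\id$ forces its right adjoint $\Nakl_C$ to be isomorphic to $\id$, and dually $\Nakl_C\cong\id$ forces its left adjoint $\Nakr_C$ to be isomorphic to $\id$.

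For $(1)\Rightarrow(3)$ I would first observe that a symmetric coalgebra is co-Frobenius: an injective $C^*$-bimodule map $\phi\colon C\to C^*$ is in particular an injective left $C^*$-module map with image in $C^{*\rat}$, so it furnishes a Frobenius pairing $\beta(x,y)=\langle\phi(x),y\rangle$. The real point is to turn the two-sided $C^*$-linearity of $\phi$ into a \emph{symmetric} Frobenius pairing, equivalently one whose Nakayama automorphism $\nu$ equals $\id$ (indeed $\nu=\id$ holds precisely when $\beta(x,y)=\beta(y,x)$). This is the coalgebra analogue of the fact that a bimodule isomorphism $A\to A^*$ produces a symmetrizing trace, and it is here that the local units of Lemma~\ref{lem:local-unit-2} take over the role played by the unit of a finite-dimensional algebra. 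Once $\nu=\id$, Theorem~\ref{thm:Nakayama-formula-co-Frobenius} gives $\Nakr_C(M)\cong M^{(\nu)}=M$ naturally, which is $(3)$.

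The substantial direction is $(3)\Rightarrow(1)$, and this is where I expect the main obstacle. If $\Nakr_C\cong\id$, then $\Nakr_C$ is an equivalence, so $C$ is QcF by Theorem~\ref{thm:Nakayama-QcF}; since $\Nakr_C$ visibly preserves the dimension of every simple comodule, Theorem~\ref{thm:Nakayama-cF} upgrades this to $C$ being co-Frobenius. Fixing a Frobenius pairing with Nakayama automorphism $\nu$, Theorem~\ref{thm:Nakayama-formula-co-Frobenius} identifies $\Nakr_C$ with the twist functor $(-)^{(\nu)}$, so the hypothesis produces a natural isomorphism $(-)^{(\nu)}\cong\id$ of endofunctors of $\Mod^C$. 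The difficulty is that this does \emph{not} literally force $\nu=\id$; what I must do instead is manufacture a genuine symmetric pairing, i.e.\ a $C^*$-bimodule isomorphism $C\cong C^{*\rat}\subset C^*$. I would obtain this by regarding $\Nakr_C=C\otimes_{C^*}(-)$ as the functor of tensoring with the $C$-bicomodule $C$, writing $\id\cong C^{*\rat}\otimes_{C^*}(-)$ via Lemma~\ref{lem:local-unit-2}, and invoking an Eilenberg--Watts-type rigidity: a natural isomorphism between two such tensoring functors that respects the residual left-comodule structure must descend from a bicomodule isomorphism of the coefficient objects, yielding $C\cong C^{*\rat}$ as $C^*$-bimodules. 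Carefully tracking the comodule structures through this identification, and once again absorbing the non-unitality of $C^{*\rat}$ into local units, is the technical heart of the argument; the resulting bimodule isomorphism is, by definition, exactly the datum exhibiting $C$ as a symmetric coalgebra.
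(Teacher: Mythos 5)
Your reduction scaffolding is the same as the paper's: (2)$\Leftrightarrow$(3) by uniqueness of adjoints via Lemma~\ref{lem:Nakayama-adjoint}, and in (3)$\Rightarrow$(1) the passage $\Nakr_C \cong \id$ $\Rightarrow$ QcF $\Rightarrow$ co-Frobenius via Theorems~\ref{thm:Nakayama-QcF} and \ref{thm:Nakayama-cF}, followed by the identification $\Nakr_C \cong (-)^{(\nu)}$ of Theorem~\ref{thm:Nakayama-formula-co-Frobenius}. But at the crux you are missing the one idea the paper actually turns on: a coalgebra automorphism $f$ of $C$ is \emph{coinner} (i.e.\ $f(c) = \alpha \rightharpoonup c \leftharpoonup \alpha^{-1}$ for an invertible $\alpha \in C^*$) precisely when the twist autoequivalence $(-)^{(f)}$ on $\Mod^C$ is isomorphic to the identity, and by \cite[Proposition 6.2]{MR2078404} a symmetric coalgebra is precisely a co-Frobenius coalgebra with coinner Nakayama automorphism. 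With these two facts both hard implications are immediate: your correct observation that $(-)^{(\nu)} \cong \id$ ``does not literally force $\nu = \id$'' is beside the point, because $(-)^{(\nu)} \cong \id$ \emph{is}, verbatim, the statement that $\nu$ is coinner, which is all that symmetry requires; and in (1)$\Rightarrow$(3) there is no need to manufacture a pairing with $\nu = \id$ at all.

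The steps you offer in place of this are genuinely gapped. For (3)$\Rightarrow$(1), the ``Eilenberg--Watts-type rigidity'' is neither proved nor correctly formulated: you ask that the natural isomorphism ``respect the residual left-comodule structure,'' but hypothesis (3) gives only an isomorphism of plain endofunctors of $\Mod^C$, and such extra compatibility is not yours to assume; moreover the rigidity principle is false for comodule categories in general --- if $C^{*\rat}_{\ell} = 0$ then $C \otimes_{C^*} (-)$ is the zero functor on $\Mod^C$ (Lemma~\ref{lem:Nakayama-C-star-rat-zero}), so non-isomorphic coefficient bicomodules can induce isomorphic functors, and any rigidity must be extracted from the semiperfectness available at that stage. (It can be: since $C$ is by then co-Frobenius, hence semiperfect, $C^{*\rat} := C^{*\rat}_{\ell} = C^{*\rat}_{r}$ is an object of $\Mod^C$ whose right multiplications $x \mapsto x c^*$ are $\Mod^C$-endomorphisms; evaluating $\Nakr_C \cong \id$ at $M = C^{*\rat}$, identifying $C \otimes_{C^*} C^{*\rat} \cong C$ by the right-module analogue of Lemma~\ref{lem:local-unit-2}, and invoking naturality against those right multiplications yields an injective left- and right-$C^*$-linear map $C \to C^*$ --- but this argument, which is the announced ``technical heart,'' is exactly what your proposal does not supply.) Likewise in (1)$\Rightarrow$(3) you assert without proof that local units upgrade the bimodule map $\phi: C \to C^*$ to a Frobenius pairing with $\nu = \id$; note in addition that injectivity of $\phi$ gives only one-sided nondegeneracy, so even co-Frobeniusness of a symmetric coalgebra needs the results of \cite{MR2078404} rather than the pairing $\beta(x,y) = \langle \phi(x), y\rangle$ as it stands. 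So the architecture is sound and repairable, but both substantive implications currently rest on unexecuted claims that the paper instead settles with the coinner characterization.
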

\begin{proof}
  A coalgebra automorphism $f$ of $C$ is said to be {\em coinner} if there is an invertible element $\alpha \in C^*$ such that $f(c) = \alpha \rightharpoonup c \leftharpoonup \alpha^{-1}$ for all $c \in C$.
  From a categorical point of view, a coalgebra automorphism $f$ of $C$ is coinner precisely if the autoequivalence $(-)^{(f)}$ on $\Mod^C$ induced by $f$ is isomorphic to the identity functor.
  It is known that a symmetric coalgebra is precisely a co-Frobenius coalgebra with coinner Nakayama automorphism \cite[Proposition 6.2]{MR2078404}. Thus (1) $\Rightarrow$ (2) and (1) $\Rightarrow$ (3) follow immediately from Theorem \ref{thm:Nakayama-formula-co-Frobenius}.

  It is obvious from Lemma \ref{lem:Nakayama-adjoint} that (2) and (3) are equivalent.
  To complete the proof, we prove (2) $\Rightarrow$ (1).
  We assume that (2) holds.
  Then, by Theorems \ref{thm:Nakayama-QcF} and \ref{thm:Nakayama-cF}, $C$ is co-Frobenius. Moreover, by Theorem \ref{thm:Nakayama-formula-co-Frobenius}, the Nakayama automorphism of $C$ is coinner. Thus $C$ is symmetric. The proof is done.
\end{proof}

\subsection{A Calabi-Yau type structure}

In some literature, a Calabi-Yau structure on a linear category $\mathcal{A}$ refers to a natural isomorphism
\begin{equation*}
  \Hom_{\mathcal{A}}(X, Y)^* \cong \Hom_{\mathcal{A}}(Y, X)
  \quad (X, Y \in \mathcal{A})
\end{equation*}
or an equivalent structure.
Here we point out that $\Proj(\Mod^C_{\fd})$ for a semiperfect coalgebra $C$ has a similar kind of structure:

\begin{theorem}
  \label{thm:Calabi-Yau-1}
  For a semiperfect coalgebra $C$, there is a natural isomorphism
  \begin{equation*}
    \Hom^C(P, M)^* \cong \Hom^C(M, \Nakr_C(P))
    \quad (P \in \Proj(\Mod^C_{\fd}), M \in \Mod^C_{\fd}).
  \end{equation*}
\end{theorem}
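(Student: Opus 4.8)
The plan is to rewrite both sides of the claimed isomorphism through the coHom functor and then to compare them with $\Nakr_C$ via an \emph{explicit} natural map, invoking projectivity only at the very last step. First I would treat the left-hand side, where no projectivity is needed. Since $M$ is finite-dimensional it is quasi-finite, so \eqref{eq:coHom-formula-1} gives a natural isomorphism $\coHom^C(M,P)^* \cong \Hom^C(P,M)$, while \eqref{eq:coHom-for-fd} identifies $\coHom^C(M,P) \cong M^* \otimes_{C^*} P$, which is finite-dimensional because $M^*$ and $P$ are. Dualizing the former and using reflexivity of finite-dimensional spaces yields a natural isomorphism $\Hom^C(P,M)^* \cong \coHom^C(M,P) \cong M^* \otimes_{C^*} P$, natural in both $M$ and $P$. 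Thus the theorem reduces to producing a natural isomorphism $M^* \otimes_{C^*} P \cong \Hom^C(M, \Nakr_C(P))$, recalling that $\Nakr_C(P) = C \otimes_{C^*} P$.

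Next I would write down the comparison map explicitly. Define $\Psi \colon M^* \otimes_{C^*} P \to \Hom^C(M, \Nakr_C(P))$ by $\Psi(\mu \otimes_{C^*} p)(m) = \langle \mu, m_{(0)}\rangle\, m_{(1)} \otimes_{C^*} p$ for $\mu \in M^*$, $p \in P$ and $m \in M$. A routine computation in Sweedler notation shows that $\Psi(\mu \otimes_{C^*} p)$ is a morphism of right $C$-comodules with respect to the coaction on $\Nakr_C(P)$ from Definition~\ref{def:Nakayama}, that $\Psi$ is well-defined over the relative tensor product (the $C^*$-balancing on $C$ matching the right $C^*$-action on $M^*$), and that $\Psi$ is natural in $M$ and $P$. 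Because naturality is built into $\Psi$ from the outset, it remains only to prove that $\Psi$ is bijective.

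Finally I would establish bijectivity by reduction to projective generators, which is where the hypotheses enter. As $C$ is semiperfect, $\Nakr_C(P)$ lies in $\fdInj^C$ by Theorem~\ref{thm:Nakayama-semiperfect}, so both the source and target of $\Psi$ are finite-dimensional and additive in $P$; by naturality it therefore suffices to check that $\Psi$ is an isomorphism on a family of additive generators of $\fdPro^C$. By the proof of Theorem~\ref{thm:Nakayama-right-semiperfect}, right semiperfectness of $C$ guarantees that every object of $\fdPro^C$ is a finite direct sum of comodules of the form $(Ce)^*$ with $e$ a primitive idempotent. For such $P = (Ce)^*$ one has $M^* \otimes_{C^*} (Ce)^* \cong M^* \otimes_{C^*} C^* e \cong M^* e \cong (eM)^*$ on the source side, whereas $\Nakr_C((Ce)^*) \cong Ce$ by \eqref{eq:Nakayama-r-of-Ce-dual}, and so $\Hom^C(M, \Nakr_C((Ce)^*)) \cong \Hom^C(M, Ce) \cong (eM)^*$ by \eqref{eq:idempotent-iso} with $W = \bfk$. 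A direct computation then identifies $\Psi$ with the identity of $(eM)^*$ under these isomorphisms, proving that $\Psi$ is bijective.

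The main obstacle I anticipate is precisely this last bijectivity check: the two reductions pin both sides to $(eM)^*$, but one must confirm that the explicit $\Psi$ realizes that identification rather than, say, the zero map, and this is exactly the point at which projectivity of $P$ is indispensable (for a non-projective $P$ the map $\Psi$ need not be invertible). Keeping $\Psi$ explicit, instead of merely matching dimensions through abstract isomorphisms, is what allows the objectwise computation on the generators $(Ce)^*$ to be upgraded to a genuine natural isomorphism in both variables.
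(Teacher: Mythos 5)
Your proposal is correct, and it reaches the theorem by a route that differs from the paper's in its constructive mechanism, though the two proofs converge on the identical endgame. The paper fixes $P$ and exploits the projectivity of $P$ as a left $C^*$-module \emph{from the outset}: it invokes the dual basis lemma to get $\theta_{P,M}: P^{\dagger} \otimes_{C^*} M \cong {}_{C^*}\Hom(P,M)$ with $P^{\dagger} = {}_{C^*}\Hom(P, C^*)$, uses an evaluation map $\phi_P$ to assemble a trace-like bilinear pairing $\beta_{M,P}$ on $\Hom^C(M, \Nakr_C(P)) \times \Hom^C(P, M)$, and proves non-degeneracy by reduction to $P = (Ce)^*$, where the pairing becomes the canonical one between $eM$ and $(eM)^*$. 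You instead eliminate the dual on the left-hand side first, via the coHom adjunction: $\Hom^C(P,M)^* \cong \coHom^C(M,P) \cong M^* \otimes_{C^*} P$ by \eqref{eq:coHom-formula-1}, \eqref{eq:coHom-for-fd} and reflexivity of finite-dimensional spaces, and then build the explicit comparison map $\Psi(\mu \otimes_{C^*} p)(m) = \langle \mu, m_{(0)}\rangle\, m_{(1)} \otimes_{C^*} p$ — whose $C^*$-balancing does check out, since the right $C^*$-action on $M^*$ dual to $\rightharpoonup$ matches the action $c \leftharpoonup c^* = \langle c^*, c_{(1)}\rangle c_{(2)}$ on $C$ used in $C \otimes_{C^*} P$. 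From there both proofs are the same: decompose $P$ into summands $(Ce)^*$ (valid by right semiperfectness), apply \eqref{eq:Nakayama-r-of-Ce-dual} and \eqref{eq:idempotent-iso}, and verify that the map, respectively the pairing, realizes the canonical identification with $(eM)^*$ — your observation that this verification is where the content lies, and that it cannot be skipped in favor of dimension counting, is exactly right, and your computation does confirm $\Psi$ becomes the identity of $(eM)^*$. What each approach buys: the paper's formulation exhibits the Calabi--Yau structure as an honest non-degenerate trace pairing, which is the form fed into the modified trace theory (Theorem \ref{thm:modified-trace}), and the dual basis lemma handles naturality in $P$ automatically; your version isolates the use of projectivity at the single bijectivity step (your intermediate isomorphism $\Hom^C(P,M)^* \cong M^* \otimes_{C^*} P$ is valid for arbitrary $P \in \Mod^C_{\fd}$), avoids dual bases entirely, and reuses machinery the paper develops anyway. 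One small economy: you do not need finite-dimensionality of source and target for the generator reduction — additivity of both functors in $P$ together with naturality of $\Psi$ already suffices.
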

\begin{proof}
  We fix $P \in \Proj(\Mod^C_{\fd})$ and set $P^{\dagger} = {}_{C^*}\Hom(P, C^*)$.
  Since $P$ is projective as a left $C^*$-module \cite[Corollary 2.4.19]{MR1786197}, the map
  \begin{equation*}
    \theta_{P, M}: P^{\dagger} \otimes_{C^*} M \to {}_{C^*}\Hom(P, M),
    \quad \theta_{P,M}(f \otimes_{C^*} m)(p) = f(p) \cdot m
  \end{equation*}
  is a natural isomorphism for $M \in {}_{C^*}\Mod$ with the inverse written by a pair of dual bases of $P$ as a left $C^*$-module.
  There is a well-defined linear map
  \begin{equation*}
    \phi_P^{}: P^{\dagger} \otimes_{C^*} \Nak(P) \to \bfk,
    \quad \phi_P^{}(f \otimes_{C^*} c \otimes_{C^*} p) = \langle f(p), c \rangle,
  \end{equation*}
  where $\Nak = \Nakr_C$.
  We use this map to define the bilinear pairing
  \begin{equation*}
    \beta_{M, P}: \Hom^C(M, \Nak(P)) \times \Hom^C(P, M) \to \bfk,
    \quad \beta_M(f, g) = \phi_P^{} \theta_{P,\Nak(P)}^{-1}(f g)
  \end{equation*}
  for $M \in \Mod^C_{\fd}$.
  We note that every indecomposable projective right $C$-comodule is of the form $(C e)^*$ for some primitive idempotent $e \in C$.
  If $P = (C e)^*$ for some primitive idempotent $e$, then we have isomorphisms
  \begin{gather*}
    \Hom^C(M, \Nak(P)) \cong \Hom^C(M, C e) \cong (e M)^*, \\
    \Hom^C(P, M) \cong {}_{C^*}\Hom(C^* e, M) \cong e M
  \end{gather*}
  by \eqref{eq:idempotent-iso} and \eqref{eq:Nakayama-r-of-Ce-dual} and, through these isomorphisms, the pairing $\beta_{M,P}$ is identified with the canonical pairing between $e M$ and $(e M)^*$.
  Thus $\beta_{M,P}$ is non-degenerate in this case.
  One can also verify the non-degeneracy of $\beta_{M, P}$ in the general case by decomposing $P$ into the direct sum of indecomposable projective comodules.
  It is now routine to check that the linear map
  \begin{equation*}
    \alpha_{M, P}: \Hom^C(M, \Nak(P)) \to \Hom^C(P, M)^*, \quad f \mapsto \beta_{M, P}(f, -)
  \end{equation*}
  induced by the pairing $\beta_{M, P}$ meets the required condition.
\end{proof}

By Theorem \ref{thm:Nakayama-symm-coalg}, we have:

\begin{corollary}
  For a symmetric coalgebra $C$, there is a natural isomorphism
  \begin{equation*}
    \Hom^C(P, M)^* \cong \Hom^C(M, P)
    \quad (P \in \Proj(\Mod^C_{\fd}), M \in \Mod^C_{\fd}).
  \end{equation*}
\end{corollary}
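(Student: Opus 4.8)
The plan is to deduce this directly from the Calabi--Yau type isomorphism of Theorem~\ref{thm:Calabi-Yau-1} by replacing $\Nakr_C$ with the identity functor, which is legitimate precisely for symmetric coalgebras.

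First I would check that Theorem~\ref{thm:Calabi-Yau-1} is even applicable, i.e.\ that a symmetric coalgebra $C$ is semiperfect. By the characterization recalled in the proof of Theorem~\ref{thm:Nakayama-symm-coalg}, a symmetric coalgebra is co-Frobenius; a co-Frobenius coalgebra is QcF (take $\kappa = 1$ in the definition of Subsection~\ref{subsec:special-classes-of-coalgebras}), and a QcF coalgebra is semiperfect by \cite[Corollary 3.3.6]{MR1786197}. Hence $C$ is semiperfect, and Theorem~\ref{thm:Calabi-Yau-1} supplies a natural isomorphism
\begin{equation*}
  \Hom^C(P, M)^* \cong \Hom^C(M, \Nakr_C(P))
  \quad (P \in \Proj(\Mod^C_{\fd}),\ M \in \Mod^C_{\fd}).
\end{equation*}

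Next I would invoke Theorem~\ref{thm:Nakayama-symm-coalg}: since $C$ is symmetric, $\Nakr_C$ is isomorphic to the identity functor on $\Mod^C$. In particular there is a natural isomorphism $\Nakr_C(P) \cong P$ in $P$, and applying the covariant functor $\Hom^C(M, -)$ turns it into an isomorphism $\Hom^C(M, \Nakr_C(P)) \cong \Hom^C(M, P)$ natural in both $M$ and $P$. Composing this with the displayed isomorphism yields the asserted $\Hom^C(P, M)^* \cong \Hom^C(M, P)$.

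There is essentially no obstacle here: once one has recorded that symmetric implies semiperfect, the corollary is a formal consequence of Theorems~\ref{thm:Calabi-Yau-1} and~\ref{thm:Nakayama-symm-coalg}. The only point deserving a moment's care is that naturality survives the substitution of $\Nakr_C(P)$ by $P$; this holds because Theorem~\ref{thm:Nakayama-symm-coalg} provides an isomorphism of functors $\Nakr_C \cong \id$, not merely an objectwise isomorphism, so the induced maps on Hom-spaces assemble into a natural transformation.
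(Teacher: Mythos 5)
Your proposal is correct and takes essentially the same route as the paper, which states the corollary as an immediate consequence of Theorems~\ref{thm:Calabi-Yau-1} and~\ref{thm:Nakayama-symm-coalg}: apply the Calabi--Yau isomorphism and replace $\Nakr_C$ by the identity via the natural isomorphism $\Nakr_C \cong \id$ for symmetric coalgebras. Your explicit verification that a symmetric coalgebra is semiperfect (symmetric $\Rightarrow$ co-Frobenius $\Rightarrow$ QcF $\Rightarrow$ semiperfect, the last step by \cite[Corollary 3.3.6]{MR1786197}) correctly supplies the hypothesis of Theorem~\ref{thm:Calabi-Yau-1} that the paper leaves implicit.
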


\section{Categorical interpretation}
\label{sec:cat-interpr}

\subsection{The coHom functor for a linear category}

In this section, we rephrase results of the previous section in the language of category theory for the purpose of broader applications.
We first discuss what the coHom functor should be.
Let $\mathcal{A}$ be a linear category, and let $W$ be a vector space of dimension $\alpha$ (which is possibly infinite). The {\em copower} of $M \in \mathcal{A}$ by $W$, denoted by $W \otimes M$, is an object of $\mathcal{A}$ together with a natural isomorphism
\begin{equation*}
  \Hom_{\mathcal{A}}(W \otimes M, N) \cong \Hom_{\bfk}(W, \Hom_{\mathcal{A}}(M, N))
\end{equation*}
for $N \in \mathcal{A}$. By choosing a basis of $W$, we see that $W \otimes M$ exists if and only the direct sum $M^{\oplus \alpha} \in \mathcal{A}$ exists and, if they exist, they are isomorphic.

Now we fix an object $M \in \mathcal{A}$ and assume that the copower $W \otimes M$ exists for all objects $W \in \Vect$. Then the assignment $W \mapsto W \otimes M$ gives rise to a linear functor from $\Vect$ to $\mathcal{A}$. If a left adjoint of this functor exists, then we write it as $\coHom_{\mathcal{A}}(M, -)$. Thus we have a natural isomorphism
\begin{equation*}
  \Hom_{\mathcal{A}}(N, W \otimes M)
  \cong \Hom_{\bfk}(\coHom_{\mathcal{A}}(M, N), W)
\end{equation*}
for $N \in \mathcal{A}$.
In view of the case of coalgebras, we say that an object $M \in \mathcal{A}$ is {\em quasi-finite} if the functor $\coHom_{\mathcal{A}}(M, -)$ exists.
We denote by $\mathcal{A}_{\qf}$ the full subcategory of $\mathcal{A}$ consisting of quasi-finite objects. Then the {\em coHom functor}
\begin{equation*}
  \coHom_{\mathcal{A}} : (\mathcal{A}_{\qf})^{\op} \times \mathcal{A} \to \Vect
\end{equation*}
is defined.
If $\mathcal{A} = \Mod^C$ for some coalgebra $C$, then the functor $\coHom_{\mathcal{A}}$ is identical to the functor $\coHom^C$ introduced in Subsection~\ref{subsec:qf-comodules}.

Given a functor $F$, we denote by $F^{\ladj}$ and $F^{\radj}$ a left and a right adjoint of $F$, respectively, if they exist.
For later use, we remark the following relation between the coHom functor and adjunctions:

\begin{lemma}
  Let $\mathcal{A}$ and $\mathcal{B}$ be linear categories admitting arbitrary copowers, and let $F: \mathcal{A} \to \mathcal{B}$ be a linear functor. If $F$ has a left adjoint and a right adjoint, then there is a natural isomorphism
  \begin{equation}
    \label{eq:cohom-and-adjoints}
    \coHom_{\mathcal{B}}(F(M), N)
    \cong \coHom_{\mathcal{A}}(M, F^{\ladj}(N))
    \quad (M \in \mathcal{A}_{\qf}, N \in \mathcal{B}).
  \end{equation}
\end{lemma}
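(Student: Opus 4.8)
The plan is to prove the isomorphism via the Yoneda lemma, by showing that both sides corepresent one and the same functor on $\Vect$, namely $W \mapsto \Hom_{\mathcal{B}}(N, W \otimes F(M))$. This formulation has the advantage of simultaneously establishing that $F(M)$ is quasi-finite in $\mathcal{B}$ (so that the left-hand side is even defined) and producing the claimed natural isomorphism. This matters, because one cannot freely manipulate $\coHom_{\mathcal{B}}(F(M), N)$ until its existence has been secured.

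First I would record that $F$ preserves copowers. Since $F$ admits a right adjoint $F^{\radj}$, it is itself a left adjoint and hence preserves all colimits; as a copower $W \otimes M$ is the coproduct $M^{\oplus \dim_{\bfk} W}$, this yields a natural isomorphism $F(W \otimes M) \cong W \otimes F(M)$, natural in both $W$ and $M$. It is worth remarking explicitly that it is the existence of the \emph{right} adjoint of $F$, making $F$ cocontinuous, that is invoked here, not the left adjoint.

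Next I would assemble the following chain of isomorphisms, natural in $W \in \Vect$ (and in $M$, $N$):
\begin{align*}
  \Hom_{\mathcal{B}}(N, W \otimes F(M))
  &\cong \Hom_{\mathcal{B}}(N, F(W \otimes M)) \\
  &\cong \Hom_{\mathcal{A}}(F^{\ladj}(N), W \otimes M) \\
  &\cong \Hom_{\bfk}(\coHom_{\mathcal{A}}(M, F^{\ladj}(N)), W).
\end{align*}
Here the first isomorphism is the preservation of copowers from the previous step, the second is the adjunction $F^{\ladj} \dashv F$ applied to the object $W \otimes M \in \mathcal{A}$, and the third is the defining universal property of $\coHom_{\mathcal{A}}(M, -)$, which exists precisely because $M$ is assumed quasi-finite. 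The composite exhibits $W \mapsto \Hom_{\mathcal{B}}(N, W \otimes F(M))$ as a corepresentable functor, corepresented by $\coHom_{\mathcal{A}}(M, F^{\ladj}(N))$. By definition this says exactly that $F(M)$ is quasi-finite and that $\coHom_{\mathcal{B}}(F(M), N) \cong \coHom_{\mathcal{A}}(M, F^{\ladj}(N))$; the Yoneda lemma extracts the isomorphism from the corepresenting objects and guarantees its naturality in $M$ and $N$.

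The only genuine obstacle is the bookkeeping around existence: because quasi-finiteness of $F(M)$ is a conclusion rather than a hypothesis, the argument must be phrased so that the left-hand side is never written down before its existence is known. Organizing the entire computation around the statement ``the functor $\Hom_{\mathcal{B}}(N, -\otimes F(M))$ is corepresentable'' sidesteps this cleanly, and all remaining steps are routine applications of the adjunctions and universal properties already at hand.
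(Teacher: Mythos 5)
Your proposal is correct and takes essentially the same route as the paper: both first show $F(W \otimes M) \cong W \otimes F(M)$ using the adjunctions (your appeal to cocontinuity of $F$ as a left adjoint is just the paper's direct Hom-computation in disguise), and then run the same chain $\Hom_{\mathcal{B}}(N, W \otimes F(M)) \cong \Hom_{\mathcal{B}}(N, F(W \otimes M)) \cong \Hom_{\mathcal{A}}(F^{\ladj}(N), W \otimes M) \cong \Hom_{\bfk}(\coHom_{\mathcal{A}}(M, F^{\ladj}(N)), W)$, concluding by the Yoneda lemma. Your explicit remark that this chain simultaneously establishes the quasi-finiteness of $F(M)$, so that the left-hand side of \eqref{eq:cohom-and-adjoints} is defined at all, is a careful refinement of a point the paper leaves implicit, but it is the same argument.
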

\begin{proof}
  We first note that $F$ preserves copowers. Indeed, we have
  \begin{align*}
    \Hom_{\mathcal{B}}(F(W \otimes M), N)
    & \cong \Hom_{\mathcal{A}}(W \otimes M, F^{\radj}(N)) \\
    & \cong \Hom_{\bfk}(W, \Hom_{\mathcal{A}}(M, F^{\radj}(N))) \\
    & \cong \Hom_{\bfk}(W, \Hom_{\mathcal{B}}(F(M), N)) \\
    & \cong \Hom_{\bfk}(W \otimes F(M), N)
  \end{align*}
  for all $W \in \Vect$, $M \in \mathcal{A}$ and $N \in \mathcal{B}$. The Yoneda lemma shows
  \begin{equation}
    \label{eq:copower-preserving}
    F(W \otimes M) \cong W \otimes F(M)
    \quad (W \in \Vect, M \in \mathcal{A}).
  \end{equation}
  For $W \in \Vect$, $M \in \mathcal{A}_{\qf}$ and $N \in \mathcal{B}$, we use the formula \eqref{eq:copower-preserving} to show:
  \begin{align*}
    \Hom_{\bfk}(\coHom_{\mathcal{B}}(F(M), N), W)
    & \cong \Hom_{\mathcal{B}}(N, W \otimes F(M)) \\
    & \cong \Hom_{\mathcal{B}}(N, F(W \otimes M)) \\
    & \cong \Hom_{\mathcal{A}}(F^{\ladj}(N), W \otimes M) \\
    & \cong \Hom_{\bfk}(\coHom_{\mathcal{B}}(M, F^{\ladj}(N)), W).
  \end{align*}
  The proof is completed by the Yoneda lemma.
\end{proof}

\subsection{The universal property of the Nakayama functor}
\label{subsec:Nakayama-uni-pro}

The Nakayama functor has a certain universal property.
To explain this, we first recall the notion of {\em coends} \cite{MR1712872}.
Let $\mathcal{A}$ and $\mathcal{V}$ be categories, and let $T: \mathcal{A}^{\op} \times \mathcal{A} \to \mathcal{V}$ be a functor. A {\em dinatural transformation} from $T$ to an object $V \in \mathcal{V}$ is a family
\begin{equation*}
  \{ i_X : T(X, X) \to V \}_{X \in \Obj(\mathcal{A})}
\end{equation*}
of morphisms in $\mathcal{V}$ such that the equation
\begin{equation*}
  i_X \circ T(f, \id_X) = i_Y \circ T(\id_Y, f)
\end{equation*}
holds for all morphisms $f: X \to Y$ in $\mathcal{V}$. A {\em coend} of $T$ is an object of $\mathcal{V}$ equipped with a `universal' dinatural transformation from $T$.
A coend of $T$ is customary denoted by using an integral symbol as
\begin{equation*}
  \int^{X \in \mathcal{A}} T(X, X).
\end{equation*}

We note the following relation between coends and adjunctions:

\begin{lemma}[{\cite[Lemma 3.9]{MR2869176}}]
  Let $T: \mathcal{B}^{\op} \times \mathcal{A} \to \mathcal{V}$ and $F: \mathcal{A} \to \mathcal{B}$ be functors, where $\mathcal{A}$, $\mathcal{B}$ and $\mathcal{V}$ are categories. If $F$ has a right adjoint, then we have
  \begin{equation}
    \label{eq:coend-and-adjoint}
    \int^{X \in \mathcal{A}} T(F(X), X)
    \cong \int^{Y \in \mathcal{B}} T(Y, F^{\radj}(Y))
  \end{equation}
  in the sense that the coend on the left-hand side exists if and only if the right one exists and, if they exist, they are canonically isomorphic.
\end{lemma}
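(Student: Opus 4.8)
The plan is to identify the two coends by showing that they corepresent the same family of dinatural transformations. Write $G := F^{\radj}$ for the given right adjoint, with unit $\eta : \id_{\mathcal{A}} \to G F$ and counit $\epsilon : F G \to \id_{\mathcal{B}}$. Fix a test object $V \in \mathcal{V}$. A dinatural transformation to $V$ from the bifunctor $(X', X) \mapsto T(F(X'), X)$ on $\mathcal{A}^{\op} \times \mathcal{A}$ is a family $\{ i_X : T(F(X), X) \to V \}_{X \in \mathcal{A}}$, while a dinatural transformation to $V$ from $(Y', Y) \mapsto T(Y', G(Y))$ on $\mathcal{B}^{\op} \times \mathcal{B}$ is a family $\{ j_Y : T(Y, G(Y)) \to V \}_{Y \in \mathcal{B}}$. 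Since a coend is precisely a universal (i.e.\ representing) such family, it suffices to produce a bijection between these two sets of dinatural transformations that is natural in $V$; the universal property then yields that the left coend exists if and only if the right one does, together with the asserted canonical isomorphism.

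First I would write down the two passages between families using the unit and counit. Given a dinatural family $\{ j_Y \}$ on the right, I set
\[
  i_X = j_{F(X)} \circ T(\id_{F(X)}, \eta_X) : T(F(X), X) \to T(F(X), G F(X)) \to V.
\]
Conversely, given a dinatural family $\{ i_X \}$ on the left, I set
\[
  j_Y = i_{G(Y)} \circ T(\epsilon_Y, \id_{G(Y)}) : T(Y, G(Y)) \to T(F G(Y), G(Y)) \to V.
\]
Both assignments are manifestly natural in $V$, so the remaining work is to check that each lands in dinatural families and that the two are mutually inverse.

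Second, I would verify dinaturality. For the family $\{ i_X \}$ built from $\{ j_Y \}$ and a morphism $f : X \to Y$ in $\mathcal{A}$, the required identity $i_X \circ T(F(f), \id_X) = i_Y \circ T(\id_{F(Y)}, f)$ between maps out of $T(F(Y), X)$ follows by factoring both sides through $T(\id_{F(Y)}, \eta_X)$, applying the dinaturality of $\{ j_Y \}$ against $F(f) : F(X) \to F(Y)$, and then invoking the naturality square $\eta_Y \circ f = G F(f) \circ \eta_X$ of the unit, all combined with the bifunctoriality of $T$. The dual computation shows that the family $\{ j_Y \}$ built from $\{ i_X \}$ is dinatural, this time using the dinaturality of $\{ i_X \}$ against $G(g)$ and the naturality of $\epsilon$.

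Finally, I would check that the two constructions are mutually inverse. Starting from $\{ j_Y \}$, passing to $\{ i_X \}$ and back produces $j'_Y = j_{F G(Y)} \circ T(\epsilon_Y, \id) \circ T(\id, \eta_{G(Y)})$; applying the dinaturality of $\{ j_Y \}$ to $\epsilon_Y : F G(Y) \to Y$ rewrites this as $j_Y \circ T(\id_Y, G(\epsilon_Y) \circ \eta_{G(Y)})$, and the triangle identity $G(\epsilon_Y) \circ \eta_{G(Y)} = \id_{G(Y)}$ collapses it to $j_Y$. The reverse round-trip is symmetric, using the dinaturality of $\{ i_X \}$ together with the other triangle identity $\epsilon_{F(X)} \circ F(\eta_X) = \id_{F(X)}$. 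The only genuine obstacle is bookkeeping: one must keep the contravariance of $T$ in its first slot straight and pair each use of dinaturality with the matching naturality square of $\eta$ or $\epsilon$. I expect the mutual-inverse step to be the most delicate, since there the triangle identities must be combined with a carefully chosen instance of dinaturality rather than with mere naturality.
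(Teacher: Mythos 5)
Your proof is correct: the transformations $i_X = j_{F(X)} \circ T(\id_{F(X)}, \eta_X)$ and $j_Y = i_{G(Y)} \circ T(\epsilon_Y, \id_{G(Y)})$ do give mutually inverse, $V$-natural bijections between the two sets of dinatural families (each verification you sketch goes through exactly as stated, with the triangle identities closing the round trips), and representability of the dinatural-transformation functor then transfers across the bijection. Note that the paper itself offers no proof of this lemma --- it is quoted verbatim from Brugui\`eres--Virelizier \cite[Lemma 3.9]{MR2869176} --- and your argument is essentially the standard one given in that reference, so there is nothing to flag beyond the bookkeeping you already identified.
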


A basic and important example of coends is found in Tannaka theory. Let $C$ be a coalgebra. The Tannaka reconstruction theorem states that the coalgebra $C$ is recovered from the category $\Mod^C_{\fd}$ as a coend as
\begin{equation}
  C = \int^{X \in \Mod^C_{\fd}} X^* \otimes_{\bfk} X,
\end{equation}
where the universal dinatural transformation for $C$ is given by
\begin{equation}
  \label{eq:Tannaka-1}
  i_X : X^* \otimes_{\bfk} X \to C,
  \quad i_X(x^* \otimes x) = \langle x^*, x_{(0)} \rangle x_{(1)}
\end{equation}
for $x^* \in X^*$ and $x \in X \in \Mod^C_{\fd}$.

We note that $i_X$ is in fact a homomorphism of $C$-bicomodules.
Let ${}^C\Mod^C$ denote the category of $C$-bimodules.
In the same way as we do in Tannaka theory, we can show that $C$ is in fact a coend of the functor
\begin{equation}
  \label{eq:Tannaka-2}
  (\Mod^C_{\fd})^{\op} \times \Mod^C_{\fd} \to {}^C\Mod^C,
  \quad (X, Y) \mapsto X^* \otimes_{\bfk} Y
\end{equation}
with the universal dinatural transformation~\eqref{eq:Tannaka-1}.

Now we prove that $\Nakr_C(M)$ is a coend of a certain functor.
The idea is simple:
We know that $C$ is a coend in ${}^C\Mod^C$.
If we apply a cocontinuous functor to the coend $C$, then the result is also a coend.

\begin{lemma}[{\cite[Lemma 2.11]{MR4560996}}]
  \label{lem:Nakayama-r-coend}
  Let $C$ be a coalgebra. Then we have
  \begin{equation*}
    \Nakr_C(M) = \int^{X \in \Mod^C_{\fd}}
    \coHom^C(X, M) \otimes_{\bfk} X
    \quad (M \in \Mod^C).
  \end{equation*}
\end{lemma}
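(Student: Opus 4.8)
The plan is to exploit the Tannaka coend presentation of $C$ recalled just above, namely
\[
C = \int^{X \in \Mod^C_{\fd}} X^* \otimes_{\bfk} X
\]
taken in the category ${}^C\Mod^C$ of $C$-bicomodules via the functor \eqref{eq:Tannaka-2} and the universal dinatural family \eqref{eq:Tannaka-1}, and then to push this coend through a suitable cocontinuous functor that carries $C$ to $\Nakr_C(M)$. Concretely, I would introduce the functor
\[
G: {}^C\Mod^C \to \Mod^C, \quad G(B) = B \otimes_{C^*} M,
\]
where $B$ is regarded as a right $C^*$-module through its \emph{left} $C$-comodule structure (used to balance the tensor product), while the output is given the right $C$-comodule structure induced by the \emph{right} $C$-comodule structure of $B$. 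With these conventions, comparing with Definition \ref{def:Nakayama} shows $G(C) = C \otimes_{C^*} M = \Nakr_C(M)$.

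First I would check that $G$ is cocontinuous. On underlying vector spaces $G$ is the functor $(-) \otimes_{C^*} M$, which is a left adjoint (to $\Hom_{\bfk}(M, -)$) and hence preserves all colimits; since the forgetful functors ${}^C\Mod^C \to \Vect$ and $\Mod^C \to \Vect$ create colimits, it follows that $G$ preserves colimits, and in particular coends. Applying $G$ to the coend presentation of $C$ therefore yields
\[
\Nakr_C(M) = G(C) = G\!\left( \int^{X} X^* \otimes_{\bfk} X \right) \cong \int^{X} G(X^* \otimes_{\bfk} X),
\]
with the dinatural structure transported along \eqref{eq:Tannaka-1}.

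It remains to identify the integrand. For $X, Y \in \Mod^C_{\fd}$, the left $C$-comodule structure of the bicomodule $X^* \otimes_{\bfk} Y$ is concentrated on the factor $X^*$, so the right $C^*$-action used to form $G$ acts only there. Hence
\[
(X^* \otimes_{\bfk} Y) \otimes_{C^*} M \cong (X^* \otimes_{C^*} M) \otimes_{\bfk} Y \cong \coHom^C(X, M) \otimes_{\bfk} Y,
\]
naturally in $X$ and $Y$, where the last isomorphism is \eqref{eq:coHom-for-fd}; moreover the resulting right $C$-comodule structure is the copower structure on the factor $Y$. Specializing to $Y = X$ identifies $G(X^* \otimes_{\bfk} X)$ with $\coHom^C(X, M) \otimes_{\bfk} X$ compatibly with the dinatural families, which gives the asserted formula.

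The main obstacle I expect is bookkeeping rather than conceptual: one must verify carefully that all the identifications above are isomorphisms of right $C$-comodules and are natural in the variable $X$, so that the dinatural transformation for $C$ is carried precisely to the universal dinatural transformation exhibiting $\int^{X} \coHom^C(X, M) \otimes_{\bfk} X$. The cocontinuity of $G$ between comodule categories (as opposed to merely on underlying spaces) and the correct matching of the two $C^*$-actions and the two comodule structures on the bicomodule $X^* \otimes_{\bfk} Y$ are the points demanding the most care.
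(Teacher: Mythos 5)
Your proposal is correct and follows essentially the same route as the paper: apply the cocontinuous functor $(-) \otimes_{C^*} M$ from ${}^C\Mod^C$ to $\Mod^C$ to the Tannaka coend presentation of $C$, then identify the integrand via \eqref{eq:coHom-for-fd}. The only (harmless) difference is how cocontinuity is verified: the paper exhibits an explicit right adjoint $N \mapsto \Hom_{\bfk}(M,N)^{\rat}$ at the bicomodule level, whereas you detect preservation of colimits through the forgetful functors to $\Vect$, which is equally valid since colimits of comodules are computed on underlying vector spaces.
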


Although we omit the details, we note that $\Nakl_C(M)$ is expressed as an end, that is, the dual notion of coends; see \cite[Theorem 3.6]{MR4560996}.

\begin{proof}
  We fix $M \in \Mod^C$ and consider the functor
  \begin{equation}
    \label{eq:Nakayama-r-coend-proof-1}
    {}^C\Mod^C \to \Mod^C,
    \quad P \mapsto P \otimes_{C^*} M.
  \end{equation}
  This functor has a right adjoint. Indeed, we have
  \begin{align*}
    \Hom^C(P \otimes_{C^*} M, N)
    & \cong {}_{C^*}\Hom_{C^*}(P, \Hom_{\bfk}(M, N)) \\
    & \cong {}^{C}\Hom^{C}(P, \Hom_{\bfk}(M, N)^{\rat})
  \end{align*}
  for $P \in {}^C\Mod^C$ and $N \in \Mod^C$ by the tensor-Hom adjunction, where $(-)^{\rat}$ means the rational part as $C$-bicomodules.
  Since $C \in {}^C\Mod^C$ is a coend of the functor \eqref{eq:Tannaka-2}, we see that $\Nakr_C(M)$ is a coend of the functor
  \begin{equation*}
    (\Mod^C_{\fd})^{\op} \times \Mod^C_{\fd} \to \Mod^C,
    \quad (X, Y) \mapsto (X^* \otimes_{\bfk} Y) \otimes_{C^*} M
  \end{equation*}
  by applying the functor~\eqref{eq:Nakayama-r-coend-proof-1} to $C$. The proof is completed by \eqref{eq:coHom-for-fd}.
\end{proof}

We give a proof of Lemma~\ref{lem:Nakayama-MT-invariance}, which we had skipped.

\begin{proof}[Proof of Lemma~\ref{lem:Nakayama-MT-invariance}]
  Let $\Phi : \Mod^C \to \Mod^D$ be an equivalence of linear categories, where $C$ and $D$ are coalgebras. Since a comodule is finite-dimensional if and only if it is of finite length, and since the length of an object is preserved by an equivalence, $\Phi$ induces an equivalence from $\Mod^C_{\fd}$ to $\Mod^D_{\fd}$.
  By the universal property given by Lemma \ref{lem:Nakayama-r-coend}, we have $\Phi \circ \Nakr_C \cong \Nakr_D \circ \Phi$. By taking right adjoints, we also have $\Nakl_C \circ \Phi^{-1} \cong \Phi^{-1} \circ \Nakl_D$. The proof is done.
\end{proof}

\subsection{Locally finite abelian categories and their ind-completions}

A {\em locally finite abelian category} \cite[\S1.8]{MR3242743} is a linear abelian category $\mathcal{A}$ such that every object of $\mathcal{A}$ is of finite length and $\Hom_{\mathcal{A}}(X, Y)$ is finite-dimensional for all objects $X, Y \in \mathcal{A}$.
By the result of Takeuchi \cite{MR472967} (see also \cite[\S1.8]{MR3242743}), a linear category is locally finite abelian if and only if it is equivalent to $\Mod^C_{\fd}$ for some coalgebra $C$.

Given a category $\mathcal{A}$, we denote by $\Ind(\mathcal{A})$ the ind-completion of $\mathcal{A}$ \cite{MR2182076}, that is, the category obtained from $\mathcal{A}$ by freely adjoining filtered colimits of objects of $\mathcal{A}$.
There is a canonical functor $\iota: \mathcal{A} \to \Ind(\mathcal{A})$ by which we regard $\mathcal{A}$ as a full subcategory of $\Ind(\mathcal{A})$.
If $\mathcal{A} = \Mod^C_{\fd}$ for some coalgebra $C$, then $\Ind(\mathcal{A})$ and $\iota$ are identified with $\Mod^C$ and the inclusion functor.
Keeping this fact in mind, we give characterizations of some classes of coalgebras by properties of their category of finite-dimensional comodules.
We first give a characterization of `one-sided' semiperfect coalgebras.

\begin{lemma}
  \label{lem:loc-fin-ab-semiperfect}
  For a locally finite abelian category $\mathcal{A}$, the following are equivalent:
  \begin{enumerate}
  \item $\mathcal{A} \approx \Mod_{\fd}^{C}$ for some right semiperfect coalgebra $C$.
  \item $\mathcal{A} \approx {}^C\Mod_{\fd}$ for some left semiperfect coalgebra $C$.
  \item Every object of $\mathcal{A}$ has a projective cover in $\mathcal{A}$.
  \item Every object of $\mathcal{A}$ has a projective cover in $\Ind(\mathcal{A})$.
  \item $\mathcal{A}$ has enough projective objects.
  \item $\Ind(\mathcal{A})$ has enough projective objects.
  \end{enumerate}
\end{lemma}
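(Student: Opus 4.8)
The plan is to fix, once and for all, a coalgebra $C$ with $\mathcal{A} \approx \Mod^C_{\fd}$ (possible since $\mathcal{A}$ is locally finite abelian), so that $\Ind(\mathcal{A}) \approx \Mod^C$, and then to recognize all six conditions as avatars of the single property that $C$ is right semiperfect. I would dispatch $(1) \Leftrightarrow (2)$ first, as it is purely formal: a left $C$-comodule is the same datum as a right $C^{\cop}$-comodule, giving a linear equivalence ${}^C\Mod_{\fd} \approx \Mod^{C^{\cop}}_{\fd}$, and by the definition recalled in Subsection~\ref{subsec:special-classes-of-coalgebras} the coalgebra $C^{\cop}$ is right semiperfect exactly when $C$ is left semiperfect. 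Thus $(2)$ becomes $(1)$ upon replacing $C$ by $C^{\cop}$.

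Next I would set up the dictionary between the categorical conditions and right semiperfectness. By the very definition (via $(-)^{\cop}$) of right semiperfect, $C$ is right semiperfect if and only if every finite-dimensional right $C$-comodule has a projective cover in $\Mod^C$; transported along $\mathcal{A} \approx \Mod^C_{\fd}$ this is precisely $(4)$, so $(1) \Leftrightarrow (4)$. I would then invoke the standard characterization from \cite{MR1786197} that $C$ is right semiperfect if and only if $\Mod^C$ has enough projective objects, yielding $(1) \Leftrightarrow (6)$. For the less trivial direction one may argue directly: once $C$ is right semiperfect, the projective cover of a finite-dimensional comodule is finite-dimensional, and for an arbitrary $M \in \Mod^C$ the coproduct of the finite-dimensional projective covers of the finite-dimensional subcomodules of $M$ is a projective object surjecting onto $M$ (using that any coproduct of projectives is projective).

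The bridge $(3) \Leftrightarrow (4)$ is where the finite-versus-infinite distinction must be controlled, and it is the heart of the argument. Assuming $(4)$, hence $C$ right semiperfect, the projective cover in $\Mod^C$ of a finite-dimensional $M$ equals $P(\mathrm{top}(M))$, a finite direct sum $\bigoplus P(S_i)$ of the finite-dimensional covers of the simple factors of $\mathrm{top}(M)$, and is therefore finite-dimensional; by Lemma~\ref{lem:fd-proj-comodules} it is projective in $\mathcal{A}$, and the essential epimorphism restricts to one in $\mathcal{A}$, giving $(3)$. Conversely, a projective cover $P \to M$ in $\mathcal{A}$ has $P$ projective in $\Mod^C$ by Lemma~\ref{lem:fd-proj-comodules}; since every subobject of the finite-dimensional $P$ is finite-dimensional, superfluousness of the kernel is tested on exactly the same subobjects in $\mathcal{A}$ and in $\Mod^C$, so $P \to M$ is a projective cover in $\Ind(\mathcal{A})$, giving $(4)$.

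Finally I would prove $(3) \Leftrightarrow (5)$, which is intrinsic to $\mathcal{A}$ and uses no coalgebra. The implication $(3) \Rightarrow (5)$ is immediate since a projective cover is in particular an epimorphism from a projective. For $(5) \Rightarrow (3)$ I would exploit that $\mathcal{A}$ is Krull--Schmidt with objects of finite length: an epimorphism onto a simple $S$ from a projective decomposes, and a suitable indecomposable summand (having a simple top, its endomorphism ring being a finite-dimensional local algebra) furnishes a finite-dimensional projective cover $P(S)$; for general $M$ one lifts $M \twoheadrightarrow \mathrm{top}(M)$ through $P(\mathrm{top}(M)) = \bigoplus P(S_i)$ and uses that the radical of a finite-length object is superfluous to conclude the lift is an essential epimorphism. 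The main obstacle throughout is exactly this control of dimension when passing between $\mathcal{A}$ and $\Ind(\mathcal{A})$: the conditions genuinely differ unless projective covers of finite-length objects are known to stay finite-dimensional, which is the precise homological content of right semiperfectness; everything else is either $\cop$-duality or standard finite-length category theory.
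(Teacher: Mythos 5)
Your proposal is correct, and at the anchor points it matches the paper's skeleton: you fix $C$ with $\mathcal{A} \approx \Mod^C_{\fd}$, dispose of $(1) \Leftrightarrow (2)$ by $(-)^{\cop}$-duality, and import from \cite{MR1786197} the one genuinely coalgebraic input, namely that right semiperfectness forces projective covers of finite-dimensional comodules (equivalently, the injective hulls $E(S^*)$ of simple left comodules) to be finite-dimensional. Where you differ is in self-containedness: the paper dispatches the equivalence of $(1)$, $(4)$, $(5)$, $(6)$ in one stroke by citing \cite[Cor.~2.4.21 and Thm.~3.2.3]{MR1786197}, and only writes out $(1) \Rightarrow (3)$ (via $P(S) \cong E(S^*)^*$, and even there only for simple $S$, leaving the passage to general finite-length objects implicit) together with the remark that $(3) \Rightarrow (4)$ is obvious. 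You instead unravel $(1) \Leftrightarrow (4)$ directly from the definition of right semiperfectness, give the coproduct-of-covers argument for $(1) \Rightarrow (6)$, make the paper's ``obvious'' $(3) \Rightarrow (4)$ precise via Lemma~\ref{lem:fd-proj-comodules} and the observation that a finite-dimensional comodule has the same subobjects in $\mathcal{A}$ and in $\Ind(\mathcal{A})$ (so superfluousness transfers), and --- the genuinely new piece --- prove $(5) \Rightarrow (3)$ intrinsically inside $\mathcal{A}$ by Krull--Schmidt theory rather than routing it through the coalgebra characterization. The paper's route buys brevity; yours isolates exactly where coalgebra theory is indispensable (keeping covers finite-dimensional) and exhibits $(3) \Leftrightarrow (5)$ as pure finite-length category theory.

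One step needs repair. In $(5) \Rightarrow (3)$ you justify that an indecomposable projective summand surjecting onto a simple $S$ has simple top by noting that its endomorphism ring is a finite-dimensional local algebra. Locality of $\End(P)$ is automatic for any finite-length indecomposable and does not by itself yield a simple top; within this lemma the statement ``indecomposable projectives in $\mathcal{A}$ have simple top'' is essentially what is being proved. The standard fix: given an epimorphism $f: P \to S$ with $P$ projective, choose (by finite length) a subobject $N \subseteq P$ minimal among those with $f(N) = S$; then $f|_N$ has superfluous kernel, projectivity yields $g: P \to N$ with $f|_N \circ g = f$, superfluousness makes $g$ epi, and Fitting's lemma applied to $e = \iota \circ g \in \End(P)$ (using $f \circ e^n = f$ and minimality to get $e^n(P) = N$) splits $P = \Ker(e^n) \oplus N$. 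Hence $N$ is a projective direct summand and $f|_N: N \to S$ is a projective cover, whose kernel is then the unique maximal subobject. With this substitution your $(5) \Rightarrow (3)$ is complete, and the rest of the proposal stands as written.
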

\begin{proof}
  It is trivial that (1) and (2) are equivalent.
  By the locally finiteness of $\mathcal{A}$, there is a coalgebra $C$ such that $\mathcal{A} \approx \Mod^C_{\fd}$. We fix such a coalgebra $C$ and identify $\mathcal{A}$ with $\Mod^C_{\fd}$.
  Then the equivalence between (1), (4), (5) and (6) is obtained by rephrasing known characterizations of semiperfect coalgebras \cite[Corollary 2.4.21 and Theorem 3.2.3]{MR1786197}.
  The implication (3) $\Rightarrow$ (4) is obvious.

  To complete the proof, it remains to show (1) $\Rightarrow$ (3). We assume that $C$ is right semiperfect.
  Let $S$ be a simple right $C$-comodule.
  Then an injective hull of $E(S^*) \in {}^C\Mod$ is finite-dimensional and $E(S^*)^* \in \Mod^C$ is a projective cover of $S$ (see the proof of \cite[Theorem 3.2.3]{MR1786197}).
  Thus (3) holds.
  The proof is done.
\end{proof}

Let $C$ be a coalgebra such that $\mathcal{A} \approx \Mod^C_{\fd}$.
Then the duality establishes equivalences ${}^C\Mod_{\fd} \approx (\Mod^C_{\fd})^{\op} \approx \mathcal{A}^{\op}$.
By applying Lemma~\ref{lem:loc-fin-ab-semiperfect} to $\mathcal{A}^{\op}$, we have:

\begin{lemma}
  \label{lem:loc-fin-ab-semiperfect-2}
  For a locally finite abelian category $\mathcal{A}$, the following are equivalent:
  \begin{enumerate}
  \item $\mathcal{A} \approx \Mod_{\fd}^{C}$ for some left semiperfect coalgebra $C$.
  \item $\mathcal{A} \approx {}^C\Mod_{\fd}$ for some right semiperfect coalgebra $C$.
  \item Every object of $\mathcal{A}$ has an injective hull in $\mathcal{A}$.
  \item $\mathcal{A}$ has enough injective objects.
  \end{enumerate}
\end{lemma}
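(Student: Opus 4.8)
The plan is to deduce the statement from Lemma~\ref{lem:loc-fin-ab-semiperfect} by passing to the opposite category. First I would note that the defining properties of a locally finite abelian category --- being abelian, having every object of finite length, and having finite-dimensional Hom-spaces --- are all self-dual, so that $\mathcal{A}^{\op}$ is again locally finite abelian whenever $\mathcal{A}$ is. This makes Lemma~\ref{lem:loc-fin-ab-semiperfect} applicable to $\mathcal{B} := \mathcal{A}^{\op}$, and the whole proof will consist of reading the four conditions stated here as four of the six conditions of that lemma for $\mathcal{B}$.

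Next I would set up the dictionary between $\mathcal{A}$ and $\mathcal{B}$. By the very definition of injectivity (an object is injective in $\mathcal{A}$ precisely when it is projective in $\mathcal{A}^{\op}$), an injective object of $\mathcal{A}$ is the same as a projective object of $\mathcal{B}$; dually, an injective hull in $\mathcal{A}$ is a projective cover in $\mathcal{B}$, and $\mathcal{A}$ has enough injectives if and only if $\mathcal{B}$ has enough projectives. Under this translation, condition~(3) of the present lemma becomes condition~(3) of Lemma~\ref{lem:loc-fin-ab-semiperfect} for $\mathcal{B}$, and condition~(4) becomes condition~(5) of that lemma for $\mathcal{B}$.

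It then remains to match the two representability conditions. Fixing a coalgebra $C$ with $\mathcal{A} \approx \Mod^C_{\fd}$, the duality functor $X \mapsto X^*$ gives an anti-equivalence $\Mod^C_{\fd} \approx {}^C\Mod_{\fd}$ preserving $C$, hence an equivalence $(\Mod^C_{\fd})^{\op} \approx {}^C\Mod_{\fd}$. Applying this, condition~(1) here (that $\mathcal{A} \approx \Mod^C_{\fd}$ for some \emph{left} semiperfect $C$) translates into $\mathcal{B} \approx {}^C\Mod_{\fd}$ for the same left semiperfect $C$, which is condition~(2) of Lemma~\ref{lem:loc-fin-ab-semiperfect} for $\mathcal{B}$; and condition~(2) here (that $\mathcal{A} \approx {}^C\Mod_{\fd}$ for some \emph{right} semiperfect $C$) translates into $\mathcal{B} \approx \Mod^C_{\fd}$ for the same right semiperfect $C$, which is condition~(1) of that lemma for $\mathcal{B}$. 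Since Lemma~\ref{lem:loc-fin-ab-semiperfect} asserts the equivalence of all of its conditions, the four conditions listed here are equivalent as well.

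The one point requiring care --- the \emph{main obstacle}, such as it is --- is bookkeeping the handedness of semiperfectness through the two anti-equivalences involved, so that the labels ``left'' and ``right'' in conditions~(1) and~(2) land on the correct conditions of Lemma~\ref{lem:loc-fin-ab-semiperfect} rather than getting swapped. Once one checks that the duality $\Mod^C_{\fd} \approx {}^C\Mod_{\fd}$ keeps the coalgebra $C$ fixed (and hence its one-sided semiperfectness), only exchanging the side on which $C$ coacts, this bookkeeping is routine and no genuinely new argument beyond Lemma~\ref{lem:loc-fin-ab-semiperfect} is needed.
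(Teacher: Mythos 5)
Your proposal is correct and takes essentially the same route as the paper, which likewise deduces the lemma by applying Lemma~\ref{lem:loc-fin-ab-semiperfect} to $\mathcal{A}^{\op}$ via the duality equivalences ${}^C\Mod_{\fd} \approx (\Mod^C_{\fd})^{\op} \approx \mathcal{A}^{\op}$. Your explicit bookkeeping of the handedness of semiperfectness and the check that $\mathcal{A}^{\op}$ is again locally finite abelian merely spell out what the paper leaves implicit, and both are carried out correctly.
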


By Lemmas \ref{lem:loc-fin-ab-semiperfect} and \ref{lem:loc-fin-ab-semiperfect-2}, semiperfect coalgebras are characterized as follows:

\begin{lemma}
  \label{lem:loc-fin-ab-bisemiperfect}
  For a locally finite abelian category $\mathcal{A}$, the following are equivalent:
  \begin{enumerate}
  \item $\mathcal{A} \approx \Mod_{\fd}^{C}$ for some semiperfect coalgebra $C$.
  \item Every object of $\mathcal{A}$ has a projective cover and an injective hull in $\mathcal{A}$.
  \item $\mathcal{A}$ has enough projective objects and enough injective objects.
  \end{enumerate}
\end{lemma}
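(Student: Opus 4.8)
The plan is to reduce the two‑sided statement entirely to the two preceding one‑sided characterizations, the key being to fix a \emph{single} presenting coalgebra and to read off both one‑sidedness conditions from that same presentation. Since $\mathcal{A}$ is locally finite abelian, I would first invoke Takeuchi's theorem to fix a coalgebra $C$ with $\mathcal{A} \approx \Mod^C_{\fd}$ once and for all, and identify $\mathcal{A}$ with $\Mod^C_{\fd}$. Recall that $C$ is semiperfect precisely when it is both left and right semiperfect, so the whole proof amounts to separating out these two halves and then recombining them.

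Applying Lemma~\ref{lem:loc-fin-ab-semiperfect} to $\mathcal{A} = \Mod^C_{\fd}$ with this fixed presentation, the coalgebra $C$ is right semiperfect if and only if every object of $\mathcal{A}$ has a projective cover in $\mathcal{A}$, equivalently if and only if $\mathcal{A}$ has enough projective objects. Dually, applying Lemma~\ref{lem:loc-fin-ab-semiperfect-2}, the coalgebra $C$ is left semiperfect if and only if every object of $\mathcal{A}$ has an injective hull in $\mathcal{A}$, equivalently if and only if $\mathcal{A}$ has enough injective objects. These two biconditionals are the only inputs I need.

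With them in hand the equivalences follow by taking conjunctions. Condition~(2) is exactly the conjunction of ``every object has a projective cover in $\mathcal{A}$'' and ``every object has an injective hull in $\mathcal{A}$'', and condition~(3) is the conjunction of ``enough projectives'' and ``enough injectives''; by the two biconditionals each of these conjunctions is equivalent to ``$C$ is left and right semiperfect'', that is, to $C$ being semiperfect. Hence (2) $\Leftrightarrow$ [$C$ semiperfect] $\Leftrightarrow$ (3). For (1), the implication [$C$ semiperfect] $\Rightarrow$ (1) is immediate, since $C$ itself serves as a semiperfect presentation; conversely, if $\mathcal{A} \approx \Mod^D_{\fd}$ for some semiperfect $D$, then running the same two biconditionals for $D$ shows that $\mathcal{A}$ has enough projectives and enough injectives, i.e.\ (3) holds, and therefore $C$ is semiperfect. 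This closes the cycle $(1) \Leftrightarrow (2) \Leftrightarrow (3)$.

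The one delicate point — and the step I would flag as the crux — is that left (resp.\ right) semiperfectness of the \emph{chosen} presentation $C$ must be detected by the intrinsic property ``enough injectives'' (resp.\ ``enough projectives'') of $\mathcal{A}$, rather than merely by the existence of \emph{some} one‑sided‑semiperfect presentation; only this lets me conclude that a single coalgebra is simultaneously left and right semiperfect. This is exactly what the proofs of Lemmas~\ref{lem:loc-fin-ab-semiperfect} and~\ref{lem:loc-fin-ab-semiperfect-2} deliver when specialized to $\mathcal{A}=\Mod^C_{\fd}$ with $C$ the fixed presenting coalgebra, so it amounts to the Morita--Takeuchi invariance of one‑sided semiperfectness and requires no argument beyond those two lemmas. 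Once this is granted, the two‑sided statement is just the conjunction of the two one‑sided ones.
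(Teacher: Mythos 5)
Your proof is correct and is essentially the paper's own argument: the paper offers no separate proof, stating Lemma~\ref{lem:loc-fin-ab-bisemiperfect} as an immediate consequence of Lemmas~\ref{lem:loc-fin-ab-semiperfect} and~\ref{lem:loc-fin-ab-semiperfect-2}, which is exactly the conjunction you carry out. The ``delicate point'' you flag is handled the same way implicitly in the paper, since the proofs of those two one-sided lemmas fix an arbitrary presenting coalgebra $C$ and show the intrinsic conditions (enough projectives, resp.\ enough injectives) are equivalent to the one-sided semiperfectness of that fixed $C$, which is precisely the Morita--Takeuchi invariance your recombination step requires.
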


QcF coalgebras are characterized as follows:

\begin{lemma}
  \label{lem:loc-fin-ab-QcF}
  For a locally finite abelian category $\mathcal{A}$, the following are equivalent:  \begin{enumerate}
  \item $\mathcal{A} \approx \Mod^Q_{\fd}$ for some QcF coalgebra $Q$.
  \item $\mathcal{A} \approx \Mod^Q_{\fd}$ for some semiperfect coalgebra $Q$ and $\Proj(\mathcal{A}) = \Ind(\mathcal{A})$.
  \end{enumerate}
\end{lemma}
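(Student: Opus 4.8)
The plan is to choose a coalgebra $Q$ with $\mathcal{A} \approx \Mod^Q_{\fd}$ and translate both conditions into statements about $Q$. Since projectivity and injectivity are categorical notions, the condition $\Proj(\mathcal{A}) = \Inj(\mathcal{A})$ in (2) (that the projective and injective objects of $\mathcal{A}$ coincide) becomes the equality $\fdPro^Q = \fdInj^Q$ of full subcategories of $\Mod^Q_{\fd}$, while statement (1) asserts that $Q$ is QcF. I would prove the equivalence by the two implications.

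For (1) $\Rightarrow$ (2), recall that a QcF coalgebra is semiperfect \cite[Corollary 3.3.6]{MR1786197}, so the same $Q$ witnesses the first clause of (2). By Theorem~\ref{thm:Nakayama-QcF} the functors $\Nakr_Q$ and $\Nakl_Q$ are mutually quasi-inverse autoequivalences of $\Mod^Q$; in particular they preserve simple comodules. Hence the isomorphisms \eqref{eq:Nakayama-inj-hull-2} exhibit every indecomposable injective $E(S)$ as an indecomposable projective and conversely. Since $Q$ is semiperfect, every object of $\fdInj^Q$ (resp. $\fdPro^Q$) is a finite direct sum of such indecomposables, and therefore $\fdInj^Q = \fdPro^Q$.

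The substance is (2) $\Rightarrow$ (1), which I would establish by verifying the criterion of Theorem~\ref{thm:Nakayama-QcF}(4): that $\Nakr_Q$ is exact and faithful. For exactness I would first observe that the hypothesis $\fdPro^Q = \fdInj^Q$ is left--right symmetric, since applying it to $\mathcal{A}^{\op} \approx {}^Q\Mod_{\fd}$ gives the analogous equality for left comodules. Decomposing $Q$ as a left comodule into its (finite-dimensional, by semiperfectness) indecomposable injective summands, each summand is now projective, so $Q$ is a direct sum of finite-dimensional projective left comodules; by \cite[Corollary 2.4.19]{MR1786197} (applied to left comodules) each summand is projective as a right $Q^*$-module, and as coproducts of projective modules are projective, $Q$ is projective, hence flat, as a right $Q^*$-module. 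Thus $\Nakr_Q = Q \otimes_{Q^*} (-)$ is exact. For faithfulness I would use that $Q \cong \bigoplus_i E(S_i)^{(a_i)}$ with every $a_i \ge 1$ (because $\Hom^Q(S_i, Q) \cong S_i^* \ne 0$), so each $E(S_i)$ is a direct summand of $Q$; applying the additive functor $\Nakl_Q$ together with $\Nakl_Q(E(S)) \cong P(S)$ from \eqref{eq:Nakayama-inj-hull} and $\Nakl_Q(Q) \cong Q^{*\rat}_{\ell}$, each $P(S_i)$ is a direct summand of $Q^{*\rat}_{\ell}$. Under $\fdPro^Q = \fdInj^Q$ the projective covers $P(S_i)$ are precisely the injective hulls $E(S_j)$, so their socles run over all simple comodules; hence $\socle(Q^{*\rat}_{\ell})$ contains every simple, and \eqref{eq:Nakayama-dual} gives $\Nakr_Q(S)^* \cong \Hom^Q(S, Q^{*\rat}_{\ell}) \ne 0$ for every simple $S$. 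Theorem~\ref{thm:Nakayama-QcF} then yields that $Q$ is QcF.

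I expect the main obstacle to be the exactness step, and specifically the bookkeeping of feeding the hypothesis into the correct side: exactness of $\Nakr_Q$ is governed by $Q$ as a right $Q^*$-module, that is, by the \emph{left} comodule structure of $Q$, so one must route through the left--right symmetry of the condition rather than using the right comodule decomposition directly. A secondary subtlety is that Theorems~\ref{thm:Nakayama-semiperfect} and \ref{thm:Nakayama-right-semiperfect} supply an equivalence only on finite-dimensional comodules, so the argument must reassemble the infinite-dimensional objects $Q$ and $Q^{*\rat}_{\ell}$ from their finite-dimensional summands while tracking projectivity and socles; phrasing these as split inclusions of direct summands, as above, is what keeps this passage clean and avoids any question of $\Nakl_Q$ commuting with infinite coproducts.
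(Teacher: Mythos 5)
Your proof is correct, and you rightly read the misprinted condition ``$\Proj(\mathcal{A}) = \Ind(\mathcal{A})$'' in (2) as $\Proj(\mathcal{A}) = \Inj(\mathcal{A})$; but your route is genuinely different from the paper's, which settles both implications with a single citation of \cite[Theorem 3.3.4]{MR1786197}. For (1) $\Rightarrow$ (2) the paper uses that over a QcF coalgebra every injective comodule is projective, getting $\Inj(\mathcal{A}) \subset \Proj(\mathcal{A})$, and the reverse inclusion by applying this to $Q^{\cop}$ via $\Proj(\mathcal{A}) = \Inj(\mathcal{A}^{\op})$; for (2) $\Rightarrow$ (1) it decomposes $Q$ as a \emph{right} comodule into finite-dimensional indecomposable injectives, which the hypothesis makes projective, so $Q$ is projective in $\Mod^Q$ and hence left QcF by the same theorem, with right QcF again by the $\cop$-symmetry. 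You instead stay inside the Nakayama-functor framework: (1) $\Rightarrow$ (2) via \eqref{eq:Nakayama-inj-hull-2}, and (2) $\Rightarrow$ (1) by verifying criterion (4) of Theorem \ref{thm:Nakayama-QcF}. Your exactness step is correct, and you correctly located the side-issue: flatness of $Q$ as a right $Q^*$-module is governed by the left comodule structure, so one must pass through the op-symmetry of the hypothesis. Amusingly, the intermediate fact you prove there --- $Q$ is a direct sum of projective left comodules, hence projective over $Q^*$ --- already yields one-sided QcF by \cite[Theorem 3.3.4]{MR1786197}, so the paper's short argument is hiding inside yours; what your longer route buys is that it never invokes that textbook theorem directly, only results already established in the paper, and it makes the Nakayama permutation (the bijection $S \mapsto \socle P(S)$, whose surjectivity you correctly extract from the hypothesis that every $E(T)$ is an indecomposable projective and hence a projective cover of its top) explicit.

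The only step you leave implicit is the passage from ``$\Nakr_Q$ is exact and $\Nakr_Q(S) \ne 0$ for every simple $S$'' to the faithfulness demanded by condition (4). This is routine --- every nonzero object of $\Mod^Q$ has a simple subquotient, and an exact functor preserves images and subquotients, so $\Nakr_Q$ kills no nonzero object and no nonzero morphism --- but it should be said. Alternatively, your two facts plug directly into the paper's proof of Theorem \ref{thm:Nakayama-QcF}: exactness gives injectivity of $Q^{*\rat}_{\ell}$ by Lemma \ref{lem:Nakayama-C-star-rat-inj}, and your socle computation says exactly that all multiplicities $b_i$ there are positive, which is the input needed for \cite[Lemma 1.7]{MR3150709}; with that remark added, no faithfulness statement is needed at all.
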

\begin{proof}
  This result is noted at the end of Section 3.1 of \cite{MR3150709}.
  We explain in a bit more detail.
  First, let $Q$ be a QcF coalgebra and set $\mathcal{A} = \Mod^Q_{\fd}$.
  Then $Q$ is semiperfect by \cite[Corollary 3.3.6]{MR1786197}.
  Since every injective right $Q$-comodule is projective \cite[Theorem 3.3.4]{MR1786197}, we have $\Inj(\mathcal{A}) \subset \Proj(\mathcal{A})$.
  By applying the same argument to $Q^{\cop}$, we also have an inclusion
  \begin{equation*}
    \Proj(\mathcal{A}) = \Inj(\mathcal{A}^{\op}) \subset \Proj(\mathcal{A}^{\op}) = \Inj(\mathcal{A})
  \end{equation*}
  of classes of objects. Thus we have proved (1) $\Rightarrow$ (2).

  To prove the converse, we let $Q$ be a semiperfect coalgebra such that $\Proj(\mathcal{A}) = \Ind(\mathcal{A})$, where $\mathcal{A} := \Mod^Q_{\fd}$.
  We decompose $Q$ into a direct sum of indecomposable injective right $Q$-comodules, as $Q = \bigoplus_{i \in I} E_i$. The right semiperfectness of $Q$ implies that each $E_i$ is finite-dimensional \cite[Theorem 3.2.3]{MR1786197}.
  Thus $Q$ is in fact a direct sum of projective objects.
  This shows that $Q$ is projective, and therefore $Q$ is left QcF \cite[Theorem 3.3.4]{MR1786197}.
  By applying the same argument to $Q^{\cop}$, we see that $Q$ is also right QcF.
  The proof is done.
\end{proof}

\subsection{Nakayama functor for locally finite abelian categories}

Our categorical dictionary is now ready for translating the results on the Nakayama functor established in the previous section.
In view of Lemma \ref{lem:Nakayama-r-coend}, we introduce:

\begin{definition}
  \label{def:Nakayama-locally-finite}
  For a locally finite abelian category $\mathcal{A}$, we define the functor
  \begin{equation*}
    \Nakr_{\Ind(\mathcal{A})}: \Ind(\mathcal{A}) \to \Ind(\mathcal{A}),
    \quad M \mapsto \int^{X \in \mathcal{A}} \coHom_{\Ind(\mathcal{A})}(X, M) \otimes X
  \end{equation*}
  and call $\Nakr_{\Ind(\mathcal{A})}$ the {\em right exact Nakayama functor} for $\mathcal{A}$. We also define the left exact Nakayama functor $\Nakl_{\Ind(\mathcal{A})}$ as a right adjoint of $\Nakr_{\Ind(\mathcal{A})}$. 
\end{definition}

Lemmas~\ref{lem:Nakayama-adjoint} and \ref{lem:Nakayama-r-coend} imply that $\Nakr_{\Ind(\mathcal{A})}$ and $\Nakr_{\Ind(\mathcal{A})}$ are identified with $\Nakr_C$ and $\Nakl_C$, respectively, when $\mathcal{A} = \Mod^C_{\fd}$ for a coalgebra $C$ and $\Ind(\mathcal{A})$ is identified with $\Mod^C$.
Thus the functors $\Nakr_{\Ind(\mathcal{A})}$ and $\Nakr_{\Ind(\mathcal{A})}$ are defined for any locally finite abelian category $\mathcal{A}$.
Theorem \ref{thm:Nakayama-semiperfect} is translated by Lemma \ref{lem:loc-fin-ab-bisemiperfect} as follows:

\begin{theorem}
  \label{thm:locally-fin-ab-semiperfect}
  Let $\mathcal{A}$ be a locally finite abelian category with enough projective objects and enough injective objects.
  Then the functor $\Nakr_{\Ind(\mathcal{A})}$ restricts to an endofunctor on $\mathcal{A}$, which we denote by $\Nakr_{\mathcal{A}} : \mathcal{A} \to \mathcal{A}$.
  The functor $\Nakr_{\mathcal{A}}$ induces an equivalence $\Nakr_{\mathcal{A}}: \Proj(\mathcal{A}) \to \Inj(\mathcal{A})$.
\end{theorem}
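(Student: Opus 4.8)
The plan is to reduce the statement to the coalgebraic Theorem~\ref{thm:Nakayama-semiperfect} by means of the categorical dictionary built up in this subsection. First I would invoke Lemma~\ref{lem:loc-fin-ab-bisemiperfect}: the hypothesis that $\mathcal{A}$ is locally finite abelian with enough projective objects and enough injective objects is \emph{precisely} the condition that $\mathcal{A} \approx \Mod^C_{\fd}$ for some semiperfect coalgebra $C$. Fixing such an equivalence, I would identify $\mathcal{A}$ with $\Mod^C_{\fd}$ and $\Ind(\mathcal{A})$ with $\Mod^C$. Under these identifications, the remark following Definition~\ref{def:Nakayama-locally-finite} (which rests on Lemmas~\ref{lem:Nakayama-adjoint} and~\ref{lem:Nakayama-r-coend}) identifies the coend-defined functor $\Nakr_{\Ind(\mathcal{A})}$ with $\Nakr_C$.

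With this identification in place, both assertions follow directly. Theorem~\ref{thm:Nakayama-semiperfect} states that for a semiperfect coalgebra $C$ the subcategory $\Mod^C_{\fd}$ is closed under $\Nakr_C$; hence $\Nakr_{\Ind(\mathcal{A})}$ restricts to an endofunctor $\Nakr_{\mathcal{A}}$ on $\mathcal{A}$. That same theorem supplies the equivalence $\Nakr_C : \fdPro^C \to \fdInj^C$, where $\fdPro^C = \Proj(\Mod^C_{\fd})$ and $\fdInj^C = \Inj(\Mod^C_{\fd})$. Transporting this equivalence back through $\mathcal{A} \approx \Mod^C_{\fd}$ yields the desired equivalence $\Nakr_{\mathcal{A}} : \Proj(\mathcal{A}) \to \Inj(\mathcal{A})$.

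Since the argument is essentially a dictionary lookup, there is no deep obstacle; the care required lies entirely in the bookkeeping of the identifications. The one point deserving explicit justification is that the coend-defined $\Nakr_{\Ind(\mathcal{A})}$ genuinely agrees with $\Nakr_C$ under the chosen equivalence --- this is exactly what the remark after Definition~\ref{def:Nakayama-locally-finite} records --- together with the fact that the resulting $\Nakr_{\mathcal{A}}$ does not depend on the auxiliary choice of $C$, which is guaranteed by the Morita--Takeuchi invariance of Lemma~\ref{lem:Nakayama-MT-invariance}. Once these compatibilities are noted, the conclusion is obtained by transporting Theorem~\ref{thm:Nakayama-semiperfect} across the equivalence verbatim.
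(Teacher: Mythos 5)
Your proposal is correct and takes essentially the same route as the paper, which likewise derives this theorem by translating Theorem~\ref{thm:Nakayama-semiperfect} through the characterization of semiperfect coalgebras in Lemma~\ref{lem:loc-fin-ab-bisemiperfect}, with the identification of $\Nakr_{\Ind(\mathcal{A})}$ with $\Nakr_C$ supplied by Lemmas~\ref{lem:Nakayama-adjoint} and~\ref{lem:Nakayama-r-coend}. Your extra remark on independence of the auxiliary coalgebra via Lemma~\ref{lem:Nakayama-MT-invariance} is sound but not strictly needed, since the coend definition of $\Nakr_{\Ind(\mathcal{A})}$ is intrinsic to $\mathcal{A}$.
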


Based on Theorem \ref{thm:Nakayama-QcF}, we prove:

\begin{theorem}
  \label{thm:locally-fin-ab-QcF}
  For a locally finite abelian category $\mathcal{A}$, the following are equivalent:
  \begin{enumerate}
  \item $\mathcal{A} \approx \Mod^Q_{\fd}$ for some QcF coalgebra $Q$.
  \item $\Nakr_{\Ind(\mathcal{A})}$ is an auto-equivalence of $\Ind(A)$.
  \item $\Nakr_{\Ind(\mathcal{A})}$ is exact and faithful.
  \item $\Nakr_{\Ind(\mathcal{A})}$ induces an auto-equivalence of $\mathcal{A}$.
  \end{enumerate}
\end{theorem}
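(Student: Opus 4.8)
The plan is to fix a coalgebra $C$ with $\mathcal{A} \approx \Mod^C_{\fd}$, to identify $\Ind(\mathcal{A})$ with $\Mod^C$ and, by Definition~\ref{def:Nakayama-locally-finite} together with Lemma~\ref{lem:Nakayama-r-coend}, the functor $\Nakr_{\Ind(\mathcal{A})}$ with $\Nakr_C$. Under this identification, conditions (2) and (3) are precisely the statements ``$\Nakr_C$ is an equivalence'' and ``$\Nakr_C$ is exact and faithful'', which are conditions (3) and (4) of Theorem~\ref{thm:Nakayama-QcF}. Hence (2) and (3) are each equivalent to ``$C$ is QcF'', and essentially all of the work for these two conditions is already done.

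For condition (1), note first that taking $Q = C$ shows that ``$C$ is QcF'' implies (1). For the reverse reduction, suppose $\mathcal{A} \approx \Mod^Q_{\fd}$ for some QcF coalgebra $Q$; then $\Mod^C \approx \Ind(\mathcal{A}) \approx \Mod^Q$, so $C$ and $Q$ are Morita--Takeuchi equivalent, and Lemma~\ref{lem:Nakayama-MT-invariance} transports the equivalence $\Nakr_Q$ (an equivalence by Theorem~\ref{thm:Nakayama-QcF}, as $Q$ is QcF) to $\Nakr_C$. Thus $\Nakr_C$ is an equivalence, and Theorem~\ref{thm:Nakayama-QcF} returns ``$C$ is QcF''. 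This establishes (1) $\Leftrightarrow$ (2) $\Leftrightarrow$ (3). It remains to weave (4) into this chain, which I would do by proving (2) $\Rightarrow$ (4) and (4) $\Rightarrow$ (3).

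For (2) $\Rightarrow$ (4): a QcF coalgebra is semiperfect, so Theorem~\ref{thm:Nakayama-semiperfect} guarantees that $\Mod^C_{\fd}$ is closed under both $\Nakr_C$ and $\Nakl_C$. Since ``$C$ is QcF'' also makes $\Nakr_C$ and $\Nakl_C$ mutually quasi-inverse auto-equivalences of all of $\Mod^C$ (Theorem~\ref{thm:Nakayama-QcF}), restricting these along the full, stable subcategory $\Mod^C_{\fd}$ yields mutually quasi-inverse auto-equivalences of $\mathcal{A}$; this is exactly (4).

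The genuinely new content is (4) $\Rightarrow$ (3), and this is where I expect the main obstacle. Condition (4) says $\Nakr_C$ preserves $\Mod^C_{\fd}$ and that the induced functor $\Nakr_{\mathcal{A}}$ is an auto-equivalence, hence exact and faithful \emph{on $\mathcal{A}$}; the task is to propagate exactness and faithfulness to all of $\Mod^C$. The key structural facts are that $\Nakr_C$ is a left adjoint (Lemma~\ref{lem:Nakayama-adjoint}), hence cocontinuous, and that $\Mod^C$ is a Grothendieck category, so that filtered colimits are exact and every object is the filtered union of its finite-dimensional subcomodules. For exactness I would write an arbitrary short exact sequence in $\Mod^C$ as a filtered colimit of short exact sequences of finite-dimensional comodules, apply the exact functor $\Nakr_{\mathcal{A}}$ termwise, and then pass to the colimit, using cocontinuity of $\Nakr_C$ and exactness of filtered colimits to conclude that $\Nakr_C$ is exact. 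For faithfulness, given $f \colon M \to N$ with $\Nakr_C(f) = 0$, I would write $M = \varinjlim_i M_i$ over its finite-dimensional subcomodules, factor each $f|_{M_i}$ as $M_i \xrightarrow{g_i} N_i \hookrightarrow N$ through a finite-dimensional subcomodule $N_i \subseteq N$, and use that $\Nakr_C$ (now known to be exact) sends the inclusion $N_i \hookrightarrow N$ to a monomorphism; this forces $\Nakr_C(g_i) = 0$, whence $g_i = 0$ by faithfulness on $\mathcal{A}$, and therefore $f = 0$ since the inclusions $M_i \hookrightarrow M$ are jointly epic. With $\Nakr_C$ shown to be exact and faithful, condition (3) holds, closing the cycle (2) $\Rightarrow$ (4) $\Rightarrow$ (3) $\Rightarrow$ (1) $\Rightarrow$ (2). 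The delicate points to get right are the order of the argument --- exactness must be secured before the faithfulness step, since the latter relies on $\Nakr_C$ preserving monomorphisms --- and the routine but necessary verification that the finite-dimensional approximations assemble into the required filtered colimits.
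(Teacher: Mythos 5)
Your proof is correct, but it takes a genuinely different route from the paper's on the one implication that carries new content, namely (4) $\Rightarrow$ (3). The paper disposes of it in two lines: since $\Nakr_{\Ind(\mathcal{A})}$ is cocontinuous (being a left adjoint, by Lemma \ref{lem:Nakayama-adjoint}) and every object of $\Ind(\mathcal{A})$ is a filtered colimit of objects of $\mathcal{A}$, the functor $\Nakr_{\Ind(\mathcal{A})}$ is isomorphic to $\Ind(\Nakr_{\mathcal{A}})$, the ind-extension of its restriction to $\mathcal{A}$; the ind-extension of an equivalence is again an equivalence, so (4) yields (2) directly, and a fortiori (3). Your filtered-colimit argument --- writing a short exact sequence in $\Mod^C$ as a filtered colimit of short exact sequences of finite-dimensional comodules to transfer exactness, then factoring each $f|_{M_i}$ through a finite-dimensional subcomodule of the target to transfer faithfulness --- is in effect a hand-made unpacking of this same principle, and it is sound; in particular you correctly identify and respect the one delicate point, that exactness must be established before faithfulness because the faithfulness step uses that $\Nakr_C$ sends the inclusions $N_i \hookrightarrow N$ to monomorphisms. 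What the paper's route buys is brevity and the stronger conclusion (2) in one stroke; what yours buys is an elementary, self-contained verification that avoids invoking functoriality of the ind-completion, at the cost of re-proving a special case of it. Two smaller divergences: for (2) $\Rightarrow$ (4) the paper calls the implication trivial (an auto-equivalence of $\Ind(\mathcal{A})$ preserves finite-length objects, which are exactly the objects of $\mathcal{A}$), whereas you route through Theorem \ref{thm:Nakayama-semiperfect} --- heavier but valid; and your explicit use of Lemma \ref{lem:Nakayama-MT-invariance} to reduce condition (1) to the fixed coalgebra $C$ spells out a step the paper leaves implicit when it simply cites Theorem \ref{thm:Nakayama-QcF}.
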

\begin{proof}
  Theorem \ref{thm:Nakayama-QcF} states that (1), (2) and (3) are equivalent.
  It is trivial that (2) implies (4).
  To complete the proof, we shall show that (4) implies (3).
  We assume that $\Nakr_{\Ind(\mathcal{A})}$ induces an endofunctor on $\mathcal{A}$, say $\Nakr_{\mathcal{A}}: \mathcal{A} \to \mathcal{A}$.
  Then the endofunctor $\Ind(\Nakr_{\mathcal{A}})$ on $\Ind(\mathcal{A})$ induced by $\Nakr_{\mathcal{A}}$ is isomorphic to $\Nakr_{\Ind(\mathcal{A})}$.
  Thus, if $\Nakr_{\mathcal{A}}$ is an equivalence, so is the functor $\Ind(\Nakr_{\mathcal{A}})$. The proof is done.
\end{proof}

Theorem \ref{thm:Nakayama-symm-coalg} implies:

\begin{theorem}
  \label{thm:locally-fin-ab-symmetric}
  For a locally finite abelian category $\mathcal{A}$, the following are equivalent:
  \begin{enumerate}
  \item $\mathcal{A} \approx \Mod^Q_{\fd}$ for some symmetric coalgebra $Q$.
  \item $\Nakr_{\Ind(\mathcal{A})}$ is isomorphic to the identity functor on $\Ind(\mathcal{A})$.
  \item The restriction of $\Nakr_{\Ind(\mathcal{A})}$ to $\mathcal{A}$ is the identity functor in $\mathcal{A}$.
  \end{enumerate}
\end{theorem}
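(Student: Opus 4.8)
The plan is to reduce all three conditions to the coalgebra statement of Theorem~\ref{thm:Nakayama-symm-coalg} via the dictionary assembled in this section. Since $\mathcal{A}$ is locally finite abelian, Takeuchi's theorem provides a coalgebra $C$ together with an equivalence $\mathcal{A} \approx \Mod^C_{\fd}$; this extends to an equivalence $\Ind(\mathcal{A}) \approx \Mod^C$ under which, by the discussion following Definition~\ref{def:Nakayama-locally-finite}, the functor $\Nakr_{\Ind(\mathcal{A})}$ is identified with $\Nakr_C$. I fix such a $C$ once and for all.

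First I would establish (1) $\Leftrightarrow$ (2). If (2) holds, then under the identification above $\Nakr_C \cong \id$, so Theorem~\ref{thm:Nakayama-symm-coalg} shows $C$ is symmetric, and taking $Q = C$ yields (1). Conversely, assume $\mathcal{A} \approx \Mod^Q_{\fd}$ for a symmetric coalgebra $Q$, so that $\Ind(\mathcal{A}) \approx \Mod^Q$. By Theorem~\ref{thm:Nakayama-symm-coalg} we have $\Nakr_Q \cong \id$, and since any equivalence of comodule categories intertwines the two Nakayama functors (Lemma~\ref{lem:Nakayama-MT-invariance}), it follows that $\Nakr_{\Ind(\mathcal{A})} \cong \id$, which is (2). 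I would remark that, although being symmetric is not \emph{a priori} a Morita-Takeuchi invariant, Theorem~\ref{thm:Nakayama-symm-coalg} identifies it with the manifestly invariant condition $\Nakr_C \cong \id$; this is exactly what makes the existential quantifier in (1) harmless and renders the choice of $C$ immaterial.

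Next I would prove (2) $\Leftrightarrow$ (3). The implication (2) $\Rightarrow$ (3) is immediate: if $\Nakr_{\Ind(\mathcal{A})} \cong \id_{\Ind(\mathcal{A})}$, then its values on objects of $\mathcal{A}$ lie in $\mathcal{A}$ and its restriction is isomorphic to $\id_{\mathcal{A}}$. For (3) $\Rightarrow$ (2), I would follow the pattern of the implication (4) $\Rightarrow$ (3) in the proof of Theorem~\ref{thm:locally-fin-ab-QcF}. The hypothesis that the restriction $\Nakr_{\mathcal{A}}$ equals $\id_{\mathcal{A}}$ exhibits $\Nakr_{\Ind(\mathcal{A})}$ as a functor preserving $\mathcal{A}$; since $\Nakr_{\Ind(\mathcal{A})}$ is defined as a coend built from copowers, it preserves filtered colimits and is therefore isomorphic to the ind-extension $\Ind(\Nakr_{\mathcal{A}})$ of its restriction. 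As $\Ind(\id_{\mathcal{A}}) = \id_{\Ind(\mathcal{A})}$, we obtain $\Nakr_{\Ind(\mathcal{A})} \cong \id_{\Ind(\mathcal{A})}$, which is (2).

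The main obstacle, such as it is, lies in the passage from $\mathcal{A}$ to $\Ind(\mathcal{A})$ in (3) $\Rightarrow$ (2): one must know that a cocontinuous endofunctor of $\Ind(\mathcal{A})$ is determined up to isomorphism by its restriction to $\mathcal{A}$, which is the content of the universal property of the ind-completion together with the cocontinuity of the Nakayama functor. Once this is granted, the remaining arguments are formal translations of Theorem~\ref{thm:Nakayama-symm-coalg} through the equivalence $\Ind(\mathcal{A}) \approx \Mod^C$.
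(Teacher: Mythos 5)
Your proof is correct and takes essentially the same approach as the paper, which derives Theorem~\ref{thm:locally-fin-ab-symmetric} directly from Theorem~\ref{thm:Nakayama-symm-coalg} via the identification of $\Nakr_{\Ind(\mathcal{A})}$ with $\Nakr_C$, your step (3) $\Rightarrow$ (2) being exactly the ind-extension argument ($\Nakr_{\Ind(\mathcal{A})} \cong \Ind(\Nakr_{\mathcal{A}})$ by cocontinuity) that the paper uses for the analogous implication in Theorem~\ref{thm:locally-fin-ab-QcF}. Your remark that Morita--Takeuchi invariance (Lemma~\ref{lem:Nakayama-MT-invariance}) renders the existential quantifier in (1) harmless is a correct refinement of a point the paper leaves implicit.
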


\subsection{Interaction with double adjoints}

In the next section, we give some applications of the Nakayama functor to tensor categories.
The following theorem is essential ({\it cf}. Observation~\ref{obs:intro-2}):

\begin{theorem}
  \label{thm:Nakayama-double-adj}
  Let $\mathcal{A}$ and $\mathcal{B}$ be locally finite abelian categories, and let $F: \Ind(\mathcal{A}) \to \Ind(\mathcal{B})$ be a cocontinuous linear functor such that the double left adjoint $F^{\lladj} = (F^{\ladj})^{\ladj}$ exists.
  Then there is an isomorphism
  \begin{equation}
    \label{ew:Nakayama-double-adj}
    F \circ \Nakr_{\Ind(\mathcal{A})} \cong \Nakr_{\Ind(\mathcal{B})} \circ F^{\lladj}.
  \end{equation}
\end{theorem}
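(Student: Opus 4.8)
The plan is to reproduce the proof of Observation~\ref{obs:intro-2} in the ind-completed setting, using the coend that defines $\Nakr_{\Ind(\mathcal{A})}$ together with the two compatibility lemmas \eqref{eq:cohom-and-adjoints} and \eqref{eq:coend-and-adjoint}. First, since $F$ is cocontinuous and $\Nakr_{\Ind(\mathcal{A})}(M)$ is a coend, the functor $F$ passes inside it, giving
\begin{equation*}
  F(\Nakr_{\Ind(\mathcal{A})}(M))
  \cong \int^{X \in \mathcal{A}} \coHom_{\Ind(\mathcal{A})}(X, M) \otimes F(X).
\end{equation*}
On the other side I would unfold the definition of $\Nakr_{\Ind(\mathcal{B})}(F^{\lladj}(M)) = \int^{Y \in \mathcal{B}} \coHom_{\Ind(\mathcal{B})}(Y, F^{\lladj}(M)) \otimes Y$ and apply \eqref{eq:cohom-and-adjoints} to the functor $F^{\ladj}$, whose left adjoint is $F^{\lladj}$ and whose right adjoint is $F$; this produces $\coHom_{\Ind(\mathcal{B})}(Y, F^{\lladj}(M)) \cong \coHom_{\Ind(\mathcal{A})}(F^{\ladj}(Y), M)$, hence
\begin{equation*}
  \Nakr_{\Ind(\mathcal{B})}(F^{\lladj}(M))
  \cong \int^{Y \in \mathcal{B}} \coHom_{\Ind(\mathcal{A})}(F^{\ladj}(Y), M) \otimes Y.
\end{equation*}

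It then remains to identify the two coends. Writing $T(Z, W) = \coHom_{\Ind(\mathcal{A})}(Z, M) \otimes W$, the first coend is $\int^{X} T(X, F(X))$ while the second is $\int^{Y} T(F^{\ladj}(Y), Y)$, so the desired isomorphism is exactly the instance of the coend-adjunction lemma \eqref{eq:coend-and-adjoint} attached to the adjunction $F^{\ladj} \dashv F$, under which $(F^{\ladj})^{\radj} = F$. Naturality in $M$ is inherited from the naturality of each isomorphism used, and I would record it at the end.

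The delicate point, and the main obstacle, is that the coends defining the Nakayama functors are taken over the finite subcategories $\mathcal{A}$ and $\mathcal{B}$, whereas $F$ and its adjoints are a priori defined only on the ind-completions. To invoke \eqref{eq:cohom-and-adjoints} and \eqref{eq:coend-and-adjoint} as above one must know that these functors restrict to the finite-dimensional (equivalently, compact) objects, so that $F^{\ladj} \dashv F$ descends to an adjunction between $\mathcal{A}$ and $\mathcal{B}$. For $F^{\ladj}$ and $F^{\lladj}$ this is automatic: a left adjoint preserves compact objects as soon as its right adjoint preserves filtered colimits, and the right adjoints $F$ and $F^{\ladj}$ are cocontinuous, hence preserve all colimits. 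The substantive claim is that $F$ itself carries finite-dimensional comodules to finite-dimensional ones; I expect this to be forced by the existence of the double left adjoint $F^{\lladj}$, since the natural functors that violate finiteness, such as an infinite cofree functor $V \mapsto V \otimes C$ with $\dim_{\bfk} C = \infty$, admit a left adjoint but no second one. A clean way to organize the write-up is to reduce first to finite-dimensional $M$: both $F \circ \Nakr_{\Ind(\mathcal{A})}$ and $\Nakr_{\Ind(\mathcal{B})} \circ F^{\lladj}$ are cocontinuous, so by the universal property of the ind-completion it suffices to construct a natural isomorphism on $\mathcal{A}$, where the coHom spaces become finite-dimensional duals of Hom spaces and the finiteness bookkeeping needed to apply \eqref{eq:coend-and-adjoint} is most transparent.
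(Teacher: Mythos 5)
Your main chain of isomorphisms is exactly the paper's proof. The paper computes, for $M \in \Ind(\mathcal{A})$,
\begin{equation*}
  F(\Nakr_{\Ind(\mathcal{A})}(M))
  \cong \int^{X \in \mathcal{A}} \coHom_{\Ind(\mathcal{A})}(X, M) \otimes F(X)
  \cong \int^{Y \in \mathcal{B}} \coHom_{\Ind(\mathcal{A})}(F^{\ladj}(Y), M) \otimes Y
  \cong \Nakr_{\Ind(\mathcal{B})}(F^{\lladj}(M)),
\end{equation*}
using the cocontinuity of $F$ and \eqref{eq:copower-preserving} for the first step, \eqref{eq:coend-and-adjoint} applied to the adjunction $F^{\ladj} \dashv F$ for the second, and \eqref{eq:cohom-and-adjoints} applied to $F^{\ladj}$ (with left adjoint $F^{\lladj}$ and right adjoint $F$) for the third --- the same three ingredients, in the same roles, as in your proposal; organizing it as ``unfold both sides and meet in the middle'' is only a cosmetic difference.

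The divergence is in your paragraph on the ``delicate point,'' and there is a genuine gap there. You make the write-up rest on the claim that $F$ itself carries finite-dimensional objects to finite-dimensional objects, justified only by ``I expect this to be forced by the existence of $F^{\lladj}$.'' That is not an argument: your heuristic (the cofree functor $V \mapsto V \otimes_{\bfk} C$ admits a left adjoint but no second one) shows that one particular finiteness-violating functor fails the hypothesis, not that every such functor does, and settling the claim in general looks at least as hard as the theorem itself. More to the point, the claim is not needed, and the paper never uses it. In the intermediate coend $\int^{X \in \mathcal{A}} \coHom_{\Ind(\mathcal{A})}(X, M) \otimes F(X)$ the object $F(X)$ is allowed to be an arbitrary object of $\Ind(\mathcal{B})$: the integrand is a copower in $\Ind(\mathcal{B})$, cocontinuous in its second variable, and the transfer \eqref{eq:coend-and-adjoint} is carried out with the adjunction $F^{\ladj} \dashv F$ living on the ind-completions --- its proof uses only the hom-isomorphism $\Hom_{\Ind(\mathcal{A})}(F^{\ladj}(Y), X) \cong \Hom_{\Ind(\mathcal{B})}(Y, F(X))$ for $Y \in \mathcal{B}$, $X \in \mathcal{A}$, so the adjunction is never required to descend to one between $\mathcal{A}$ and $\mathcal{B}$. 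What \emph{is} genuinely needed at the finite level is only that $F^{\ladj}(Y)$ be compact (hence finite-dimensional, hence quasi-finite) for $Y \in \mathcal{B}$, so that $\coHom_{\Ind(\mathcal{A})}(F^{\ladj}(Y), M)$ is defined --- and this part you supply correctly, since the right adjoint $F$ of $F^{\ladj}$ preserves filtered colimits. Note also that your proposed reduction to finite-dimensional $M$ does not repair the issue you raise: the potentially infinite-dimensional objects in the critical step are the $F(X)$, whose size has nothing to do with $M$. So: keep your chain and your observation about $F^{\ladj}$ and $F^{\lladj}$, and delete the unproven finiteness claim about $F$; as written, your argument would stall at exactly the step you identify as the main obstacle.
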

\begin{proof}
  Since $\Mod^C$ for a coalgebra $C$ is locally presentable, $F$ has a right adjoint.
  For $M \in \Ind(\mathcal{A})$, we have natural isomorphisms
  \begin{align*}
    F(\Nakr_{\Ind(\mathcal{A})}(M))
    & \cong \textstyle \int_{X \in \mathcal{A}} F(\coHom_{\Ind(\mathcal{A})}(X, M) \otimes X) \\
    & \cong \textstyle \int_{X \in \mathcal{A}} \coHom_{\Ind(\mathcal{A})}(X, M) \otimes F(X) \\
    & \cong \textstyle \int_{Y \in \mathcal{B}} \coHom_{\Ind(\mathcal{A})}(F^{\ladj}(Y), M) \otimes Y \\
    & \cong \textstyle \int_{Y \in \mathcal{B}} \coHom_{\Ind(\mathcal{B})}(Y, F^{\lladj}(M)) \otimes Y \\
    & = \Nakr_{\Ind(\mathcal{B})}(F^{\lladj}(M)),
  \end{align*}
  where the first isomorphism follows from the cocontinuity of $F$,
  the second from \eqref{eq:copower-preserving},
  the third from \eqref{eq:coend-and-adjoint},
  and the fourth from \eqref{eq:cohom-and-adjoints}.
\end{proof}

\section{Applications to tensor categories}
\label{sec:tensor-cat}

\subsection{Frobenius tensor categories}

In this section, we give some applications of our results to tensor categories.
We mostly follow \cite{MR3242743} for notation and terminology on monoidal categories.
In view of Mac Lane's strictness theorem, we assume that all monoidal category are strict.
Given a monoidal category $\mathcal{C}$, we usually denote by $\otimes$ and $\unitobj$ the monoidal product and the unit object of $\mathcal{C}$, respectively.
The category $\mathcal{C}^{\rev}$ is the monoidal category obtained from $\mathcal{C}$ by reversing the order of the monoidal product.

Following \cite{MR3242743}, a left dual object of $X \in \mathcal{C}$ is an object $X^{\vee} \in \mathcal{C}$ equipped with morphisms $\eval_X : X^{\vee} \otimes X \to \unitobj$ and $\coev_X : \unitobj \to X \otimes X^{\vee}$ satisfying certain equations.
A right dual object of $X$ is written as ${}^{\vee}\!X$.
We recall that $\mathcal{C}$ is left (right) rigid if every object of $\mathcal{C}$ has a left (right) dual object. A rigid monoidal category is a left and right rigid monoidal category.

\begin{definition}
  A {\em tensor category} \cite{MR3242743} is a locally finite abelian category $\mathcal{C}$ equipped with a structure of a rigid monoidal category such that the tensor product $\otimes: \mathcal{C} \times \mathcal{C} \to \mathcal{C}$ is $\bfk$-linear in each variable and the unit object $\unitobj \in \mathcal{C}$ is a simple object whose endomorphism algebra is isomorphic to $\bfk$.
\end{definition}

Our main interest in this section is the following class of tensor categories:

\begin{definition}
  A {\em Frobenius tensor category} \cite{MR3410615} is a tensor category $\mathcal{C}$ such that every simple object of $\mathcal{C}$ has an injective hull in $\mathcal{C}$.
\end{definition}

\subsection{Equivalent conditions for Frobenius property}

Let $H$ be a Hopf algebra.
Then the locally finite abelian category $\Mod^{H}_{\fd}$ has a natural structure of a left rigid monoidal category, which may not be a tensor category.
Indeed, $\Mod^H_{\fd}$ is rigid if and only if the antipode of $H$ is bijective \cite{MR1098991} and there are some examples of Hopf algebras with non-bijective antipodes \cite{MR292876}.
Thus, from a viewpoint of general theory of Hopf algebras, the following situation could be important:

\begin{assumption}
  \label{assumption:pretensor-cat}
  $\mathcal{C}$ is a locally finite abelian category equipped with a structure of a left rigid monoidal category such that the tensor product of $\mathcal{C}$ is $\bfk$-linear and exact in each variable, and the unit object $\unitobj$ of $\mathcal{C}$ is a simple object whose endomorphism algebra is isomorphic to $\bfk$.
\end{assumption}

If $\mathcal{C}$ satisfies Assumption \ref{assumption:pretensor-cat}, then $\Ind(\mathcal{C})$ has a natural monoidal structure extending that of $\mathcal{C}$.
The exactness of the tensor product of $\mathcal{C}$ implies that the tensor product of $\Ind(\mathcal{C})$ preserves colimits.
Since the tensor product of $\mathcal{C}$ is faithful by \cite[Exercise 4.3.11]{MR3242743}, so is the tensor product of $\Ind(\mathcal{C})$.
Now we prove the following formula of the Nakayama functor:

\begin{lemma}
  \label{lem:Nakayama-tensor-cat-formula}
  Let $\mathcal{C}$ satisfy Assumption \ref{assumption:pretensor-cat}.
  Then we have
  \begin{gather}
    \label{eq:Nakayama-tensor-cat-formula}
    \Nakr_{\Ind(\mathcal{C})}(X \otimes Y)
    \cong \Nakr_{\Ind(\mathcal{C})}(X) \otimes \mathbb{D}(Y), \\
    \label{eq:Nakayama-tensor-cat-formula-2}
    \Nakl_{\Ind(\mathcal{C})}(X \otimes Y)
    \cong \mathbb{D}(X) \otimes \Nakl_{\Ind(\mathcal{C})}(Y)
  \end{gather}
  for $X, Y \in \Ind(\mathcal{C})$, where $\mathbb{D} : \Ind(\mathcal{C}) \to \Ind(\mathcal{C})$ is the functor induced by the double left dual functor on the category $\mathcal{C}$.
  In particular, we have
  \begin{equation}
    \label{eq:Nakayama-tensor-cat-formula-3}
    \Nakr_{\Ind(\mathcal{C})}(X) \cong \modobj \otimes \mathbb{D}(X), \quad
    \Nakl_{\Ind(\mathcal{C})}(X) \cong \mathbb{D}(X) \otimes \overline{\modobj}
    \quad (X \in \Ind(\mathcal{C})),
  \end{equation}
  where $\modobj = \Nakr_{\mathcal{C}}(\unitobj)$ and $\overline{\modobj} = \Nakl_{\mathcal{C}}(\unitobj)$.
\end{lemma}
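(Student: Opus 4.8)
The plan is to obtain both displayed isomorphisms as instances of the double-adjoint formula of Theorem~\ref{thm:Nakayama-double-adj}, applied to suitable tensoring functors on $\Ind(\mathcal{C})$. The one delicate point is that $\mathcal{C}$ is only assumed left rigid, so a general object has a left dual $(-)^\vee$ but need not have a right dual; consequently the naive choice $F = (-) \otimes Y$ is useless, since its left adjoint would be $(-) \otimes {}^\vee Y$ and ${}^\vee Y$ may not exist. The key trick is therefore to apply the theorem to a functor whose entire tower of iterated \emph{left} adjoints is built from left duals alone.

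Concretely, for \eqref{eq:Nakayama-tensor-cat-formula} I would fix $Y \in \mathcal{C}$ and take $F = (-) \otimes \mathbb{D}(Y) = (-) \otimes Y^{\vee\vee}$. This $F$ is cocontinuous, and since $(-) \otimes Y \dashv (-) \otimes Y^\vee \dashv (-) \otimes Y^{\vee\vee}$, its iterated left adjoints are $F^{\ladj} = (-) \otimes Y^\vee$ and $F^{\lladj} = (-) \otimes Y$, both existing by left rigidity alone. Theorem~\ref{thm:Nakayama-double-adj} then reads $F \circ \Nakr_{\Ind(\mathcal{C})} \cong \Nakr_{\Ind(\mathcal{C})} \circ F^{\lladj}$, which is precisely $\Nakr_{\Ind(\mathcal{C})}(X) \otimes \mathbb{D}(Y) \cong \Nakr_{\Ind(\mathcal{C})}(X \otimes Y)$, i.e. \eqref{eq:Nakayama-tensor-cat-formula}. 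Since $\Nakr_{\Ind(\mathcal{C})}$, $\mathbb{D}$ and $\otimes$ are cocontinuous, this extends from $Y \in \mathcal{C}$ to all $Y \in \Ind(\mathcal{C})$.

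For \eqref{eq:Nakayama-tensor-cat-formula-2} I would instead fix $X \in \mathcal{C}$, set $F = X^\vee \otimes (-)$, and pass to right adjoints. Taking the right adjoint of $F \circ \Nakr \cong \Nakr \circ F^{\lladj}$ and using $(F^{\lladj})^{\radj} = F^{\ladj}$ yields the companion identity $\Nakl_{\Ind(\mathcal{C})} \circ F^{\radj} \cong F^{\ladj} \circ \Nakl_{\Ind(\mathcal{C})}$. Here the adjunction $X^\vee \otimes (-) \dashv X \otimes (-)$, valid because $X$ is a right dual of $X^\vee$ by left rigidity, identifies $F^{\radj} = X \otimes (-)$ with no right dual needed, while $F^{\ladj} = (X^\vee)^\vee \otimes (-) = \mathbb{D}(X) \otimes (-)$. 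The identity thus becomes $\Nakl_{\Ind(\mathcal{C})}(X \otimes Y) \cong \mathbb{D}(X) \otimes \Nakl_{\Ind(\mathcal{C})}(Y)$, which is \eqref{eq:Nakayama-tensor-cat-formula-2}. The special cases \eqref{eq:Nakayama-tensor-cat-formula-3} are then immediate by setting $X = \unitobj$ in \eqref{eq:Nakayama-tensor-cat-formula} and $Y = \unitobj$ in \eqref{eq:Nakayama-tensor-cat-formula-2}, with $\modobj = \Nakr_{\mathcal{C}}(\unitobj)$ and $\overline{\modobj} = \Nakl_{\mathcal{C}}(\unitobj)$.

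The main obstacle is exactly this one-sided rigidity: every step must be arranged so that only left duals and iterated left adjoints occur, which is why one tensors by $\mathbb{D}(Y) = Y^{\vee\vee}$ (resp. by $X^\vee$) rather than by $Y$ (resp. $X$). A secondary point to verify is the existence of the right adjoints required to invoke Theorem~\ref{thm:Nakayama-double-adj} and to dualize; these hold because $\Ind(\mathcal{C}) \approx \Mod^{C}$ is locally presentable and the functors in play are cocontinuous, just as in the proof of that theorem. Finally, the extension of \eqref{eq:Nakayama-tensor-cat-formula-2} from $X \in \mathcal{C}$ to $X \in \Ind(\mathcal{C})$ uses the cocontinuity of $\Nakl_{\Ind(\mathcal{C})}$ that this very formula exhibits (equivalently, the end-description of $\Nakl_{\Ind(\mathcal{C})}$ dual to Lemma~\ref{lem:Nakayama-r-coend}).
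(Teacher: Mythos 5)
Your treatment of \eqref{eq:Nakayama-tensor-cat-formula} coincides with the paper's: apply Theorem~\ref{thm:Nakayama-double-adj} to $F = (-) \otimes Y^{\vee\vee}$, whose iterated left adjoints $(-)\otimes Y^{\vee}$ and $(-)\otimes Y$ exist by left rigidity alone, then extend in $Y$ using the cocontinuity of $\Nakr_{\Ind(\mathcal{C})}$ and of $\otimes$. For \eqref{eq:Nakayama-tensor-cat-formula-2} your route genuinely differs from the paper's: you transpose $F \circ \Nakr_{\Ind(\mathcal{C})} \cong \Nakr_{\Ind(\mathcal{C})} \circ F^{\lladj}$ for $F = X^{\vee}\otimes(-)$ to right adjoints, using $\Nakr_{\Ind(\mathcal{C})} \dashv \Nakl_{\Ind(\mathcal{C})}$ (Lemma~\ref{lem:Nakayama-adjoint}) and $X^{\vee}\otimes(-) \dashv X\otimes(-)$, whereas the paper invokes the universal property of $\Nakl_{\Ind(\mathcal{C})}$ as an end, deferring the details to \cite{MR4560996}. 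Your adjunction bookkeeping is correct ($F^{\radj} = X\otimes(-)$, $F^{\ladj} = X^{\vee\vee}\otimes(-)$, $F^{\lladj} = X^{\vee\vee\vee}\otimes(-)$, all existing by left rigidity), and this gives \eqref{eq:Nakayama-tensor-cat-formula-2} for $X \in \mathcal{C}$ and all $Y \in \Ind(\mathcal{C})$ in a pleasantly self-contained way.

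The genuine gap is the extension of \eqref{eq:Nakayama-tensor-cat-formula-2}, and hence of the second half of \eqref{eq:Nakayama-tensor-cat-formula-3}, from $X \in \mathcal{C}$ to $X \in \Ind(\mathcal{C})$. Your justification --- ``the cocontinuity of $\Nakl_{\Ind(\mathcal{C})}$ that this very formula exhibits'' --- is circular: knowing $\Nakl_{\Ind(\mathcal{C})}(X) \cong \mathbb{D}(X)\otimes\overline{\modobj}$ for $X \in \mathcal{C}$ only identifies the restriction of $\Nakl_{\Ind(\mathcal{C})}$ to the full subcategory $\mathcal{C}$ with a cocontinuous functor; it says nothing about how $\Nakl_{\Ind(\mathcal{C})}$ acts on filtered colimits, which is exactly what is needed to propagate the identification to $\Ind(\mathcal{C})$. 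The asymmetry with the first formula is real: $\Nakr_{\Ind(\mathcal{C})}$ is cocontinuous by construction (a coend, a left adjoint), while $\Nakl_{\Ind(\mathcal{C})}$ is a priori only a right adjoint, hence continuous, and objects of $\Ind(\mathcal{C})$ are filtered colimits, not cofiltered limits, of objects of $\mathcal{C}$, so the mirror of your extension argument does not run. Nor does your fallback work as a one-liner: the end-dual of Theorem~\ref{thm:Nakayama-double-adj} applies to \emph{continuous} functors with an existing double right adjoint, and for general $X \in \Ind(\mathcal{C})$ the functor $\mathbb{D}(X)\otimes(-)$ need not be continuous (it need not preserve products), so it too directly yields only the case $X \in \mathcal{C}$. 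This is precisely the point where the paper leans on the end description of $\Nakl$ in \cite{MR4560996}, and the full-generality statement is not cosmetic: the proof of (12)~$\Rightarrow$~(1) in Theorem~\ref{thm:one-sided-rigidity} applies \eqref{eq:Nakayama-tensor-cat-formula-3} to arbitrary objects of $\Ind(\mathcal{C})$ --- and one cannot instead appeal there to the zero-or-equivalence dichotomy for $\Nakl_{\Ind(\mathcal{C})}$ without circularity, since that dichotomy is itself a consequence of this lemma. To close the gap you must actually carry out the end-based argument, or otherwise prove that $\Nakl_{\Ind(\mathcal{C})}$ preserves filtered colimits in this setting.
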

\begin{proof}
  We write $\Nak = \Nakr_{\Ind(\mathcal{C})}$ for brevity.
  We fix an object $Y \in \mathcal{C}$.
  By applying Theorem \ref{thm:Nakayama-double-adj} to $F = (-) \otimes Y^{\vee\vee}$, we have a natural isomorphism
  \begin{equation*}
    \Nak(X \otimes Y)
    = \Nak F^{\lladj}(X)
    \cong F(\Nak(X)) = \Nak(X) \otimes Y^{\vee\vee}
    \quad (X \in \Ind(\mathcal{C})).
  \end{equation*}
  Thus \eqref{eq:Nakayama-tensor-cat-formula} holds for all $X \in \Ind(\mathcal{C})$ and $Y \in \mathcal{C}$.
  By the cocontinuity of $\Nak$ and $\otimes$, the formula \eqref{eq:Nakayama-tensor-cat-formula} actually holds for all $X, Y \in \Ind(\mathcal{C})$. The isomorphism \eqref{eq:Nakayama-tensor-cat-formula-2} is established by the universal property of $\Nakl_{\Ind(\mathcal{C})}$ as an end (see \cite{MR4560996}).
\end{proof}

Let $H$ be a Hopf algebra.
A left (right) integral on $H$ is a morphism $H \to \bfk$ of left (right) $H$-comodules, where $\bfk$ is the trivial comodule.
It is known that $H$ is co-Frobenius if and only if $H$ has a non-zero (left or right) integral on $H$,  if and only if $H^{*\rat}_{\ell} \ne 0$
\cite[Corollary 5.2.4 and Theorem 5.3.2]{MR1786197}.
Moreover, the antipode of a co-Frobenius Hopf algebra is bijective \cite[Corollary 5.4.6]{MR1786197}.
By Lemma \ref{lem:Nakayama-C-star-rat-zero}, the condition $H^{*\rat}_{\ell} = 0$ is also equivalent to $\Nakr_H \ne 0$.
As a categorical generalization of these results, we prove:

\begin{theorem}[{\it cf}. {\cite[Theorem 5.2]{MR4560996}} and {\cite[Lemma 4.4]{2023arXiv230314687S}}]
  \label{thm:one-sided-rigidity}
  Let $\mathcal{C}$ satisfy Assumption \ref{assumption:pretensor-cat}, and let $Q$ be a coalgebra such that $\mathcal{C} \approx \Mod^Q_{\fd}$ as linear categories.
  Then the following are equivalent:
  \begin{enumerate}
  \item $Q$ is QcF.
  \item $Q$ is right QcF.
  \item $Q$ is right semiperfect.
  \item $Q^{*\rat}_{\ell} \ne 0$.
  \item $Q$ is left QcF.
  \item $Q$ is left semiperfect.
  \item $Q^{*\rat}_{r} \ne 0$.
  \item There is a non-zero object of $\mathcal{C}$ which is projective or injective.
  \item $\Hom_{\Ind(\mathcal{C})}(E, X) \ne 0$ for some $X \in \mathcal{C}$ and injective $E \in \Ind(\mathcal{C})$.
  \item The functor $\Nak := \Nakr_{\Ind(\mathcal{C})}$ is an auto-equivalence of $\Ind(\mathcal{C})$.
  \item The functor $\Nak$ induces an auto-equivalence of $\mathcal{C}$.
  \item The functor $\Nak$ is not zero.
  \end{enumerate}
  Suppose that these equivalent conditions are satisfied.
  Then $\mathcal{C}$ is rigid.
  Thus $\mathcal{C}$ is a Frobenius tensor category.
\end{theorem}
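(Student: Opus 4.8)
The plan is to treat $\Nak := \Nakr_{\Ind(\mathcal{C})}$ as the hub through which all twelve conditions pass, and to prove that under Assumption~\ref{assumption:pretensor-cat} the single condition $\Nak \ne 0$ already forces $Q$ to be QcF. The engine is the formula \eqref{eq:Nakayama-tensor-cat-formula-3}, $\Nak(X) \cong \modobj \otimes \mathbb{D}(X)$, where $\modobj = \Nak(\unitobj)$ and $\mathbb{D}$ is the double left dual. First I would record that $\Nak$ is always exact: the left dual functor $(-)^{\vee}$ is exact---left exactness comes from the natural isomorphism $\Hom_{\mathcal{C}}(T, X^{\vee}) \cong \Hom_{\mathcal{C}}(T \otimes X, \unitobj)$, while right exactness (a mono $A \hookrightarrow B$ inducing an epi $B^{\vee} \twoheadrightarrow A^{\vee}$) follows by testing against $\Hom_{\mathcal{C}}(\unitobj, -)$ and using biexactness of $\otimes$---so $\mathbb{D} = (-)^{\vee\vee}$ is exact, and $\modobj \otimes (-)$ is exact by hypothesis. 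The decisive point is faithfulness: when $\modobj \ne 0$, the evaluation $\eval_{\modobj} \colon \modobj^{\vee} \otimes \modobj \to \unitobj$ is an epimorphism (it is nonzero and $\unitobj$ is simple), so the counit $\eval_{\modobj} \otimes \id_B$ of the adjunction $\modobj^{\vee} \otimes (-) \dashv \modobj \otimes (-)$ is epi for every $B$, whence the right adjoint $\modobj \otimes (-)$ is faithful; as $\mathbb{D}$ is fully faithful, $\Nak$ is faithful. Thus (12) gives $\Nak$ exact and faithful, and Theorem~\ref{thm:locally-fin-ab-QcF} promotes this to (1). Together with the equivalences $(1) \Leftrightarrow (10) \Leftrightarrow (11)$ of Theorem~\ref{thm:locally-fin-ab-QcF}, the trivial $(10) \Rightarrow (12)$, and $(4) \Leftrightarrow (12)$ from Lemma~\ref{lem:Nakayama-C-star-rat-zero}, this settles the block $(1),(4),(10),(11),(12)$.

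Next I would feed the remaining coalgebraic and existence conditions into this block through one explicit value of $\Nak$. A right QcF coalgebra is right semiperfect, and by Lemma~\ref{lem:loc-fin-ab-semiperfect} right semiperfectness means $\mathcal{C}$ has enough projectives; in particular $\mathcal{C}$ contains a nonzero finite-dimensional projective, whose indecomposable summands have the form $(Q e)^{*}$. Applying \eqref{eq:Nakayama-r-of-Ce-dual} gives $\Nak((Q e)^{*}) \cong Q e \ne 0$, so $\Nak \ne 0$; this realizes $(2) \Rightarrow (3) \Rightarrow (8,\ \text{projective case}) \Rightarrow (12)$. Condition (9) is caught between the block and (8): a nonzero finite-dimensional injective $E$ yields $\Hom_{\Ind(\mathcal{C})}(E, E) \ne 0$, while conversely a nonzero map out of an injective into a finite-dimensional object is traced back through \eqref{eq:Nakayama-dual} to non-vanishing of $\Nak$. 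The left-hand conditions $(5),(6),(7)$ and the injective case of (8) I would then obtain by the mirror argument: the duality $X \mapsto X^{*}$ identifies $\mathcal{C}^{\mathrm{op}}$ with $\Mod^{Q^{\cop}}_{\fd}$, which is right rigid, and a nonzero injective of $\mathcal{C}$ is carried to a nonzero projective there, so the right-rigid analogue of the engine applies and shows $Q^{\cop}$ is QcF (equivalently $\Nakr_{Q^{\cop}} \ne 0$, i.e.\ $Q^{*\rat}_{r} \ne 0$). Since the QcF property is self-dual under $(-)^{\cop}$, condition (1) follows.

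The heart of the matter---and the step I expect to be the main obstacle---is precisely this bridging of left and right. For a general coalgebra none of the one-sided conditions (right semiperfect, $Q^{*\rat}_{\ell} \ne 0$, existence of a nonzero projective, and so on) implies QcF; what makes them coincide here is the monoidal structure, and it enters only through the formula $\Nak \cong \modobj \otimes \mathbb{D}(-)$ and the faithfulness of $\modobj \otimes (-)$ for $\modobj \ne 0$. The two supporting sublemmas---exactness of $\mathbb{D}$ and faithfulness of tensoring by a nonzero object, both resting on simplicity of $\unitobj$ together with biexactness of $\otimes$---are the technical crux; once they are in place, the promotion $\Nak \ne 0 \Rightarrow \Nak \text{ faithful} \Rightarrow \text{QcF}$ does all the work that integrals do in the classical Hopf-algebra setting.

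Finally I would deduce rigidity. Under the equivalent conditions $\Nak$ is an auto-equivalence, and both \eqref{eq:Nakayama-tensor-cat-formula-3} and its left-exact counterpart $\Nakl(X) \cong \mathbb{D}(X) \otimes \overline{\modobj}$ are available, with $\Nak$ and $\Nakl$ mutually quasi-inverse. Using that $\mathbb{D} = (-)^{\vee\vee}$ is a monoidal functor, the isomorphism $\Nakl\,\Nak \cong \id$ evaluated at $\unitobj$ gives $\mathbb{D}(\modobj) \otimes \overline{\modobj} \cong \unitobj$, and symmetrically $\Nak\,\Nakl \cong \id$ gives $\modobj \otimes \mathbb{D}(\overline{\modobj}) \cong \unitobj$, so $\modobj$ is invertible. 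Hence $\modobj \otimes (-)$ is an auto-equivalence and $\mathbb{D} \cong (\modobj \otimes (-))^{-1} \circ \Nak$ is an auto-equivalence of $\mathcal{C}$. Consequently $(-)^{\vee}$ is an anti-auto-equivalence, so every object is of the form $Z^{\vee}$ and therefore has a right dual (namely $Z$, via $\eval_Z$ and $\coev_Z$); thus $\mathcal{C}$ is rigid. Being QcF, $Q$ is semiperfect, so every simple object of $\mathcal{C}$ has an injective hull in $\mathcal{C}$, and $\mathcal{C}$ is a Frobenius tensor category.
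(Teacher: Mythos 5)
Your architecture coincides with the paper's: the same block $(1),(4),(10),(11),(12)$ handled by Lemma~\ref{lem:Nakayama-C-star-rat-zero} and Theorem~\ref{thm:locally-fin-ab-QcF}, the same engine $(12)\Rightarrow(1)$ via exactness and faithfulness of $\Nak \cong \modobj \otimes \mathbb{D}(-)$, the same passage through the opposite category to bridge the left-hand conditions, and the same endgame for rigidity. Two of your micro-arguments are genuine (and welcome) improvements in self-containedness: the direct proof that $(-)^{\vee}$ is exact via $\Hom_{\mathcal{C}}(T, X^{\vee}) \cong \Hom_{\mathcal{C}}(T \otimes X, \unitobj)$ and $\Hom_{\mathcal{C}}(X^{\vee}, T) \cong \Hom_{\mathcal{C}}(\unitobj, X \otimes T)$ replaces the paper's derivation of left exactness of $\mathbb{D}$ from $\Nakl$ and the formula $\Nakl(X) \cong \mathbb{D}(X) \otimes \overline{\modobj}$; and your epi-counit argument for faithfulness of $\modobj \otimes (-)$ replaces the paper's citation of an exercise in the tensor-category literature. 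Your treatment of the injective case of (8) through the mirrored category also legitimately bypasses the paper's input ``projective $\Leftrightarrow$ injective'' (which the paper cites from its references).

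There are, however, two flawed steps. The smaller one is your handling of (9): you claim a nonzero map from an injective $E$ to $X \in \mathcal{C}$ is ``traced back through \eqref{eq:Nakayama-dual} to non-vanishing of $\Nak$.'' The sides are crossed here. Formula \eqref{eq:Nakayama-dual} reads $\Nakr_Q(M)^* \cong \Hom^Q(M, Q^{*\rat}_{\ell})$, so $\Nak \ne 0$ is detected by maps \emph{into} $Q^{*\rat}_{\ell}$; whereas reducing $E$ to a summand of $Q$ and dualizing the resulting map $Q \to X$ produces a rational \emph{right} $Q^*$-submodule of $Q^*$, i.e.\ it proves $Q^{*\rat}_{r} \ne 0$, which is condition (7), not (4). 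For a bare coalgebra $(7) \not\Rightarrow (4)$, so (9) cannot feed directly into (12); it must be routed as $(9) \Rightarrow (7)$ and then through your mirror engine, exactly as the paper does. This is repairable inside your own framework, since you already set up $(7) \Rightarrow (1)$ via $\mathcal{C}^{\op}$ in the next sentence, but the step as written is wrong. The serious gap is in the rigidity paragraph: from $\Nakl\Nak \cong \id \cong \Nak\Nakl$ you correctly extract $\mathbb{D}(\modobj) \otimes \overline{\modobj} \cong \unitobj$ and $\modobj \otimes \mathbb{D}(\overline{\modobj}) \cong \unitobj$, and then assert ``so $\modobj$ is invertible.'' This does not follow: the first relation gives $\overline{\modobj}$ a left tensor-inverse and the second gives $\modobj$ a right tensor-inverse, and in a merely left rigid category (which is all you have at this stage --- rigidity is the conclusion) a one-sided tensor-inverse does not upgrade to a two-sided one; none of your available tools ($\mathbb{D}$ monoidal and fully faithful, $\Nak$ an equivalence, faithful biexact $\otimes$ with simple $\unitobj$) yields $W \otimes \modobj \cong \unitobj$ by a formal manipulation. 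This is precisely the point the paper itself flags as ``slightly technical'' and outsources to its reference, where the claim proved is $\modobj \otimes \overline{\modobj} \cong \unitobj \cong \overline{\modobj} \otimes \modobj$ --- note this is \emph{not} what naive evaluation of the quasi-inverse isomorphisms at $\unitobj$ gives, which is a sign that additional work is genuinely needed. Once invertibility of $\modobj$ is granted, your conclusion ($\mathbb{D} \cong (\modobj \otimes (-))^{-1} \circ \Nak$ an equivalence, hence $(-)^{\vee}$ an anti-equivalence, hence every object a right dual $Z^{\vee}$, hence rigidity, hence Frobenius) is correct and matches the paper.
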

\begin{proof}
  (4) and (12) are equivalent by Lemma \ref{lem:Nakayama-C-star-rat-zero}.
  (1), (10) and (11) are equivalent by Theorem \ref{thm:locally-fin-ab-QcF}.
  The general theory of coalgebras \cite[Chapter 3]{MR1786197} implies
  (1) $\Rightarrow$ (2), (2) $\Rightarrow$ (3), (3) $\Rightarrow$ (4),
  (1) $\Rightarrow$ (5), (5) $\Rightarrow$ (6), (6) $\Rightarrow$ (7)
  and (1) $\Rightarrow$ (8).
  To show that all the conditions (1)--(12) are equivalent, it suffices to show (12) $\Rightarrow$ (1), (8) $\Rightarrow$ (9), (9) $\Rightarrow$ (7) and (7) $\Rightarrow$ (1).

  \medskip \underline{(12) $\Rightarrow$ (1)}.
  This part was proved in \cite[Lemma 4.4]{2023arXiv230314687S} under the assumption that $\mathcal{C}$ is a tensor category.
  We give a proof based on Lemma~\ref{thm:locally-fin-ab-QcF}.
  Let $\mathcal{C}$ satisfy Assumption \ref{assumption:pretensor-cat} and assume $\Nak \ne 0$.
  Then the functor $\Nakl := \Nakl_{\Ind(\mathcal{C})}$ is non-zero as well.
  Hence, by \eqref{eq:Nakayama-tensor-cat-formula-3}, the object  $\overline{\modobj} = \Nakl(\unitobj)$ is non-zero.

  We first show that the endofunctor $\mathbb{D}: \Ind(\mathcal{C}) \to \Ind(\mathcal{C})$ of Lemma \ref{lem:Nakayama-tensor-cat-formula} is left exact.
  Let $0 \to X \xrightarrow{\ p \ } Y \xrightarrow{\ q \ } Z$ be an exact sequence in $\Ind(\mathcal{C})$.
  By the left exactness of $\Nakl$ and \eqref{eq:Nakayama-tensor-cat-formula-3}, we have an exact sequence
  \begin{equation*}
    0 \to \mathbb{D}(X) \otimes \overline{\modobj}
    \xrightarrow{\quad \mathbb{D}(p) \otimes \id \quad}
    \mathbb{D}(Y) \otimes \overline{\modobj}
    \xrightarrow{\quad \mathbb{D}(q) \otimes \id \quad}
    \mathbb{D}(Z) \otimes \overline{\modobj}
  \end{equation*}
  in $\Ind(\mathcal{C})$. By the exactness of the functor $(-) \otimes \overline{\modobj}$, we have
  \begin{equation*}
    \Ker(\mathbb{D}(p)) \otimes \overline{\modobj} = 0
    \quad \text{and} \quad
    \frac{\Ker(\mathbb{D}(q))}{\Img(\mathbb{D}(p))} \otimes \overline{\modobj} = 0.
  \end{equation*}
  Since the functor $(-) \otimes \overline{\modobj}$ is also faithful, we have $\Ker(\mathbb{D}(p)) = 0$ and $\Ker(\mathbb{D}(q)) = \Img(\mathbb{D}(p))$.
  We have proved that $\mathbb{D}$ is left exact.

  The functor $\Nak$ is not only right exact, but also left exact by \eqref{eq:Nakayama-tensor-cat-formula-3} and the left exactness of $\mathbb{D}$.
  Since the endofunctor $X \mapsto X^{\vee\vee}$ on $\mathcal{C}$ is faithful, so is $\mathbb{D}$.
  Thus, by \eqref{eq:Nakayama-tensor-cat-formula-3}, $\Nak$ is faithful.
  Now, by Lemma~\ref{thm:locally-fin-ab-QcF}, we conclude that $Q$ is QcF.

  \medskip
  \underline{(8) $\Rightarrow$ (9)}.
  It is known that an object of a tensor category is projective if and only if it is injective \cite[Proposition 6.1.3]{MR3242743}.
  Although the rigidity is used in the proof presented in \cite{MR3242743}, one can prove that the same holds for categories satisfying Assumption~\ref{assumption:pretensor-cat} in the basically same manner (see \cite[\S5.2]{MR4560996}).
  Hence (8) implies that $\mathcal{C}$ has a non-zero injective object.
  Now it is trivial that (9) holds.
  
  \medskip
  \underline{(9) $\Rightarrow$ (7)}.
  We assume that there are an object $X \in \mathcal{C}$, an injective object $E \in \Ind(\mathcal{C})$ and a non-zero morphism $E \to X$ in $\Ind(\mathcal{C})$.
  For simplicity, we identify $\mathcal{C}$ and $\Ind(\mathcal{C})$ with $\Mod^Q_{\fd}$ and $\Mod^Q$, respectively.
  By replacing $E$ with its direct summand, we may assume that $E$ is indecomposable.
  Then $E$ is a direct summand of $Q$. Hence we have a non-zero morphism $Q \to X$ of right $Q$-comodules. Since $X$ is finite-dimensional, we have isomorphisms
  \begin{equation*}
    {}^Q\Hom(X^*, Q^{*\rat}_{r})
    \cong \Hom_{Q^*}(X^*, Q^{*})
    \cong \Hom^Q(Q, X) \ne 0,
  \end{equation*}
  which implies (7).

  \medskip
  \underline{(7) $\Rightarrow$ (1)}.
  We have already proved that (1) and (4) of this theorem are equivalent.
  We note that $\mathcal{C}^{\op,\rev}$ also satisfies Assumption~\ref{assumption:pretensor-cat} and there is an equivalence $\mathcal{C}^{\op,\rev} \approx {}^Q\Mod_{\fd}$ of linear categories.
  The equivalence (1) $\Leftrightarrow$ (7) is proved by applying (1) $\Leftrightarrow$ (4) to $\mathcal{C}^{\op,\rev}$.

  \medskip
  \underline{Rigidity of $\mathcal{C}$}.
  We assume that (1)--(12) are satisfied.
  By Theorem \ref{thm:locally-fin-ab-QcF}, the right exact Nakayama functor for $\mathcal{C}$ and its right adjoint induce autoequivalences on $\mathcal{C}$, which we denote by $\Nakr_{\mathcal{C}}$ and $\Nakl_{\mathcal{C}}$, respectively.
  The functors $\Nakr_{\mathcal{C}}$ and $\Nakl_{\mathcal{C}}$ are mutually quasi-inverse.
  Now we set $\modobj = \Nakr_{\mathcal{C}}(\unitobj)$ and $\overline{\modobj} = \Nakl_{\mathcal{C}}(\unitobj)$.
  By \eqref{eq:Nakayama-tensor-cat-formula-3} and $\Nakl_{\mathcal{C}} \Nakr_{\mathcal{C}} \cong \id_{\mathcal{C}} \cong \Nakr_{\mathcal{C}} \Nakl_{\mathcal{C}}$, we have isomorphisms $\modobj \otimes \overline{\modobj} \cong \unitobj \cong \overline{\modobj} \otimes \modobj$ (this part is slight technical, though; see \cite{MR4560996}). From this, we conclude that the double dual functor is an auto-equivalence on $\mathcal{C}$.
  Hence $\mathcal{C}$ is rigid. The proof is done.
\end{proof}

\subsection{A categorical analogue of the modular function}

From now on, we introduce interesting results on Frobenius tensor categories from \cite{2023arXiv230314687S,MR4560996}.
Let $\mathcal{C}$ be a Frobenius tensor category.
By Theorems \ref{thm:Nakayama-QcF} and \ref{thm:one-sided-rigidity}, the Nakayama functor $\Nakr_{\Ind(\mathcal{C})}$ restricts to an autoequivalence $\Nakr_{\mathcal{C}}$ on $\mathcal{C}$. Now we define
\begin{equation}
  \modobj_{\mathcal{C}} := \Nakr_{\mathcal{C}}(\unitobj) \in \mathcal{C}.
\end{equation}
The object $\modobj_{\mathcal{C}}$ corresponds to the modular function of a co-Frobenius Hopf algebra (see \cite{MR4560996}).
According to the nomenclature in the Hopf algebra theory, we introduce the following terminology:

\begin{definition}
  We call $\modobj_{\mathcal{C}}$ the {\em modular object} of $\mathcal{C}$.
  We say that $\mathcal{C}$ is {\em unimodular} if $\modobj_{\mathcal{C}} \cong \unitobj$.
\end{definition}

By the same way as Lemma \ref{lem:Nakayama-tensor-cat-formula}, we have:

\begin{theorem}[{\it cf}. {\cite[Theorems 4.4 and 4.5]{MR4042867}}]
  \label{thm:Frob-tensor-cat-Nakayama}
  Let $\mathcal{C}$ be a Frobenius tensor category.
  Then there are natural isomorphisms
  \begin{equation*}
    {}^{\vee\vee}\!X \otimes \Nakr_{\mathcal{C}}(Y)
    \cong \Nakr_{\mathcal{C}}(X \otimes Y)
    \cong \Nakr_{\mathcal{C}}(X) \otimes Y^{\vee\vee}
  \end{equation*}
  for $X, Y \in \mathcal{C}$. In particular,
  \begin{equation}
    \label{eq:Nakayama-Frob-tensor-cat-formula}
    {}^{\vee\vee\!}X \otimes \modobj_{\mathcal{C}}
    \cong \Nakr_{\mathcal{C}}(X)
    \cong \modobj_{\mathcal{C}} \otimes X^{\vee\vee}
    \quad (X \in \mathcal{C}).
  \end{equation}
\end{theorem}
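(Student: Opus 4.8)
The plan is to obtain both isomorphisms as two applications of Theorem~\ref{thm:Nakayama-double-adj}, mirroring the proof of Lemma~\ref{lem:Nakayama-tensor-cat-formula}. Since $\mathcal{C}$ is a Frobenius tensor category, it is rigid, and by Theorem~\ref{thm:one-sided-rigidity} the coalgebra $Q$ with $\mathcal{C} \approx \Mod^Q_{\fd}$ is QcF; as recalled just above the statement, the functor $\Nak := \Nakr_{\Ind(\mathcal{C})}$ therefore restricts to an autoequivalence $\Nakr_{\mathcal{C}}$ on $\mathcal{C}$. I would establish the two isomorphisms in $\Ind(\mathcal{C})$ for $X, Y \in \mathcal{C}$ and then observe that, since $\Nak$ preserves $\mathcal{C}$, they restrict to isomorphisms in $\mathcal{C}$ computed by $\Nakr_{\mathcal{C}}$.

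The right-hand isomorphism $\Nakr_{\mathcal{C}}(X \otimes Y) \cong \Nakr_{\mathcal{C}}(X) \otimes Y^{\vee\vee}$ is nothing but \eqref{eq:Nakayama-tensor-cat-formula} of Lemma~\ref{lem:Nakayama-tensor-cat-formula} restricted to $\mathcal{C}$, since $\mathbb{D}(Y) = Y^{\vee\vee}$ there. Recall that it is proved by applying Theorem~\ref{thm:Nakayama-double-adj} to $F = (-) \otimes Y^{\vee\vee}$, whose double left adjoint is $(-) \otimes Y$ because ${}^{\vee\vee}(Y^{\vee\vee}) \cong Y$ in a rigid category.

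For the left-hand isomorphism I would instead apply Theorem~\ref{thm:Nakayama-double-adj} to the left-tensoring functor $F = {}^{\vee\vee}X \otimes (-)$, with $X \in \mathcal{C}$ fixed. This functor is cocontinuous and admits the required adjoints by rigidity: from the standard adjunction $Z^{\vee} \otimes (-) \dashv Z \otimes (-)$, the left adjoint of $Z \otimes (-)$ is $Z^{\vee} \otimes (-)$, so $F^{\lladj} = Z^{\vee\vee} \otimes (-)$ with $Z = {}^{\vee\vee}X$. The double-dual cancellation $({}^{\vee}A)^{\vee} \cong A$, applied twice, gives $({}^{\vee\vee}X)^{\vee\vee} \cong X$, whence $F^{\lladj} = X \otimes (-)$. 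Theorem~\ref{thm:Nakayama-double-adj} then yields ${}^{\vee\vee}X \otimes \Nak(Y) \cong \Nak(X \otimes Y)$, as desired.

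Finally, the displayed special case follows by specialization: setting $Y = \unitobj$ in the left-hand isomorphism and using $\Nakr_{\mathcal{C}}(\unitobj) = \modobj_{\mathcal{C}}$ together with $X \otimes \unitobj \cong X$ gives ${}^{\vee\vee}X \otimes \modobj_{\mathcal{C}} \cong \Nakr_{\mathcal{C}}(X)$, while setting $X = \unitobj$ in the right-hand isomorphism and using $\unitobj \otimes Y \cong Y$ gives $\Nakr_{\mathcal{C}}(X) \cong \modobj_{\mathcal{C}} \otimes X^{\vee\vee}$ after renaming the variable. The point requiring the most care is the bookkeeping of left versus right duals in the adjoint computation—verifying that the double left adjoint of left-tensoring by ${}^{\vee\vee}X$ is genuinely tensoring by $X$—together with checking that $F$ is cocontinuous and that $F^{\lladj}$ exists; both of the latter follow from the exactness and rigidity hypotheses recorded under Assumption~\ref{assumption:pretensor-cat} and established in Theorem~\ref{thm:one-sided-rigidity}.
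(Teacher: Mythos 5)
Your proposal is correct and follows essentially the same route as the paper, whose entire proof of this theorem is the remark that it goes ``by the same way as Lemma~\ref{lem:Nakayama-tensor-cat-formula}'' --- that is, by applying Theorem~\ref{thm:Nakayama-double-adj} to the one-sided tensoring functors $(-) \otimes Y^{\vee\vee}$ and ${}^{\vee\vee}X \otimes (-)$ and then specializing at $\unitobj$, exactly as you do. Your bookkeeping of the adjoint chain $Z^{\vee} \otimes (-) \dashv Z \otimes (-) \dashv {}^{\vee}Z \otimes (-)$, the cancellation $({}^{\vee\vee}X)^{\vee\vee} \cong X$, and the passage from $\Ind(\mathcal{C})$ back to $\mathcal{C}$ via the Frobenius hypothesis (Theorems~\ref{thm:Nakayama-QcF} and \ref{thm:one-sided-rigidity}) is all accurate.
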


We give some immediate corollaries of this theorem.
Let $\mathcal{C}$ be a Frobenius tensor category.
Since $\Nakr_{\mathcal{C}}$ is an auto-equivalence of $\mathcal{C}$, we have:

\begin{corollary}
  The object $\modobj_{\mathcal{C}}$ is invertible, that is,
  \begin{equation*}
    \modobj_{\mathcal{C}} \otimes \modobj_{\mathcal{C}}^{\vee}
    \cong \unitobj \cong
    \modobj_{\mathcal{C}}^{\vee} \otimes \modobj_{\mathcal{C}}.
  \end{equation*}
\end{corollary}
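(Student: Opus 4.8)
The plan is to reduce the invertibility of $\modobj_{\mathcal{C}}$ to the single fact that left tensoring by $\modobj_{\mathcal{C}}$ is an auto-equivalence, and then to identify the inverse object with the left dual $\modobj_{\mathcal{C}}^{\vee}$ by means of the duality adjunctions available in the rigid category $\mathcal{C}$.

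First I would rewrite the second isomorphism of Theorem \ref{thm:Frob-tensor-cat-Nakayama} as an isomorphism of functors. Writing $L := \modobj_{\mathcal{C}} \otimes (-)$ and $D := (-)^{\vee\vee}$ for the endofunctors of $\mathcal{C}$, the natural isomorphism $\Nakr_{\mathcal{C}}(X) \cong \modobj_{\mathcal{C}} \otimes X^{\vee\vee}$ reads $\Nakr_{\mathcal{C}} \cong L \circ D$. Since $\mathcal{C}$ is rigid by Theorem \ref{thm:one-sided-rigidity}, the double left dual functor $D$ is an auto-equivalence, with quasi-inverse the double right dual ${}^{\vee\vee}(-)$; and $\Nakr_{\mathcal{C}}$ is an auto-equivalence because $\mathcal{C}$ is a Frobenius tensor category (Theorems \ref{thm:Nakayama-QcF} and \ref{thm:one-sided-rigidity}). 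Hence $L \cong \Nakr_{\mathcal{C}} \circ D^{-1}$ is itself an auto-equivalence of $\mathcal{C}$.

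Next I would invoke rigidity to pin down the inverse object. The evaluation and coevaluation for $\modobj_{\mathcal{C}}$ exhibit $L' := \modobj_{\mathcal{C}}^{\vee} \otimes (-)$ as left adjoint to $L$; since $L$ is an equivalence, its left adjoint $L'$ is automatically a quasi-inverse, so $L \circ L' \cong \id_{\mathcal{C}} \cong L' \circ L$. Evaluating these isomorphisms at the unit object $\unitobj$ yields
\begin{equation*}
  \modobj_{\mathcal{C}} \otimes \modobj_{\mathcal{C}}^{\vee} \cong \unitobj
  \cong \modobj_{\mathcal{C}}^{\vee} \otimes \modobj_{\mathcal{C}},
\end{equation*}
which is exactly the asserted statement.

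The only real obstacle is bookkeeping rather than conceptual: one must be careful with the left/right-dual conventions so that the functor appearing as the honest quasi-inverse of $L$ is tensoring by the correct dual $\modobj_{\mathcal{C}}^{\vee}$ (as opposed to ${}^{\vee}\modobj_{\mathcal{C}}$), and one must confirm that once $L$ is an equivalence its left adjoint really is a two-sided quasi-inverse before evaluating at $\unitobj$. Both points are routine in a rigid monoidal category, so no serious difficulty is expected. Alternatively, one could run the symmetric argument using the first isomorphism ${}^{\vee\vee}\!X \otimes \modobj_{\mathcal{C}} \cong \Nakr_{\mathcal{C}}(X)$ to see directly that right tensoring by $\modobj_{\mathcal{C}}$ is an equivalence, giving a second, parallel derivation of the two displayed isomorphisms.
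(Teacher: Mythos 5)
Your proposal is correct and follows essentially the same route as the paper: the corollary there is deduced precisely from the formula $\Nakr_{\mathcal{C}}(X) \cong \modobj_{\mathcal{C}} \otimes X^{\vee\vee}$ of Theorem \ref{thm:Frob-tensor-cat-Nakayama} together with the fact that $\Nakr_{\mathcal{C}}$ is an auto-equivalence, which is exactly your factorization $\modobj_{\mathcal{C}} \otimes (-) \cong \Nakr_{\mathcal{C}} \circ ((-)^{\vee\vee})^{-1}$. Your identification of the inverse object via the duality adjunction (with the paper's convention $\eval_X : X^{\vee} \otimes X \to \unitobj$, $\coev_X : \unitobj \to X \otimes X^{\vee}$, the functor $\modobj_{\mathcal{C}}^{\vee} \otimes (-)$ is indeed left adjoint, hence quasi-inverse, to $\modobj_{\mathcal{C}} \otimes (-)$) is a standard and valid way to make the paper's terse deduction explicit.
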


Corollary~\ref{cor:ACE15-Rem-2-10} below has been proved in \cite[Remark 2.10]{MR3410615} under the assumption that $\mathcal{C}$ admits a dimension function.
Given a simple object $S$, we denote by $E(S)$ and $P(S)$ an injective hull and a projective cover of $S$, respectively.
By Theorems~\ref{thm:Nakayama-QcF} and \ref{thm:Frob-tensor-cat-Nakayama}, we have:

\begin{corollary}[{\cite[Corollary 5.10]{MR4560996}}]
  \label{cor:ACE15-Rem-2-10}
  There are isomorphisms
  \begin{equation*}
    E(S) \cong P(\modobj_{\mathcal{C}} \otimes S^{\vee\vee}),
    \quad
    P(S) \cong E(\modobj_{\mathcal{C}}^{\vee} \otimes {}^{\vee\vee}S)
  \end{equation*}
  for all simple objects $S \in \mathcal{C}$. In particular, $\modobj_{\mathcal{C}}$ is the top of $E(\unitobj)$ as well as the dual of the socle of $P(\unitobj)$.
\end{corollary}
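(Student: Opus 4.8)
The plan is to combine the two injective-hull/projective-cover formulas of Theorem~\ref{thm:Nakayama-QcF} with the explicit description of the Nakayama functor on a Frobenius tensor category provided by Theorem~\ref{thm:Frob-tensor-cat-Nakayama}. Fix a QcF coalgebra $Q$ with $\mathcal{C} \approx \Mod^Q_{\fd}$ as in Theorem~\ref{thm:one-sided-rigidity}. Transporting the isomorphisms \eqref{eq:Nakayama-inj-hull-2} along this equivalence, I obtain $E(S) \cong P(\Nakr_{\mathcal{C}}(S))$ and $P(S) \cong E(\Nakl_{\mathcal{C}}(S))$ for every simple object $S \in \mathcal{C}$. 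The first formula of the corollary then follows at once: by the right-hand isomorphism of \eqref{eq:Nakayama-Frob-tensor-cat-formula} we have $\Nakr_{\mathcal{C}}(S) \cong \modobj_{\mathcal{C}} \otimes S^{\vee\vee}$, whence $E(S) \cong P(\modobj_{\mathcal{C}} \otimes S^{\vee\vee})$.

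For the second formula I would compute $\Nakl_{\mathcal{C}}(S)$ by inverting the description of $\Nakr_{\mathcal{C}}$. Since $\Nakr_{\mathcal{C}}$ and $\Nakl_{\mathcal{C}}$ are mutually quasi-inverse autoequivalences (Theorem~\ref{thm:one-sided-rigidity}), it suffices to identify the object $Y$ with $\Nakr_{\mathcal{C}}(Y) \cong S$. Writing $\Nakr_{\mathcal{C}}(Y) \cong \modobj_{\mathcal{C}} \otimes Y^{\vee\vee}$ and using that $\modobj_{\mathcal{C}}$ is invertible with inverse $\modobj_{\mathcal{C}}^{\vee}$ by the preceding corollary, I solve $Y^{\vee\vee} \cong \modobj_{\mathcal{C}}^{\vee} \otimes S$ and then apply the double right dual functor ${}^{\vee\vee}(-)$, which is monoidal and inverse to the double left dual $(-)^{\vee\vee}$, to get $Y \cong {}^{\vee\vee}(\modobj_{\mathcal{C}}^{\vee}) \otimes {}^{\vee\vee}S$. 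For an invertible object the left and right duals coincide, so ${}^{\vee\vee}(\modobj_{\mathcal{C}}^{\vee}) \cong \modobj_{\mathcal{C}}^{\vee}$ and hence $\Nakl_{\mathcal{C}}(S) \cong \modobj_{\mathcal{C}}^{\vee} \otimes {}^{\vee\vee}S$, giving $P(S) \cong E(\modobj_{\mathcal{C}}^{\vee} \otimes {}^{\vee\vee}S)$.

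The last clause falls out by specializing to $S = \unitobj$. Since $\unitobj^{\vee\vee} \cong \unitobj \cong {}^{\vee\vee}\unitobj$, the two formulas read $E(\unitobj) \cong P(\modobj_{\mathcal{C}})$ and $P(\unitobj) \cong E(\modobj_{\mathcal{C}}^{\vee})$. The object $\modobj_{\mathcal{C}} = \Nakr_{\mathcal{C}}(\unitobj)$ is simple because $\Nakr_{\mathcal{C}}$ is an equivalence and $\unitobj$ is simple; moreover the top of the projective cover $P(T)$ of a simple object $T$ is $T$, and the socle of the injective hull $E(T)$ is $T$. Thus $\modobj_{\mathcal{C}}$ is the top of $E(\unitobj)$, while the socle of $P(\unitobj)$ is $\modobj_{\mathcal{C}}^{\vee}$, whose dual is $\modobj_{\mathcal{C}}$.

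The main obstacle is the bookkeeping in the second paragraph: one must track carefully the distinction between left and right duals and the order of the tensor factors when inverting the formula for $\Nakr_{\mathcal{C}}$. The essential simplification that makes this tractable is the invertibility of $\modobj_{\mathcal{C}}$, which collapses the difference between its left and right duals and permits moving it freely across tensor products; without it the two displayed isomorphisms of Theorem~\ref{thm:Frob-tensor-cat-Nakayama} would not reconcile into a single clean expression.
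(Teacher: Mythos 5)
Your proposal is correct and matches the paper's argument: the paper derives this corollary precisely by combining the hull/cover formulas \eqref{eq:Nakayama-inj-hull-2} of Theorem~\ref{thm:Nakayama-QcF} with the expression \eqref{eq:Nakayama-Frob-tensor-cat-formula} for the Nakayama functor from Theorem~\ref{thm:Frob-tensor-cat-Nakayama}. Your identification of $\Nakl_{\mathcal{C}}(S) \cong \modobj_{\mathcal{C}}^{\vee} \otimes {}^{\vee\vee}S$ by inverting $\Nakr_{\mathcal{C}}$ (using the invertibility of $\modobj_{\mathcal{C}}$ and the monoidality of ${}^{\vee\vee}(-)$) is a legitimate filling-in of a detail the paper leaves implicit, equivalent to using the formula $\Nakl_{\Ind(\mathcal{C})}(X) \cong \mathbb{D}(X) \otimes \overline{\modobj}$ of Lemma~\ref{lem:Nakayama-tensor-cat-formula}.
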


By Theorems~\ref{thm:Nakayama-symm-coalg} and \ref{thm:Frob-tensor-cat-Nakayama}, we have:

\begin{corollary}[{\cite[Corollary 5.11]{MR4560996}}]
  \label{cor:Frob-tensor-cat-symmetricity}
  For a Frobenius tensor category $\mathcal{C}$, the following are equivalent:
  \begin{enumerate}
  \item $\mathcal{C}$ is unimodular and the double dual functor $(-)^{\vee\vee}$ on $\mathcal{C}$ is isomorphic to the identity functor.
  \item $\mathcal{C} \approx \fdmod^Q$ for some symmetric coalgebra $Q$.
  \end{enumerate}
\end{corollary}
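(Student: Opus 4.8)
The plan is to treat this as a direct translation exercise between the two characterizations of symmetry, using the modular object as the bridge. The key observation is that condition (2) is exactly the hypothesis of the categorical characterization of symmetry: since $\mathcal{C}$ is a Frobenius tensor category, the functor $\Nakr_{\Ind(\mathcal{C})}$ restricts to an autoequivalence $\Nakr_{\mathcal{C}}$ on $\mathcal{C}$ (as recorded above), and Theorem~\ref{thm:locally-fin-ab-symmetric} (the categorical form of Theorem~\ref{thm:Nakayama-symm-coalg}), applied to $\mathcal{A} = \mathcal{C}$, tells us that $\mathcal{C} \approx \fdmod^Q$ for some symmetric coalgebra $Q$ if and only if this restriction $\Nakr_{\mathcal{C}}$ is isomorphic to $\id_{\mathcal{C}}$. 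So the entire corollary reduces to proving that condition (1) holds precisely when $\Nakr_{\mathcal{C}} \cong \id_{\mathcal{C}}$.

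The decisive tool for this reduction is the natural isomorphism $\Nakr_{\mathcal{C}}(X) \cong \modobj_{\mathcal{C}} \otimes X^{\vee\vee}$ of \eqref{eq:Nakayama-Frob-tensor-cat-formula} in Theorem~\ref{thm:Frob-tensor-cat-Nakayama}, which in functorial form reads $\Nakr_{\mathcal{C}} \cong (\modobj_{\mathcal{C}} \otimes -) \circ (-)^{\vee\vee}$. Because we work with strict monoidal categories, one has $\unitobj \otimes - = \id_{\mathcal{C}}$ on the nose, so a single isomorphism $\modobj_{\mathcal{C}} \cong \unitobj$ upgrades automatically to a natural isomorphism $\modobj_{\mathcal{C}} \otimes - \cong \id_{\mathcal{C}}$ by tensoring it with $X$. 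For the implication (1) $\Rightarrow$ (2), I would substitute the assumptions $\modobj_{\mathcal{C}} \cong \unitobj$ and $(-)^{\vee\vee} \cong \id_{\mathcal{C}}$ into this functorial identity to obtain $\Nakr_{\mathcal{C}} \cong (\unitobj \otimes -) \circ \id_{\mathcal{C}} = \id_{\mathcal{C}}$, which yields (2) by the reduction above.

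For the converse (2) $\Rightarrow$ (1), I would start from $\Nakr_{\mathcal{C}} \cong \id_{\mathcal{C}}$ and first evaluate at the unit object: $\modobj_{\mathcal{C}} = \Nakr_{\mathcal{C}}(\unitobj) \cong \unitobj$, so $\mathcal{C}$ is unimodular. Feeding $\modobj_{\mathcal{C}} \otimes - \cong \id_{\mathcal{C}}$ back into the functorial formula gives $\id_{\mathcal{C}} \cong \Nakr_{\mathcal{C}} \cong (\modobj_{\mathcal{C}} \otimes -) \circ (-)^{\vee\vee} \cong (-)^{\vee\vee}$, which is the remaining assertion of (1). The proof is essentially formal once the two theorems are in place; the only point demanding care — and the one I would regard as the main obstacle — is bookkeeping at the level of \emph{natural} isomorphisms of functors rather than object-wise isomorphisms, namely verifying that the isomorphism of \eqref{eq:Nakayama-Frob-tensor-cat-formula} is genuinely natural in $X$ and that the various composites and cancellations $\unitobj \otimes - = \id_{\mathcal{C}}$ respect this naturality, so that the conclusions about $(-)^{\vee\vee}$ are isomorphisms of functors and not merely pointwise.
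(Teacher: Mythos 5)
Your proposal is correct and takes essentially the same route as the paper: the corollary is stated there as an immediate consequence of Theorem~\ref{thm:Nakayama-symm-coalg} (equivalently, its categorical form, Theorem~\ref{thm:locally-fin-ab-symmetric}, which you cite) combined with the natural isomorphism $\Nakr_{\mathcal{C}}(X) \cong \modobj_{\mathcal{C}} \otimes X^{\vee\vee}$ of Theorem~\ref{thm:Frob-tensor-cat-Nakayama}. Your writeup simply makes explicit the substitution and evaluation-at-$\unitobj$ steps, including the naturality bookkeeping, that the paper leaves to the reader.
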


\subsection{A categorical analogue of Radford's $S^4$-formula}

Radford's $S^4$-formula expresses the fourth power of the antipode of a finite-dimensional Hopf algebra in terms of certain elements defined by using integrals.
This formula has been generalized to co-Frobenius Hopf algebras in \cite[Theorem 2.8]{MR2278058} and to finite tensor categories in \cite{MR2097289}.
It was pointed out that Radford's $S^4$-formula for finite tensor categories is obtained from a general result on the Nakayama functor for finite abelian categories in \cite{MR4042867}.
As we have established the theory of Nakayama functors for locally finite abelian categories, we obtain the following theorem in the same way as \cite{MR4042867}.

\begin{theorem}
  \label{thm:Radford-S4-Frob-ten-cat}
  For a Frobenius tensor category $\mathcal{C}$, there is an isomorphism
  \begin{equation}
    \label{eq:Frob-ten-cat-Radford-S4}
    X^{\vee\vee\vee\vee} \cong \modobj_{\mathcal{C}}^{\vee} \otimes X \otimes \modobj_{\mathcal{C}}
    \quad (X \in \mathcal{C})
  \end{equation}
  of tensor functors.
\end{theorem}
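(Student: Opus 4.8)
The plan is to deduce \eqref{eq:Frob-ten-cat-Radford-S4} formally from the description of the Nakayama functor in Theorem~\ref{thm:Frob-tensor-cat-Nakayama}, reserving the genuine work for the verification that the resulting natural isomorphism is monoidal. Since $\mathcal{C}$ is rigid by Theorem~\ref{thm:one-sided-rigidity}, the double left dual $(-)^{\vee\vee}$ and the double right dual ${}^{\vee\vee}(-)$ are mutually quasi-inverse tensor autoequivalences of $\mathcal{C}$; in particular there is a monoidal natural isomorphism ${}^{\vee\vee}(X^{\vee\vee}) \cong X$, and $\modobj_{\mathcal{C}}$ is invertible with $\modobj_{\mathcal{C}}^{\vee} \otimes \modobj_{\mathcal{C}} \cong \unitobj \cong \modobj_{\mathcal{C}} \otimes \modobj_{\mathcal{C}}^{\vee}$, as already observed.

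First I would combine the two natural isomorphisms of \eqref{eq:Nakayama-Frob-tensor-cat-formula}: for $X \in \mathcal{C}$,
\begin{equation*}
  {}^{\vee\vee}X \otimes \modobj_{\mathcal{C}}
  \cong \Nakr_{\mathcal{C}}(X)
  \cong \modobj_{\mathcal{C}} \otimes X^{\vee\vee},
\end{equation*}
and rewrite the outcome, using the invertibility of $\modobj_{\mathcal{C}}$, as a natural isomorphism
\begin{equation*}
  X^{\vee\vee} \cong \modobj_{\mathcal{C}}^{\vee} \otimes {}^{\vee\vee}X \otimes \modobj_{\mathcal{C}}
  \quad (X \in \mathcal{C}).
\end{equation*}
Writing $T$ for the endofunctor $X \mapsto \modobj_{\mathcal{C}}^{\vee} \otimes {}^{\vee\vee}X \otimes \modobj_{\mathcal{C}}$, a conjugate of the tensor functor ${}^{\vee\vee}(-)$ by the invertible object $\modobj_{\mathcal{C}}$ and hence itself a tensor functor, this says $(-)^{\vee\vee} \cong T$.

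I would then iterate. Since $(-)^{\vee\vee\vee\vee} = (-)^{\vee\vee} \circ (-)^{\vee\vee}$, replacing the outer factor by $T$ gives
\begin{equation*}
  X^{\vee\vee\vee\vee}
  \cong T(X^{\vee\vee})
  = \modobj_{\mathcal{C}}^{\vee} \otimes {}^{\vee\vee}(X^{\vee\vee}) \otimes \modobj_{\mathcal{C}}
  \cong \modobj_{\mathcal{C}}^{\vee} \otimes X \otimes \modobj_{\mathcal{C}},
\end{equation*}
where the last step uses ${}^{\vee\vee}(X^{\vee\vee}) \cong X$. This produces the underlying natural isomorphism of \eqref{eq:Frob-ten-cat-Radford-S4}.

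The main obstacle is the upgrade to an isomorphism of tensor functors. Both $(-)^{\vee\vee\vee\vee}$ and $X \mapsto \modobj_{\mathcal{C}}^{\vee} \otimes X \otimes \modobj_{\mathcal{C}}$ are tensor functors (the latter being conjugation by the invertible object $\modobj_{\mathcal{C}}$), and the displayed isomorphism is a composite of $(-)^{\vee\vee} \cong T$, of $T$ applied to the identity $(-)^{\vee\vee}$, and of the monoidal counit ${}^{\vee\vee}(X^{\vee\vee}) \cong X$. The last two are monoidal by construction, so the crux is to check that $(-)^{\vee\vee} \cong T$ is monoidal. Here one must track the monoidal structures carefully, because this isomorphism originates from \eqref{eq:Nakayama-Frob-tensor-cat-formula} while the tensor structure of $\Nakr_{\mathcal{C}}$ is only the relative one of Lemma~\ref{lem:Nakayama-tensor-cat-formula}, namely $\Nakr_{\mathcal{C}}(X \otimes Y) \cong \Nakr_{\mathcal{C}}(X) \otimes \mathbb{D}(Y)$ with $\mathbb{D} = (-)^{\vee\vee}$. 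I expect this coherence verification to proceed exactly as in the finite case treated by Fuchs, Schaumann and Schweigert \cite{MR4042867}, and to be the only nontrivial part of the argument.
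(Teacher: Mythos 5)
Your proposal is correct and takes essentially the same route as the paper: your isomorphism $(-)^{\vee\vee}\cong T$ evaluated at $X^{\vee\vee}$, after cancelling $\modobj_{\mathcal{C}}^{\vee}\otimes\modobj_{\mathcal{C}}\cong\unitobj$, is exactly the paper's Radford isomorphism $\Radford_X\colon X\otimes\modobj_{\mathcal{C}}\to\Nak_{\mathcal{C}}(X^{\vee\vee})\to\modobj_{\mathcal{C}}\otimes X^{\vee\vee\vee\vee}$ obtained by composing the two halves of \eqref{eq:Nakayama-Frob-tensor-cat-formula} at $X^{\vee\vee}$ and using ${}^{\vee\vee}(X^{\vee\vee})\cong X$. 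You also locate the one nontrivial point exactly where the paper does, namely in the monoidality of the comparison isomorphism, which the paper likewise settles by appealing to the coherence of \eqref{ew:Nakayama-double-adj} from Theorem~\ref{thm:Nakayama-double-adj} as in the finite case of \cite{MR4042867}.
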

\begin{proof}
  There is a natural isomorphism as in \eqref{eq:Frob-ten-cat-Radford-S4} induced by
  \begin{equation}
    \label{eq:Radford-iso}
    \Radford_X :=
    \left( X \otimes \modobj_{\mathcal{C}}
      \xrightarrow{\quad \eqref{eq:Nakayama-Frob-tensor-cat-formula} \quad}
      \Nak_{\mathcal{C}}(X^{\vee\vee})
      \xrightarrow{\quad \eqref{eq:Nakayama-Frob-tensor-cat-formula} \quad}
      \modobj_{\mathcal{C}} \otimes X^{\vee\vee\vee\vee} \right)
    \quad (X \in \mathcal{C}).
  \end{equation}
  We note that the isomorphism \eqref{eq:Nakayama-Frob-tensor-cat-formula} used to define \eqref{eq:Radford-iso} comes from the isomorphism \eqref{ew:Nakayama-double-adj} given in Theorem \ref{thm:Nakayama-double-adj}.
  It follows from the coherence property of \eqref{ew:Nakayama-double-adj} that \eqref{eq:Frob-ten-cat-Radford-S4} is indeed an isomorphism of tensor functors.
\end{proof}

We call \eqref{eq:Radford-iso} the {\em Radford isomorphism}.
In some situations, we desire to know an expression of the isomorphism \eqref{eq:Frob-ten-cat-Radford-S4} in much explicit way.
For notational convenience, we rather discuss how the Radford isomorphism $\Radford_X$ is given.
Below we introduce some formulas of $\Radford_X$ from \cite{MR4560996} without expounding the detail of the proof.
The following theorem generalizes \cite[Theorem 8.10.7]{MR3242743}.
  
\begin{theorem}[{\cite[Theorem 5.14]{MR4560996}}]
  \label{thm:Radford-iso-braiding}
  For a braided Frobenius tensor category $\mathcal{C}$ with braiding $\sigma$, the Radford isomorphism is given by
  \begin{equation*}
    \Radford_{X} = (\id_{\modobj} \otimes (u_{X^{\vee}}^{\vee})^{-1}u_{X}) \circ (\sigma_{\modobj, X})^{-1}
  \end{equation*}
  for all $X \in \mathcal{C}$, where $\modobj = \modobj_{\mathcal{C}}$ and
  \begin{equation}
    \label{eq:Drinfeld-iso-def}
    u_X = (\eval_{X} \otimes \id_{X^{\vee\vee}})
    \circ (\sigma_{X,X^{\vee}} \otimes \id_{X^{\vee\vee}})
    \circ (\id_X \otimes \coev_{X^{\vee}})
    \quad (X \in \mathcal{C})
  \end{equation}
  is the Drinfeld isomorphism associated to the braiding $\sigma$.
\end{theorem}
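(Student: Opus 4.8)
The plan is to unwind the definition \eqref{eq:Radford-iso} of the Radford isomorphism into two instances of the natural isomorphism \eqref{eq:Nakayama-Frob-tensor-cat-formula} and then to render each of them fully explicit once the braiding is available. By \eqref{eq:Radford-iso}, $\Radford_X$ is the composite
\[
  X \otimes \modobj \xrightarrow{\ \cong\ } \Nakr_{\mathcal{C}}(X^{\vee\vee}) \xrightarrow{\ \cong\ } \modobj \otimes X^{\vee\vee\vee\vee},
\]
where the first arrow is obtained from the isomorphism ${}^{\vee\vee}Z \otimes \modobj \cong \Nakr_{\mathcal{C}}(Z)$ of Theorem \ref{thm:Frob-tensor-cat-Nakayama} by setting $Z = X^{\vee\vee}$ and using the duality coherence ${}^{\vee\vee}(X^{\vee\vee}) \cong X$, while the second arrow is the isomorphism $\Nakr_{\mathcal{C}}(Z) \cong \modobj \otimes Z^{\vee\vee}$ of the same theorem with $Z = X^{\vee\vee}$. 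Both of these originate from the single coherent isomorphism \eqref{ew:Nakayama-double-adj} of Theorem \ref{thm:Nakayama-double-adj}, applied to the one-sided tensoring functors $(-) \otimes X^{\vee\vee}$ and $X^{\vee\vee} \otimes (-)$, so the whole problem reduces to computing \eqref{ew:Nakayama-double-adj} for such functors.

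First I would make \eqref{ew:Nakayama-double-adj} explicit for $F = (-) \otimes Z$. Tracing the chain of natural isomorphisms in the proof of Theorem \ref{thm:Nakayama-double-adj}, the only non-formal ingredients are the adjunction data relating $F$, $F^{\ladj} = (-) \otimes {}^{\vee}Z$ and $F^{\lladj} = (-) \otimes {}^{\vee\vee}Z$, whose units and counits are built from the evaluation and coevaluation morphisms of the left and right dualities. Consequently each of the two factors of \eqref{eq:Nakayama-Frob-tensor-cat-formula} is expressed through $\eval$ and $\coev$. The two factors are governed by different dualities: ${}^{\vee\vee}X \otimes \modobj \cong \Nakr_{\mathcal{C}}(X)$ involves the right duality, whereas $\Nakr_{\mathcal{C}}(X) \cong \modobj \otimes X^{\vee\vee}$ involves the left duality, and reconciling them is exactly where the braided structure will enter.

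Next I would feed in the braiding. The braiding supplies the two structures needed to convert the right-duality description into the left-duality one: the Drinfeld isomorphism $u$ of \eqref{eq:Drinfeld-iso-def}, together with the natural isomorphism ${}^{\vee}(-) \cong (-)^{\vee}$ it induces between the right and left dualities, and the braiding $\sigma_{\modobj, -}$ itself, which commutes the modular object $\modobj$ past $X$. Substituting $Z = X^{\vee\vee}$, rewriting ${}^{\vee\vee}(X^{\vee\vee}) \cong X$ by the duality coherence, and applying the hexagon axioms together with the defining relation \eqref{eq:Drinfeld-iso-def} of $u$, the comparison morphism $X \otimes \modobj \to \modobj \otimes X^{\vee\vee\vee\vee}$ should factor as $(\sigma_{\modobj, X})^{-1}: X \otimes \modobj \to \modobj \otimes X$ followed by $\id_{\modobj}$ tensored with the quadruple-dual isomorphism $(u_{X^{\vee}}^{\vee})^{-1} u_X : X \to X^{\vee\vee\vee\vee}$. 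A convenient way to perform this matching is in the graphical calculus for braided rigid categories, mirroring the proof of \cite[Theorem 8.10.7]{MR3242743}; naturality in $X$ then follows from that of all the building blocks.

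I expect the main obstacle to be the bookkeeping that identifies the abstract coend-level isomorphism \eqref{ew:Nakayama-double-adj} with the concrete (co)evaluation morphisms, and then verifying that the hexagon identities assemble precisely into $(u_{X^{\vee}}^{\vee})^{-1} u_X$ rather than some coherent variant. In particular, one must track the left/right duality swap together with the correct placement of $\modobj$ on either side of $X$, so that the resulting morphism is literally $(\id_{\modobj} \otimes (u_{X^{\vee}}^{\vee})^{-1} u_X) \circ (\sigma_{\modobj, X})^{-1}$, and confirm that the coherence of \eqref{ew:Nakayama-double-adj} used in the proof of Theorem \ref{thm:Radford-S4-Frob-ten-cat} is compatible with these explicit choices.
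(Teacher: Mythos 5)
Your proposal follows essentially the same route as the proof this paper defers to \cite[Theorem 5.14]{MR4560996} (the statement is given here explicitly without proof): unwind \eqref{eq:Radford-iso} into the two instances of \eqref{eq:Nakayama-Frob-tensor-cat-formula} arising from Theorem \ref{thm:Nakayama-double-adj} applied to the one-sided tensoring functors, make the adjunction data explicit through the (co)evaluation morphisms, and then use the braiding and the Drinfeld isomorphism to match the composite with $(\id_{\modobj} \otimes (u_{X^{\vee}}^{\vee})^{-1}u_{X}) \circ (\sigma_{\modobj, X})^{-1}$, mirroring the finite-case argument of \cite[Theorem 8.10.7]{MR3242743}. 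Your bookkeeping is sound --- $(u_{X^{\vee}}^{\vee})^{-1}u_X : X \to X^{\vee\vee\vee\vee}$ is correctly typed, and for $F = (-) \otimes Z$ one indeed has $F^{\ladj} = (-) \otimes {}^{\vee}Z$ and $F^{\lladj} = (-) \otimes {}^{\vee\vee}Z$ --- so the residual work is exactly the graphical verification you flag, with no step that would fail.
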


Let $\mathcal{C}$ be a braided tensor category with braiding $\sigma$.
An object $T \in \mathcal{C}$ is said to be {\em transparent} if $\sigma_{X,T} \circ \sigma_{T,X} = \id_{T} \otimes \id_{X}$ for all objects $X \in \mathcal{C}$.
The M\"uger center of $\mathcal{C}$ is the full subcategory of $\mathcal{C}$ consisting of all transparent objects of $\mathcal{C}$.
We say that the M\"uger center of $\mathcal{C}$ is trivial if every transparent object of $\mathcal{C}$ is isomorphic to the direct sum of finitely many copies of $\unitobj$.
It is known that the modular object of a braided finite tensor category is transparent \cite[Corollary 8.10.8]{MR3242743}.
With the help of Theorem~\ref{thm:Radford-iso-braiding}, we can prove the following result by the same way as \cite[Corollary 8.10.8]{MR3242743}.

\begin{corollary}
  The modular object of a braided Frobenius tensor category is transparent.
  Thus, a braided Frobenius tensor category with the trivial M\"uger center is unimodular.
\end{corollary}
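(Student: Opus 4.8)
The plan is to follow the strategy of \cite[Corollary 8.10.8]{MR3242743}, obtaining transparency of the modular object $\modobj := \modobj_{\mathcal{C}}$ directly from the explicit braiding formula for the Radford isomorphism in Theorem~\ref{thm:Radford-iso-braiding}. First I would record that, by Theorem~\ref{thm:Radford-S4-Frob-ten-cat}, the family $\Radford_X \colon X \otimes \modobj \to \modobj \otimes X^{\vee\vee\vee\vee}$ constitutes an isomorphism of tensor functors; in particular it satisfies the monoidality constraint relating $\Radford_{X \otimes Y}$ to $\Radford_X$ and $\Radford_Y$. This monoidality is intrinsic, coming from the coherence of \eqref{ew:Nakayama-double-adj}, and makes no reference to the braiding. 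Theorem~\ref{thm:Radford-iso-braiding} then re-expresses the very same morphism as $\Radford_X = (\id_{\modobj} \otimes (u_{X^{\vee}}^{\vee})^{-1} u_X) \circ (\sigma_{\modobj, X})^{-1}$, that is, as an inverse braiding with $\modobj$ followed by a purely duality-theoretic isomorphism built from the Drinfeld morphism $u$.

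The core of the argument is to substitute this formula into the monoidality constraint for $\Radford$ and read off the resulting identity. Here I would invoke the standard monodromy relation for the Drinfeld morphism of a braided category: $u$ fails to be a monoidal natural transformation from the identity functor to the double-dual functor precisely by the double braiding $\sigma_{Y,X}\sigma_{X,Y}$. Applying this relation (twice, to account for the quadruple dual) shows that the duality-theoretic factors $(u_{X^{\vee}}^{\vee})^{-1}u_X$ carry a monoidal defect equal to a composite of double braidings, which is exactly matched by the canonical monoidal structure of $(-)^{\vee\vee\vee\vee}$. What survives is a single residual factor coming from the braidings $\sigma_{\modobj, -}$; since $\Radford$ is genuinely monoidal this factor must be trivial, and the surviving identity is $\sigma_{X, \modobj}\,\sigma_{\modobj, X} = \id_{\modobj \otimes X}$ for all $X \in \mathcal{C}$. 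This is precisely transparency of $\modobj$.

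I expect the main obstacle to be the bookkeeping in this substitution: one must keep track of the monoidal structure on the quadruple-dual functor and of the exact monodromy relation for $u$ (including the juggling between left and right duals that identifies $(u_{X^{\vee}}^{\vee})^{-1}u_X$ with a genuine double application of the Drinfeld morphism), so as to isolate cleanly a single double-braiding-with-$\modobj$ term. This is exactly the computation performed in \cite[Corollary 8.10.8]{MR3242743}, and it transfers verbatim here because Theorem~\ref{thm:Radford-iso-braiding} reproduces the braided formula used there.

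Finally, for the second assertion I would combine transparency with invertibility. Since the modular object is invertible (a consequence of Theorem~\ref{thm:Frob-tensor-cat-Nakayama}), it is simple: for any short exact sequence $0 \to A \to \modobj \to B \to 0$, tensoring with $\modobj^{\vee}$ and using that $\unitobj$ is simple forces $A = 0$ or $B = 0$. Thus $\modobj$ is a simple transparent object. If the M\"uger center of $\mathcal{C}$ is trivial, then $\modobj \cong \unitobj^{\oplus n}$ for some $n$, and simplicity forces $n = 1$; hence $\modobj \cong \unitobj$, i.e. $\mathcal{C}$ is unimodular.
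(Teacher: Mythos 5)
Your final paragraph (invertibility of $\modobj := \modobj_{\mathcal{C}}$ forces simplicity, and a simple transparent object in a category with trivial M\"uger center must be $\unitobj$) is fine. The problem is the core step: the monoidality constraint for $\Radford$ does \emph{not} leave a residual double-braiding with $\modobj$. Write $\zeta_X := (u_{X^{\vee}}^{\vee})^{-1} u_X \colon X \to X^{\vee\vee\vee\vee}$ and let $\varphi_{X,Y} \colon X^{\vee\vee\vee\vee} \otimes Y^{\vee\vee\vee\vee} \to (X \otimes Y)^{\vee\vee\vee\vee}$ be the canonical monoidal structure of the quadruple dual. Unwinding Theorem~\ref{thm:Radford-S4-Frob-ten-cat}, the monoidality constraint reads $\Radford_{X \otimes Y} = (\id_{\modobj} \otimes \varphi_{X,Y}) \circ (\Radford_X \otimes \id) \circ (\id_X \otimes \Radford_Y)$. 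Substituting the formula of Theorem~\ref{thm:Radford-iso-braiding}, the two braiding factors recombine by the hexagon axiom,
\begin{equation*}
  (\sigma_{\modobj,X}^{-1} \otimes \id_Y) \circ (\id_X \otimes \sigma_{\modobj,Y}^{-1})
  = \sigma_{\modobj, X \otimes Y}^{-1},
\end{equation*}
and the $\zeta$-factors slide past by the interchange law, so the whole constraint collapses to $\zeta_{X \otimes Y} = \varphi_{X,Y} \circ (\zeta_X \otimes \zeta_Y)$, i.e.\ to monoidality of $\zeta$ — which holds automatically: in $\zeta$ the monodromy defects of $u_X$ and of $(u_{X^{\vee}}^{\vee})^{-1}$ \emph{cancel} rather than compound (this is the categorical counterpart of the fact that $u S(u)^{-1}$ is grouplike in a quasitriangular Hopf algebra). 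So the identity you want to extract is satisfied identically, for any invertible $\modobj$ and any braiding, and carries no information about $\sigma_{X,\modobj}\sigma_{\modobj,X}$. Note also the type mismatch in your bookkeeping: a surviving monodromy defect of $u$ would be a morphism of $X \otimes Y$, not of $\modobj \otimes X$, so it could never be "matched" against a braiding with $\modobj$.

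What the argument of \cite[Corollary 8.10.8]{MR3242743}, which the paper invokes, actually exploits is that the Radford isomorphism is defined \emph{without reference to the braiding} — it comes from the Nakayama functor via \eqref{ew:Nakayama-double-adj}. The same category with the reverse braiding $\sigma^{\rev}_{X,Y} = \sigma_{Y,X}^{-1}$ is again a braided Frobenius tensor category with the same Nakayama functor, the same $\modobj$ and the same $\Radford$, so Theorem~\ref{thm:Radford-iso-braiding} applied to $\sigma^{\rev}$ gives a \emph{second} expression $\Radford_X = (\id_{\modobj} \otimes \zeta'_X) \circ \sigma_{X,\modobj}$, where $\zeta'$ is built from the Drinfeld morphism of $\sigma^{\rev}$. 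One then checks $\zeta' = \zeta$ (the avatar of $S^2(uS(u)^{-1}) = uS(u)^{-1}$), and comparing the two expressions yields $\sigma_{\modobj,X}^{-1} = \sigma_{X,\modobj}$, i.e.\ transparency. It is this comparison of two formulas for one and the same braiding-independent isomorphism — not a single substitution into the monoidality constraint — that makes the monodromy with $\modobj$ appear; without it your proposal has no route to the conclusion.
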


Since a cosemisimple coalgebra is a symmetric coalgebra \cite{MR2078404}, a semisimple tensor category is a unimodular Frobenius tensor category whose Nakayama functor is isomorphic to the identity.
We note that the Radford isomorphism $\Radford_X$ is viewed as an isomorphism from $X$ to $X^{\vee\vee\vee\vee}$ in a unimodular Frobenius tensor category.
For a semisimple tensor category, we have:

\newcommand{\trace}{\mathsf{tr}}
\newcommand{\ptrace}{\mathsf{ptr}}

\begin{theorem}[{\cite[Theorem 5.18]{MR4560996}}]
  \label{thm:Radford-iso-semisimple}
  Let $\mathcal{C}$ be a semisimple tensor category such that the endomorphism algebra of every simple object of $\mathcal{C}$ is isomorphic to $\bfk$. Then, for every simple object $X \in \mathcal{C}$, we have
  \begin{equation}
    \label{eq:Radford-iso-semisimple}
    \Radford_X = \frac{\trace((\phi^{-1})^{\vee})}{\trace(\phi)} \cdot \phi^{\vee\vee} \circ \phi,
  \end{equation}
  where $\phi : X \to X^{\vee\vee}$ is an arbitrary isomorphism in $\mathcal{C}$ and $\trace(f) \in \bfk$ is the `trace' for a morphism $f: X \to X^{\vee\vee}$ in $\mathcal{C}$ is defined by the equation
  \begin{equation*}
    \trace(f) \cdot \id_{\unitobj}
    = \eval_{X^{\vee}} \circ (f \otimes \id_{X^{\vee}}) \circ \coev_X.
  \end{equation*}
\end{theorem}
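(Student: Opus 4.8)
The plan is to reduce the identity to the computation of a single scalar and then to extract that scalar from the construction of $\Radford_X$ via the Nakayama functor. Because $\mathcal{C}$ is semisimple, it is a unimodular Frobenius tensor category whose Nakayama functor is isomorphic to the identity (the associated coalgebra being cosemisimple, hence symmetric; cf.\ Theorem~\ref{thm:locally-fin-ab-symmetric}), so $\modobj_{\mathcal{C}}\cong\unitobj$ and $\Radford_X$ is an isomorphism $X\to X^{\vee\vee\vee\vee}$. Since $X$ is simple, so is $X^{\vee\vee\vee\vee}$, and $\Hom_{\mathcal{C}}(X,X^{\vee\vee\vee\vee})$ is one-dimensional. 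As $\Radford_X$ and $\phi^{\vee\vee}\circ\phi$ are both isomorphisms, they span this line, so $\Radford_X=\lambda_X\cdot\phi^{\vee\vee}\circ\phi$ for a unique $\lambda_X\in\bfk^{\times}$, and the task is to identify $\lambda_X$ with $\trace((\phi^{-1})^{\vee})/\trace(\phi)$.

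Before computing, I would record two consistency checks. Replacing $\phi$ by $c\phi$ (with $c\in\bfk^{\times}$, the only freedom available since $\End_{\mathcal{C}}(X)\cong\bfk$) multiplies $\trace(\phi)$ by $c$, multiplies $\trace((\phi^{-1})^{\vee})$ by $c^{-1}$, and multiplies $\phi^{\vee\vee}\circ\phi$ by $c^{2}$; hence the right-hand side of the claimed formula is independent of the choice of $\phi$, so the statement is well-posed. Moreover, since $\Radford$ is an isomorphism of tensor functors (Theorem~\ref{thm:Radford-S4-Frob-ten-cat}), its compatibility with $\eval$ and $\coev$ relates $\Radford_{X^{\vee}}$ to the inverse of the dual of $\Radford_X$; taking $\psi=(\phi^{-1})^{\vee}\colon X^{\vee}\to X^{\vee\vee\vee}$ as the auxiliary isomorphism for $X^{\vee}$ then forces $\lambda_{X^{\vee}}=\lambda_X^{-1}$, which is consistent with the proposed formula once one invokes the standard invariance $\trace(\phi^{\vee\vee})=\trace(\phi)$. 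These relations do not pin $\lambda_X$ down absolutely, but they constrain it sharply and help catch convention errors.

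To determine $\lambda_X$ I would unwind the definition \eqref{eq:Radford-iso}: $\Radford_X$ is the composite of the two isomorphisms in \eqref{eq:Nakayama-Frob-tensor-cat-formula} with $Z=X^{\vee\vee}$, namely $X\otimes\modobj_{\mathcal{C}}\xrightarrow{\sim}\Nakr_{\mathcal{C}}(X^{\vee\vee})$ and $\Nakr_{\mathcal{C}}(X^{\vee\vee})\xrightarrow{\sim}\modobj_{\mathcal{C}}\otimes X^{\vee\vee\vee\vee}$. Each of these is the value at $\unitobj$ of the coherence isomorphism \eqref{ew:Nakayama-double-adj} of Theorem~\ref{thm:Nakayama-double-adj} applied to the functors ${}^{\vee\vee}(X^{\vee\vee})\otimes(-)$ and $(-)\otimes(X^{\vee\vee})^{\vee\vee}$, respectively. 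I would then make \eqref{ew:Nakayama-double-adj} explicit by tracing through its building blocks from the proof of Theorem~\ref{thm:Nakayama-double-adj}: copower-preservation \eqref{eq:copower-preserving}, the coend reindexing \eqref{eq:coend-and-adjoint}, and the coHom reindexing \eqref{eq:cohom-and-adjoints}. The key point is that the latter two are induced by the units and counits of the relevant tensoring adjunctions, which are built from the evaluation and coevaluation morphisms of $X^{\vee\vee}$, $X^{\vee}$ and $X$.

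In the semisimple case the defining coend of $\Nakr_{\mathcal{C}}$ (Lemma~\ref{lem:Nakayama-r-coend}) collapses to a finite direct sum over the isomorphism classes of simple objects, with the dinatural structure maps given by the canonical pairings $\coHom(S,-)\cong\Hom(-,S)^{*}$. Projecting onto the summand indexed by the relevant simple object and composing with $\phi$ and its duals, each adjunction unit and counit closes the evaluation and coevaluation morphisms into a scalar loop; using $\phi\colon X\to X^{\vee\vee}$ to convert the (co)evaluations at the even duals produces the factor $\trace(\phi)$, while using $(\phi^{-1})^{\vee}\colon X^{\vee}\to X^{\vee\vee\vee}$ to convert those at the odd duals produces the factor $\trace((\phi^{-1})^{\vee})$, with the first entering in the denominator and the second in the numerator, giving $\lambda_X=\trace((\phi^{-1})^{\vee})/\trace(\phi)$. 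The main obstacle is precisely this last bookkeeping step: one must keep careful track of which (co)evaluation enters each adjunction unit and counit and verify that the scalar loops are exactly the traces in the statement rather than their inverses or a product with an extraneous dimension factor. Here the well-posedness and the relation $\lambda_{X^{\vee}}=\lambda_X^{-1}$ are valuable cross-checks, and one may further streamline the computation by specializing $\phi$ to a convenient isomorphism (for instance one arising from an explicit comodule description of $X$), the answer being independent of this choice.
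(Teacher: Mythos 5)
The paper itself contains no proof of Theorem~\ref{thm:Radford-iso-semisimple}: it is imported from \cite[Theorem 5.18]{MR4560996} explicitly ``without expounding the detail of the proof,'' so your proposal has to be judged on its own merits rather than against an internal argument. Its first half is correct and cleanly done: semisimplicity gives $\Nakr_{\Ind(\mathcal{C})}\cong\id$ and $\modobj_{\mathcal{C}}\cong\unitobj$ (and the induced map $X\to X^{\vee\vee\vee\vee}$ is independent of the trivialization of $\modobj_{\mathcal{C}}$, since the chosen isomorphism and its inverse each occur once); $\Hom_{\mathcal{C}}(X,X^{\vee\vee\vee\vee})$ is one-dimensional, so $\Radford_X=\lambda_X\cdot\phi^{\vee\vee}\circ\phi$ for a unique $\lambda_X\in\bfk^{\times}$; and your degree count under $\phi\mapsto c\phi$ shows the stated right-hand side is well posed (it even excludes the reciprocal formula, whose total degree in $c$ would be $4$ rather than $0$). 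Your identification of the two constituents of \eqref{eq:Radford-iso} as instances of \eqref{ew:Nakayama-double-adj} for $(-)\otimes X\dashv(-)\otimes X^{\vee}\dashv(-)\otimes X^{\vee\vee}$ and its left-handed analogue is also accurate.

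The gap is that the identification $\lambda_X=\trace((\phi^{-1})^{\vee})/\trace(\phi)$ is the entire content of the theorem, and you assert it rather than prove it. Your final paragraph names the isomorphisms to be unwound --- \eqref{eq:copower-preserving}, \eqref{eq:coend-and-adjoint} and \eqref{eq:cohom-and-adjoints} inside the two instances of \eqref{ew:Nakayama-double-adj}, evaluated on the simple summand of the collapsed coend of Lemma~\ref{lem:Nakayama-r-coend} via $\coHom_{\mathcal{C}}(S,M)\cong\Hom_{\mathcal{C}}(M,S)^{*}$ --- but never evaluates a single one of them, and you yourself concede that deciding whether the resulting scalar loops are the stated traces ``rather than their inverses or a product with an extraneous dimension factor'' is the main obstacle. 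Your consistency checks cannot close this: for instance $-\trace((\phi^{-1})^{\vee})/\trace(\phi)$, or $\lambda_X$ multiplied by any $\phi$-independent invariant $\mu_X$ with $\mu_{X^{\vee}}=\mu_X^{-1}$ (a global sign qualifies, since $(-1)^{-1}=-1$), passes both the rescaling test and the duality relation $\lambda_{X^{\vee}}=\lambda_X^{-1}$; and an extraneous factor of $\trace(\phi)\,\trace((\phi^{-1})^{\vee})$, which is invariant under rescaling of $\phi$, is exactly the ``dimension factor'' your checks leave open except through the duality constraint. So the missing step is precisely the computation that constitutes the proof: write the units and counits of the tensoring adjunctions in terms of $\eval$ and $\coev$, push them through the coend projection, and verify that the two loops close to $\trace(\phi)$ and $\trace((\phi^{-1})^{\vee})$ with the stated placement --- a computation your well-posedness argument at least licenses you to perform for one convenient choice of $\phi$. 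As it stands, the proposal is a credible plan, quite possibly the same plan as the proof in \cite{MR4560996}, but it is not yet a proof.
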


\subsection{Spherical tensor categories}

A {\em pivotal structure} of a tensor category $\mathcal{C}$ is an isomorphism $\id_{\mathcal{C}} \to (-)^{\vee\vee}$ of tensor functors.
Inspired by \cite[Definition 3.5.2]{MR4254952}, we introduce the following definition:

\begin{definition}
  \label{def:spherical-tensor}
  Let $\mathcal{C}$ be a unimodular Frobenius tensor category, and regard the Radford isomorphism $\Radford_X$ ($X \in \mathcal{C}$) as an isomorphism from $X$ to $X^{\vee\vee\vee\vee}$.
  We say that a pivotal structure $p$ of $\mathcal{C}$ is {\em spherical} if the equation $\Radford_X = p_{X^{\vee\vee}} \circ p_X$ holds for all $X \in \mathcal{C}$.
  A {\em spherical tensor category} is a unimodular Frobenius tensor category equipped with a spherical pivotal structure.
\end{definition}

A motivation for introducing spherical {\em finite} tensor categories in \cite{MR4254952} is to look for a `correct' generalization of the notion of spherical fusion category \cite{MR1686423} to a non-semisimple setting.
As the importance of spherical finite tensor categories has become clearer in recent studies (see, {\it e.g.}, \cite{2023arXiv230204509C,2022arXiv220707031F,2021arXiv210313702S,MR4560990}), its `infinite' version Definition \ref{def:spherical-tensor} could also be significant.

We here introduce some criteria for a tensor category to be spherical.
The first one, which concerns semisimple tensor categories, is a generalization of the finite case proved in \cite[Proposition 3.5.4]{MR4254952}.
Thanks to Theorem~\ref{thm:Radford-iso-semisimple}, one can prove Theorem \ref{thm:trace-spherical} below in the completely same way as the finite case.

\begin{theorem}[{\cite[Theorem 5.20]{MR4560996}}]
  \label{thm:trace-spherical}
  Let $\mathcal{C}$ be a semisimple tensor category equipped with a pivotal structure $p$. Then $\mathcal{C}$ is spherical if and only if it is trace spherical  \cite[Definition 3.5.3]{MR4254952}, that is, the equation
  \begin{equation*}
    \trace(p_{X^{\vee}} \circ f^{\vee}) = \trace(p_X \circ f)
  \end{equation*}
  holds for all endomorphisms $f: X \to X$ in $\mathcal{C}$.
\end{theorem}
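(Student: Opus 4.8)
The plan is to verify the equivalence object-by-object, reducing everything to simple objects. A semisimple tensor category is a unimodular Frobenius tensor category, so by Theorem~\ref{thm:locally-fin-ab-QcF} the Radford isomorphism $\Radford_X$ is defined as an isomorphism $X \to X^{\vee\vee\vee\vee}$ and Definition~\ref{def:spherical-tensor} applies. Since both $\Radford$ and $p$ are natural and additive, $\mathcal{C}$ is spherical precisely when the equation $\Radford_X = p_{X^{\vee\vee}}\circ p_X$ holds for every simple $X$, and it is trace spherical precisely when the trace equation holds for every simple $X$ and every endomorphism $f$ of $X$. To apply Theorem~\ref{thm:Radford-iso-semisimple} I assume, as in that theorem, that $\End(X)\cong\bfk$ for every simple object $X$; then every endomorphism of a simple object is a scalar multiple of the identity. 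Because $(\lambda\,\id_X)^{\vee}=\lambda\,\id_{X^{\vee}}$ and both sides of the trace-spherical equation are linear in $f$, the trace-spherical condition collapses to the single scalar identity $\trace(p_X)=\trace(p_{X^{\vee}})$ for each simple $X$.

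The key input on the pivotal side is the standard identity $p_{X^{\vee}} = \bigl((p_X)^{\vee}\bigr)^{-1}$, valid for any pivotal structure since $p$ is an isomorphism of tensor functors. From it I would extract the two consequences that drive the argument. Applying it directly, and using that dualization is a contravariant functor sending inverses to inverses, gives $(p_X^{-1})^{\vee} = \bigl((p_X)^{\vee}\bigr)^{-1} = p_{X^{\vee}}$. Applying it twice, first to $X$ and then to $X^{\vee}$, gives $p_{X^{\vee\vee}} = (p_X)^{\vee\vee}$, since $(p_{X^{\vee}})^{\vee} = \bigl((p_X)^{\vee\vee}\bigr)^{-1}$.

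With these in hand I would invoke Theorem~\ref{thm:Radford-iso-semisimple} with the admissible choice $\phi = p_X$, whose trace is the pivotal dimension of $X$ and is nonzero (it appears in the denominator of the formula). This yields $\Radford_X = \tfrac{\trace((p_X^{-1})^{\vee})}{\trace(p_X)}\,(p_X)^{\vee\vee}\circ p_X$; substituting $(p_X^{-1})^{\vee} = p_{X^{\vee}}$ and $(p_X)^{\vee\vee} = p_{X^{\vee\vee}}$ rewrites this as $\Radford_X = \tfrac{\trace(p_{X^{\vee}})}{\trace(p_X)}\, p_{X^{\vee\vee}}\circ p_X$. Comparing with the sphericity equation $\Radford_X = p_{X^{\vee\vee}}\circ p_X$ and cancelling the isomorphism $p_{X^{\vee\vee}}\circ p_X$ shows that sphericity at $X$ is equivalent to $\trace(p_{X^{\vee}}) = \trace(p_X)$, which is exactly the trace-spherical condition at $X$ isolated in the first step. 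Ranging over all simple $X$ completes the equivalence.

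The step I expect to be the main obstacle is not any single computation but the careful justification of the two structural facts: that the categorical trace and the Radford isomorphism both split over a decomposition into simple summands, so that the global conditions are genuinely detected on simples, and that the pivotal identity $p_{X^{\vee}} = ((p_X)^{\vee})^{-1}$ holds (which must be derived from, or cited as a consequence of, the monoidality of $p$). One should also confirm the nonvanishing $\trace(p_X)\neq 0$ needed to take $\phi = p_X$ in Theorem~\ref{thm:Radford-iso-semisimple}. Once these facts are secured, the remaining manipulation of the explicit formula is the short calculation carried out above.
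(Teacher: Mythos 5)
Your proposal is correct and follows essentially the same route as the paper, which proves the theorem by applying Theorem~\ref{thm:Radford-iso-semisimple} with $\phi = p_X$ (using the pivotal identities $p_{X^{\vee}} = ((p_X)^{\vee})^{-1}$ and $p_{X^{\vee\vee}} = (p_X)^{\vee\vee}$, and the reduction of both conditions to simple objects via additivity and naturality), exactly as in the finite case of \cite[Proposition 3.5.4]{MR4254952}. The hypothesis $\End(X) \cong \bfk$ for simple $X$ that you add is inherited from Theorem~\ref{thm:Radford-iso-semisimple} and is equally implicit in the paper's own argument, so it is not a deviation.
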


The second criteria concerns when a unimodular braided Frobenius tensor category is spherical. By Theorem \ref{thm:Radford-iso-braiding} giving a formula of the Radford isomorphism for a braided Frobenius tensor category, one can prove:

\begin{theorem}[{\cite[Theorem 5.21]{MR4560996}}]
  \label{thm:spherical-ribbon}
  Let $\mathcal{C}$ be a unimodular braided Frobenius tensor category equipped with a pivotal structure $p$. Then $\mathcal{C}$ is spherical if and only if the natural isomorphism $\theta := u^{-1} \circ p$ is a twist \cite[Definition 8.10.1]{MR3242743} on $\mathcal{C}$, where $u$ is the Drinfeld isomorphism~\eqref{eq:Drinfeld-iso-def}.
\end{theorem}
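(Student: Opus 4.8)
The plan is to trade the pivotal structure $p$ for the balancing $\theta = u^{-1}\circ p$ and to reduce sphericity in the sense of Definition~\ref{def:spherical-tensor} to the duality axiom of a twist by a direct manipulation of the Radford isomorphism. First I would exploit unimodularity: since $\modobj_{\mathcal{C}}\cong\unitobj$, the factor $\sigma_{\modobj,X}$ appearing in Theorem~\ref{thm:Radford-iso-braiding} is the identity and the tensor factor $\id_{\modobj}\otimes(-)$ disappears, so the Radford isomorphism collapses to
\begin{equation*}
  \Radford_X = (u_{X^{\vee}}^{\vee})^{-1}\circ u_X \qquad (X\in\mathcal{C}),
\end{equation*}
regarded as an isomorphism $X\to X^{\vee\vee\vee\vee}$. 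Writing $p_X = u_X\circ\theta_X$, the sphericity condition $\Radford_X = p_{X^{\vee\vee}}\circ p_X$ then becomes an equation expressed purely through the Drinfeld isomorphism $u$ of \eqref{eq:Drinfeld-iso-def} and the natural automorphism $\theta$.

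Next I would dispose of the balancing (multiplicativity) half of the twist axiom, which I expect to hold \emph{automatically}, independently of sphericity. Combining the standard multiplicativity of the Drinfeld isomorphism (the failure of $u\colon\id_{\mathcal{C}}\to(-)^{\vee\vee}$ to be monoidal is measured exactly by the double braiding $\sigma_{Y,X}\sigma_{X,Y}$) with the monoidality of the pivotal structure $p$, the double-braiding defect of $u^{-1}$ cancels against the monoidal constraint of $p$; naturality of $\sigma$ then lets one commute $\theta_X\otimes\theta_Y$ past the double braiding, leaving precisely $\theta_{X\otimes Y}=(\theta_X\otimes\theta_Y)\,\sigma_{Y,X}\,\sigma_{X,Y}$. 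Thus ``$\theta$ is a twist'' reduces to the single duality-compatibility condition $\theta_{X^{\vee}}=(\theta_X)^{\vee}$, and the theorem becomes the equivalence of this condition with sphericity.

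For that equivalence I would set up three standard identities and then compute. The identities are the pivotal duality relation $p_{X^{\vee}}=(p_X^{-1})^{\vee}$ (whence $p_{X^{\vee\vee}}=p_X^{\vee\vee}$ after iterating); the self-compatibility $u_X^{\vee\vee}=u_{X^{\vee\vee}}$ of the Drinfeld isomorphism, which follows from naturality of $u$ evaluated at the isomorphism $u_X$ itself; and the relation $u_X\circ\theta_X=\theta_X^{\vee\vee}\circ u_X$, from naturality of $u$ at $\theta_X$, saying that conjugation by $u_X$ realizes the double dual. Substituting $p=u\theta$ together with these identities into $\Radford_X=p_{X^{\vee\vee}}p_X$ and cancelling, I expect sphericity to be equivalent to the clean relation
\begin{equation*}
  u_{X^{\vee}}^{\vee}\circ u_{X^{\vee\vee}} = (\theta_X^{\vee\vee})^{-2}.
\end{equation*}
Separately, dualizing $\theta_{X^{\vee}}=(\theta_X)^{\vee}$ in the rearranged form $p_{X^{\vee}}=u_{X^{\vee}}\,p_X^{\vee}\,(u_X^{\vee})^{-1}$ and applying the same three identities rewrites the duality axiom as the \emph{same} relation; since the duality functor is faithful every step is reversible, yielding the desired equivalence.

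The main obstacle, I anticipate, is not a single conceptual point but the coherence bookkeeping: keeping the four levels of duals $X,\,X^{\vee\vee},\,X^{\vee\vee\vee\vee}$ straight, inserting the monoidal constraints $\zeta_{X,Y}\colon (X\otimes Y)^{\vee\vee}\to X^{\vee\vee}\otimes Y^{\vee\vee}$ of the double-dual functor at the correct places when showing the balancing condition is automatic, and checking that the ``naturality-at-$u_X$'' identities are legitimate in the strictified monoidal setting. Particular care is needed to confirm that the cited form of the Drinfeld isomorphism's multiplicativity matches the orientation conventions of \eqref{eq:Drinfeld-iso-def}, since the opposite convention would spoil the automatic balancing and change the content of the statement. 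Once these conventions are pinned down, the reversibility secured by faithfulness of $(-)^{\vee}$ assembles the manipulations into the stated equivalence.
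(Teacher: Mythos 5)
Your proposal is correct and takes essentially the same route as the paper, which likewise obtains the theorem by specializing the braided Radford formula of Theorem~\ref{thm:Radford-iso-braiding} at $\modobj_{\mathcal{C}} \cong \unitobj$, so that $\Radford_X = (u_{X^{\vee}}^{\vee})^{-1} \circ u_X$, and then comparing with the twist axioms (the detailed computation is the one deferred to \cite[Theorem 5.21]{MR4560996}). Your two hinge points both check out under the printed conventions: the multiplicativity defect of the Drinfeld isomorphism \eqref{eq:Drinfeld-iso-def} is indeed the \emph{inverse} double braiding $(\sigma_{Y,X}\sigma_{X,Y})^{-1}$ (as your final conclusion requires, though your parenthetical leaves the sign ambiguous), so the balancing half of the twist axiom is automatic for $\theta = u^{-1}\circ p$ once the monoidal constraints of $(-)^{\vee\vee}$ cancel; and both sphericity and the remaining duality condition $\theta_{X^{\vee}} = (\theta_X)^{\vee}$ reduce, via the identities $p_{X^{\vee}} = (p_X^{-1})^{\vee}$, $u_{X^{\vee\vee}} = u_X^{\vee\vee}$ and $u_X \circ \theta_X = \theta_X^{\vee\vee}\circ u_X$ that you name, to exactly your relation $u_{X^{\vee}}^{\vee}\circ u_{X^{\vee\vee}} = (\theta_X^{\vee\vee})^{-2}$, with faithfulness of $(-)^{\vee}$ giving reversibility, as you say.
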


\subsection{Modified trace theory}

In recent attempts to construct topological invariants from non-semisimple categories, a modified trace plays a central role.

\begin{definition}
  For a Frobenius tensor category $\mathcal{C}$ equipped with a pivotal structure $p$, a {\em right modified trace} on $\Proj(\mathcal{C})$ is a family of linear maps
  \begin{equation*}
    t = \{ t_P: \Hom_{\mathcal{C}}(P, P) \to \bfk \}_{P \in \Proj(\mathcal{C})}
  \end{equation*}
  satisfying the following two conditions:
  \begin{enumerate}
  \item For any morphisms $f: P \to Q$ and $g: Q \to P$ in $\Proj(\mathcal{C})$, we have
    \begin{equation*}
      t_Q(f g) = t_P(g f).
    \end{equation*}
  \item For any morphism $f: P \otimes X \to P \otimes X$ in $\Proj(\mathcal{C})$ with $P \in \Proj(\mathcal{C})$ and $X \in \mathcal{C}$, we have $t_{P \otimes X}(f) = t_{P}(\mathrm{cl}_X(f))$, where
    \begin{equation*}
      \mathrm{cl}_X(f) = (\id_P \otimes \eval_{X^{\vee}})
      \circ (\id_P \otimes p_X \otimes \id_{X^{\vee}})
      \circ (f \otimes \id_{X^{\vee}}) \circ (\id_P \otimes \coev_X)
    \end{equation*}
    is the morphism obtained from $f$ by `closing' the $X$-strand in the string diagram representing $f$.
  \end{enumerate}
  We note that $\mathcal{C}^{\rev}$ is also a Frobenius pivotal tensor category.
  A {\em left modified trace} on $\Proj(\mathcal{C})$ is a right modified trace on $\Proj(\mathcal{C}^{\rev})$. We say that a left or a right modified trace $t$ on $\Proj(\mathcal{C})$ is {\em non-degenerate} if the pairing
  \begin{equation*}
    \Hom_{\mathcal{C}}(P, Q) \times \Hom_{\mathcal{C}}(Q, P) \to \bfk,
    \quad (f, g) \mapsto t_Q(f g)
  \end{equation*}
  is non-degenerate for all $P, Q \in \Proj(\mathcal{C})$.
\end{definition}

We denote by $t_{\ell}(\mathcal{C})$ and $t_r(\mathcal{C})$ the space of left and right modified traces on $\Proj(\mathcal{C})$, respectively.
In the finite case, it has been observed in \cite{2021arXiv210315772S,2021arXiv210313702S} that modified traces are closely related to the Nakayama functor. By Theorem \ref{thm:Calabi-Yau-1} and the same argument as \cite{2021arXiv210313702S}, one can prove:

\begin{theorem}
  \label{thm:modified-trace}
  For a Frobenius tensor category equipped with a pivotal structure, the following are equivalent:
  \begin{enumerate}
  \item There is a non-zero left modified trace on $\Proj(\mathcal{C})$.
  \item There is a non-zero right modified trace on $\Proj(\mathcal{C})$.
  \item $\mathcal{C}$ is unimodular.
  \end{enumerate}
  Suppose that the above equivalent conditions are satisfied.
  Then both $t_{\ell}(\mathcal{C})$ and $t_r(\mathcal{C})$ are one-dimensional, and every non-zero left or right modified trace on $\Proj(\mathcal{C})$ is non-degenerate.
  Furthermore, $\mathcal{C}$ is spherical if and only if $t_{\ell}(\mathcal{C}) = t_r(\mathcal{C})$.
\end{theorem}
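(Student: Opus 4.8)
The plan is to follow the strategy of the finite-dimensional case carried out in \cite{2021arXiv210313702S}, with the Calabi-Yau isomorphism of Theorem~\ref{thm:Calabi-Yau-1} as the main engine. Fix a coalgebra $Q$ with $\mathcal{C} \approx \Mod^Q_{\fd}$. Since $\mathcal{C}$ is a Frobenius tensor category, $Q$ is QcF by Theorem~\ref{thm:one-sided-rigidity}, hence semiperfect, so Theorem~\ref{thm:Calabi-Yau-1} supplies a natural isomorphism $\Hom_{\mathcal{C}}(P, M)^* \cong \Hom_{\mathcal{C}}(M, \Nakr_{\mathcal{C}}(P))$ for $P \in \Proj(\mathcal{C})$ and $M \in \mathcal{C}$. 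The first step is to use this isomorphism to encode the family $\{t_P\}_P$ of a right modified trace as a single natural datum.

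First I would reinterpret the cyclicity axiom. Applying the Calabi-Yau isomorphism with $M = P$, each linear functional $t_P \in \End_{\mathcal{C}}(P)^*$ corresponds to a morphism $\phi_P \colon P \to \Nakr_{\mathcal{C}}(P)$, and a direct check shows that axiom~(1) is equivalent to the naturality of $\{\phi_P\}$, i.e.\ to $\phi$ being a natural transformation $\id \Rightarrow \Nakr_{\mathcal{C}}$ on $\Proj(\mathcal{C})$. Invoking the tensor formula $\Nakr_{\mathcal{C}}(X) \cong \modobj_{\mathcal{C}} \otimes X^{\vee\vee}$ of \eqref{eq:Nakayama-Frob-tensor-cat-formula} together with the given pivotal structure $p \colon \id_{\mathcal{C}} \xrightarrow{\ \sim\ } (-)^{\vee\vee}$, I would rewrite this as $\Nakr_{\mathcal{C}}(X) \cong \modobj_{\mathcal{C}} \otimes X$, naturally in $X$. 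Thus a right modified trace amounts to a natural family of morphisms $P \to \modobj_{\mathcal{C}} \otimes P$, and the partial-trace axiom~(2) is precisely the compatibility that makes this family tensor-multiplicative, so that it is determined by its restriction to the unit and hence by a single morphism in $\Hom_{\mathcal{C}}(\unitobj, \modobj_{\mathcal{C}})$.

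Carrying this out yields the quantitative conclusions. The upshot of the previous step is a natural isomorphism $t_r(\mathcal{C}) \cong \Hom_{\mathcal{C}}(\unitobj, \modobj_{\mathcal{C}})$. Because $\modobj_{\mathcal{C}}$ is invertible and $\unitobj$ is simple with $\End_{\mathcal{C}}(\unitobj) \cong \bfk$, this space is non-zero exactly when $\modobj_{\mathcal{C}} \cong \unitobj$, i.e.\ when $\mathcal{C}$ is unimodular, and it is then one-dimensional; this gives (2) $\Leftrightarrow$ (3) together with the asserted dimension of $t_r(\mathcal{C})$. Running the same argument for $\mathcal{C}^{\rev}$, which is again a Frobenius pivotal tensor category, gives (1) $\Leftrightarrow$ (3) and the statement for $t_\ell(\mathcal{C})$. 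Non-degeneracy of any non-zero modified trace is then automatic: under unimodularity and the pivotal identification $\Nakr_{\mathcal{C}} \cong \id_{\mathcal{C}}$, the defining pairing $(f,g) \mapsto t_Q(fg)$ is carried to the Calabi-Yau pairing of Theorem~\ref{thm:Calabi-Yau-1}, which is non-degenerate by construction.

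For the final assertion I would compare the canonical generators of $t_r(\mathcal{C})$ and $t_\ell(\mathcal{C})$ under unimodularity. Both one-dimensional spaces arise from the Calabi-Yau pairing, but through the two distinct isomorphisms displayed in \eqref{eq:Nakayama-Frob-tensor-cat-formula} — the one using the double right dual on the left and the one using the double left dual on the right. The composite of these two isomorphisms is by definition the Radford isomorphism \eqref{eq:Radford-iso}, so that the equality $t_\ell(\mathcal{C}) = t_r(\mathcal{C})$ unwinds to $\Radford_X = p_{X^{\vee\vee}} \circ p_X$ for all $X \in \mathcal{C}$, which is exactly the sphericity condition of Definition~\ref{def:spherical-tensor}. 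The step I expect to be the main obstacle is the translation of axiom~(2): one must verify that the partial-trace compatibility, expressed through the morphisms $\mathrm{cl}_X$ built from $\eval$, $\coev$ and the pivotal structure, is matched precisely by the coherence of the Nakayama-tensor isomorphism \eqref{eq:Nakayama-tensor-cat-formula}. This is the delicate point where the finite-case computation of \cite{2021arXiv210313702S} must be checked to carry over, the only genuinely new difficulty being that one works with $\Proj(\mathcal{C})$ inside a category that need not possess a projective generator.
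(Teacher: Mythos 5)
Your proposal follows exactly the route the paper indicates: the paper gives no detailed proof of Theorem~\ref{thm:modified-trace} (it states only that the result follows ``by Theorem~\ref{thm:Calabi-Yau-1} and the same argument as \cite{2021arXiv210313702S}'', with details deferred to a forthcoming paper), and your reconstruction --- converting a modified trace via the Calabi--Yau isomorphism into a natural family $\phi_P\colon P \to \Nakr_{\mathcal{C}}(P)$ on $\Proj(\mathcal{C})$, translating the cyclicity and partial-trace axioms into naturality and tensor-compatibility, identifying $t_r(\mathcal{C}) \cong \Hom_{\mathcal{C}}(\unitobj, \modobj_{\mathcal{C}})$, deducing the unimodularity criterion, one-dimensionality and non-degeneracy, and comparing $t_{\ell}$ with $t_r$ through the Radford isomorphism \eqref{eq:Radford-iso} --- is precisely that argument. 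One caution on phrasing: since $\unitobj$ is not projective unless $\mathcal{C}$ is semisimple, ``determined by its restriction to the unit'' must be read as the identification of tensor-compatible families $\{\phi_P\}$ with morphisms $\lambda\colon \unitobj \to \modobj_{\mathcal{C}}$ via $\phi_P = \lambda \otimes \id_P$ (up to the pivotal identifications), exactly as in the finite case you cite, and you correctly flag the remaining delicate point (coherence of \eqref{eq:Nakayama-tensor-cat-formula} against $\mathrm{cl}_X$, without a projective generator) as the content carried over from \cite{2021arXiv210313702S}.
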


The detail will be given in a forthcoming paper.
A part of this theorem has been known.
Indeed, (3) $\Rightarrow$ (1), (3) $\Rightarrow$ (2) and the uniqueness of modified traces (up to scalar) have been proved in \cite{MR4421818}.

\subsection{Exact sequences of tensor categories}
\label{subsec:exact-seq}

The notion of an exact sequence of Hopf algebras has been effectively used for the study of Hopf algebras.
As a categorical counterpart of this notion, Brugui\`eres and Natale \cite{MR2863377,MR3161401,MR4281372} introduced and studied exact sequences of tensor categories.
We exhibit an application of our results to exact sequences of tensor categories.
To begin with, we prove:

\begin{theorem}[{\cite[Theorem 3.6]{2023arXiv230314687S}}]
  \label{thm:intro-co-Fb-1}
  Let $F: \mathcal{C} \to \mathcal{D}$ be a tensor functor between tensor categories $\mathcal{C}$ and $\mathcal{D}$, and let $G$ be a right adjoint of $\Ind(F)$.
  Then we have:
  \begin{enumerate}
  \item [(a)] If $\mathcal{C}$ is Frobenius and $G$ is exact, then $\mathcal{D}$ is Frobenius.
  \item [(b)] If $\mathcal{D}$ is Frobenius and $G(\mathcal{D}) \subset \mathcal{C}$, then $\mathcal{C}$ is Frobenius.
  \end{enumerate}
\end{theorem}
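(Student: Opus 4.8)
The plan is to reduce both statements to the criterion of Theorem~\ref{thm:one-sided-rigidity}: since a tensor category satisfies Assumption~\ref{assumption:pretensor-cat}, it is Frobenius if and only if it possesses a non-zero projective (equivalently injective) object, this being condition~(8) of that theorem. Thus for (a) I would produce a non-zero projective object of $\mathcal{D}$, and for (b) a non-zero injective object of $\mathcal{C}$. Throughout I use that $\Ind(F)$ agrees with $F$ on $\mathcal{C} \subset \Ind(\mathcal{C})$, that $\Ind(F) \dashv G$, and that by Lemma~\ref{lem:fd-proj-comodules} and its dual a finite-dimensional object is projective (injective) in the ind-completion precisely when it is so in the finite category.

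For part (a), I would first recall that a tensor functor is exact and faithful and that $\Ind(F)$ is cocontinuous. Since $\Ind(F) \dashv G$ with $G$ exact, the left adjoint $\Ind(F)$ preserves projective objects: for projective $P$ the functor $\Hom_{\Ind(\mathcal{D})}(\Ind(F)(P), -) \cong \Hom_{\Ind(\mathcal{C})}(P, G(-))$ preserves epimorphisms because $\Hom_{\Ind(\mathcal{C})}(P, -)$ does and $G$ is right exact. Now $\mathcal{C}$ being Frobenius furnishes a non-zero projective $P \in \mathcal{C}$ (say the projective cover of $\unitobj$), which is projective in $\Ind(\mathcal{C})$ by Lemma~\ref{lem:fd-proj-comodules}. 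Hence $F(P) = \Ind(F)(P)$ is projective in $\Ind(\mathcal{D})$; it is non-zero by faithfulness of $F$ and lies in $\mathcal{D}$, so it is projective in $\mathcal{D}$. Theorem~\ref{thm:one-sided-rigidity} applied to $\mathcal{D}$ then gives the Frobenius property.

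For part (b), the dual route through injectives hinges on two points: that $G$ preserves injective objects, and that it does not annihilate the injective hull of the unit. For the first, I would verify that $\Ind(F)$ is exact---it is right exact by cocontinuity, and left exact because any monomorphism in $\Ind(\mathcal{C})$ is a filtered colimit of monomorphisms between finite-dimensional objects, on which $\Ind(F)$ acts as the exact functor $F$, and filtered colimits are exact in these locally finite categories---so that its right adjoint $G$ preserves injectives. Taking $E := E(\unitobj)$ in $\mathcal{D}$, a non-zero injective object (finite-dimensional, hence injective in $\Ind(\mathcal{D})$), I obtain that $G(E)$ is injective in $\Ind(\mathcal{C})$; the hypothesis $G(\mathcal{D}) \subset \mathcal{C}$ puts it in $\mathcal{C}$, so it is injective there.

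The decisive and least formal step is the non-vanishing of $G(E)$. Using the adjunction and $F(\unitobj) \cong \unitobj$, I would compute
\begin{equation*}
  \Hom_{\Ind(\mathcal{C})}(\unitobj, G(E))
  \cong \Hom_{\Ind(\mathcal{D})}(F(\unitobj), E)
  \cong \Hom_{\Ind(\mathcal{D})}(\unitobj, E(\unitobj)) \neq 0,
\end{equation*}
the final nonzeroness being the embedding of the unit into its injective hull. Thus $G(E)$ is a non-zero injective object of $\mathcal{C}$, and Theorem~\ref{thm:one-sided-rigidity} applied to $\mathcal{C}$ completes (b). I expect the main obstacle to be the coordination of these two ingredients in (b): establishing exactness of $\Ind(F)$ so that $G$ transports injectivity, and the judicious choice $E = E(\unitobj)$ that makes the adjunction actually detect a non-zero injective object rather than collapse to zero.
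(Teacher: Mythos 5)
Your proposal is correct and follows essentially the same route as the paper: for (a) both produce the non-zero projective $F(P)$ via $\Hom(F(P),-)\cong\Hom(P,G(-))$ with $G$ exact, and for (b) both take $E=E(\unitobj)$ and detect $G(E)\neq 0$ through $\Hom_{\mathcal{C}}(\unitobj,G(E))\cong\Hom_{\mathcal{D}}(\unitobj,E)$. The only (harmless) difference is that you establish injectivity of $G(E)$ by first proving $\Ind(F)$ exact so that $G$ preserves injectives in $\Ind(\mathcal{C})$, whereas the paper gets it directly from exactness of $\Hom_{\mathcal{C}}(-,G(E))\cong\Hom_{\mathcal{D}}(-,E)\circ F$ using only the exactness of the tensor functor $F$ itself.
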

\begin{proof}
  (a) Suppose that $\mathcal{C}$ is Frobenius and $G$ is exact.
  Then $\mathcal{C}$ has a non-zero projective object, say $P$.
  Since a tensor functor is faithful, we have $F(P) \ne 0$.
  Furthermore, $F(P)$ is a projective object of $\mathcal{D}$, since 
  \begin{equation*}
    \Hom_{\mathcal{D}}(F(P), -) \cong \Hom_{\Ind(\mathcal{C})}(P, -) \circ G
  \end{equation*}
  is exact. Hence $\mathcal{D}$ is Frobenius.

  (b) Suppose that $\mathcal{D}$ is Frobenius and $G(\mathcal{D}) \subset \mathcal{C}$.
  Then the unit object $\unitobj \in \mathcal{D}$ has an injective hull in $\mathcal{D}$, say $E$.
  A tensor functor is exact by definition.
  The object $G(E) \in \mathcal{C}$ is a non-zero injective object, since
  \begin{equation*}
    \Hom_{\mathcal{C}}(-, G(E)) \cong \Hom_{\mathcal{D}}(-, E) \circ F
  \end{equation*}
  is exact and $\Hom_{\mathcal{C}}(\unitobj, G(E)) \cong \Hom_{\mathcal{D}}(\unitobj, E) \ne 0$. Hence $\mathcal{C}$ is Frobenius.
\end{proof}

Let $\mathcal{C}' \xrightarrow{\ \iota \ } \mathcal{C} \xrightarrow{\ F \ } \mathcal{D}$ be a sequence of tensor functors between tensor categories $\mathcal{C}$, $\mathcal{C}'$ and $\mathcal{D}$. We say that this sequence is {\em exact} \cite{MR2863377,MR2863377,MR4281372} if the following conditions (1), (2), (3) and (4) are satisfied:
\begin{enumerate}
\item The functor $\iota$ is fully faithful.
\item The functor $F$ is {\em dominant} in the sense that every object $Y \in \mathcal{D}$ is a subobject of an object of the form $F(X)$ for some $X \in \mathcal{C}$.
\item The functor $F$ is {\em normal} in the sense that every object $X \in \mathcal{C}$ has a subobject such that $F(X_0)$ is the maximal trivial subobject of $F(X)$ (here an object of $\mathcal{D}$ is said to be {\em trivial} if it is isomorphic to the direct sum of finitely many copies of the unit object).
\item The kernel of $F$, defined by $\Ker(F) := \{ X \in \mathcal{C} \mid \text{$F(X)$ is trivial} \}$, coincides with the essential image of the functor $\iota$.
\end{enumerate}

Let $G: \Ind(\mathcal{D}) \to \Ind(\mathcal{C})$ be a right adjoint of $\Ind(F)$.
The conditions (2) and (3) above are characterized in terms of $G$ as follows:
The functor $F$ is dominant if and only if $G$ is faithful \cite[Lemma 3.1]{MR2863377}.
The functor $F$ is normal if and only if $G(\unitobj)$ is a filtered colimit of trivial objects (see \cite[Proposition 3.5]{MR2863377} and \cite[Lemma 3.8]{2023arXiv230314687S}).

It is known that the class of co-Frobenius Hopf algebras is closed under exact sequences \cite{MR3032811}.
Natale \cite{MR4281372} asked whether the class of Frobenius tensor categories is closed under exact sequences. We have given an affirmative answer to this question as follows:

\begin{theorem}[{\cite[Theorem 3.9]{2023arXiv230314687S}}]
  \label{thm:intro-co-Fb-2}
  Let $\mathcal{C}' \xrightarrow{\ \iota \ } \mathcal{C} \xrightarrow{\ F \ } \mathcal{C}''$ be an exact sequence of tensor categories such that a right adjoint of $\Ind(F)$ is exact.
  Then $\mathcal{C}$ is Frobenius if and only if both $\mathcal{C}'$ and $\mathcal{C}''$ are.
\end{theorem}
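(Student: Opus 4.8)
The plan is to apply Theorem~\ref{thm:intro-co-Fb-1} to each of the two tensor functors in the exact sequence, namely the inclusion $\iota : \mathcal{C}' \to \mathcal{C}$ and the functor $F : \mathcal{C} \to \mathcal{C}''$. I write $G$ for the right adjoint of $\Ind(F)$, which is exact by hypothesis, and let $G'$ be a right adjoint of $\Ind(\iota)$; such a $G'$ exists because $\Ind(\iota)$ is a cocontinuous functor between locally presentable categories. Since the essential image of $\iota$ is $\Ker(F)$, and since a subobject or quotient of a trivial object of $\mathcal{C}''$ is again trivial (as $\unitobj$ is simple and $F$ is exact), the full subcategory $\Ker(F)$ is closed under subobjects, quotients, and filtered colimits in $\Ind(\mathcal{C})$. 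Hence $G'$ is the coreflection sending $Y \in \Ind(\mathcal{C})$ to its largest subobject lying in $\Ind(\mathcal{C}')$, with counit the canonical inclusion $G'(Y) \hookrightarrow Y$.

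For the implication that $\mathcal{C}$ Frobenius forces both $\mathcal{C}'$ and $\mathcal{C}''$ to be Frobenius, I would argue as follows. Applying Theorem~\ref{thm:intro-co-Fb-1}(a) to $F$, and using that $G$ is exact, immediately gives that $\mathcal{C}''$ is Frobenius. For $\mathcal{C}'$, I would apply Theorem~\ref{thm:intro-co-Fb-1}(b) to $\iota$: the only hypothesis to check is $G'(\mathcal{C}) \subset \mathcal{C}'$, and this is automatic from the description of $G'$ above, since for a finite object $X \in \mathcal{C}$ the object $G'(X)$ is a subobject of $X$ and hence finite. Concretely, taking an injective hull $E$ of $\unitobj_{\mathcal{C}}$ in $\mathcal{C}$, the object $G'(E)$ is a non-zero injective object of $\mathcal{C}'$, because $\Hom_{\mathcal{C}'}(-, G'(E)) \cong \Hom_{\mathcal{C}}(\iota(-), E)$ is exact and $\Hom_{\mathcal{C}'}(\unitobj_{\mathcal{C}'}, G'(E)) \cong \Hom_{\mathcal{C}}(\unitobj_{\mathcal{C}}, E) \neq 0$; so $\mathcal{C}'$ is Frobenius. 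This direction needs no information about the exactness of $G'$.

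For the converse, assuming that both $\mathcal{C}'$ and $\mathcal{C}''$ are Frobenius, I would deduce that $\mathcal{C}$ is Frobenius by applying Theorem~\ref{thm:intro-co-Fb-1}(a) to $\iota$. This reduces the whole direction to a single point: the exactness of $G'$. Granting it, $\mathcal{C}'$ being Frobenius provides a non-zero projective object $P'$, and the natural isomorphism $\Hom_{\mathcal{C}}(\iota(P'), -) \cong \Hom_{\mathcal{C}'}(P', G'(-))$ then shows $\iota(P')$ is a non-zero projective object of $\mathcal{C}$, whence $\mathcal{C}$ is Frobenius by Theorem~\ref{thm:one-sided-rigidity}.

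The main obstacle is therefore establishing that the coreflection $G'$ is exact. It is automatically left exact as a right adjoint, so the real content is right exactness, which is \emph{not} formal: taking the largest subobject inside a subcategory does not preserve epimorphisms in general. Here I expect the exact-sequence hypotheses to enter decisively. Normality of $F$ gives that $A := G(\unitobj'')$ is a filtered colimit of trivial objects, so $A \in \Ind(\mathcal{C}')$; together with the exactness of $G$ this should let one express $G'$ in terms of $F$, $G$, and the trivial-part functor on $\Ind(\mathcal{C}'')$, and it is precisely the Frobenius property of $\mathcal{C}''$ that I expect to make the relevant trivial-part construction exact. Carrying out this identification and verifying right exactness of $G'$ is the technical heart of the proof; once it is in place, the two applications of Theorem~\ref{thm:intro-co-Fb-1} assemble into the stated equivalence.
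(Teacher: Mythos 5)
The paper itself states Theorem~\ref{thm:intro-co-Fb-2} without proof, quoting it from \cite{2023arXiv230314687S}, so your proposal must be measured against the tools the paper provides, namely Theorems~\ref{thm:intro-co-Fb-1} and \ref{thm:one-sided-rigidity}. Your forward direction is correct and is exactly what Theorem~\ref{thm:intro-co-Fb-1} was set up for: part~(a) applied to $F$ uses the exactness of $G$ as hypothesized, and for part~(b) applied to $\iota$ your identification of $G'$ as the coreflection onto the largest subobject lying in $\Ind(\mathcal{C}')$ is legitimate, since $\Ker(F)$ is closed under subobjects, quotients and filtered colimits (trivial objects of $\mathcal{C}''$ are semisimple because $\unitobj$ is simple); consequently $G'(\mathcal{C}) \subset \mathcal{C}'$, as you say.

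The converse contains a genuine gap, and it is worse than a deferred verification: the reduction you propose cannot be repaired, because $G'$ need not be exact under the hypotheses of the theorem, even when all three categories are Frobenius. Let $\bfk$ have characteristic $p > 0$, let $\mathcal{C} = \mathcal{C}'' = \Rep(\mathbb{Z}/p)$ be the finite tensor category of finite-dimensional representations of the cyclic group of order $p$, let $\mathcal{C}'$ be the category of finite-dimensional vector spaces, and consider
\begin{equation*}
  \mathcal{C}' \xrightarrow{\ \iota\ } \mathcal{C} \xrightarrow{\ \id\ } \mathcal{C}'',
\end{equation*}
where $\iota$ sends a vector space to the corresponding trivial representation. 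Conditions (1)--(4) of the definition of an exact sequence hold ($\Ker(\id)$ is the class of trivial objects, which is the essential image of $\iota$, and normality holds with $X_0 = X^{\mathbb{Z}/p}$), and $G = \id$ is exact, so all hypotheses of Theorem~\ref{thm:intro-co-Fb-2} are met; moreover all three categories are Frobenius, being finite. Yet $G' \cong (-)^{\mathbb{Z}/p}$ is the invariants (maximal-trivial-subobject) functor, which is not right exact in characteristic $p$: applied to the augmentation $\bfk[\mathbb{Z}/p] \twoheadrightarrow \bfk$ it yields the zero map $\bfk \to \bfk$, since the invariants of the regular representation are spanned by $\sum_{g} g$ and $\varepsilon(\sum_g g) = p = 0$. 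Equivalently, $\iota$ carries the projective generator of $\mathcal{C}'$ to the unit object of $\mathcal{C}$, which is not projective, so no argument that transports a projective of $\mathcal{C}'$ into $\mathcal{C}$ along $\iota$ can succeed. The heuristic closing your sketch is precisely the false point: Frobeniusness of $\mathcal{C}''$ does not make the trivial-part functor exact --- that functor is exact essentially only when $\unitobj$ is projective, i.e.\ in the cosemisimple case, and Frobenius is far weaker.

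A workable proof of the converse must therefore couple the two Frobenius hypotheses through $G$ rather than through $G'$; in essence this is the route of \cite{2023arXiv230314687S}. Take a finite injective $E''$ in $\mathcal{C}''$; then $G(E'')$ is injective in $\Ind(\mathcal{C})$ because $\Ind(F)$ is exact. Dominance and injectivity make $E''$ a direct summand of some $F(X)$ with $X \in \mathcal{C}$, so the projection formula $G(F(X) \otimes -) \cong X \otimes G(-)$ exhibits $G(E'')$ as a direct summand of $X \otimes G(\unitobj'')$, and normality places $G(\unitobj'')$ in $\Ind(\mathcal{C}')$. It is at this point, not via exactness of $G'$, that the Frobenius property of $\mathcal{C}'$ enters, producing a non-zero morphism from the injective object $G(E'')$ to an object of $\mathcal{C}$, whence $\mathcal{C}$ is Frobenius by criterion (9) of Theorem~\ref{thm:one-sided-rigidity}.
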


\section{Applications to coquasi-bialgebras}
\label{sec:coquasibialg}

\subsection{Coquasi-bialgebras with preantipode}

A {\em coquasi-bialgebra} \cite{MR1187289,MR1887584} is a coalgebra $H$ equipped with two coalgebra maps $m: H \otimes_{\bfk} H \to H$ and $u: \bfk \to H$ and a linear map $\omega: H \otimes_{\bfk} H \otimes_{\bfk} H \to \bfk$ such that $\omega$ is invertible with respect to the convolution product and the equations
\begin{gather*}
  \omega(\id_H \otimes \id_H \otimes m) * \omega(m \otimes \id_H \otimes \id_H)
  = (\varepsilon \otimes \omega) * \omega(\id_H \otimes m \otimes \id_H) * (\omega \otimes \varepsilon), \\
  \omega(\id_H \otimes u \otimes \id_H) = \varepsilon \otimes \varepsilon,
  \quad m(u \otimes \id_H) = \id_H = m(\id_H \otimes u), \\
  m(\id_H \otimes m) * \omega = \omega * m(m \otimes \id_H)
\end{gather*}
hold, where $*$ denote the convolution product.
This much complicated notion is natural from the viewpoint of Tannaka-Krein reconstruction:
If $H$ is a coquasi-bialgebra, then ${}^H\Mod_{\fd}$ is a monoidal category with respect to the tensor product over $\bfk$ and the associator given by $\omega$.
Moreover, a coalgebra $C$ has a structure of a coquasi-bialgebra if and only if ${}^C\Mod_{\fd}$ has a structure of a monoidal category such that the forgetful functor ${}^C\Mod_{\fd} \to \Vect$ is quasi-monoidal \cite{MR1187289,MR1887584}.

A coquasi-bialgebra is the dual notion of a quasi-bialgebra introduced by Drinfeld in \cite{MR1025154}. As an analogue of the notion of a Hopf algebra, a {\em quasi-Hopf algebra} was also introduced in \cite{MR1025154}.
A {\em coquasi-Hopf algebra}, which is the dual notion of a quasi-Hopf algebra, is a coquasi-bialgebra $(H, m, u)$ equipped with a triple $(s, \alpha, \beta)$ consisting of an anti-coalgebra map $s: H \to H$ and two linear maps $\alpha, \beta: H \to \bfk$ satisfying the equations
\begin{gather*}
  h_{(1)} \beta(h_{(2)}) s(h_{(3)}) = \beta(h) 1_H, \quad
  s(h_{(1)}) \alpha(h_{(2)}) h_{(3)} = \alpha(h) 1_H, \\
  \omega^{-1}(s(h_{(1)}) \otimes \alpha(h_{(2)}) h_{(3)} \beta(h_{(4)}) \otimes s(h_{(5)})) \\
  = \varepsilon(h)
  = \omega(h_{(1)} \otimes \beta(h_{(2)})s(h_{(3)})\alpha(h_{(4)}) \otimes h_{(5)})
\end{gather*}
for all $h \in H$, where $x y = m(x \otimes y)$ ($x, y \in H$), $1_H = u(1_{\bfk})$ and $\omega^{-1}$ is the inverse of $\omega$ with respect to the convolution product.
The triple $(s, \alpha, \beta)$ is called a {\em coquasi-antipode} of $H$.
It is known that ${}^H\Mod_{\fd}$ is right rigid if $H$ is a coquasi-Hopf algebra.
However, the converse does not hold in general, as has been pointed out by Schauenburg in \cite{MR1913442}.

According to Schauenburg \cite[Theorem 2.6]{MR2037710}, the monoidal category ${}^H\Mod_{\fd}$ for a coquasi-bialgebra $H$ is right rigid if and only if a certain structure theorem for Hopf bimodules hold. Ardizzoni and Pavarin \cite{MR3004083} showed that the latter condition for $H$ is equivalent to that $H$ has a {\em preantipode}, that is, a linear map $S: H \to H$ satisfying the equations
\begin{gather*}
  S(h_{(1)})_{(1)} h_{(2)} \otimes S(h_{(1)})_{(2)} = 1_H \otimes S(h), \\
  S(h_{(2)})_{(1)} \otimes h_{(1)} S(h_{(2)})_{(2)} = S(h) \otimes 1_H, \\
  \omega(h_{(1)} \otimes S(h_{(2)}) \otimes h_{(3)}) = \varepsilon(h)
\end{gather*}
for all $h \in H$.
In summary, the monoidal category ${}^H\Mod_{\fd}$ is right rigid if and only if $H$ has a preantipode.
We also remark that a coquasi-Hopf algebra $H$ has a preantipode given by $S(h) = \beta(h_{(1)}) s(h_{(2)}) \alpha(h_{(3)})$ for $h \in H$, where $(s, \alpha, \beta)$ is the coquasi-antipode of $H$ \cite[Theorem 3.10]{MR3004083}.

Saracco \cite{MR4207391} investigated how a preantipode is reconstructed from the forgetful functor ${}^H\Mod_{\fd} \to \Vect$ when ${}^H\Mod_{\fd}$ is right rigid.
Moreover, an explicit construction of a dual object was given:
Following \cite[Remark 3.20]{MR4207391}, a right dual of $X \in {}^H\Mod_{\fd}$ is given by ${}^{\vee\!}X := (X^* \otimes_{\bfk} H)^{\mathop{\mathrm{co}}H}$ as a vector space, where $(-)^{\mathop{\mathrm{co}}H}$ means the coinvariant as a right $H$-comodule.
In general, ${}^{\vee\!}X$ may not be isomorphic to $X^*$. This is a crucial difference to the case where $H$ is a coquasi-Hopf algebra.

\subsection{QcF property of a coquasi-bialgebra with preantipode}

Let $H$ be a coquasi-bialgebra with preantipode.
As in the case of Hopf algebras, we introduce:

\begin{definition}
  A left (right) cointegral on $H$ is a homomorphism $H \to \bfk$ of left (right) comodules, where $\bfk$ is the trivial $H$-comodule.
\end{definition}

As we have noted in the previous subsection, ${}^H\Mod_{\fd}$ is a right rigid monoidal category with respect to the tensor product over $\bfk$.
Thus the category $\mathcal{C} := ({}^H\Mod_{\fd})^{\rev}$ satisfies Assumption \ref{assumption:pretensor-cat}. By applying our results to $\mathcal{C}$, we obtain:

\begin{theorem}[see \cite{MR1786197} for Hopf algebras and \cite{MR1995128} for coquasi-Hopf algebras]
  \label{thm:coquasi-bialg-QcF-1}
  For a coquasi-bialgebra $H$ with preantipode, the following are equivalent:
  \begin{enumerate}
  \item $H$ is QcF (as a coalgebra; the same applies below).
  \item $H$ is left semiperfect.
  \item $H$ has a finite-dimensional non-zero projective left comodule.
  \item $H$ has a finite-dimensional non-zero injective left comodule.
  \item The rational part of $H^*$ as a left $H^*$-module is non-zero.
  \item There is a non-zero left integral on $H$.
  \item Left-right switched versions of (2), (3), (4), (5) and (6).
  \end{enumerate}
\end{theorem}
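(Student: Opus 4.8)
The plan is to reduce the whole statement to Theorem~\ref{thm:one-sided-rigidity} applied to the category $\mathcal{C} := ({}^H\Mod_{\fd})^{\rev}$. As recalled just before the statement, the existence of a preantipode makes ${}^H\Mod_{\fd}$ right rigid, so $\mathcal{C}$ is left rigid and satisfies Assumption~\ref{assumption:pretensor-cat}. As a linear category, $\mathcal{C}$ coincides with ${}^H\Mod_{\fd}$, and a left $H$-comodule is the same as a right $H^{\cop}$-comodule; hence $\mathcal{C} \approx \Mod^{Q}_{\fd}$ with $Q := H^{\cop}$. First I would invoke Theorem~\ref{thm:one-sided-rigidity} for this $Q$, so that all of its twelve conditions become equivalent, and then translate them back to $H$ through the dictionary $Q^* = (H^*)^{\op}$.

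The translation is routine bookkeeping with the operations $(-)^{\cop}$ and $(-)^{\rev}$. Since $Q^{\cop} = H$, the coalgebra $Q$ is QcF iff $H$ is QcF, and $Q$ is right (resp. left) semiperfect iff $H$ is left (resp. right) semiperfect, with the analogous statements for the QcF variants; this yields (1), (2) and the left--right switched form of (2). Because a left $Q^*$-module is a right $H^*$-module, one has $Q^{*\rat}_{r} = H^{*\rat}_{\ell}$ and $Q^{*\rat}_{\ell} = H^{*\rat}_{r}$, so conditions (4) and (7) of Theorem~\ref{thm:one-sided-rigidity} give (5) and its switch. Finally, condition (8) there asserts the existence of a non-zero projective \emph{or} injective object of $\mathcal{C}$; since an object of a category satisfying Assumption~\ref{assumption:pretensor-cat} is projective iff it is injective (used in the proof of Theorem~\ref{thm:one-sided-rigidity}), this single condition delivers both (3), (4) and their switched forms. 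At this stage (1)--(5) and the switched versions of (2)--(5) are mutually equivalent.

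It remains to splice in the cointegral conditions (6) and its switch. For the easy direction I would dualize the defining equation: a linear map $\lambda: H \to \bfk$ is a left cointegral iff $h_{(1)}\lambda(h_{(2)}) = \lambda(h)\,1_H$, and pairing this with an arbitrary $f \in H^*$ gives $f\lambda = \langle f, 1_H\rangle\,\lambda$. Thus $H^*\lambda$ is at most one-dimensional, so $\lambda \in H^{*\rat}_{\ell}$ by \eqref{eq:star-rat}; hence a non-zero left cointegral forces $H^{*\rat}_{\ell} \ne 0$, i.e. (6) $\Rightarrow$ (5). For the converse I would assume $H$ QcF and manufacture a cointegral from the Nakayama permutation: the regular left comodule $H$ contains every indecomposable injective $E(S)$ as a direct summand, and by \eqref{eq:Nakayama-inj-hull-2} (Theorem~\ref{thm:Nakayama-QcF}, applied to $Q$) we have $E(S) \cong P(\Nakr_{Q}(S))$. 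Since $\unitobj = \bfk$ is simple and $\Nakr_{Q}$ permutes the simples, there is a simple $S$ with $\Nakr_{Q}(S) \cong \unitobj$, whence $E(S) \cong P(\unitobj)$ has top $\unitobj$; composing the projection $H \twoheadrightarrow E(S)$ with the surjection $E(S) \twoheadrightarrow \unitobj$ yields a non-zero left cointegral. This gives (1) $\Rightarrow$ (6) and closes the cycle. The switched condition (6) is handled by the mirror-image argument on right comodules: a right cointegral lands in $H^{*\rat}_{r}$, and QcF-ness produces one exactly as above.

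The part I expect to be the genuine obstacle is not the bookkeeping but the verification that $\mathcal{C}$ satisfies Assumption~\ref{assumption:pretensor-cat}: this is precisely where the coquasi-bialgebra hypothesis enters, through the theorem of Ardizzoni--Pavarin and Saracco that a preantipode is equivalent to right rigidity of ${}^H\Mod_{\fd}$. Once that input and Theorem~\ref{thm:one-sided-rigidity} are in hand, the remaining difficulty is purely one of keeping the many left/right and $(-)^{\cop}$/$(-)^{\rev}$ identifications consistent, together with the small but genuinely new computation relating cointegrals to the rational part of $H^*$ and to the Nakayama permutation.
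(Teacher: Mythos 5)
Your proposal is correct, and its skeleton is exactly the paper's: both reduce the whole statement to Theorem~\ref{thm:one-sided-rigidity} applied to $\mathcal{C} := ({}^H\Mod_{\fd})^{\rev}$, with the coquasi-bialgebra hypothesis entering only through the Ardizzoni--Pavarin/Saracco equivalence between preantipodes and right rigidity, followed by the $Q = H^{\cop}$ bookkeeping you describe (the paper additionally applies the theorem to $\mathcal{C}^{\op,\rev}$ to handle the switched conditions, which your duality argument $X \mapsto X^*$ replaces; both devices work). The only genuine divergence is in how the cointegral condition (6) is spliced into the cycle, and there your route differs from the paper's in both directions. For (6) $\Rightarrow$ QcF, the paper observes that a non-zero left cointegral is a non-zero morphism from the injective object $H \in \Ind(\mathcal{C})$ to the finite-dimensional object $\bfk$, i.e.\ it directly instantiates condition (9) of Theorem~\ref{thm:one-sided-rigidity}; you instead prove (6) $\Rightarrow$ (5) by the convolution computation $f\lambda = \langle f, 1_H \rangle\,\lambda$, so that $\lambda \in H^{*\rat}_{\ell}$ by \eqref{eq:star-rat}. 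Both are valid; the paper's is shorter, while yours is the faithful dualization of the classical Hopf-algebra argument and makes the link between cointegrals and the rational part explicit. For QcF $\Rightarrow$ (6), the paper cites the fact that a QcF coalgebra is a generator in both of its comodule categories \cite[Corollary 3.3.11]{MR1786197}, whereas you manufacture the cointegral by hand: pick a simple $S$ with $\Nakr(S) \cong \unitobj$ (possible since the Nakayama functor is an equivalence and hence permutes the simples), use \eqref{eq:Nakayama-inj-hull-2} to identify $E(S) \cong P(\unitobj)$, and compose the projection of $H$ onto the summand $E(S)$ with the cover $P(\unitobj) \twoheadrightarrow \unitobj$. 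This is a correct, self-contained replacement of the generator citation --- in effect you reprove the special case of it that is actually needed --- and the one point it silently relies on, namely that every indecomposable injective $E(S_i)$ occurs as a summand of the regular comodule with positive multiplicity, is legitimate and is recorded in the proof of Theorem~\ref{thm:Nakayama-QcF} (via $\Hom^C(S_i, C) \cong S_i^* \ne 0$). So: same main approach, with a more elementary, citation-free treatment of the integral conditions; what the paper's version buys is brevity, and what yours buys is an explicit construction of the cointegral and a transparent proof of (6) $\Leftrightarrow$ (5).
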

\begin{proof}
  By applying Theorems~\ref{thm:one-sided-rigidity} to $\mathcal{C} := ({}^H\Mod_{\fd})^{\rev}$ or $\mathcal{C}^{\op,\rev}$, we see that (1), (2), (3), (4), (5) and their left-right switched versions are equivalent. Moreover, Theorem~\ref{thm:one-sided-rigidity} shows that (6) and its left-right switched version imply (1).
  By the fact that a QcF coalgebra $Q$ is a generator both in ${}^Q\Mod$ and $\Mod^Q$ \cite[Corollary 3.3.11]{MR1786197}, we see that (1) implies both (6) and its left-right switched version. The proof is done.
\end{proof}

By Theorems \ref{thm:Nakayama-cF} and~\ref{thm:coquasi-bialg-QcF-1}, we have:

\begin{theorem}
  \label{thm:coquasi-bialg-QcF-2}
  Let $H$ be a coquasi-bialgebra with preantipode.
  We assume that $H$ satisfies the equivalent conditions of Theorem~\ref{thm:coquasi-bialg-QcF-1}. Then the following are equivalent:
  \begin{enumerate}
  \item $H$ is co-Frobenius.
  \item $\dim_{\bfk} ({}^{\vee\vee\!}X) = \dim_{\bfk}(X)$ for all simple $X \in {}^H\Mod_{\fd}$.
  \end{enumerate}
\end{theorem}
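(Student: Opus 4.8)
The plan is to apply Theorem~\ref{thm:Nakayama-cF} to the coalgebra $Q := H^{\cop}$ and to translate its dimension criterion for co-Frobenius-ness into the stated condition on double duals, using the tensor-categorical description of the Nakayama functor. First I would recall the setup from the proof of Theorem~\ref{thm:coquasi-bialg-QcF-1}: the category $\mathcal{C} := ({}^H\Mod_{\fd})^{\rev}$ satisfies Assumption~\ref{assumption:pretensor-cat}, and since the equivalent conditions of Theorem~\ref{thm:coquasi-bialg-QcF-1} are in force, $H$ is QcF and $\mathcal{C}$ is a Frobenius tensor category by Theorem~\ref{thm:one-sided-rigidity}. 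As a linear category $\mathcal{C}$ is identified with $\Mod^Q_{\fd}$ for $Q = H^{\cop}$, which is again QcF, and under this identification $\Nakr_{\Ind(\mathcal{C})}$ corresponds to $\Nakr_Q$ and $\Ind(\mathcal{C})$ to $\Mod^Q$.

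Next I would apply Theorem~\ref{thm:Nakayama-cF} to $Q$: it is co-Frobenius if and only if $\dim_{\bfk}\Nakr_Q(S) = \dim_{\bfk}S$ for every simple right $Q$-comodule $S$. Since co-Frobenius-ness is a left--right symmetric (equivalently $(-)^{\cop}$-invariant) property of a coalgebra, which one sees from the Nakayama permutation characterization recalled just before Theorem~\ref{thm:Nakayama-cF}, the coalgebra $Q = H^{\cop}$ is co-Frobenius exactly when $H$ is. It therefore remains to compute $\dim_{\bfk}\Nakr_Q(S)$ and recognise the answer.

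For this I would invoke Theorem~\ref{thm:Frob-tensor-cat-Nakayama} for the Frobenius tensor category $\mathcal{C}$, giving $\Nakr_{\mathcal{C}}(S) \cong \modobj_{\mathcal{C}} \otimes S^{\vee\vee}$ with $\modobj_{\mathcal{C}} = \Nakr_{\mathcal{C}}(\unitobj)$ (equivalently one may use Lemma~\ref{lem:Nakayama-tensor-cat-formula}). The bookkeeping point is that the left dual in $\mathcal{C} = ({}^H\Mod_{\fd})^{\rev}$ is exactly the right dual ${}^{\vee}(-)$ of ${}^H\Mod_{\fd}$, so the double left dual $S^{\vee\vee}$ computed in $\mathcal{C}$ is precisely the object ${}^{\vee\vee}S$ of the statement. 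Because the tensor product of $\mathcal{C}$ is $\otimes_{\bfk}$, dimensions are multiplicative; and because $\modobj_{\mathcal{C}}$ is invertible (as $\Nakr_{\mathcal{C}}$ is an auto-equivalence), the relation $\modobj_{\mathcal{C}} \otimes \modobj_{\mathcal{C}}^{\vee} \cong \unitobj$ forces $\dim_{\bfk}\modobj_{\mathcal{C}} = 1$. Hence $\dim_{\bfk}\Nakr_Q(S) = \dim_{\bfk}\modobj_{\mathcal{C}} \cdot \dim_{\bfk}({}^{\vee\vee}S) = \dim_{\bfk}({}^{\vee\vee}S)$, and chaining the equivalences gives that $H$ is co-Frobenius if and only if $\dim_{\bfk}({}^{\vee\vee}X) = \dim_{\bfk}(X)$ for all simple $X$.

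I expect the main obstacle to be organizational rather than computational: correctly tracking the one-sided duality conventions through the passage to $({}^H\Mod_{\fd})^{\rev}$ and to the co-opposite coalgebra, so that $S^{\vee\vee}$ in $\mathcal{C}$ really is the ${}^{\vee\vee}X$ of the statement, together with the short but essential verification that $\modobj_{\mathcal{C}}$ is one-dimensional. Here the non-Hopf nature of $H$ matters: since ${}^{\vee}X$ need not be $X^{*}$, the dimension of a dual genuinely changes, which is exactly what makes condition~(2) nontrivial, so one must take care not to silently assume $\dim_{\bfk}({}^{\vee\vee}S) = \dim_{\bfk}S$.
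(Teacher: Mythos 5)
Your proof is correct and follows essentially the same route the paper intends, which states the theorem as an immediate consequence of Theorems~\ref{thm:Nakayama-cF} and~\ref{thm:coquasi-bialg-QcF-1} via the formula $\Nakr_{\mathcal{C}}(X) \cong \modobj_{\mathcal{C}} \otimes X^{\vee\vee}$ of Theorem~\ref{thm:Frob-tensor-cat-Nakayama}. The points you spell out --- the identification of $\mathcal{C} = ({}^H\Mod_{\fd})^{\rev}$ with $\Mod^{Q}_{\fd}$ for $Q = H^{\cop}$, the duality bookkeeping turning the double left dual in $\mathcal{C}$ into ${}^{\vee\vee}(-)$ of ${}^H\Mod_{\fd}$, and $\dim_{\bfk}\modobj_{\mathcal{C}} = 1$ forced by invertibility of $\modobj_{\mathcal{C}}$ --- are exactly what the paper leaves implicit, and each is justified correctly.
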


In the case where $H$ is a coquasi-Hopf algebra, the dual object of $X \in {}^H\Mod_{\fd}$ is built on the dual space $X^*$.
Thus we obtain the following consequence:
If a coquasi-Hopf algebra $H$ satisfies the equivalent conditions of Theorem~\ref{thm:coquasi-bialg-QcF-1}, then $H$ is co-Frobenius ({\it cf}. \cite[Theorem 4.5]{MR1995128}).
A natural question arises:

\begin{question}
  Is there a QcF coquasi-bialgebra with preantipode that is not co-Frobenius?
\end{question}

\subsection{De-equivariantization}

The equivariantization is a kind of category of `fixed points' of a category on which a group acts, and the de-equivariantization can be thought of as the inverse of equivariantization \cite{MR3242743}.
At the end of this paper, we discuss coquasi-bialgebras constructed by de-equivariantization by affine group schemes in \cite{MR3160718}.
A starting observation by Brugui\`eres and Natale \cite{MR2863377,MR3161401} is that the de-equivariantization of a finite tensor category by a finite group fits into a certain exact sequence of tensor categories.
Their observation has been extended in \cite{2023arXiv230314687S} to de-equivariantization by affine group schemes as follows.

Given a monoidal category $\mathcal{M}$, we denote by $\mathcal{Z}(\mathcal{M})$ its Drinfeld center.
Let $G$ be an affine group scheme, and let $\mathcal{C}$ be a tensor category equipped with a central embedding of $\Rep(G) := {}^{\mathcal{O}(G)}\Mod_{\fd}$ into $\mathcal{C}$, that is, a fully faithful tensor functor $\iota: \Rep(G) \to \mathcal{C}$ such that there exists a braided tensor functor $\widetilde{\iota} : \Rep(G) \to \mathcal{Z}(\mathcal{C})$ such that $\iota = U \circ \widetilde{\iota}$, where $U: \mathcal{Z}(\mathcal{C}) \to \mathcal{C}$ is the forgetful functor.
Then the coordinate algebra $\mathcal{O}(G)$ becomes a commutative algebra in $\mathcal{Z}(\Ind(\mathcal{C}))$.
The category of left $\mathcal{O}(G)$-modules in ${}_{\mathcal{O}(G)}\Ind(\mathcal{C})$ is an abelian monoidal category with respect to the tensor product over $\mathcal{O}(G)$.
We recall that an object $X$ of a category $\mathcal{A}$ is said to be {\em finitely generated} if the functor $\Hom_{\mathcal{A}}(X, -)$ preserves filtered colimits of monomorphisms.
Now let $\mathcal{D}$ denote the full subcategory of ${}_{\mathcal{O}(G)}\Ind(\mathcal{C})$ consisting of finitely generated objects.
It can be shown that $\mathcal{D}$ is closed under $\otimes_{\mathcal{O}(G)}$. Hence, $\mathcal{D}$ is a monoidal category.

\begin{theorem}[\cite{2023arXiv230314687S}]
  \label{thm:de-equiv-Frobenius}
  Let $G$, $\mathcal{C}$ and $\mathcal{D}$ be as above.
  Suppose that the monoidal category $\mathcal{D}$ is a tensor category. Then there is an exact sequence
  \begin{equation*}
    \Rep(G) \to \mathcal{C} \to \mathcal{D}
  \end{equation*}
  of tensor categories.
  Hence, by Theorem \ref{thm:intro-co-Fb-2}, the tensor category $\mathcal{C}$ is Frobenius if and only if the Hopf algebra $\mathcal{O}(G)$ is co-Frobenius and the tensor category $\mathcal{D}$ is Frobenius.
\end{theorem}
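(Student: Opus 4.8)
The plan is to realize $\Rep(G) \xrightarrow{\ \iota\ } \mathcal{C} \xrightarrow{\ F\ } \mathcal{D}$ as an exact sequence of tensor categories whose second functor has, after passing to ind-completions, an exact right adjoint, and then to invoke Theorem~\ref{thm:intro-co-Fb-2}. Here $\iota$ is the given central embedding and $F$ is the free-module functor $F(X) = \mathcal{O}(G) \otimes X$. Since $\mathcal{O}(G)$ is a commutative algebra in the center $\mathcal{Z}(\Ind(\mathcal{C}))$, its half-braiding equips $F$ with a monoidal structure through the canonical isomorphisms $F(X) \otimes_{\mathcal{O}(G)} F(Y) \cong F(X \otimes Y)$, so that $F$ is a tensor functor landing in $\mathcal{D}$. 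On ind-completions, $\Ind(F)$ is the free-module functor $\Ind(\mathcal{C}) \to {}_{\mathcal{O}(G)}\Ind(\mathcal{C})$, whose right adjoint is the forgetful functor $U$; and $U$ is exact because exactness of a sequence of $\mathcal{O}(G)$-modules is detected on underlying objects. This already verifies the hypothesis of Theorem~\ref{thm:intro-co-Fb-2}.

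Next I would check the four axioms defining an exact sequence, the detailed verification being that of \cite{2023arXiv230314687S} extending the finite-group case of Brugui\`eres--Natale. Full faithfulness of $\iota$ is part of the central-embedding hypothesis. Dominance of $F$ follows because every finitely generated $\mathcal{O}(G)$-module is a quotient of a free module $F(X)$, hence, using duals in the rigid category $\mathcal{D}$, also a subobject of a free module. For normality and the kernel condition one identifies the trivial objects of $\mathcal{D}$: an object of the form $F(X)$ is trivial precisely when $X$ lies in the essential image of $\iota$, and $\Ker(F)$ consists exactly of such objects, so that $\Ker(F) = \iota(\Rep(G))$. The hard part will be this identification of trivial objects, which rests on the compatibility of the half-braiding of $\mathcal{O}(G)$ with the module structure and on controlling the interaction of finite generation with $\otimes_{\mathcal{O}(G)}$.

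Finally I would apply Theorem~\ref{thm:intro-co-Fb-2}: since the right adjoint $U$ of $\Ind(F)$ is exact, $\mathcal{C}$ is Frobenius if and only if both $\Rep(G)$ and $\mathcal{D}$ are Frobenius. It then remains to translate the Frobenius property of $\Rep(G) = {}^{\mathcal{O}(G)}\Mod_{\fd}$ into the co-Frobenius property of $\mathcal{O}(G)$. Because $\mathcal{O}(G)$ is commutative its antipode is involutive, so $\Rep(G)$ is a tensor category and Theorem~\ref{thm:one-sided-rigidity} applies to it, showing that $\Rep(G)$ is Frobenius if and only if $\mathcal{O}(G)$ is QcF; and for a Hopf algebra the QcF and co-Frobenius properties coincide, equivalently one-sided semiperfectness is equivalent to co-Frobenius \cite{MR1786197}. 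Combining these equivalences with the output of Theorem~\ref{thm:intro-co-Fb-2} yields the stated characterization, the only genuinely technical input being the exact-sequence verification above.
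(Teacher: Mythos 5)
Your proposal is correct and takes essentially the same route as the paper, which itself states this theorem by citation to \cite{2023arXiv230314687S}: realize the sequence via the free-module tensor functor $X \mapsto \mathcal{O}(G) \otimes X$ landing in $\mathcal{D}$, observe that the right adjoint of $\Ind(F)$ is the exact forgetful functor so that Theorem~\ref{thm:intro-co-Fb-2} applies, verify the Brugui\`eres--Natale exactness axioms (with the normality/kernel identification as the technical core, deferred to \cite{2023arXiv230314687S} exactly as the paper does), and translate the Frobenius property of $\Rep(G)$ into co-Frobeniusness of $\mathcal{O}(G)$ via the coincidence of QcF and co-Frobenius for Hopf algebras. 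No genuinely different decomposition or key lemma appears, so there is nothing further to compare.
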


We consider the case where $\mathcal{C} = {}^H\Mod_{\fd}$ for some Hopf algebra $H$ with bijective antipode.
Let $(K, r)$ be a braided central Hopf subalgebra of $H$ \cite[Definition 3.1]{MR3160718}, that is, a pair of a Hopf subalgebra $K$ of $H$ and a bilinear map $r: H \times K \to \bfk$ satisfying certain conditions similar to the axioms for universal r-forms.
Since $K$ is commutative \cite[Remark 3.2]{MR3160718}, we may assume that $K = \mathcal{O}(G)$ for some affine group scheme $G$.
As shown in \cite{MR3160718}, the pair $(K, r)$ gives rise to a central embedding $\Rep(G) \to \mathcal{C}$.

We now assume that there is a convolution-invertible $\mathcal{O}(G)$-linear map from $H$ to $\mathcal{O}(G)$ preserving the unit and the counit.
The main result of \cite{MR3160718} states that the quotient coalgebra $Q = H/\mathcal{O}(G)^{+}H$, where $\mathcal{O}(G)^{+}$ is the kernel of the counit of $\mathcal{O}(G)$, has a structure of a coquasi-bialgebra such that ${}^{Q}\Mod$ is equivalent to ${}_{\mathcal{O}(G)}\Ind(\mathcal{C})$ as a monoidal category.
Theorem \ref{thm:de-equiv-Frobenius} gives the following consequences: Suppose that the monoidal category ${}^Q\Mod_{\fd}$ is rigid. Then the coquasi-bialgebra $Q$ is QcF if and only if both $\mathcal{O}(G)$ and $H$ are co-Frobenius.

\bibliographystyle{amsplain}
% \bibliography{naka}
\def\cprime{$'$}
\providecommand{\bysame}{\leavevmode\hbox to3em{\hrulefill}\thinspace}
\providecommand{\MR}{\relax\ifhmode\unskip\space\fi MR }
% \MRhref is called by the amsart/book/proc definition of \MR.
\providecommand{\MRhref}[2]{%
  \href{http://www.ams.org/mathscinet-getitem?mr=#1}{#2}
}
\providecommand{\href}[2]{#2}

\end{document}